\newtheorem{theorem}{Theorem}[section]
\newtheorem{lemma}[theorem]{Lemma}
\newtheorem{proposition}[theorem]{Proposition}
\newtheorem{corollary}[theorem]{Corollary}
\newtheorem{example}[theorem]{Example}
\theoremstyle{plain}
\theoremstyle{definition}
\newtheorem{definition}[theorem]{Definition}
\newtheorem{remark}[theorem]{Remark}
\numberwithin{equation}{section}
\newcommand{\Ext}{\operatorname{Ext}}
\newcommand{\End}{\operatorname{End}}
\newcommand{\id}{\operatorname{id}}
\newcommand{\Ker}{\operatorname{Ker}}
\newcommand{\Ad}{\operatorname{Ad}}
\newcommand{\pr}{\operatorname{pr}}
\newcommand{\st}{\operatorname{st}}
\newcommand{\K}{\mathcal{K}}
\newcommand{\G}{\mathcal{G}}
\newcommand{\N}{\mathbb{N}}
\newcommand{\Z}{\mathbb{Z}}
\newcommand{\Zp}{{\mathbb{Z}}_+}
\def\OL{{\mathcal{O}}_{\frak L}}
\def\M{\mathcal{M}}
\def\SN{\mathcal{N}}
\def\A{\mathcal{A}}
\def\P{\mathcal{P}}
\def\Q{\mathcal{Q}}
\def\X{\mathcal{X}}
\def\Y{\mathcal{Y}}
\def\H{\mathcal{H}}
\def\K{\mathcal{K}}
\def\E{{\mathcal{E}}}
\def\OA{{{\mathcal{O}}_A}}
\def\DA{{{\mathcal{D}}_A}}
\def\Ext{{{\operatorname{Ext}}}}
\def\Ad{{{\operatorname{Ad}}}}
\def\det{{{\operatorname{det}}}}
\def\LGBS{({\frak L}^-, {\frak L}^+)}
\def\LLGBS{({\frak L}^-_\Lambda, {\frak L}^+_\Lambda)}
\def\OALMP{{{\mathcal{O}}_{{\frak L}^-}^+}}
\def\OALPM{{{\mathcal{O}}_{{\frak L}^+}^-}}
\def\ALM{{{\mathcal{A}}_{{\frak L}^-}}}
\def\ALP{{{\mathcal{A}}_{{\frak L}^+}}}
\def\DLMP{{{\mathcal{D}}_{{\frak L}^-}^+}}
\def\DLPM{{{\mathcal{D}}_{{\frak L}^+}^-}}
\def\Pev{{{\operatorname{P_{ev}}}}}
\title{ Subshifts, $\lambda$-graph bisystems and  $C^*$-algebras
\\
}
\author{Kengo Matsumoto \\
Department of Mathematics \\
Joetsu University of Education \\
Joetsu, 943-8512, Japan
}
\begin{document}
\maketitle

\date{}

\def\det{{{\operatorname{det}}}}

\begin{abstract} 
We introduce a notion of $\lambda$-graph bisystem
that consists of a pair $\LGBS$ of two labeled Bratteli diagrams 
${\frak L}^-, {\frak L}^+$
satisfying  certain compatibility condition for labeling their edges.
It is a two-sided extension of $\lambda$-graph system, 
that has been previously introduced by the author.
 Its matrix presentation is called a symbolic matrix bisystem.
We first show that  any $\lambda$-graph bisystem presents subshifts 
and conversely any subshift is presented by a  $\lambda$-graph bisystem,
called the canonical $\lambda$-graph bisystem for the subshift.
We introduce an algebraically defined relation on symbolic matrix bisystems
called properly strong shift equivalence 
and show that two subshifts are topologically conjugate 
if and only if their canonical  symbolic matrix bisystems 
are properly strong shift equivalent.
A $\lambda$-graph bisystem $({\frak L}^-, {\frak L}^+)$ yields 
 a pair of $C^*$-algebras 
written $\OALMP, \OALPM$ that are first defined as the $C^*$-algebras 
of certain \'etale groupoids constructed from 
$({\frak L}^-, {\frak L}^+).$ 
 We  study structure of the $C^*$-algebras, and show that 
they are universal unital unique $C^*$-algebras subject to
certain operator relations 
among canonical generators of partial isometries and projections
encoded by the structure of the  $\lambda$-graph bisystem $({\frak L}^-, {\frak L}^+).$ 
If a $\lambda$-graph bisystem comes from a $\lambda$-graph system 
of a finite directed graph, 
then the associated subshift is the two-sided topological Markov shift
$(\Lambda_A, \sigma_A)$
 by its transition  matrix $A$ of the graph,
 and the associated $C^*$-algebra $\OALMP$ is isomorphic to 
 the Cuntz--Krieger algebra ${\mathcal{O}}_A,$
whereas the other  $C^*$-algebra $\OALPM$ is isomorphic to 
the crossed product $C^*$-algebra 
$C(\Lambda_A)\rtimes_{\sigma_A^*}\mathbb{Z}$
of the commutative $C^*$-algebra $C(\Lambda_A)$ 
of continuous functions on the shift space $\Lambda_A$
of the two-sided topological Markov shift 
by the automorphism $\sigma_A^*$ induced by the homeomorphism of the shift 
$\sigma_A.$
This phenomena shows  a duality 
between Cuntz--Krieger algebra ${\mathcal{O}}_A$ 
and the crossed product $C^*$-algebra 
$C(\Lambda_A)\rtimes_{\sigma_A^*}\mathbb{Z}$.  
\end{abstract}

{\it Mathematics Subject Classification}:
 Primary 46L55; Secondary 46L35, 37B10.

{\it Keywords and phrases}:
subshift, Bratteli diagram, $\lambda$-graph system, symbolic matrix system, topological Markov shift, strong shift equivalence,  \'etale groupoid, K-group,  $C^*$-algebra, 
Cuntz--Krieger algebra,  crossed product $C^*$-algebra

\newpage

Contents:

\begin{enumerate}
\renewcommand{\theenumi}{\arabic{enumi}}
\renewcommand{\labelenumi}{\textup{\theenumi}}
\item Introduction
\item Subshifts, $\lambda$-graph systems and its $C^*$-algebras
\item $\lambda$-graph bisystems 
\item Symbolic matrix bisystems
\item Subshifts and $\lambda$-graph bisystems
\item Strong shift equivalence  
\item \'Etale groupoids for $\lambda$-graph bisystems
and its $C^*$-algebras
\item Structure of the $C^*$-algebra $\OALMP$
\item K-groups for $\OALMP$
\item A duality: $\lambda$-graph systems as $\lambda$-graph bisystems
\end{enumerate}



\section{Introduction}
Cuntz--Krieger in \cite{CK} initiated an interplay between symbolic dynamics and $C^*$-algebras.
They constructed a purely infinite simple $C^*$-algebra $\OA$ 
called Cuntz--Krieger algebra from a topological Markov shift defined by a square matrix 
$A$ with entries in 
$\{0,1\}.$ 
They showed that the algebra $\OA$ is a universal unique $C^*$-algebra generated by 
a finite family of partial isometries subject to 
certain operator relations defined by the matrix $A$.
They also proved not only that the stable isomorphism classes of the resulting $C^*$-algebras are invariant under topological conjugacy of the underlying topological Markov shifts, 
but also that their K-groups and  $\Ext$-groups are realized as flow equivalence invariants of the Markov shifts.
Their pioneer work has given  big influence to both classification theory of $C^*$-algebras and interplay between symbolic dynamical systems and $C^*$-algebras.
 After their work, many generalizations of Cuntz--Krieger algebras
have come up from several view points (cf. 
\cite{Deaconu}, \cite{ExcelLaca}, \cite{Katsura}, \cite{KPRR},  \cite{Pim},  \dots).
  In \cite{MaIJM1997} (cf. \cite{CaMa}), 
  the author attempted to generalize Cuntz--Krieger algebras 
  defined from topological Markov shifts
  to $C^*$-algebras defined  from general subshifts.
After \cite{MaIJM1997}, 
he introduced a notion of $\lambda$-graph system written 
${\frak L}=(V, E,\lambda,\iota).$
It consists of a labeled Bratteli diagram $(V, E, \lambda)$
with its vertex set $V = \bigcup_{l=0}^\infty V_l$, 
edge set $E = \bigcup_{l=0}^\infty E_{l,l+1}$
and a labeling map
 $\lambda:E\longrightarrow\Sigma,$ 
together with a surjective map  
$\iota: V_{l+1}\longrightarrow V_l, l \in \Zp=\{0,1,\dots, \}.$
We require certain compatibility condition between the labeled Bratteli diagram
$(V, E, \lambda)$ and the map $\iota:V\longrightarrow V$,
called local property of $\lambda$-graph system.
He showed that any $\lambda$-graph system presents a subshift 
and conversely
any subshift can be presented by a $\lambda$-graph system, 
called the canonical $\lambda$-graph system.
He also introduced a notion of symbolic matrix system that is a matrix presentation of 
$\lambda$-graph system,
and defined some algebraic relations called (properly)  
strong shift equivalence in symbolic matrix systems.
He proved that
if two symbolic matrix systems are (properly) strong shift equivalent, 
then their presenting subshifts are  topologically conjugate.
Conversely if two subshifts are topologically conjugate, 
then  their canonically 
constructed symbolic matrix systems from the subshifts   
are (properly) strong shift equivalent (cf. \cite{MaETDS2003}).
 This result  generalizes a fundamental classification theorem
of topological Markov shifts proved by R. Williams \cite{Williams}.
A construction of $C^*$-algebra from a $\lambda$-graph system   
was presented in \cite{MaDocMath2002}.
The class of such $C^*$-algebras are generalization of Cuntz--Krieger algebras.
The resulting $C^*$-algebra was written  
${\mathcal{O}}_{\frak L}$
and whose K-theoretic groups were proved to be invariant 
under (properly) strong shift equivalence of  underlying symbolic dynamical systems,
and hence yield topological conjugacy invariants of general subshifts.
Especially, the K-groups $K_*({\mathcal{O}}_{{\frak L}_\Lambda})$ and the Ext-group $\Ext_*({\mathcal{O}}_{{\frak L}_\Lambda})$
 for the $C^*$-algebra ${\mathcal{O}}_{{\frak L}_\Lambda}$ 
of the canonical $\lambda$-graph system 
${{\frak L}_\Lambda}$ of a subshift $\Lambda$
was the first found computable invariant under flow equivalence 
of general subshifts (\cite{MaKTheory}).

As seen  in the construction of $\lambda$-graph system from subshifts in \cite{MaDocMath1999},
it is essentially due to its (right) one-sided structure of the subshifts.
Hence the resulting $C^*$-algebra $\mathcal{O}_{\frak L}$
do not exactly reflect two-sided dynamics.
 In this paper, we will attempt to construct two-sided extension of $\lambda$-graph systems, construct associated $C^*$-algebras
and study their structure.
We will introduce a notion of $\lambda$-graph bisystem over a finite alphabet.
It is a pair of two labeled Bratteli diagrams ${\frak L}^-, {\frak L}^+$
over alphabets $\Sigma^-, \Sigma^+$, respectively,
and satisfy certain compatible condition of their edge labeling, 
called local property of $\lambda$-graph bisystem, 
where two alphabet sets  $\Sigma^-, \Sigma^+$ are not related in general.
The two labeled Bratteli diagrams are of the form
${\frak L}^- =(V, E^-,\lambda^-),
 {\frak L}^+ = (V,E^+, \lambda^+).$
They have common vertex sets
$V = \bigcup_{l=0}^\infty V_l$
together with 
edge sets
$E^- = \bigcup_{l=0}^\infty E^-_{l+1,l}$
and
$E^+ = \bigcup_{l=0}^\infty E^+_{l,l+1}$
 and labeling maps
$
\lambda^-: E^-\longrightarrow \Sigma^-,
\lambda^+: E^+\longrightarrow \Sigma^+,
$
respectively.
Its matrix presentation is called a symbolic matrix bisystem written
$(\M^-, \M^+).$
It is a pair $(\M_{l,l+1}^-, \M_{l,l+1}^+)_{l \in \Zp}$
of sequences of rectangular matrices 
such that 
$\M_{l,l+1}^-, \M_{l,l+1}^+$
are symbolic matrices over $\Sigma^-, \Sigma^+,$
respectively
satisfying the following commutation relations 
corresponding to the local property of $\lambda$-graph bisystem :
\begin{equation}
\M_{l,l+1}^- \M_{l+1,l+2}^+ \overset{\kappa}{\simeq}\M_{l,l+1}^+ \M_{l+1,l+2}^-,
\qquad l \in \Zp, \label{eq:MMlocal}
\end{equation}
where
$\overset{\kappa}{\simeq}$
denotes the equality through exchanging
symbols 
$
\kappa: \beta\cdot \alpha \in \Sigma^-\cdot \Sigma^+ 
\longrightarrow \alpha\cdot \beta \in \Sigma^+\cdot \Sigma^-.
$
The notions of $\lambda$-graph bisystem and symbolic matrix bisystem are not only two-sided extensions of 
the preceding $\lambda$-graph system and symbolic matrix system,
respectively,
but also generalization of them, respectively.
We will first show that  any $\lambda$-graph bisystem presents two subshifts.
One is the subshift presented by the labeled Bratteli diagram ${\frak L}^-,$
and the other one is the one presented  the labeled Bratteli diagram ${\frak L}^+.$
If a $\lambda$-graph bisystem satisfies a particular condition on edge labeling called 
FPCC (Follower and Predecessor Compatibility Condition),
then the two presented subshifts coincide.
Conversely any subshift is presented by a  $\lambda$-graph bisystem satisfying FPCC,
called the canonical $\lambda$-graph bisystem for the subshift (Proposition \ref{prop:5.4}).
We will introduce a notion of properly strong shift equivalence in 
symbolic matrix bisystems satisfying FPCC, and prove the following theorem.

\begin{theorem}[{Theorem \ref{thm:DM4.2}}]
Two  subshifts are topologically conjugate
if and only if  
their canonical symbolic matrix bisystems
are properly strong shift equivalent.
\end{theorem}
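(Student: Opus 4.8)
The plan is to prove the two implications separately, following the template of Williams' classification theorem for topological Markov shifts but adapted to the level-indexed, two-sided structure of symbolic matrix bisystems. In both directions I would work with a single-step (\emph{elementary}) properly strong shift equivalence and recover the general equivalence as a finite chain of such steps, so that the whole argument reduces to analyzing one elementary step together with a transitivity/composition argument. Since the theorem concerns \emph{canonical} bisystems, which satisfy FPCC and present a single subshift from both the past diagram ${\frak L}^-$ and the future diagram ${\frak L}^+$, I would be careful to keep every intermediate bisystem inside the FPCC-satisfying class.

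For the direction ``properly strong shift equivalent $\Rightarrow$ topologically conjugate,'' I would first treat a single elementary step between two symbolic matrix bisystems. Such a step should provide, at each level $l$, a bipartite factorization of the matrices $\M^-_{l,l+1}$ and $\M^+_{l,l+1}$ through specification matrices (the analogues of $R,S$ in Williams' theorem) that are simultaneously compatible with the embedding maps $\iota$ and with the symbol-exchange commutation relation (\ref{eq:MMlocal}). Reading such a factorization in its two possible orders yields two presentations of a common intermediate $\lambda$-graph bisystem; comparing the subshifts presented by the respective labeled Bratteli diagrams (via the construction preceding Proposition \ref{prop:5.4}) then produces an explicit sliding block code in each direction, and these two codes are mutually inverse, hence a topological conjugacy. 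Composing these conjugacies along the chain of elementary steps gives the desired conjugacy of the original subshifts.

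For the converse, ``topologically conjugate $\Rightarrow$ properly strong shift equivalent,'' I would use that any topological conjugacy of subshifts can be realized by sliding block codes and, after passing to higher block presentations (which correspond to explicit moves on the bisystems), factored into a finite composition of elementary bipartite conjugacies. The key step is then to show that a single elementary conjugacy $\Lambda \to \Lambda'$ induces a single elementary properly strong shift equivalence between the canonical symbolic matrix bisystems of $\Lambda$ and $\Lambda'$. This requires tracking how the canonical construction --- built from the $l$-past and $l$-future (predecessor/follower) set structure underlying FPCC --- transforms under the coding, and then producing the specification matrices explicitly and verifying both their $\iota$-compatibility and the relation (\ref{eq:MMlocal}) at every level $l$.

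The hard part will be this last verification in the converse direction. Because the bisystem is genuinely two-sided, one must simultaneously control ${\frak L}^-$ and ${\frak L}^+$, ensuring that the factorizations chosen for $\M^-_{l,l+1}$ and $\M^+_{l,l+1}$ are coherent under the symbol-exchange $\kappa$ and stable under $\iota$ as $l$ varies, rather than matching up only level by level in isolation. Aligning the level-wise data of the canonical bisystem with the combinatorics of one elementary conjugacy, and confirming that FPCC is preserved throughout so that the two presented subshifts continue to coincide, is where I expect the principal technical effort to lie.
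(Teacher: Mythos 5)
Your proposal is correct and follows essentially the same route as the paper: the ``only if'' direction via Nasu's factorization of a conjugacy into symbolic and bipartite conjugacies, combined with the observation that a bipartite subshift has a bipartite canonical bisystem whose two halves are properly strong shift equivalent in one step; and the ``if'' direction via a Williams-style analysis of a single elementary step, producing a $2$-block sliding block code from the factorization data (the paper does this directly through the map $x_1x_2\mapsto y_0$ with $\phi(y_0)=d_1c_2$ rather than through an explicit intermediate $\lambda$-graph bisystem, but this is the same idea). One small correction: there is no separate $\iota$-map in a symbolic matrix bisystem --- the only compatibility to track is the exchange relation \eqref{eq:MMlocal} between $\M^-$ and $\M^+$, which is exactly the condition your plan already verifies.
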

The proof of the only if part of the theorem is harder than that of the if part.
To prove the only if part, 
we basically follow the idea given in the proof of \cite[Theorem 4.2]{MaDocMath1999},
and provide a notion of bipartite $\lambda$-graph bisystem as well as bipartite symbolic matrix bisystem.
We will show that the canonical $\lambda$-graph bisystems, whose presenting subshifts are topologically conjugate, are connected by a finite chain of bipartite $\lambda$-graph bisystems. 
We will also
introduce the notion of strong shift equivalence  
in general symbolic matrix bisystems.
Properly strong shift equivalence in symbolic matrix bisystems 
satisfying FPCC
implies strong shift equivalence.

We will construct a pair of   $C^*$-algebra 
written $\OALMP, \OALPM$ from a $\lambda$-graph bisystem 
$({\frak L}^-, {\frak L}^+).$ 
The two algebras 
$\OALMP, \OALPM$ are symmetrically constructed as the $C^*$-algebras 
of certain \'etale groupoids
$\G_{{\frak L}^-}^+, \G_{{\frak L}^+}^-.$
The groupoids
$\G_{{\frak L}^-}^+, \G_{{\frak L}^+}^-$
are Deaconu--Renault groupoids for certain shift dynamical systems
$(X_{{\frak L}^-}^+, \sigma_{{\frak L}^-}), (X_{{\frak L}^+}^-, \sigma_{{\frak L}^+})$ 
associated with the $\lambda$-graph bisystem 
$({\frak L}^-, {\frak L}^+).$ 
They are also regarded as 
a generalization of the \'etale groupoids
constructed from  $\lambda$-graph systems in \cite{MaDocMath2002}. 
We will introduce a notion of $\sigma_{{\frak L}^-}$-condition (I) 
for $\lambda$-graph bisystem
that guarantees the \'etale groupoid $\G_{{\frak L}^-}^+$ being essentially principal
and uniqueness of the $C^*$-algebra $\OALMP$ subject to the operator relations
$\LGBS$ below.  
Let $\{v_1^l,\dots, v_{m(l)}^l\}$ be the vertex set $V_l.$
For an edge $e^- \in E_{l+1,l}^-,$
its source vertex and terminal vertex
are denoted by
$s(e^-)\in V_{l+1}$ and $t(e^-)\in V_{l}$, respectively. 
For an edge $e^+ \in E_{l,l+1}^+,$
$s(e^+)\in V_{l}, t(e^+)\in V_{l+1}$
are similarly defined.
The directions of edges in ${\frak L}^-$ 
are upward, whereas those of edges in ${\frak L}^+$
are downward.
The transition matrices $A^-_{l,l+1}, A^+_{l,l+1}$
for  ${\frak L}^-, {\frak L}^+$
are defined by setting
\begin{align*}
A_{l,l+1}^-(i,\beta,j)
 & =
{\begin{cases}
1 &  
    \text{ if } \ t(e^-) = v_i^{l}, \lambda^-(e^-) = \beta,
                       s(e^-) = v_j^{l+1} 
    \text{ for some }    e^- \in E_{l+1,l}^-, \\
0           & \text{ otherwise,}
\end{cases}} \\
A_{l,l+1}^+(i,\alpha,j)
 & =
{\begin{cases}
1 &  
    \text{ if } \ s(e^+) = v_i^{l}, \lambda^+(e^+) = \alpha,
                       t(e^+) = v_j^{l+1} 
    \text{ for some }    e^+ \in E_{l,l+1}^+, \\
0           & \text{ otherwise,}
\end{cases}} 
\end{align*}
for
$
i=1,2,\dots,m(l),\ j=1,2,\dots,m(l+1),  \, \beta\in \Sigma^-, \, \alpha \in \Sigma^+.$

We will prove the following theorem, that is one of main results of the paper. 
\begin{theorem}[{Theorem \ref{thm:themain6}}]
Suppose that a $\lambda$-graph bisystem $\LGBS$ 
satisfies $\sigma_{{\frak L}^-}$-condition (I).
Then the $C^*$-algebra $\OALMP$
is  the universal unital unique $C^*$-algebra
generated by
partial isometries
$S_{\alpha}$ indexed by symbols $\alpha \in \Sigma^+$
and mutually commuting projections
$E_i^{l}(\beta)$ indexed by vertices
$v_i^l \in  V_l$ and symbols $\beta \in \Sigma^-$ 
with
$\beta \in \Sigma^-_1(v_i^l)$
 subject to the  following operator relations called $\LGBS$:
\begin{align}
\sum_{\alpha \in \Sigma^+} S_{\alpha}S_{\alpha}^*  
 & = \sum_{i=1}^{m(l)} \sum_{\beta \in \Sigma^-_1(v_i^l)}  
E_i^{l}(\beta) = 1, \label{eq:RMP1}\\ 
 S_\alpha S_\alpha^* & E_i^{l}(\beta)   =   E_i^{l}(\beta) S_\alpha S_\alpha^*, 
\label{eq:RMP2}\\
 \sum_{\beta \in \Sigma_1^-(v_i^l)} E_i^{l} (\beta)   
& =  \sum_{j=1}^{m(l+1)}\sum_{\gamma \in \Sigma_1^-(v_j^{l+1})}
A_{l,l+1}^-(i,\gamma,j)E_j^{l+1}(\gamma),  \label{eq:RMP3}\\
  S_{\alpha}^*E_i^{l}(\beta) S_{\alpha} &  =  
\sum_{j=1}^{m(l+1)} A_{l,l+1}^+(i,\alpha,j)E_j^{l+1}(\beta), \label{eq:RMP4}
\end{align}
where
$\Sigma_1^-(v_i^l) =
\{ \lambda^-(e^-) \in \Sigma^- \mid  
e^- \in E_{l,l-1}^- \text{ such that } s(e^-)  = v_i^l \}$
for $v_i^l \in V_l.$
\end{theorem}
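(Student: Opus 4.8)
The plan is to move between the groupoid model $\OALMP = C^*(\G_{{\frak L}^-}^+)$ and the abstract $C^*$-algebra presented by the relations \eqref{eq:RMP1}--\eqref{eq:RMP4}, invoking $\sigma_{{\frak L}^-}$-condition (I) only at the final uniqueness step. First I would exhibit concrete generators inside $\OALMP$. For $\alpha \in \Sigma^+$ let $S_\alpha$ be the partial isometry given by the characteristic function of the compact open bisection of the Deaconu--Renault groupoid $\G_{{\frak L}^-}^+$ that implements the one--step shift $\sigma_{{\frak L}^-}$ indexed by $\alpha$, and for a vertex $v_i^l \in V_l$ and $\beta \in \Sigma_1^-(v_i^l)$ let $E_i^l(\beta) \in C(X_{{\frak L}^-}^+)$ be the projection onto the clopen set determined by the constraint that the level-$l$ configuration sits at $v_i^l$ with past label $\beta$. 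The equalities \eqref{eq:RMP1} and \eqref{eq:RMP2} follow from the bisection calculus in $\G_{{\frak L}^-}^+$; relation \eqref{eq:RMP3} records the refinement of the level-$l$ clopen partition by the level-$(l+1)$ partition along the edge set $E_{l+1,l}^-$ of ${\frak L}^-$; and \eqref{eq:RMP4} expresses how the $\alpha$-shift transports a past-label cylinder, which is precisely the commutation \eqref{eq:MMlocal} of the two labelings of the $\lambda$-graph bisystem. Since these elements generate $\OALMP$, the universal property of the algebra $\A$ presented by \eqref{eq:RMP1}--\eqref{eq:RMP4} yields a surjective unital $*$-homomorphism $\pi \colon \A \to \OALMP$ carrying generators to generators.

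Next I would analyze the abelian and AF cores inside $\A$. Using \eqref{eq:RMP2}--\eqref{eq:RMP4} I would check that for each admissible word $\mu$ over $\Sigma^+$ the element $S_\mu E_i^l(\beta) S_\mu^*$ is a projection, that any two such projections commute, and that together they generate a commutative unital subalgebra $\DLMP(\A) \subset \A$. Relation \eqref{eq:RMP3} organizes these projections, indexed by level $l$ and word length, into a projective system whose inverse limit is canonically the shift space $X_{{\frak L}^-}^+$; hence $\pi$ restricts to an isomorphism $\DLMP(\A) \xrightarrow{\ \cong\ } C(X_{{\frak L}^-}^+)$, the function algebra on the unit space of $\G_{{\frak L}^-}^+$. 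In the same way the balanced elements $S_\mu E_i^l(\beta) S_\nu^*$ with $|\mu| = |\nu|$ span finite-dimensional $*$-subalgebras whose inductive limit is an AF core $\mathcal{F}_{\A} \subset \A$ containing $\DLMP(\A)$.

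I would then equip $\A$ with the gauge action $\rho \colon \T \to \Aut(\A)$ determined by $\rho_z(S_\alpha) = z S_\alpha$ and $\rho_z(E_i^l(\beta)) = E_i^l(\beta)$, which is well defined because the relations are homogeneous in the gauge degree. Averaging over $\T$ gives a faithful conditional expectation of $\A$ onto the fixed-point algebra $\A^\rho = \mathcal{F}_{\A}$, and $\pi$ is equivariant for $\rho$ and the canonical gauge action on $\OALMP$ coming from the $\Z$-valued cocycle on $\G_{{\frak L}^-}^+$. Since $\pi$ is injective on each finite-dimensional block of $\mathcal{F}_{\A}$ (the balanced matrix units map to nonzero, linearly independent groupoid elements as the underlying cylinders are nonempty), $\pi$ is injective on $\mathcal{F}_{\A}$, and the gauge-invariant uniqueness theorem then forces $\pi$ to be injective on all of $\A$. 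Thus $\OALMP$ realizes the universal algebra of \eqref{eq:RMP1}--\eqref{eq:RMP4}.

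It remains to upgrade universality to the asserted uniqueness, and this is where $\sigma_{{\frak L}^-}$-condition (I) enters and constitutes the main obstacle. The condition is designed exactly so that $\G_{{\frak L}^-}^+$ is essentially principal; by the uniqueness theorem for \'etale groupoid $C^*$-algebras this makes the canonical conditional expectation $P \colon \OALMP \to C(X_{{\frak L}^-}^+)$ faithful. Consequently, if $\mathcal{B}$ is any unital $C^*$-algebra generated by partial isometries $s_\alpha$ and commuting projections $e_i^l(\beta)$ obeying \eqref{eq:RMP1}--\eqref{eq:RMP4} with all $e_i^l(\beta) \neq 0$, the induced surjection $\rho_{\mathcal{B}} \colon \A \to \mathcal{B}$ is injective on $\DLMP(\A)$, and the Cuntz--Krieger type argument built on faithfulness of $P$ shows $\rho_{\mathcal{B}}$ is injective, so that every such realization is canonically isomorphic to $\OALMP$. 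The delicate points are to confirm that the projective limit of the relation-generated projections reproduces the full cylinder structure of $X_{{\frak L}^-}^+$ without collapse, and that $\sigma_{{\frak L}^-}$-condition (I) is precisely what excludes the periodic configurations obstructing faithfulness of $P$; both amount to transporting the essential-principality analysis of the groupoid into purely relational statements about the generators.
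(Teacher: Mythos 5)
Your route is genuinely different from the paper's. The paper first proves a version of the theorem (Theorem \ref{thm:main6}) in which the commuting projections are the word-indexed cylinder projections $E_i^{l-}(\xi)$, $\xi\in F(v_i^l)$, and obtains universality and uniqueness there by identifying $\OALMP$ with the $C^*$-symbolic crossed product $\ALM\rtimes_{\rho^+}\Lambda_{{\frak L}^+}$ and invoking \cite[Theorem 3.9]{MaMZ2010}, after checking in Proposition \ref{prop:conditionI} that $\sigma_{{\frak L}^-}$-condition (I) implies condition (I) for $(\ALM,\rho^+,\Sigma^+)$; Theorem \ref{thm:themain6} is then deduced from Theorem \ref{thm:main6} by a purely algebraic reduction. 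You instead propose to run a gauge-invariant uniqueness argument to identify the universal algebra with $C^*(\G_{{\frak L}^-}^+)$ and then a Cuntz--Krieger-type uniqueness argument built on essential principality of the groupoid and faithfulness of the canonical expectation. That is a legitimate alternative and, if completed, would make the proof self-contained rather than resting on the crossed-product machinery; you also correctly locate where $\sigma_{{\frak L}^-}$-condition (I) enters.

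There is, however, a genuine gap, and it sits exactly at the point that distinguishes Theorem \ref{thm:themain6} from Theorem \ref{thm:main6}: the presentation here uses only the single-symbol projections $E_i^l(\beta)$, $\beta\in\Sigma_1^-(v_i^l)$, so one must prove from the relations \eqref{eq:RMP1}--\eqref{eq:RMP4} alone that the products $E_i^{l}(\xi_1)E_{i_{l-1}}^{l-1}(\xi_2)\cdots E_{i_1}^{1}(\xi_l)$ taken down the levels of ${\frak L}^-$ reconstruct the word cylinder projections --- in particular that $E_j^{l+1}(\gamma)\,E_{i}^{l}(\beta)\ne 0$ only when there is an edge of $E^-_{l+1,l}$ from $v_j^{l+1}$ to $v_i^l$ labelled $\gamma$, and that the resulting projections satisfy the word-level relations \eqref{eq:L4} and \eqref{eq:L6}. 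This is precisely the content of the paper's Lemma \ref{lem:E12.10} and of the computation with $\widetilde{E}_i^{l-}(\xi)$ in its proof of Theorem \ref{thm:themain6}. Your proposal asserts that relation \eqref{eq:RMP3} organizes the projections into a projective system whose inverse limit is $X_{{\frak L}^-}^+$, but \eqref{eq:RMP3} only constrains the sums $\sum_{\beta}E_i^l(\beta)$ across consecutive levels; it says nothing by itself about the products of individual projections at different levels. Without that multiplication table you cannot conclude that $\DLMP(\A)\cong C(X_{{\frak L}^-}^+)$, that the balanced elements $S_\mu E_i^l(\beta)S_\nu^*$ form systems of matrix units, that $\pi$ is injective on the AF core, or that a quotient with all $e_i^l(\beta)\ne 0$ is injective on the diagonal --- every later step of your argument leans on it. You acknowledge this as a ``delicate point'' in your closing sentence but offer no argument; it is the step that actually has to be carried out, and it is where the paper spends most of its effort in this proof.
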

For the other $C^*$-algebra 
$\OALPM$,
we have a symmetric structure theorem to the above Theorem.

$\lambda$-graph systems are typical examples of $\lambda$-graph bisystems.
If a $\lambda$-graph bisystem $\LGBS$
 comes from a $\lambda$-graph system ${\frak L},$
then the $C^*$-algebra
$\OALMP$ coincides with the $C^*$-algebra
${\mathcal{O}}_{\frak L}$ 
of a $\lambda$-graph system ${\frak L}$
previously studied in \cite{MaDocMath2002}.

For a $\lambda$-graph bisystem
$\LGBS,$
let us denote by $\Omega_{{\frak L}^-}$
the compact Hausdorff space of path spaces of the labeled Bratteli diagram
${\frak L}^-.$
By the local property of $\lambda$-graph bisystem,
the edges labeled symbols $\alpha \in \Sigma^+$ of the other labeled Bratteli diagram ${\frak L}^+$
give rise to an endomorphism on the abelian group
$C(\Omega_{{\frak L}^-}, \Z)$ of $\Z$-valued continuous functions on 
$\Omega_{{\frak L}^-},$
 that is denoted by $\lambda_{{\frak L}^-*}^+.$
Then we have a $K$-theory formulas for the $C^*$-algebra
$\OALMP.$
\begin{theorem}[{Theorem \ref{thm:Ktheory}}]
\begin{align*}
   K_0(\OALMP) 
\cong &
C(\Omega_{{\frak L}^-}, \Z) / 
(\id -\lambda_{{\frak L}^-*}^+) C(\Omega_{{\frak L}^-}, \Z), \\
  K_1(\OALMP) 
\cong & 
\Ker(\id -\lambda_{{\frak L}^-*}^+) \text{ in } C(\Omega_{{\frak L}^-}, \Z).
\end{align*}
\end{theorem}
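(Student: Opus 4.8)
The plan is to obtain both K-groups from a single six-term exact sequence of Pimsner type, using that $\OALMP$ is the $C^*$-algebra of the Deaconu--Renault groupoid $\G_{{\frak L}^-}^+$ of the local homeomorphism $\sigma_{{\frak L}^-}$ on $X_{{\frak L}^-}^+$. The coefficient algebra of this construction is the commutative subalgebra $\DLMP$ generated by the projections $E_i^l(\beta)$ of Theorem \ref{thm:themain6}; by relation \eqref{eq:RMP3} these projections carry exactly the Bratteli-diagram grading of ${\frak L}^-$, so $\DLMP$ is an AF-algebra canonically isomorphic to $C(\Omega_{{\frak L}^-})$, the algebra of continuous functions on the path space of ${\frak L}^-$. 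Since $\Omega_{{\frak L}^-}$ is compact and totally disconnected, its K-groups are
\[
K_0(\DLMP)\cong C(\Omega_{{\frak L}^-}, \Z)=\varinjlim(\Z^{m(l)},\,\cdot), \qquad K_1(\DLMP)=0,
\]
the inductive limit being formed along the maps induced by $A_{l,l+1}^-$ (equivalently by $\iota\colon V_{l+1}\to V_l$).

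First I would exhibit $\OALMP$ as a Cuntz--Pimsner algebra $\mathcal{O}_E$ of a $C^*$-correspondence $E$ over $A:=\DLMP$, whose generating elements are the partial isometries $S_\alpha$, $\alpha\in\Sigma^+$, and whose bimodule structure encodes the $\Sigma^+$-labeled edges of ${\frak L}^+$. The gauge action of $\mathbb{T}$ on $\OALMP$ is to be matched with the canonical circle action on $\mathcal{O}_E$, so that $A$ sits as the degree-zero coefficient algebra. Here I must verify the hypotheses that make the Pimsner/Katsura machinery apply: that the left action of $A$ on $E$ is injective and non-degenerate, that $\mathcal{O}_E$ is the correct Cuntz--Pimsner quotient (rather than a Toeplitz or relative version), and that $\G_{{\frak L}^-}^+$ is amenable so that its full and reduced completions agree. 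The $\sigma_{{\frak L}^-}$-condition (I) together with the local property of the $\lambda$-graph bisystem are precisely what furnish these hypotheses and the uniqueness already used in Theorem \ref{thm:themain6}.

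With the correspondence in place, the Pimsner--Katsura six-term exact sequence \cite{Pim}, \cite{Katsura} becomes
\[
K_0(A)\xrightarrow{\ \id-[E]\ }K_0(A)\longrightarrow K_0(\OALMP)\longrightarrow K_1(A)\xrightarrow{\ \id-[E]\ }K_1(A)\longrightarrow K_1(\OALMP)\xrightarrow{\ \partial\ }K_0(A),
\]
where $[E]$ denotes the endomorphism of $K_*(A)$ induced by the correspondence. Because $K_1(A)=0$, the sequence at once collapses into the two isomorphisms $K_0(\OALMP)\cong\Coker(\id-[E])$ and $K_1(\OALMP)\cong\Ker(\id-[E])$, the connecting map $\partial$ identifying $K_1(\OALMP)$ with the kernel of $\id-[E]$ on $K_0(A)$. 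Everything therefore reduces to computing the single map $[E]$ on $K_0(A)=C(\Omega_{{\frak L}^-}, \Z)$.

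The decisive step, and the one I expect to be the principal obstacle, is to identify $[E]$ with the endomorphism $\lambda_{{\frak L}^-*}^+$. At a fixed level this is governed by relation \eqref{eq:RMP4}, namely $S_\alpha^* E_i^l(\beta) S_\alpha=\sum_j A_{l,l+1}^+(i,\alpha,j)E_j^{l+1}(\beta)$, which shows that conjugation by the generators $S_\alpha$ transports a projection supported at $v_i^l$ to the combination of level-$(l+1)$ projections prescribed by $A_{l,l+1}^+$; summing over $\alpha\in\Sigma^+$ yields the level-wise transfer operator. The delicate point is compatibility with the inductive-limit structure: I must show that these level-wise operators intertwine the connecting maps of $C(\Omega_{{\frak L}^-}, \Z)=\varinjlim(\Z^{m(l)},\cdot)$, which is exactly where the commutation relation \eqref{eq:MMlocal} between $A^-$ and $A^+$ enters, so that they assemble into the globally defined endomorphism $\lambda_{{\frak L}^-*}^+$ induced by the action of the $\Sigma^+$-edges on $\Omega_{{\frak L}^-}$. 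Substituting $[E]=\lambda_{{\frak L}^-*}^+$ into the two isomorphisms above yields the stated formulas, and the analogous computation with ${\frak L}^-$ and ${\frak L}^+$ interchanged treats $\OALPM$.
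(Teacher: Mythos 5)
Your route is essentially the paper's: the paper realizes $\OALMP$ as the $C^*$-symbolic crossed product $\ALM\rtimes_{\rho^+}\Lambda_{{\frak L}^+}$ --- itself built from a Hilbert $C^*$-bimodule, so the cited six-term sequence is exactly the Pimsner sequence you invoke --- over the AF coefficient algebra $\ALM\cong C(\Omega_{{\frak L}^-})$, whence $K_1$ of the coefficients vanishes and the induced map $\rho^+_{*}=\sum_{\alpha}{\rho^+_{\alpha}}_{*}$ is identified with $\lambda_{{\frak L}^-*}^+$ through the isomorphism $\varphi_*:K_0(\ALM)\to C(\Omega_{{\frak L}^-},\Z)$, just as you identify $[E]$ via \eqref{eq:RMP4}. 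One notational correction: the coefficient algebra generated by the projections $E_i^{l}(\beta)$ alone is $\ALM$, not $\DLMP$; in the paper's notation $\DLMP\cong C(X_{{\frak L}^-}^+)$ is the strictly larger diagonal generated by all $S_\mu E_i^{l-}(\xi)S_\mu^*$.
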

Similar K-theory formulas for 
the other $C^*$-algebra $\OALPM$ hold.
Since the properly strong shift equivalence class of 
the canonical symbolic matrix bisystem for a subshift is 
invariant under topological conjugacy of the subshift,
the following result tells us that 
the above K-groups $K_i(\OALMP), i=0,1 $
yield topological conjugacy invariants of subshifts.
\begin{theorem}[{Theorem \ref{thm:MoritaKtheory}}]
Let $(\M^-,\M^+)$ and $(\SN^-,\SN^+)$
be symbolic matrix bisystems.
Let 
$({\frak L}_\M^-,{\frak L}_\M^+)$ and 
$({\frak L}_\SN^-,{\frak L}_\SN^+)$
be the associated $\lambda$-graph bisystems both of which satisfy FPCC.
Suppose that 
 $(\M^-,\M^+)$ and $(\SN^-,\SN^+)$
are properly strong shift equivalent.
Then 
the $C^*$-algebras
${{\mathcal{O}}_{{\frak L}_\M^-}^+}$ and 
${{\mathcal{O}}_{{\frak L}_\SN^-}^+}$
are Morita equivalent, so that their K-groups
$K_i({{\mathcal{O}}_{{\frak L}_\M^-}^+})$ and 
$K_i({{\mathcal{O}}_{{\frak L}_\SN^-}^+})$
are isomorphic for $i=0,1.$
\end{theorem}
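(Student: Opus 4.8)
The plan is to reduce the assertion to a single elementary step of properly strong shift equivalence and then to realize the two $C^*$-algebras as full corners of a common $C^*$-algebra attached to an interpolating bipartite $\lambda$-graph bisystem. Since Morita equivalence of $C^*$-algebras is an equivalence relation and is preserved by passing to $K$-theory, and since properly strong shift equivalence is by definition a finite chain of elementary properly strong shift equivalences, it suffices to treat the case in which $(\M^-,\M^+)$ and $(\SN^-,\SN^+)$ are related by one elementary step. The isomorphism of $K$-groups will then be automatic from Morita invariance of $K$-theory once Morita equivalence of ${\mathcal{O}}_{{\frak L}_\M^-}^+$ and ${\mathcal{O}}_{{\frak L}_\SN^-}^+$ is established.

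For such a single step I would invoke the bipartite $\lambda$-graph bisystem already introduced for the proof of Theorem \ref{thm:DM4.2}. The elementary equivalence is witnessed by a pair of interpolating symbolic matrix bisystems whose products recover $(\M^-,\M^+)$ and $(\SN^-,\SN^+)$ on the two sides; assembling these as off-diagonal blocks produces a bipartite symbolic matrix bisystem, hence a $\lambda$-graph bisystem $\hat{\frak L} = (\hat{\frak L}^-, \hat{\frak L}^+)$. The first task is to verify that $\hat{\frak L}$ is a genuine $\lambda$-graph bisystem satisfying FPCC: here the word \emph{properly} is exactly what guarantees the compatibility of the $\iota$-maps and the preservation of FPCC through the block assembly, so that the Deaconu--Renault groupoid $\G_{\hat{\frak L}^-}^+$ and the $C^*$-algebra $C^*(\G_{\hat{\frak L}^-}^+)$ are well defined.

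Next I would exploit the two-colour structure of the vertex set of $\hat{\frak L}$. The vertex set at each level splits as a disjoint union of an $\M$-part and an $\SN$-part, and a single $\pm$-edge of the bipartite diagram always crosses from one colour to the other. Consequently the (compact) unit space $X_{\hat{\frak L}^-}^+$ decomposes into two clopen pieces $X_\M \sqcup X_\SN$, giving complementary projections $1_{X_\M}, 1_{X_\SN}$ in the unital algebra $C^*(\G_{\hat{\frak L}^-}^+)$, and the shift $\sigma_{\hat{\frak L}^-}$ interchanges the two pieces. The technical heart of the argument is to identify the reductions $\G_{\hat{\frak L}^-}^+|_{X_\M}$ and $\G_{\hat{\frak L}^-}^+|_{X_\SN}$, as \'etale groupoids, with $\G_{{\frak L}_\M^-}^+$ and $\G_{{\frak L}_\SN^-}^+$ respectively: staying within a single colour forces moving by an even number of steps, and two consecutive bipartite edges reproduce exactly one edge of the original bisystem. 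This is the groupoid incarnation of the fact that the square of the bipartite symbolic matrix is block-diagonal with blocks $(\M^\pm, \SN^\pm)$. Matching these identifications with the groupoid descriptions $C^*(\G_{{\frak L}_\M^-}^+) = {\mathcal{O}}_{{\frak L}_\M^-}^+$ and $C^*(\G_{{\frak L}_\SN^-}^+) = {\mathcal{O}}_{{\frak L}_\SN^-}^+$ realizes $1_{X_\M} C^*(\G_{\hat{\frak L}^-}^+) 1_{X_\M} = {\mathcal{O}}_{{\frak L}_\M^-}^+$ and $1_{X_\SN} C^*(\G_{\hat{\frak L}^-}^+) 1_{X_\SN} = {\mathcal{O}}_{{\frak L}_\SN^-}^+$.

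Finally, because $\sigma_{\hat{\frak L}^-}$ swaps the two colours, every forward orbit meets both $X_\M$ and $X_\SN$, so both clopen sets are full in $X_{\hat{\frak L}^-}^+$. Invoking the standard fact that the reduction of an \'etale groupoid to a full clopen subset of its unit space yields a strongly Morita equivalent groupoid $C^*$-algebra (equivalently, the corner $1_U C^*(\G_{\hat{\frak L}^-}^+) 1_U = C^*(\G_{\hat{\frak L}^-}^+|_U)$ is full whenever $U$ is full), I conclude that both ${\mathcal{O}}_{{\frak L}_\M^-}^+$ and ${\mathcal{O}}_{{\frak L}_\SN^-}^+$ are Morita equivalent to $C^*(\G_{\hat{\frak L}^-}^+)$, and hence to one another; the isomorphism $K_i({\mathcal{O}}_{{\frak L}_\M^-}^+) \cong K_i({\mathcal{O}}_{{\frak L}_\SN^-}^+)$ for $i=0,1$ follows. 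I expect the main obstacle to be the groupoid identification of the preceding paragraph — in particular confirming that the reduced groupoid, together with its shift/cocycle data, matches the constituent groupoid $\G_{{\frak L}_\M^-}^+$ (respectively $\G_{{\frak L}_\SN^-}^+$) \emph{on the nose} rather than merely up to Morita equivalence — and the combinatorial bookkeeping needed to check that the bipartite assembly of the proper SSE data genuinely produces a $\lambda$-graph bisystem satisfying FPCC.
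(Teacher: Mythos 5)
Your proposal follows essentially the same route as the paper: reduce to a single elementary step, interpolate by the bipartite $\lambda$-graph bisystem $(\widehat{\frak L}^-,\widehat{\frak L}^+)$ whose $CD$- and $DC$-parts recover the two given bisystems, and realize ${\mathcal{O}}_{{\frak L}_\M^-}^+$ and ${\mathcal{O}}_{{\frak L}_\SN^-}^+$ as complementary full corners $P_C\,{\mathcal{O}}_{\widehat{\frak L}^-}^+\,P_C$ and $P_D\,{\mathcal{O}}_{\widehat{\frak L}^-}^+\,P_D$ with $P_C+P_D=1$. The only cosmetic difference is that you carry out the corner identification at the level of groupoid reductions to full clopen subsets of the unit space, whereas the paper cites its earlier $C^*$-algebraic results (\cite[Theorem 6.1]{MaCrelle} and \cite[Theorem 4.7]{MaTransAMS2018}, the latter also yielding the stronger diagonal-preserving stable isomorphism) for the same conclusion.
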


\begin{corollary}[{Corollary \ref{cor:13.4}}]
The K-groups $K_i({\mathcal{O}}_{{\frak L}_\Lambda^-}^+), i=0,1$
of the $C^*$-algebra
${\mathcal{O}}_{{\frak L}_\Lambda^-}^+$
of the canonical $\lambda$-graph bisystem
$({\frak L}_\Lambda^-, {\frak L}_\Lambda^+)$
of a subshift $\Lambda$ is invariant under topological conjugacy of subshifts. 
\end{corollary}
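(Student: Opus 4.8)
The plan is to obtain the statement as an immediate consequence of the topological conjugacy characterization (Theorem \ref{thm:DM4.2}) combined with the Morita equivalence result (Theorem \ref{thm:MoritaKtheory}), so that Corollary \ref{cor:13.4} becomes a formal concatenation of two already-established theorems. First I would fix two subshifts $\Lambda$ and $\Lambda'$ that are topologically conjugate and pass to their canonical symbolic matrix bisystems, say $(\M^-_\Lambda, \M^+_\Lambda)$ and $(\M^-_{\Lambda'}, \M^+_{\Lambda'})$, together with the canonical $\lambda$-graph bisystems $\LLGBS$ and $({\frak L}^-_{\Lambda'}, {\frak L}^+_{\Lambda'})$ that present them. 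By Proposition \ref{prop:5.4}, these canonical $\lambda$-graph bisystems satisfy FPCC, so the standing hypothesis required to invoke Theorem \ref{thm:MoritaKtheory} is in place from the outset.

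Next I would apply the only-if direction of Theorem \ref{thm:DM4.2}: since $\Lambda$ and $\Lambda'$ are topologically conjugate, their canonical symbolic matrix bisystems $(\M^-_\Lambda, \M^+_\Lambda)$ and $(\M^-_{\Lambda'}, \M^+_{\Lambda'})$ are properly strong shift equivalent. With both FPCC and proper strong shift equivalence in hand, Theorem \ref{thm:MoritaKtheory} then yields that the $C^*$-algebras ${{\mathcal{O}}_{{\frak L}^-_\Lambda}^+}$ and ${{\mathcal{O}}_{{\frak L}^-_{\Lambda'}}^+}$ are Morita equivalent, and hence that $K_i({{\mathcal{O}}_{{\frak L}^-_\Lambda}^+}) \cong K_i({{\mathcal{O}}_{{\frak L}^-_{\Lambda'}}^+})$ for $i=0,1$ by the Morita invariance of $K$-theory. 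This is precisely the assertion that these $K$-groups form a topological conjugacy invariant of subshifts.

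Because the argument is a chain of implications through two prior results, there is no genuine analytic obstacle in the corollary itself; all of the substantive work has already been carried out in the proofs of Theorems \ref{thm:DM4.2} and \ref{thm:MoritaKtheory}. The one point I would verify carefully is the bookkeeping at the interface between the two theorems, namely that the object called ``the canonical symbolic matrix bisystem'' appearing in Theorem \ref{thm:DM4.2} and the object called ``the associated $\lambda$-graph bisystem satisfying FPCC'' appearing in Theorem \ref{thm:MoritaKtheory} refer to the same canonical construction attached to $\Lambda$. Given the definitions, this matching is routine, so the only care needed is to state explicitly that the canonical $\lambda$-graph bisystem of a subshift is exactly the common input to which both theorems apply.
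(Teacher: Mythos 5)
Your proposal is correct and follows essentially the same route as the paper: the corollary is obtained by combining the only-if direction of Theorem \ref{thm:DM4.2} (topological conjugacy implies properly strong shift equivalence of the canonical symbolic matrix bisystems) with Theorem \ref{thm:MoritaKtheory} (properly strong shift equivalence of FPCC bisystems implies Morita equivalence, hence isomorphic K-groups), using Proposition \ref{prop:5.4} to ensure the canonical $\lambda$-graph bisystems satisfy FPCC. The paper states this chain of implications in the sentence immediately preceding Theorem \ref{thm:MoritaKtheory}, so no further argument is needed.
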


Let ${\frak L} =(V, E, \lambda,\iota)$ be a $\lambda$-graph system over $\Sigma.$
Put $\Sigma^+ = \Sigma.$ 
Let ${\frak L}^+$ be the original labeled Bratteli diagram $\frak L$ without the map 
$\iota: V \longrightarrow V.$ 
Define a new alphabet
$\Sigma^- =  \{\iota\}.$
The other 
labeled Bratteli diagram
${\frak L}^- = (V, E^-, \lambda^-)$ over alphabet $\Sigma^-$
is defined in the following way.
Define an edge $e^-\in E^-_{l+1,l}$
if
$\iota(v_j^{l+1}) = v_i^l$ 
so that 
$s(e^-) = v_j^{l+1}, t(e^-) = v_i^l$
and 
$\lambda^-(e^-) =\iota \in \Sigma^-.$ 
Then we have a labeled Bratteli diagram
${\frak L}^- = (V, E^-, \lambda^-)$ over alphabet $\Sigma^-.$
Then the local property of the $\lambda$-graph system  ${\frak L}$ makes the pair 
$\LGBS$ a $\lambda$-graph bisystem.
Hence we have a $\lambda$-graph bisystem from a $\lambda$-graph system.
Let $X_{{\frak L}^+}^-$
be the unit space of the \'etale groupoid ${\mathcal{G}}_{{\frak L}^+}^-.$
The $\iota$-map induces a shift homeomorphism on $X_{{\frak L}^+}^-$
denoted by $\sigma_{{\frak L}^+}$.
It yields an automorphism written $\sigma_{{\frak L}^+}^*$
on the commutative $C^*$-algebra $C(X_{{\frak L}^+}^-)$
of continuous functions on $X_{{\frak L}^+}^-$.
Then we see the following.
\begin{proposition}[{Proposition \ref{prop:lambdaalgebras}}]
Let $\frak L$ be a left-resolving $\lambda$-graph system over $\Sigma.$
Let $\LGBS$ be the associated $\lambda$-graph bisystem.
Then we have 
\begin{enumerate}
\renewcommand{\theenumi}{\roman{enumi}}
\renewcommand{\labelenumi}{\textup{(\theenumi)}}
\item
The $C^*$-algebra $\OALMP$ is canonically isomorphic to 
the $C^*$-algebra ${\mathcal O}_{\frak L}$ 
of the original $\lambda$-graph system  ${\frak L}.$ 
\item
The $C^*$-algebra $\OALPM$ is isomorphic to  the crossed product
$ C(X_{{\frak L}^+}^-)\rtimes_{\sigma_{{\frak L}^+}^*}\Z.$ 
\end{enumerate}
\end{proposition}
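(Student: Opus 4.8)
The plan is to prove the two isomorphisms separately, each by identifying canonical generators on both sides and matching the defining operator relations, then invoking the uniqueness results already established.

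For part (i), I would use the universal characterization from Theorem \ref{thm:themain6}. The key observation is that when $\Sigma^- = \{\iota\}$ is a single symbol, the projections $E_i^l(\beta)$ collapse: for each vertex $v_i^l$ the set $\Sigma_1^-(v_i^l)$ is either $\{\iota\}$ or empty, so we may write $E_i^l := E_i^l(\iota)$ and regard the $E_i^l$ simply as vertex projections. Under this collapse, relation \eqref{eq:RMP3} becomes $E_i^l = \sum_{j} A_{l,l+1}^-(i,j) E_j^{l+1}$, which (since $\iota$ is the only $\Sigma^-$-label) encodes exactly the $\iota$-map $\iota(v_j^{l+1}) = v_i^l$, i.e. the compatibility $E_i^l = \sum_{\iota(v_j^{l+1})=v_i^l} E_j^{l+1}$ that appears in the presentation of ${\mathcal O}_{\frak L}$. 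Relations \eqref{eq:RMP1}, \eqref{eq:RMP2} and \eqref{eq:RMP4} then translate verbatim into the Cuntz--Krieger-type relations defining ${\mathcal O}_{\frak L}$ for the $\lambda$-graph system $\frak L$ over $\Sigma = \Sigma^+$, because the transition matrix $A_{l,l+1}^+$ for ${\frak L}^+$ is precisely the transition matrix of $\frak L$. Here the left-resolving hypothesis on $\frak L$ ensures that $\frak L$ satisfies the analogue of condition (I) needed for the uniqueness theorem for ${\mathcal O}_{\frak L}$, and simultaneously guarantees $\sigma_{{\frak L}^-}$-condition (I) for the associated bisystem so that Theorem \ref{thm:themain6} applies. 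The two universal algebras presented by identical generators and relations are then canonically isomorphic.

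For part (ii), I would work with the symmetric structure theorem for $\OALPM$ and the groupoid description. Since $\Sigma^- = \{\iota\}$, the labeled Bratteli diagram ${\frak L}^-$ has exactly one edge out of each vertex (given by $\iota$), so the shift $\sigma_{{\frak L}^+}$ on the unit space $X_{{\frak L}^+}^-$ is a homeomorphism rather than merely a local homeomorphism: the $\iota$-map provides a single-valued global inverse direction. Consequently the Deaconu--Renault groupoid $\G_{{\frak L}^+}^-$ reduces to the transformation groupoid $X_{{\frak L}^+}^- \rtimes_{\sigma_{{\frak L}^+}} \Z$ of the $\Z$-action generated by the homeomorphism $\sigma_{{\frak L}^+}$. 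The standard identification of the $C^*$-algebra of a transformation groupoid of a $\Z$-action with the crossed product then yields $\OALPM = C^*(\G_{{\frak L}^+}^-) \cong C(X_{{\frak L}^+}^-) \rtimes_{\sigma_{{\frak L}^+}^*} \Z$.

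I expect the main obstacle to be part (ii), specifically the verification that $\sigma_{{\frak L}^+}$ is a genuine homeomorphism and that the Deaconu--Renault groupoid degenerates exactly to the transformation groupoid. The single-symbol alphabet $\Sigma^- = \{\iota\}$ makes the $\iota$-direction deterministic, but one must check carefully that surjectivity of $\iota$ together with the local property forces $\sigma_{{\frak L}^+}$ to be both injective and surjective on all of $X_{{\frak L}^+}^-$, with continuous inverse, so that the cocycle defining the Deaconu--Renault groupoid takes values giving a free and proper $\Z$-structure. Once the homeomorphism is established, the crossed-product identification is routine, and part (i) is essentially a bookkeeping comparison of presentations requiring only that the $\Sigma^-$-collapse matches the $\iota$-relations of ${\mathcal O}_{\frak L}$ under the left-resolving assumption.
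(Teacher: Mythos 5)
Your part (i) follows essentially the same route as the paper: since $\Sigma^-=\{\iota\}$, each $\Sigma_1^-(v_i^l)$ is the singleton $\{\iota\}$, the projections $E_i^l(\iota)$ become vertex projections $E_i^l$, and the identifications $A^-_{l,l+1}(i,\iota,j)=I_{l,l+1}(i,j)$, $A^+_{l,l+1}(i,\alpha,j)=A_{l,l+1}(i,\alpha,j)$ turn the relations $\LGBS$ of Theorem \ref{thm:themain6} into the relations $({\frak L})$ of Theorem \ref{thm:lambdagraphC*}; the paper then concludes by matching universal properties exactly as you do. One caveat: your assertion that the left-resolving hypothesis ``ensures condition (I)'' (and $\sigma_{{\frak L}^-}$-condition (I)) is not correct --- left-resolving is a combinatorial condition on the labeling, while condition (I) is an aperiodicity-type condition on the dynamics, and neither implies the other. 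Fortunately this claim is not needed: the isomorphism of the two algebras only requires that each be the \emph{universal} algebra for its relations (which the paper asserts without condition (I)); condition (I) is only used for the uniqueness statement, which plays no role here. You should simply drop that sentence.

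For part (ii) you take a genuinely different route. The paper works at the level of generators and relations: using Theorem \ref{thm:main7'} it writes $\OALPM$ as generated by a single coisometry $T_\iota$ and the projections $E_i^{l+}(\mu)$, computes from the relations \eqref{eq:Plambda2}--\eqref{eq:Plambda6} that $T_\iota^*T_\iota=1$ (so $T_\iota$ is a unitary), identifies $\DLPM$ with $C(X^-_{{\frak L}^+})$ via $\varphi_{{\frak L}^+}$, verifies $\varphi_{{\frak L}^+}\circ\Ad(T_\iota^*)=\sigma_{{\frak L}^+}^*\circ\varphi_{{\frak L}^+}$, and then invokes the universal property of the crossed product. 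You instead argue at the groupoid level: once $\sigma_{{\frak L}^+}$ is shown to be a homeomorphism, the Deaconu--Renault groupoid $\G^-_{{\frak L}^+}$ collapses to the transformation groupoid of the $\Z$-action, whose $C^*$-algebra is the crossed product. Both arguments hinge on the same key verification --- that the sequence $(\iota,\omega^{i+1})_{i\ge 1}$ uniquely determines $(\iota,\omega^i)_{i\ge 1}$, which follows because $\iota$ is single-valued (injectivity of the shift) and every element extends backward by one more $\iota$-level (surjectivity); the paper carries out exactly this check inside its lemma. Your groupoid route is arguably more economical, since $C^*(X\rtimes\Z)\cong C(X)\rtimes\Z$ needs no appeal to a uniqueness theorem for $\OALPM$, whereas the paper's ``by their universal properties'' step is where any injectivity concern would have to be addressed. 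Your identification of the obstacle --- verifying the homeomorphism property rather than merely local homeomorphism --- is exactly right.
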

An irreducible finite directed graph naturally gives rise to a $\lambda$-graph system.
Let $A$ be its transition matrix for a given  finite directed  graph.
We then have the two-sided 
topological Markov shift
$(\Lambda_A, \sigma_A)$ for the matrix $A.$
We denote by ${{\frak L}_A}$ the associated $\lambda$-graph system for the finite directed graph.
The  $\lambda$-graph bisystem from ${{\frak L}_A}$ 
obtained by the above procedure is 
$({\frak L}_A^-,{\frak L}_A^+).$  
As a corollary of the above proposition we have 
\begin{corollary}[{Corollary \ref{cor:duality}}]
The $C^*$-algebra ${\mathcal{O}}^+_{{\frak L}_A^-}$
is isomorphic to the Cuntz--Krieger algebra $\OA$, 
whereas the other $C^*$-algebra
${\mathcal{O}}^-_{{\frak L}_A^+}$
is isomorphic to the $C^*$-algebra of  the crossed product 
$C(\Lambda_A) \rtimes_{\sigma_A^*}\Z$
of the commutative $C^*$-algebra  
$C(\Lambda_A) $ by the automorphism 
$\sigma_A^*$
induced by the homeomorphism $\sigma_A$
of the shift on $\Lambda_A.$
\end{corollary}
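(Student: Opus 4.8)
The statement is an application of Proposition \ref{prop:lambdaalgebras} to the particular left-resolving $\lambda$-graph system ${\frak L}_A$ attached to a finite irreducible directed graph with transition matrix $A$. The plan is therefore to feed ${\frak L}_A$ into that proposition and then identify, concretely, the two objects it produces: the $C^*$-algebra ${\mathcal O}_{{\frak L}_A}$ on the one hand, and the dynamical system $(X_{{\frak L}_A^+}^-, \sigma_{{\frak L}_A^+})$ on the other.

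For the first assertion, Proposition \ref{prop:lambdaalgebras}(i) gives a canonical isomorphism ${\mathcal O}^+_{{\frak L}_A^-}\cong {\mathcal O}_{{\frak L}_A}$, so it remains to recall that ${\mathcal O}_{{\frak L}_A}$ is the Cuntz--Krieger algebra $\OA$. This is a known feature of the theory of $\lambda$-graph systems: when ${\frak L}={\frak L}_A$ arises from a finite directed graph, each vertex set $V_l$ is a copy of the vertex set of the graph, the map $\iota$ is the identity, and the transition matrices $A^+_{l,l+1}$ all equal $A$. The universal relations defining ${\mathcal O}_{{\frak L}_A}$ then collapse to the Cuntz--Krieger relations of $A$, so ${\mathcal O}_{{\frak L}_A}\cong\OA$ by \cite{MaDocMath2002}. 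Composing the two isomorphisms gives ${\mathcal O}^+_{{\frak L}_A^-}\cong\OA$.

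For the second assertion, Proposition \ref{prop:lambdaalgebras}(ii) gives ${\mathcal O}^-_{{\frak L}_A^+}\cong C(X_{{\frak L}_A^+}^-)\rtimes_{\sigma_{{\frak L}_A^+}^*}\Z$. The task is then to produce a homeomorphism $h\colon X_{{\frak L}_A^+}^-\to\Lambda_A$ onto the two-sided Markov shift space that intertwines the $\iota$-induced shift with $\sigma_A$, i.e.\ $h\circ\sigma_{{\frak L}_A^+}=\sigma_A\circ h$. To do this I would unwind the definition of the unit space $X_{{\frak L}_A^+}^-$ of the Deaconu--Renault groupoid ${\mathcal G}_{{\frak L}_A^+}^-$: since $\iota$ is the identity and all levels carry the same transition matrix $A$, an element of $X_{{\frak L}_A^+}^-$ is determined by a bi-infinite $A$-admissible sequence of symbols, and this correspondence is a homeomorphism for the natural topologies. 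Under $h$ the shift $\sigma_{{\frak L}_A^+}$ induced by $\iota$ becomes exactly the two-sided shift $\sigma_A$ on $\Lambda_A$. A homeomorphism conjugating the two systems induces an equivariant isomorphism $C(X_{{\frak L}_A^+}^-)\cong C(\Lambda_A)$ carrying $\sigma_{{\frak L}_A^+}^*$ to $\sigma_A^*$, and by the functoriality of the crossed product under conjugacy of dynamical systems we obtain $C(X_{{\frak L}_A^+}^-)\rtimes_{\sigma_{{\frak L}_A^+}^*}\Z\cong C(\Lambda_A)\rtimes_{\sigma_A^*}\Z$, as required.

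The only nontrivial work lies in the second assertion, and specifically in the explicit identification of the unit space $X_{{\frak L}_A^+}^-$ with the two-sided shift $\Lambda_A$ together with the verification that the $\iota$-induced shift matches $\sigma_A$. Everything else is bookkeeping: the first assertion is the known identification ${\mathcal O}_{{\frak L}_A}\cong\OA$, and once the conjugacy $(X_{{\frak L}_A^+}^-,\sigma_{{\frak L}_A^+})\cong(\Lambda_A,\sigma_A)$ is in hand, the passage to crossed products is automatic. I expect the delicate point to be checking that the path-space topology on $X_{{\frak L}_A^+}^-$ coincides with the product topology on $\Lambda_A$, and that the identification of each $V_l$ with the graph's vertex set is handled coherently so that no spurious collapses occur.
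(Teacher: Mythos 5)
Your proposal is correct and follows the same route as the paper: the paper derives Corollary \ref{cor:duality} directly from Proposition \ref{prop:lambdaalgebras} applied to ${\frak L}_A$, using the known identification ${\mathcal O}_{{\frak L}_A}\cong\OA$ from Section 2 and the explicit description of $X_{{\frak L}^+}^-$ (the tower diagram in the preceding lemma), which for ${\frak L}_A$ collapses to the bi-infinite shift space $\Lambda_A$ with $\sigma_{{\frak L}_A^+}$ becoming $\sigma_A$. The identification of the unit space with $\Lambda_A$, which you correctly flag as the only substantive step, is exactly what the paper's diagram-based argument establishes.
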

The above corollary suggests us that the Cuntz--Krieger algebra 
$\OA$ and the crossed product $C^*$-algebra $C(\Lambda_A) \rtimes_{\sigma_A^*}\Z$
have a relation like a ``duality'' pair.

We finally refer to the transpose of $\lambda$-graph bisystems and its $C^*$-algebras.
\begin{proposition}
For a $\lambda$-graph bisystem $\LGBS$, 
denote by 
${{\frak L}^{-t}}$ (resp. ${{\frak L}^{+t}}$) the labeled Bratteli diagram obtained by reversing the directions of all edges in ${{\frak L}^-}$ (resp. ${{\frak L}^+}$). 
Then the pair
$({{\frak L}^{+t}}, {{\frak L}^{-t}})$ becomes a $\lambda$-graph bisystem.
We then have a canonical isomorphisms of $C^*$-algebras:
\begin{equation*}
\OALMP \cong {{\mathcal{O}}_{{{\frak L}^{-t}}}^-}, \qquad
\OALPM \cong {{\mathcal{O}}_{{{\frak L}^{+t}}}^+}. 
\end{equation*}
\end{proposition}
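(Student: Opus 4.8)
The plan is to prove the two parts of the statement in turn: first the combinatorial assertion that $({\frak L}^{+t}, {\frak L}^{-t})$ is again a $\lambda$-graph bisystem, and then the two $C^*$-algebraic isomorphisms.

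For the first part I would record the effect of edge reversal on the matrix presentation. Reversing the upward edges of ${\frak L}^-$ gives a labeled Bratteli diagram ${\frak L}^{-t}$ with downward edges from $V_l$ to $V_{l+1}$ and labels in $\Sigma^-$, which is of the shape of a plus-component; symmetrically ${\frak L}^{+t}$ has upward edges labeled in $\Sigma^+$ and is of the shape of a minus-component. Thus $({\frak L}^{+t}, {\frak L}^{-t})$ has minus-alphabet $\Sigma^+$, plus-alphabet $\Sigma^-$, and the correct edge directions. Keeping every symbolic matrix $m(l)\times m(l+1)$ with rows indexed by $V_l$ and columns by $V_{l+1}$ (the indexing forced on the original pair by \eqref{eq:MMlocal}), a direct inspection of the source and terminal maps shows that reversing edges and at the same time exchanging the source/terminal convention attached to the minus/plus role leaves the matrices unchanged: the symbolic matrices $\tilde{\M}^{-}, \tilde{\M}^{+}$ of the transposed pair satisfy $\tilde{\M}^{-}_{l,l+1} = \M^{+}_{l,l+1}$ and $\tilde{\M}^{+}_{l,l+1} = \M^{-}_{l,l+1}$ for all $l \in \Zp$, and likewise $\tilde{A}^{-}_{l,l+1} = A^{+}_{l,l+1}$, $\tilde{A}^{+}_{l,l+1} = A^{-}_{l,l+1}$ for the transition matrices. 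Therefore the local property demanded of the transposed pair is $\M^{+}_{l,l+1}\M^{-}_{l+1,l+2} \overset{\kappa^{-1}}{\simeq} \M^{-}_{l,l+1}\M^{+}_{l+1,l+2}$, which is exactly \eqref{eq:MMlocal} with its two sides interchanged and with $\kappa$ replaced by its inverse $\kappa^{-1}\colon \alpha\cdot\beta \mapsto \beta\cdot\alpha$. Hence the transposed pair inherits the local property and is a $\lambda$-graph bisystem.

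For the isomorphisms I would argue at the level of the defining \'etale groupoids, which needs no instance of $\sigma$-condition (I). Recall that $\OALMP = C^*({\mathcal G}_{{\frak L}^-}^+)$ is the groupoid $C^*$-algebra of the Deaconu--Renault groupoid of $(X_{{\frak L}^-}^+, \sigma_{{\frak L}^-})$, whose shift runs along $\Sigma^+$-data while the transverse structure of the unit space is governed by ${\frak L}^-$; similarly, for the transposed pair the minus-algebra $\mathcal{O}_{{\frak L}^{-t}}^-$ is the groupoid $C^*$-algebra of the Deaconu--Renault groupoid of $(X_{{\frak L}^{-t}}^-, \sigma_{{\frak L}^{-t}})$, whose shift runs along the new minus-alphabet, which is again $\Sigma^+$, with transverse structure governed by ${\frak L}^{-t}$. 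Since ${\frak L}^{-t}$ is ${\frak L}^-$ with edges reversed and the two shift dynamical systems are built from the very same ingredients up to this reversal, I would exhibit the tautological map $\Phi\colon X_{{\frak L}^-}^+ \to X_{{\frak L}^{-t}}^-$ induced by edge reversal and check that it is a homeomorphism intertwining the shifts, $\Phi\circ\sigma_{{\frak L}^-} = \sigma_{{\frak L}^{-t}}\circ\Phi$. A conjugacy of the underlying systems induces an isomorphism of the associated Deaconu--Renault groupoids, and hence the canonical isomorphism $\OALMP \cong \mathcal{O}_{{\frak L}^{-t}}^-$. The second isomorphism $\OALPM \cong \mathcal{O}_{{\frak L}^{+t}}^+$ follows by the same argument with the roles of $-$ and $+$ interchanged; equivalently, one applies the first isomorphism to the transposed pair and uses that transposition is an involution, the transpose of $({\frak L}^{+t}, {\frak L}^{-t})$ being $\LGBS$ again.

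I expect the main obstacle to be the bookkeeping in verifying that $\Phi$ is a genuine conjugacy: one must track simultaneously the reversal of the source and terminal maps, the interchange of the minus/plus roles of the two diagrams, and the compatibility constraints coming from the local property that cut out $X_{{\frak L}^-}^+$ and $X_{{\frak L}^{-t}}^-$ inside the relevant path/sequence spaces, and confirm that $\Phi$ carries one family of constraints to the other. Conceptually, though, the entire groupoid construction is symmetric under the transpose, so once the definitions are unwound the intertwining is forced. As a consistency check, valid whenever both relevant $\sigma$-conditions (I) hold, I would also match universal presentations: under the identifications $\tilde{A}^{-}_{l,l+1} = A^{+}_{l,l+1}$, $\tilde{A}^{+}_{l,l+1} = A^{-}_{l,l+1}$ and of the alphabets $\Sigma^{+}, \Sigma^{-}$ established above, the generators $S_\alpha, E_i^l(\beta)$ and the relations \eqref{eq:RMP1}--\eqref{eq:RMP4} of Theorem \ref{thm:themain6} for $\OALMP$ match term by term the symmetric presentation of $\mathcal{O}_{{\frak L}^{-t}}^-$, so universality yields the same canonical isomorphism.
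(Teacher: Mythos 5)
Your proposal is correct, but your primary route differs from the paper's. The paper disposes of the isomorphisms in two sentences at the end of Section 8: since $\OALMP$ and $\OALPM$ are the universal unital $C^*$-algebras subject to the relations $\LGBS$ of Theorem \ref{thm:themain6} and $({\frak L}^+,{\frak L}^-)$ of Theorem \ref{thm:themain7'}, and since under transposition follower sets in ${\frak L}^-$ become predecessor sets in ${\frak L}^{-t}$ while $\tilde{A}^{\pm}_{l,l+1}=A^{\mp}_{l,l+1}$, the presentation of ${\mathcal{O}}_{{\frak L}^{-t}}^{-}$ coincides term by term with that of $\OALMP$ — exactly what you relegate to a ``consistency check'' at the end. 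Your main argument instead exhibits a conjugacy $\Phi\colon X_{{\frak L}^-}^+\to X_{{\frak L}^{-t}}^-$ of the underlying shift dynamical systems and deduces an isomorphism of the Deaconu--Renault groupoids $\G_{{\frak L}^-}^+$ and $\G_{{\frak L}^{+t}}^{-}$ (your reading of $\Omega_{{\frak L}^{-t}}$ as $\Omega_{{\frak L}^-}$ with reversed orientation, and of the connecting data in $E_{{\frak L}^{-t}}^{-}$ as that of $E_{{\frak L}^-}^{+}$, checks out). This buys something real: the universality theorems are proved only under $\sigma_{{\frak L}^-}$-condition (I), so the paper's argument as literally stated is conditional, whereas your groupoid-level isomorphism is unconditional and moreover automatically carries the diagonal $\DLMP$ onto the corresponding diagonal on the other side. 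The cost is the bookkeeping you flag; the paper's route, when condition (I) holds, is essentially immediate from the two structure theorems. Your verification of the first assertion (that $({\frak L}^{+t},{\frak L}^{-t})$ satisfies the local property, via $\tilde{\M}^{\mp}=\M^{\pm}$ and \eqref{eq:MMlocal} read backwards through $\kappa^{-1}$) is a correct expansion of what the paper declares ``easy to see'' in Section 3.
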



The paper is organized in the following way:

Section 1 is Introduction in which we describe a brief survey of the paper.

 In Section 2, we review $\lambda$-graph systems, symbolic matrix systems and their $C^*$-algebras. 
The operator relations among the canonical generating partial isometries and projections in the $C^*$-algebra ${\mathcal{O}}_{\frak L}$ associated with a $\lambda$-graph system ${\frak L}$ are described. 

 In Section 3, 
we introduce a new notion of $\lambda$-graph bisystem, that is a main target of the paper. It is a generalization of $\lambda$-graph system surveyed in the preceding section.
Several examples of $\lambda$-graph bisystems are presented.
 
In Section 4, a matrix presentation of a $\lambda$-graph bisystem is introduced.
It is called a symbolic matrix bisystem, that is also a generalization of symbolic matrix system surveyed in Section 2.

In Section 5, it is shown that for any subshift, there exists a $\lambda$-graph bisystem
satisfying FPCC and presenting the subshift. 
The $\lambda$-graph bisystem is called the canonical  $\lambda$-graph bisystem for the subshift. 

In Section 6, properly strong shift equivalence in symbolic matrix  bisystems satisfying FPCC is introduced.
It is proved that 
two subshifts are topologically conjugate if and only if 
their canonical symbolic matrix bisystems  are properly strong shift equivalent.
We also introduce a notion of strong shift equivalence in general symbolic matrix bisystems.

In Section 7, two \'etale groupoids 
${\mathcal{G}}^+_{{\frak L}^-}, {\mathcal{G}}^-_{{\frak L}^+}$
are introduced as the Deaconu--Renault groupoids constructed  from cerain shift dynamical systems associated with continuous graphs in the sense of Deaconu \cite{Deaconu3}
from a $\lambda$-graph bisystem $\LGBS$. 
We then define the $C^*$-algebras $\OALMP, \OALPM$ as their groupoid $C^*$-algebras 
$C^*({\mathcal{G}}^+_{{\frak L}^-}), C^*({\mathcal{G}}^-_{{\frak L}^+}),$
respectively.

In Section 8,  condition (I) on a $\lambda$-graph bisystem $\LGBS$ is introduced.
Under the condition (I), 
the $C^*$-algebra $\OALMP$ as well as $\OALPM$
is realized as a universal unital unique $C^*$-algebra 
generated by partial isometries and projections subject to 
certain operator relations encoded by the structure of the 
$\lambda$-graph bisystem $\LGBS.$ 
It is one of the main results of the paper.

 In Section 9, 
K-theory formulas of the $C^*$-algebras $\OALMP, \OALPM$  are presented.
It is shown that if two symbolic matrix bisystems satisfying FPCC are properly strong shift equivalent, then the $C^*$-algebras associated with the $\lambda$-graph bisystems of the symbolic matrix bisystems are Morita equivalent, so that their K-theory groups yield topological conjugacy invariants of subshifts.

 In Section 10,
the two $C^*$-algebras $\OALMP, \OALPM$ for $\lambda$-graph bisystems 
coming from $\lambda$-graph systems are studied.
Let $\LGBS$ be the $\lambda$-graph bisystem defined by a $\lambda$-graph system
 ${\frak L}.$ 
 It is proved that  the $C^*$-algebra 
$\OALMP$ is isomorphic to the $C^*$-algebra 
${\mathcal{O}}_{\frak L}$ of the $\lambda$-graph system ${\frak L}$ in 
Section 2, and the other $C^*$-algebra 
$\OALPM$ is isomorphic to the crossed product 
$C(X^-_{{\frak L}^+}) \rtimes_{\sigma_{{\frak L}^+}^*}\Z$
of the commutative $C^*$-algebra on the unit space
$X^-_{{\frak L}^+}$ of the groupoid ${\mathcal{G}}^-_{{\frak L}^+}$
by the homeomorphism of the shift $\sigma_{{\frak L}^+}.$
In particular, we know that 
the  Cuntz--Krieger algebra $\OA$ for a finite nonnegative matrix $A$ and 
the $C^*$-algebra of the crossed product $C(\Lambda_A)\rtimes_{\sigma_A^*}\Z$
of the two-sided topological Markov shift $\Lambda_A$ 
by the homeomorphism of the shift
are regarded as a duality pair.
 
\medskip

Throughout the paper,
the notation $\N, \Zp$ will denote the set of positive integers, the set of nonnegative integers,
respectively.
By a nonnegative matrix we mean a finite rectangular matrix with entries in nonnegative integers.
 

\section{Subshifts, $\lambda$-graph systems and its $C^*$-algebra}
Let $\Sigma$ be a finite set, which we call an alphabet. 
Each element of $\Sigma$ is called a symbol or a label.
Denote by $\Sigma^{\Z}$ the set of bi-infinite sequences
$(x_n)_{n \in \Z}$ of elements of $\Sigma.$
We endow $\Sigma^{\Z}$ with the infinite  product topology,
so that it is a compact Hausdorff space.
Let us denote by 
$\sigma: \Sigma^{\Z} \longrightarrow \Sigma^{\Z}$
the homeomorphism defined by the left shift
$\sigma((x_n)_{n\in \Z}) =(x_{n+1})_{n\in \Z}.$
Let $\Lambda \subset \Sigma^\Z$ 
be a  $\sigma$-invariant closed subset, that is $\sigma(\Lambda) =\Lambda.$
Then the topological dynamical system
$(\Lambda, \sigma)$ 
is called a subshift over $\Sigma,$
and the space $\Lambda$ is called the shift space for $(\Lambda, \sigma).$ 
We often write a subshift $(\Lambda, \sigma)$ as $\Lambda$ for short. 
For a subshift $\Lambda$ and $n\in \Zp$,
let us denote by $B_n(\Lambda)$
the set of admissible words in $\Lambda$ with length $n$, 
that is defined by
$B_n(\Lambda) = \{(x_1,\dots,x_n) \in \Sigma^n \mid (x_i)_{i\in \Z}\in \Lambda \}.$

For $N = |\Sigma|$, 
 the subshift
$(\Sigma^\Z, \sigma)$  is called the full $N$-shift.
More generally for an $N\times N$ matrix $A = [A(i,j)]_{i,j=1}^N$
with entries $A(i,j)$ in $\{0,1\},$
the subshift $\Lambda_A$ 
defined by
\begin{equation}
\Lambda_A = \{ (x_n)_{n \in \Z} \in \{1,2,\dots, N\}^\Z \mid
A(x_n, x_{n+1}) =1 \text{ for all } n \in \Z \} \label{eq:LambdaA}
\end{equation}    
is called the topological Markov shift defined by the matrix $A$. 
A topological Markov shift is often called a shift of finite type or simply SFT.
The class of sofic shifts are a generalized class containing shifts of finite type.
Let $\mathcal{G} = (\mathcal{V}, \mathcal{E},\lambda)$
be a finite labeled directed graph with
vertex set $\mathcal{V},$
edge set $\mathcal{E}$
and labeling map 
$\lambda: \mathcal{E}\longrightarrow \Sigma.$
For $n \in \N,$ let 
$$
B_n({\mathcal{G}}) =\{ (\lambda(e_1),\dots,\lambda(e_n)) \in \Sigma^n \mid
e_i \in \mathcal{E}, t(e_i) =s(e_{i+1}), i=1,2,\dots, n-1\} $$ 
be the set of words appearing in the labeled graph ${\mathcal{G}}$,
where
$t(e_i)$ denote the terminal vertex of $e_i$ and
$s(e_{i+1})$ denote the source vertex of $e_{i+1}.$ 
Then the sofic shift $\Lambda_{\mathcal{G}}$ for the labeled graph 
${\mathcal{G}}$ is defined by
\begin{equation*}
\Lambda_{\mathcal{G}} = \{ (x_n)_{n \in \Z} \in \Sigma^\Z \mid
(x_{n+1},\dots, x_{n+k}) \in B_k({\mathcal{G}}) \text{ for all } k \in \N, \, n \in \Z \}
\end{equation*}    
(\cite{Fis}, \cite{We}).
A labeled graph ${\mathcal{G}}$ 
is said to be left-resolving (resp. right-resolving),
if $\lambda(e) = \lambda(f)$ implies $t(e) \ne t(f)$ (resp. $s(e) \ne s(f)$).
It is well-known that  any sofic shift may be presented by a left-resolving  labeled graph
(cf. \cite{Kr84}, \cite{Kr87}, \cite{LM}). 
It is also  presented by a right-resolving  labeled graph.   
 There are lots of non-sofic subshifts, for example, Dyck shifts, $\beta$-shifts, 
substitution subshifts, etc. (cf. \cite{LM}). 
Non-sofic shifts can not be presented by any finite labeled graphs.

A $\lambda$-graph system is a graphical object to present general subshifts
(\cite{MaDocMath1999}).
The idea defining it is basically due to a notion coming from operator algebras,
called Bratteli diagram (cf. \cite{Bratteli}). 
Let ${\frak L} =(V,E,\lambda,\iota)$ 
be a $\lambda$-graph system over $\Sigma$ with vertex set
$
V = \bigcup_{l=0}^\infty V_{l}
$
and  edge set
$
E = \bigcup_{l=0}^\infty E_{l,l+1}
$
that is labeled with symbols in $\Sigma$ by $\lambda: E \rightarrow \Sigma$, 
and that is supplied with a surjective map
$
\iota( = \iota_{l,l+1}):V_{l+1} \rightarrow V_l
$
for each
$
l \in  \Zp.
$
Here the vertex sets $V_{l},l \in \Zp$
are finite disjoint sets,
as well as   
$E_{l,l+1},l \in \Zp$
are finite disjoint sets.
An edge $e$ in $E_{l,l+1}$ has its source vertex $s(e)$ in $V_{l}$ 
and its terminal  vertex $t(e)$ 
in
$V_{l+1}$
respectively.
Every vertex in $V$ has a successor and  every 
vertex in $V_l$ for $l\in \N$ has a predecessor. 
It is then required for definition of $\lambda$-graph system that there exists an edge in $E_{l+1,l+2}$
with label $\alpha$ and its terminal is  $v \in V_{l+2}$
 if and only if 
 there exists an edge in $E_{l,l+1}$
with label $\alpha$ and its terminal is $\iota(v) \in V_{l+1}.$
For 
$u \in V_{l}$ and
$v \in V_{l+2},$
we put
\begin{align*}
E^{\iota}(u, v)
& = \{e \in E_{l+1,l+2} \ | \ \iota(s(e)) = u, \, t(e) = v \},\\
E_{\iota}(u, v)
& = \{e \in E_{l,l+1} \ | \ s(e) = u, \, t(e) = \iota(v) \}.
\end{align*}
As a key hypothesis for ${\frak L}$ to be a $\lambda$-graph system, 
we require the condition 
that there exists a bijective correspondence between 
$
E^{\iota}(u, v)
$
and
$
E_{\iota}(u, v)
$
that preserves labels
for each pair $(u,v) \in V_{l}\times V_{l+2}$ 
of vertices.
We call this property {\it the local property of $\lambda$-graph system}. 
For a $\lambda$-graph system ${\frak L},$
let
$W_{{\frak L}}$ be the set of finite label sequences appearing 
as concatenating finite labeled paths in ${\frak L}.$
Then there exists a unique subshift $\Lambda_{{\frak L}}$
whose admissible words 
$B_*(\Lambda_{{\frak L}})=\bigcup_{n=0}^\infty B_n(\Lambda_{{\frak L}})$
coincide with $W_{{\frak L}}$.
The subshift
$\Lambda_{\frak L}$ is called the subshift presented by 
${\frak L}.$
Conversely, we have a canonical method to construct a $\lambda$-graph system 
${\frak L}_\Lambda$ from an arbitrary subshift $\Lambda$
(\cite{MaDocMath1999}).
The $\lambda$-graph system is called the canonical $\lambda$-graph system for the subshift.

A $\lambda$-graph system has its matrix presentation, 
that is called a symbolic matrix system denoted by $(I,\M).$
In \cite{MaDocMath1999}, the notation $(\M,I)$ has been used for symbolic matrix system.
In this paper, the notation  $(I,\M)$ will be used instead.
  For a $\lambda$-graph system ${\frak L} =(V,E,\lambda,\iota)$ over $\Sigma,$
we define its {\it transition matrix system}
$(I_{l,l+1},A_{l,l+1})_{l\in \Zp}$ by setting
\begin{align}
I_{l,l+1}(i,j)
 & =
\begin{cases}
1 &  
    \text{ if } \ \iota_{l,l+1}(v_j^{l+1}) = v_i^l, \\
0           & \text{ otherwise}
\end{cases} \label{eq:tmsI} \\
A_{l,l+1}(i,\alpha,j)
 & =
\begin{cases}
1 &  
    \text{ if } \ s(e) = v_i^l, \lambda(e) = \alpha,
                       t(e) = v_j^{l+1} 
    \text{ for some }    e \in E_{l,l+1}, \\
0           & \text{ otherwise,}
\end{cases} \label{eq:tmsA}
\end{align}
for
$
i=1,2,\dots,m(l),\ j=1,2,\dots,m(l+1), \ \alpha \in \Sigma.
$ 
For $l \in \Zp$ and $i=1,2,\dots,m(l), \, j=1,2,\dots, m(l+1),$
we define 
$\M_{l,l+1}(i,j) = \alpha_1+ \cdots + \alpha_n$ if 
$A_{l,l+1}(i, \alpha_k, j) =1$ for $k=1,\dots, n$. 
That is $\M_{l,l+1}(i,j) = \alpha_1+ \cdots + \alpha_n$ 
if and only if
there exist labeled edges from $v_i^l$ to $v_j^{l+1}$ labeled 
$\alpha_1, \dots, \alpha_n.$
By the local property of $\lambda$-graph system, the matrix equations
\begin{equation}
\M_{l,l+1} I_{l+1,l+2} = I_{l,l+1} \M_{l+1,l+2}, \qquad l \in \Zp 
\label{eq:sms}
\end{equation} 
hold, where in the above equality
 $\alpha\cdot 1$ and $1\cdot \alpha$ for $\alpha \in \Sigma$
are identified with each other, and also both
 $\alpha\cdot 0$ and $0\cdot \alpha$ are recognized as $0$. 
The sequence 
$(I_{l,l+1},\M_{l,l+1}), l\in \Zp$ of pairs of matrices 
$I_{l,l+1}, \M_{l,l+1}$ 
is called the symbolic matrix system associated to the 
$\lambda$-graph system ${\frak L}$.
Conversely     
a sequence $(I_{l,l+1}, \M_{l,l+1}), l\in \Zp$ of pairs
of symbolic matrices $\M_{l,l+1}$ over alphabet $\Sigma$
and matrices $I_{l,l+1}$ over $\{0,1\}$ satisfying 
\eqref{eq:sms} gives rise to a $\lambda$-graph system over $\Sigma$,
and hence a subshift. 
 The sequence $(I_{l,l+1}, \M_{l,l+1}), l\in \Zp$ is written $(I, \M),$
or $(I_\M, \M).$

In \cite{MaDocMath1999} (cf. \cite{MaETDS2003}),
the author introduced a notion of strong shift equivalence in symbolic matrix systems.
 It has been proved that if two symbolic matrix systems are strong shift equivalent,
then the presenting subshifts are topologically conjugate.
Conversely if two subshifts are topologically conjugate, then the canonically constructed symbolic matrix systems are strong shift equivalent.
Therefore classification of subshifts are completely deduced to the classification of symbolic matrix systems up to strong shift equivalence. 
This result is a generalization of the fundamental classification theory
of topological Markov shifts  by R. Williams (\cite{Williams}, see \cite{Nasu} for sofic case).
 The author in \cite{MaDocMath1999} 
also introduced  notions of 
 K-groups and Bowen--Franks groups for symbolic matrix systems and hence for  
 $\lambda$-graph systems, and proved that they are all strong shift equivalence invariants of symbolic matrix systems. 
Hence these invariants give rise to topological conjugacy invariants of general subshifts. 
These invariants are regarded as K-theoretic invariants of the associated 
$C^*$-algebras.

A $\lambda$-graph system ${\frak L}$ 
is said to be left-resolving if $e,f \in E$ with
$t(e) = t(f)$ and $\lambda(e) = \lambda(f)$ implies $e=f$.
In what follows all $\lambda$-graph systems are assumed to be left-resolving. 
Let us denote by $\{ v_1^l, \dots, v_{m(l)}^l\}$ the set $V_l$ of vertices at level $l.$ 
The author in \cite{MaDocMath2002} introduced 
a $C^*$-algebra $\OL$ associated to the $\lambda$-graph system as a generalization of Cuntz--Krieger algebras.
The $C^*$-algebra $\OL$ was first constructed as a $C^*$-algebra 
$C^*({\mathcal{G}}_{\frak L})$ of an \'etale groupoid  
${\mathcal{G}}_{\frak L}$ associated to ${\frak L}$.
It is realized as a universal $C^*$-algebra in the following way.
\begin{theorem}[{\cite[Theorem A]{MaDocMath2002}}] \label{thm:lambdagraphC*}
Let ${\frak L}$ be a left-resolving $\lambda$-graph system over alphabet $\Sigma.$
Then
the $C^*$-algebra $\OL$ 
is realized as a  universal concrete    
$C^*$-algebra generated by partial isometries
$S_\alpha$ indexed by symbols $\alpha \in \Sigma$
and projections $E_i^l, i=1,2,\dots,m(l)$ indexed by vertices $v_i^l \in V_l, \, l\in \Zp$  
satisfying  the following  operator relations called $({\frak L})$:
\begin{gather}
 \sum_{\alpha \in \Sigma}  S_{\alpha}S_{\alpha}^*  = \sum_{i=1}^{m(l)} E_i^l   =  1, \\
 S_\alpha S_\alpha^* E_i^l =E_i^l S_\alpha S_\alpha^*,  \\ 
  E_i^l   =  \sum_{j=1}^{m(l+1)}I_{l,l+1}(i,j)E_j^{l+1}, \\
 S_{\alpha}^*E_i^l S_{\alpha}  =  
\sum_{j=1}^{m(l+1)} A_{l,l+1}(i,\alpha,j)E_j^{l+1}
\end{gather}
for
$
i=1,2,\dots,m(l),\l\in \Zp, 
 \alpha \in \Sigma.
 $
 If in particular 
 ${\frak L}$ satisfies condition (I) in the sense of \cite{MaDocMath2002}, 
the operator relations determine the $C^*$-algebra in a unique way.  
\end{theorem}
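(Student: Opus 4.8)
The plan is to work throughout with the \'etale groupoid $\mathcal{G}_{\frak L}$, exploiting that $\OL$ is by definition $C^*(\mathcal{G}_{\frak L})$. First I would fix notation for the unit space $X_{\frak L}$, the compact zero-dimensional path space of $\frak L$ obtained as the projective limit of the vertex sets $V_l$ along $\iota$ together with admissible label data; it carries a basis of compact open cylinder sets, one family indexed by the vertices $v_i^l$ and another by finite admissible label words $\mu$. The groupoid $\mathcal{G}_{\frak L}$ is the groupoid of the label-reading shift on $X_{\frak L}$. On the groupoid side I would take $E_i^l \in C(X_{\frak L}) \subset C^*(\mathcal{G}_{\frak L})$ to be the characteristic function of the cylinder of paths through $v_i^l$, and $S_\alpha$ to be the partial isometry supported on the compact open bisection that prepends the symbol $\alpha$.

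Second, I would check the four relations $({\frak L})$ directly for these concrete operators. The unit relation records that at each fixed level $l$ the $E_i^l$ partition $X_{\frak L}$ into disjoint cylinders and that the ranges $S_\alpha S_\alpha^*$ do likewise; the commutation relation is just that both $E_i^l$ and $S_\alpha S_\alpha^*$ lie in the commutative algebra $C(X_{\frak L})$. The $\iota$-relation is the transcription of $I_{l,l+1}$ expressing compatibility of cylinders across levels, and the transition relation is the transcription of $A_{l,l+1}$ describing how $S_\alpha^*(\cdot)S_\alpha$ moves a level-$l$ cylinder to the level-$(l+1)$ cylinders reachable by $\alpha$; here the \emph{local property} of the $\lambda$-graph system together with the left-resolving hypothesis are exactly what force this to be an equality of projections rather than a mere inequality. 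These verifications yield a canonical surjection $\pi \colon \mathcal{A} \twoheadrightarrow \OL$, where $\mathcal{A}$ is the universal $C^*$-algebra on generators subject to $({\frak L})$; $\mathcal{A}$ exists because the relations force the generators to be partial isometries and projections dominated by the unit.

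Third, to prove $\pi$ is an isomorphism I would use the gauge action $\gamma_z(S_\alpha) = z S_\alpha$, $\gamma_z(E_i^l) = E_i^l$ for $z \in \mathbb{T}$, defined on both algebras and intertwined by $\pi$, with faithful conditional expectations onto the fixed-point subalgebras given by averaging over $\mathbb{T}$. The fixed-point algebra of $\OL$ is the AF algebra spanned by the $S_\mu E_i^l S_\nu^*$ with $|\mu| = |\nu|$, whose Bratteli diagram is read off from the pair $(I_{l,l+1}, A_{l,l+1})$. By analysing products of generators in $\mathcal{A}$ I would show the analogous elements span the fixed-point algebra of $\mathcal{A}$ as a system of matrix units with the same inclusion data, and that $\pi$ carries them to manifestly nonzero, linearly independent operators on the concrete side; hence $\pi$ is isometric on the core, and the gauge-invariant uniqueness argument upgrades this to injectivity of $\pi$. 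This establishes the universal-property clause, independently of condition (I).

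Finally, for the uniqueness clause I would show that condition (I) is precisely what makes $\mathcal{G}_{\frak L}$ essentially principal, i.e. that the points of $X_{\frak L}$ with trivial shift-isotropy are dense: condition (I) supplies, near any potentially periodic point, admissible continuations that branch, so the interior of the isotropy bundle collapses onto the unit space. Since the groupoid is amenable its full and reduced $C^*$-algebras coincide, so Renault's uniqueness theorem for essentially principal \'etale groupoids guarantees that any $*$-homomorphism out of $\OL = C^*(\mathcal{G}_{\frak L})$ that is injective on $C(X_{\frak L})$ is automatically injective; since any $C^*$-algebra generated by operators obeying $({\frak L})$ receives the canonical map from $\OL \cong \mathcal{A}$, and the relations force the diagonal projections $S_\mu E_i^l S_\mu^*$ to realize a faithful copy of $C(X_{\frak L})$, such a homomorphism is faithful and the algebra is determined uniquely. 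I expect the \emph{main obstacle} to be this last step: extracting essential principality from condition (I) means controlling the isotropy of the shift on the intricate limit space $X_{\frak L}$, and one must also verify that the relations genuinely pin down a faithful copy of the diagonal $C(X_{\frak L})$ inside an arbitrary algebra satisfying them.
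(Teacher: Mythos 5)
Your outline is essentially sound, but note first that this paper does not prove Theorem \ref{thm:lambdagraphC*} at all: it is recalled verbatim from \cite{MaDocMath2002}, and the honest in-paper comparison point is the bisystem analogue proved in Section 8 (Proposition \ref{prop:conditionI}, Theorems \ref{thm:main6} and \ref{thm:themain6}). Against that proof your route agrees on the first half --- build $\OL$ as the groupoid algebra, verify the relations on characteristic functions of cylinders and on the prepending bisections (where, as you say, the local property plus left-resolving is what turns the fourth relation into an equality; compare Lemma \ref{lem:keysaesa}) --- but diverges on the uniqueness clause. The paper does \emph{not} run Renault's essentially-principal uniqueness theorem for this step: essential freeness is only used there to conclude that the diagonal $\DLMP$ is maximal abelian, while uniqueness is obtained by recasting $(\A_{{\frak L}},\rho,\Sigma)$ as a $C^*$-symbolic dynamical system, showing condition (I) on ${\frak L}$ yields projections $q_k^l$ with $q_k^l\phi^m(q_k^l)=0$, and invoking the faithful-expectation argument of \cite[Theorem 3.9]{MaMZ2010}. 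Your Renault-based alternative is legitimate (amenability gives $C^*=C^*_r$, and condition (I) does yield essential principality, exactly as in the proof of Proposition \ref{prop:conditionI}(i)$\Rightarrow$(ii)), and it buys a cleaner conceptual statement; what it costs is that you must verify, purely from the relations $({\frak L})$ applied to an \emph{arbitrary} nonzero family $\widehat{S}_\alpha,\widehat{E}_i^l$, that every minimal diagonal projection $\widehat{S}_\mu\widehat{E}_i^l\widehat{S}_\mu^*$ with $U(\mu,v_i^l)\ne\emptyset$ is nonzero, so that the induced map is injective on all of $C(X_{\frak L})$ and not merely on the subalgebra $C(\Omega_{\frak L})$ generated by the $\widehat{E}_i^l$; this is where the mutual orthogonality forced by $\sum_i E_i^l=1$ and the left-resolving computation of $S_\mu^*S_\mu$ must be carried out explicitly, and it is the one point at which your sketch asserts rather than argues. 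The expectation-based route in \cite{MaDocMath2002} and Section 8 sidesteps Renault entirely and generalizes to settings where the groupoid picture is less accessible, which is why the present paper adopts it.
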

Let $\mathcal{G} = (\mathcal{V}, \mathcal{E},\lambda)$
be a finite labeled directed graph with labeling map 
$\lambda: \mathcal{E}\longrightarrow \Sigma.$
We assume that the labeling is left-resolving in the above mentioned sense.
Then we have a $\lambda$-graph system 
${\frak L}_{\mathcal{G}}$ from the finite labeled graph $\mathcal{G}$ 
by setting
$V_l = \mathcal{V}, \, E_{l,l+1} = \mathcal{E}$ for every $l \in \Zp$
and
$\iota(v) = v, \, v \in \mathcal{V}.$ 
Then the $C^*$-algebra ${\mathcal{O}}_{\mathcal{G}}$ 
is so called a graph algebra with labeled edges
 (\cite[Proposition 7.1]{MaDocMath2002}, cf. \cite{Carl}, \cite{KPRR}).
Any topological Markov shift is realized as an edge shift with labeled edges,
then the operator relations among its canonical generators above 
reduce to the usual operator relations of Cuntz--Krieger algebras.
Hence the class of $C^*$-algebras $\OL$ generalizes 
the class of Cuntz--Krieger algebras.
It actually generalizes the class of $C^*$-algebras associated with subshifts
\cite{MaIJM1997} (cf. \cite{CaMa}).

The K-groups for symbolic matrix system described above is nothing but 
the K-groups $K_i(\OL)$ of the $C^*$-algebra of the
 $\lambda$-graph system ${\frak L}$ for the symbolic matrix system (\cite{MaDocMath2002}). 

A $\lambda$-graph system seems to fit in describing one-sided structure of subshifts.
Actually even if a subshift is a topological Markov shift, 
the associated Cuntz--Krieger algebra it-self does not cover two-sided structure of the underlying topological Markov shift.   
In this paper, we will generalize $\lambda$-graph system 
and introduce  two-sided extension of it, 
and construct associated $C^*$-algebras.
 

\section{$\lambda$-graph bisystems}
Let $\Sigma^-$ and $\Sigma^+$ be two finite alphabets.
They are generally not related to each other.
We will generalize the definition of $\lambda$-graph system to two-sided versions
in the following way.
 \begin{definition}\label{def:lambdabisystem}
{\it A $\lambda$-graph bisystem}\/ $({\frak L}^-, {\frak L}^+)$ is a pair of labeled Bratteli diagrams 
${\frak L}^- =(V^-, E^-, \lambda^-)$ over $\Sigma^-$ and 
$ {\frak L}^+=(V^+, E^+, \lambda^+)$ over  $\Sigma^+$
satisfying the following five conditions:
\begin{enumerate}
\renewcommand{\theenumi}{\roman{enumi}}
\renewcommand{\labelenumi}{\textup{(\theenumi)}}
\item
$
V^-= V^+ = \bigcup_{l=0}^\infty V_l 
$ disjoint union of finite sets $V_l, l\in \Zp$
with $m(l) := | V_l | < \infty $
for $l \in \Zp.$
\item
$
E^- = \bigcup_{l=0}^\infty E_{l+1, l}^-
$
and
$
E^+ = \bigcup_{l=0}^\infty E_{l, l+1}^+
$
disjoint unions of finite sets 
$E^-_{l+1,l}, E_{l,l+1}^+, l \in \Zp$,
respectively. 
\item
(1) Every edge $e^- \in E_{l+1,l}^-$ satisfies $s(e^-) \in V_{l+1}, \,\, t(e^-) \in V_l,$ 
and
every edge $e^+ \in E_{l,l+1}^+$ satisfies $s(e^+) \in V_{l}, \,\, t(e^+) \in V_{l+1}.$ 

(2) For every vertex $v \in V_l$ with $l\ne 0,$ 
there exists $e^- \in E_{l+1,l}^-, f^- \in E_{l,l-1}^-$ such that 
$v = s(f^-) = t(e^-)$,
 and for every vertex $v \in V_0,$ 
there exists $e^- \in E_{1,0}^-$ such that 
$v = t(e^-)$.
 
For every vertex $v \in V_l$ with $l\ne 0,$ 
there exists $e^+ \in E_{l,l+1}^+, f^+ \in E_{l-1, l}^+$ such that 
$v = t(f^+) = s(e^+)$,
 and for every vertex $v \in V_0,$ 
there exists $e^+ \in E_{0,1}^+$ such that 
$v = s(e^+).$

\item
The labeling map $\lambda^-: E^-\longrightarrow \Sigma^-$
is right-resolving, that is, the condition
$s(e^-) = s(f^-), \, \lambda^-(e^-) = \lambda^-(f^-)$
implies $e^- = f^-.$

The labeling map $\lambda^+: E^+\longrightarrow \Sigma^+$
is left-resolving, that is, the condition
$t(e^+) = t(f^+), \, \lambda^+(e^+) = \lambda^+(f^+)$
implies $e^+ = f^+.$
     
\item
For every pair 
$u \in V_{l}, \, v \in V_{l+2}$ with $l \in \Zp$,
 we put
\begin{align*}
E_+^-(u,v)
=  & \{ (e^-, e^+) \in E_{l+1, l}^-\times E_{l+1,l+2}^+ \mid 
t(e^-) = u, \, s(e^-) = s(e^+), t(e^+) = v \},\\
E_-^+(u,v)
=  & \{ (f^+, f^-) \in E_{l, l+1}^+\times E_{l+2,l+1}^- \mid 
s(f^+) = u, \, t(f^+) = t(f^-), s(f^-) = v \}. 
\end{align*}
Then there exists a bijective correspondence 
$$
\varphi:E_+^-(u, v) \longrightarrow E_-^+(u,v)
$$
satisfying 
$\lambda^-(e^-) = \lambda^-(f^-), \, 
\lambda^+(e^+) = \lambda^+(f^+)$
whenever $\varphi(e^-, e^+) = (f^+, f^-).$ 
\end{enumerate}
The property (v) is called {\it the local property of $\lambda$-graph bisystem}.
The pair $\LGBS$ is called {\it a $\lambda$-graph bisystem over}\/ $\Sigma^\pm.$  
\end{definition}
We write 
$V:= V^- = V^+$ and 
$\{v_1^l, \dots, v_{m(l)}^l\}$ for the vertex set $V_l.$

A $\lambda$-graph bisystem $\LGBS$ is said to be {\it standard}\/
if  its top vertex set $V_0$ is a singleton.
A $\lambda$-graph bisystem $\LGBS$ is said to {\it have a common alphabet}\/
if  $\Sigma^- = \Sigma^+.$
In this case, we write the alphabet $\Sigma^- = \Sigma^+$
as $\Sigma,$
and we say that $\LGBS$ is a $\lambda$-graph bisystem over common alphabet
$\Sigma.$
We write an edge $e^- \in E^-$ (resp. $e^+ \in E^+$) as $e$ without 
$-$ sign (resp. $+$ sign) unless we specify.

\medskip
Let $({\frak L}^-, {\frak L}^+)$ be a $\lambda$-graph bisystem over $\Sigma^\pm.$
For a vertex $u \in V_l$, 
we define 
its follower set $F(u)$ in ${\frak L}^-$ and
its predecessor set $P(u)$ in ${\frak L}^+$
as in the following way:
\begin{align*}
F(u) := & \{ 
(\lambda^-(f_l),\lambda^-(f_{l-1}),\dots,\lambda^-(f_1)) \in {(\Sigma^{-})}^l \mid
f_l \in E_{l,l-1}^-, f_{l-1} \in E_{l-1, l-2}^-, \dots, f_1 \in E_{1,0}^-,\,\, \\  
 & \qquad \qquad\qquad \qquad \qquad \qquad 
 s(f_l) = u, t(f_l) = s(f_{l-1}), \dots, t(f_2) = s(f_1) \}.
\end{align*}
Each element of $F(u)$ is figured such as  
$$  \qquad u \overset{\lambda^-(f_l)}{\longrightarrow} 
\bigcirc\overset{\lambda^-(f_{l-1})}{\longrightarrow}
 \cdots \overset{\lambda^-(f_{2})}{\longrightarrow}
\bigcirc\overset{\lambda^-(f_{1})}{\longrightarrow}\bigcirc.
$$
Similarly,
\begin{align*}
P(u) := & \{ 
(\lambda^+(e_1),\lambda^+(e_2),\dots,\lambda^+(e_l)) \in {(\Sigma^{+})}^l \mid
e_1 \in E_{0,1}^+, e_2 \in E_{1, 2}^+, \dots, e_l \in E_{l-1,l}^+,\,\,  \\
& \qquad \qquad\qquad \qquad 
t(e_1) =s(e_2), t(e_2) = s(e_3), \dots, t(e_{l-1}) =s(e_l), t(e_l) = u \}.
\end{align*}
Each element of $P(u)$ is figured such as  
$$
 \qquad 
\bigcirc\overset{\lambda^+(e_1)}{\longrightarrow} 
\bigcirc\overset{\lambda^+(e_2)}{\longrightarrow}
 \cdots \overset{\lambda^+(e_{l-1})}{\longrightarrow}
\bigcirc\overset{\lambda^+(e_{l})}{\longrightarrow}u.
$$
A standard $\lambda$-graph bisystem $\LGBS$ 
having a common alphabet is said to satisfy
 {\it{Follower-Predecessor Compatibility Condition},}
FPCC for short, if $\LGBS$ satisfies the condition
$F(u) = P(u)$  
for every vertex $u \in V_l, l\in \N.$

We will present several examples of $\lambda$-graph bisystems.

\begin{example} \label{ex:3.2}
\end{example}

{\bf (i)  $\lambda$-graph systems.}

Let ${\frak L} =(V,E,\lambda,\iota)$ be a $\lambda$-graph system over $\Sigma.$
We may construct a $\lambda$-graph bisystem $\LGBS$ from ${\frak L}$ in the following way.
Let us recognize the map $\iota: E\longrightarrow \Sigma$
with a new symbol written $\iota$, and
define a new alphabet
$\Sigma^- :=  \{\iota\}.$
The original alphabet $\Sigma$ is written $\Sigma^+.$ 
 We define 
$
E^+_{l,l+1}:= E_{l,l+1} 
$ for $l \in \Zp$ 
and 
$ 
\lambda^+:= \lambda:E^+ \longrightarrow \Sigma^+.
$  
We then have a labeled Bratteli diagram
${\frak L}^+ := (V, E^+, \lambda^+)$ over alphabet $\Sigma^+.$
The other 
labeled Bratteli diagram
${\frak L}^- := (V, E^-, \lambda^-)$ over alphabet $\Sigma^-$
is defined in the following way.
Define an edge $e^-\in E^-_{l+1,l}$
if
$\iota(v_j^{l+1}) = v_i^l$ 
so that 
$s(e^-) = v_j^{l+1}, t(e^-) = v_i^l$
and 
$\lambda^-(e^-) :=\iota \in \Sigma^-.$ 
Then we have a labeling map
$\lambda^-: E^-_{l+1,l} \longrightarrow \{ \iota\} = \Sigma^-,$
and hence a labeled Bratteli diagram
${\frak L}^- := (V, E^-, \lambda^-)$ over alphabet $\Sigma^-.$
Then the local property of the $\lambda$-graph system  ${\frak L}$ makes the pair 
$\LGBS$ a $\lambda$-graph bisystem.
This $\lambda$-graph bisystem does not satisfy FPCC.

Figure \ref{fig:(i)} in the end of this section is the first six levels of the $\lambda$-graph bisystem
defined by the canonical $\lambda$-graph system for the $\beta$-shift for $\beta =\frac{3}{2},$
 that was used  in \cite{KMW}. 
The $\beta$-shift is not sofic. 
In the $\lambda$-graph system, the alphabet $\Sigma^+ = \Sigma = \{0,1\}.$

\medskip

{\bf (ii)  A $\lambda$-graph bisystem for full $N$-shift.}

Let $N$ be a positive integer with $N >1.$
Take finite alphabets
$\Sigma^- = \{ \alpha^-_1,\dots,\alpha^-_N\}$
and
$\Sigma^+ = \{ \alpha^+_1,\dots,\alpha^+_N\}.$
We will construct a $\lambda$-graph bisystem
$(\frak{L}_N^-,\frak{L}_N^+)$
in the following way.
Let
$V_l=\{v_l\}$ one point set for each $l \in \Zp,$
and
$
 E_{l+1,l}^- =\{ e_1^-, \dots, e_N^-\}$,
$ E_{l,l+1}^+ =\{ e_1^+, \dots, e_N^+\}
$
such that 
\begin{gather*}
s(e_i^-) = v_{l+1}, \qquad t(e_i^-) = v_l, \qquad
\lambda^-(e_i^-) = \alpha^-_i \quad \text{ for } i=1,\dots, N, \, l \in \Zp,\\
s(e_i^+) = v_l, \qquad t(e_i^+) = v_{l+1}, \qquad
\lambda^+(e_i^+) = \alpha^+_i \quad \text{ for } i=1,\dots, N, \, l \in \Zp.   
\end{gather*}
We set
$
\frak{L}_N^- = (V, E^-, \lambda^-)$
and
$\frak{L}_N^+ = (V, E^+, \lambda^+)$.
Then   
$(\frak{L}_N^-,\frak{L}_N^+)$ is a $\lambda$-graph bisystem
satisfying FPCC.

\medskip

{\bf (iii) A $\lambda$-graph bisystem for golden mean shift.}

The topological Markov shift defined by the matrix  
$F = 
\begin{bmatrix}
1  & 1 \\
1 & 0
\end{bmatrix}
$
is called the golden mean shift (cf. \cite{LM}).
Let 
$\Sigma^- = \{\alpha^-, \beta^-\}$
and
$\Sigma^+ = \{\alpha^+, \beta^+\}$.
We set
$V_0= \{v_1^0\}, \,V_1 =\{ v_1^1, v_2^1\},\,
V_l =\{ v_1^l, v_2^l, v_3^l, v_4^l\}$ for $l \ge 2.$
The labeled Bratteli diagram ${\frak L}_F^-$
is defined as follows.
Define directed edges labeled symbols in $\Sigma^-$ such as
\begin{gather*}
v_1^0\overset{\alpha^-}{\longleftarrow}v_1^1, \quad
v_1^0\overset{\alpha^-}{\longleftarrow}v_2^1, \quad
v_1^0\overset{\beta^-}{\longleftarrow}v_1^1, \\
v_1^1\overset{\alpha^-}{\longleftarrow}v_1^2, \quad
v_1^1\overset{\alpha^-}{\longleftarrow}v_3^2, \quad
v_2^1\overset{\alpha^-}{\longleftarrow}v_2^2, \quad
v_2^1\overset{\alpha^-}{\longleftarrow}v_4^2, \quad
v_2^1\overset{\beta^-}{\longleftarrow}v_1^2, \quad
v_2^1\overset{\beta^-}{\longleftarrow}v_2^2, \\
v_1^l\overset{\alpha^-}{\longleftarrow}v_1^{l+1}, \quad
v_1^l\overset{\alpha^-}{\longleftarrow}v_3^{l+1}, \quad
v_2^l\overset{\alpha^-}{\longleftarrow}v_2^{l+1}, \quad
v_2^l\overset{\alpha^-}{\longleftarrow}v_4^{l+1}, \quad
v_3^l\overset{\beta^-}{\longleftarrow}v_1^{l+1}, \quad
v_4^l\overset{\beta^-}{\longleftarrow}v_2^{l+1}, \\
\end{gather*}
for $l \ge2.$
The other labeled Bratteli diagram ${\frak L}_F^+$
is defined as follows.
Define directed edges labeled symbols in $\Sigma^+$ such as
\begin{gather*}
v_1^0\overset{\alpha^+}{\longrightarrow}v_1^1, \quad
v_1^0\overset{\alpha^+}{\longrightarrow}v_2^1, \quad
v_1^0\overset{\beta^+}{\longrightarrow}v_1^1, \\
v_1^1\overset{\alpha^+}{\longrightarrow}v_1^2, \quad
v_1^1\overset{\alpha^+}{\longrightarrow}v_2^2, \quad
v_2^1\overset{\alpha^+}{\longrightarrow}v_3^2, \quad
v_2^1\overset{\alpha^+}{\longrightarrow}v_4^2, \quad
v_2^1\overset{\beta^+}{\longrightarrow}v_1^2, \quad
v_2^1\overset{\beta^+}{\longrightarrow}v_3^2, \\
v_1^l\overset{\alpha^+}{\longrightarrow}v_1^{l+1}, \quad
v_1^l\overset{\alpha^+}{\longrightarrow}v_2^{l+1}, \quad
v_3^l\overset{\alpha^+}{\longrightarrow}v_3^{l+1}, \quad
v_3^l\overset{\alpha^+}{\longrightarrow}v_4^{l+1}, \quad
v_2^l\overset{\beta^+}{\longrightarrow}v_1^{l+1}, \quad
v_4^l\overset{\beta^+}{\longrightarrow}v_3^{l+1}, \\
\end{gather*}
for $l \ge2.$
The pair $({\frak L}^-,{\frak L}^+)$
becomes a $\lambda$-graph bisystem satisfying FPCC.
It is figured in Figure \ref{fig:fibo} in the end of this section.

\medskip

{\bf (iv)  A $\lambda$-graph bisystem for even  shift.}

The sofic shift defined by the symbolic matrix   
$\E = 
\begin{bmatrix}
\alpha  & \beta \\
\beta & 0
\end{bmatrix}
$
is called the even shift (cf. \cite{LM}).
Let 
$\Sigma^+ = \{\alpha^+, \beta^+\}$ and
$\Sigma^- = \{\alpha^-, \beta^-\}.$
We set
$V_0 =\{v_1^0\}, \,V_1 =\{ v_1^1, v_2^1\},\,V_2 =\{ v_1^2, v_2^2, v_3^2\},\,
V_l =\{ v_1^l, v_2^l, v_3^l, v_4^l\}$ for $l \ge 3.$
The labeled Bratteli diagram ${\frak L}_\E^-$ is defined as follows.
Define directed edges labeled symbols in $\Sigma^-$ such as
\begin{gather*}
v_1^0\overset{\alpha^-}{\longleftarrow}v_1^1, \quad
v_1^0\overset{\beta^-}{\longleftarrow}v_2^1, \\
v_1^1\overset{\alpha^-}{\longleftarrow}v_1^2, \quad
v_1^1\overset{\beta^-}{\longleftarrow}v_3^2, \quad
v_2^1\overset{\beta^-}{\longleftarrow}v_2^2, \\
v_1^2\overset{\alpha^-}{\longleftarrow}v_1^{3}, \quad
v_2^2\overset{\alpha^-}{\longleftarrow}v_2^{3}, \quad
v_1^2\overset{\beta^-}{\longleftarrow}v_3^{3}, \quad
v_2^2\overset{\beta^-}{\longleftarrow}v_4^{3}, \quad
v_3^2\overset{\beta^-}{\longleftarrow}v_1^{3}, \\
v_1^l\overset{\alpha^-}{\longleftarrow}v_1^{l+1}, \quad
v_2^l\overset{\alpha^-}{\longleftarrow}v_2^{l+1}, \quad
v_1^l\overset{\beta^-}{\longleftarrow}v_3^{l+1}, \quad
v_2^l\overset{\beta^-}{\longleftarrow}v_4^{l+1}, \quad
v_3^l\overset{\beta^-}{\longleftarrow}v_1^{l+1}, \quad
v_4^l\overset{\beta^-}{\longleftarrow}v_2^{l+1}, \\
\end{gather*}
for $l \ge 3.$
The other labeled Bratteli diagram ${\frak L}_\E^+$
is defined as follows. 
Define directed edges labeled symbols in $\Sigma^+$ such as
\begin{gather*}
v_1^0\overset{\alpha^+}{\longrightarrow}v_1^1, \quad
v_1^0\overset{\beta^+}{\longrightarrow}v_2^1, \\
v_1^1\overset{\alpha^+}{\longrightarrow}v_1^2, \quad
v_1^1\overset{\beta^+}{\longrightarrow}v_2^2, \quad
v_2^1\overset{\beta^+}{\longrightarrow}v_3^2, \\
v_1^2\overset{\alpha^+}{\longrightarrow}v_1^{3}, \quad
v_3^2\overset{\alpha^+}{\longrightarrow}v_3^{3}, \quad
v_1^2\overset{\beta^+}{\longrightarrow}v_2^{3}, \quad
v_2^2\overset{\beta^+}{\longrightarrow}v_1^{3}, \quad
v_3^2\overset{\beta^+}{\longrightarrow}v_4^{3}, \\
v_1^l\overset{\alpha^+}{\longrightarrow}v_1^{l+1}, \quad
v_3^l\overset{\alpha^+}{\longrightarrow}v_3^{l+1}, \quad
v_1^l\overset{\beta^+}{\longrightarrow}v_2^{l+1}, \quad
v_2^l\overset{\beta^+}{\longrightarrow}v_1^{l+1}, \quad
v_3^l\overset{\beta^+}{\longrightarrow}v_4^{l+1}, \quad
v_4^l\overset{\beta^+}{\longrightarrow}v_3^{l+1}, \\
\end{gather*}
for $l \ge 3.$
The pair $({\frak L}_{\E}^-,{\frak L}_{\E}^+)$
becomes a $\lambda$-graph bisystem that satisfies FPCC.
It is figured in Figure \ref{fig:even}
in the end of this section.

\medskip

Let $\LGBS$ be a $\lambda$-graph bisystem.
Let us denote by ${\frak{L}}^{-t}$
the labeled Bratteli diagram for which every  edge is  reversed  with the original edge.
This means that for an edge $e^- \in E_{l+1,l}^-$ such that 
$s(e^-) \in V_{l+1}, t(e^-) \in V_l,$ the reversed edge $e^{-t}$ is defined 
by $t(e^{-t}) :=s(e^-) \in V_{l+1}, s(e^{-t}): =t(e^-) \in V_l$
and $\lambda^-(e^{-t}):= \lambda^-(e^{-}) \in \Sigma^-.$
The resulting labeled Bratteli diagram is written ${\frak{L}}^{-t}.$
We similarly define a labeled Bratteli diagram ${\frak{L}}^{+t}$ 
from $ {\frak{L}}^{+}.$
It is easy to see that the pair $({\frak{L}}^{+t},{\frak{L}}^{-t})$
becomes a $\lambda$-graph bisystem. 
It is called the {\it transpose} of $\LGBS$ and written  $\LGBS^t.$

\newpage

\begin{figure}[htbp]
\begin{center}
\input{pictureiMP2.tex}
\end{center}
\caption{ $\lambda$-graph bisystem $({\frak L}^-,{\frak L}^+)$ of Example \ref{ex:3.2} (i) }
\label{fig:(i)}
\end{figure}
\noindent
where upward arrows $\longleftarrow$ 
in ${\frak L}^-$ are labeled $\iota$,
and downward arrows $\longleftarrow$ and {\mathversion{bold} $\longleftarrow$} (bold)
in ${\frak L}^+$ are labeled $0$ and $1$, respectively.  

\newpage

\begin{figure}[htbp]
\begin{center}
\input{pictureFiboMP2.tex}
\end{center}
\caption{ $\lambda$-graph bisystem $({\frak L}_F^-,{\frak L}_F^+)$ of Example \ref{ex:3.2} (iii) }
\label{fig:fibo}
\end{figure}
\noindent
where upward arrows $\longleftarrow$ and {\mathversion{bold} $\longleftarrow$} (bold)
in ${\frak L}_F^-$ are labeled $\alpha^-$ and $\beta^-$, respectively,
and downward arrows $\longleftarrow$ and {\mathversion{bold} $\longleftarrow$} (bold)
in ${\frak L}_F^+$ are labeled $\alpha^+$ and $\beta^+$, respectively.  

\newpage

\begin{figure}[htbp]
\begin{center}
\input{pictureevenMP2.tex}
\end{center}
\caption{ $\lambda$-graph bisystem $({\frak L}_{\mathcal{E}}^-,{\frak L}_{\mathcal{E}}^+)$ of Example \ref{ex:3.2} (iv) }
\label{fig:even}
\end{figure}
\noindent
where upward arrows $\longleftarrow$ and {\mathversion{bold} $\longleftarrow$} (bold)
in ${\frak L}_{\mathcal{E}}^-$ are labeled $\alpha^-$ and $\beta^-$, respectively,
and downward arrows $\longleftarrow$ and {\mathversion{bold} $\longleftarrow$} (bold)
in ${\frak L}_{\mathcal{E}}^+$ are labeled $\alpha^+$ and $\beta^+$, respectively.  

\newpage
\section{Symbolic matrix bisystems }
Let $\Sigma$ be a finite alphabet.
 We denote by ${\frak S}_\Sigma$ the set of finite formal sums of elements of $\Sigma.$
 By a symbolic matrix $\A$ over $\Sigma$
 we mean a rectangular finite matrix $\A = [\A(i,j)]_{i,j}$
 whose entries in ${\frak S}_\Sigma.$ 
We write the empty word $\emptyset$ as $0$ in ${\frak S}_\Sigma.$
For the symbolic matrix $\A,$
we write an edge $e_k$ labeled $\alpha_k$
for $k=1,\dots,n$ for a vertex $v_i$ to a vertex $v_j$
 if $\A(i,j) = \alpha_1+\cdots +\alpha_n.$  
For two alphabets $\Sigma, \Sigma',$
the notation $\Sigma\cdot\Sigma'$ denotes the set
$\{ a\cdot b \mid a \in \Sigma, \, b \in \Sigma' \}.$
The following notion of specified equivalence 
between symbolic matrices due to M. Nasu in
\cite{Nasu}, \cite{NasuMemoir}.
For two symbolic matrices
$\A$ over alphabet $\Sigma$
 and 
 $\A'$ over alphabet $\Sigma'$
 and bijection
 $\phi$ from a subset of $\Sigma$ onto a subset of $\Sigma'$,
 we call $\A$ and $\A'$ are  specified equivalent under specification
 $\phi$ if $\A'$ can be obtained from $\A$
  by replacing every symbol 
 $\alpha$ appearing in $\A$ by $\phi(\alpha)$.
 We write it as
 $\A \overset{\phi}{\simeq} \A'$. 
 We call $\phi$ a specification from $\Sigma$ to $\Sigma'$.
For two alphabet $\Sigma_1, \Sigma_2,$ 
the bijection 
$\alpha\cdot\beta\in \Sigma_1\cdot\Sigma_2 \longrightarrow 
\beta\cdot\alpha\in \Sigma_2\cdot\Sigma_1$
naturally yields a bijection from 
${\frak S}_{\Sigma_1\cdot\Sigma_2}$
to 
${\frak S}_{\Sigma_2\cdot\Sigma_1}$
that we denote by $\kappa$ 
and call the exchanging specification  
between $\Sigma_1$ and $\Sigma_2.$
\begin{definition}
{\it A symbolic matrix bisystem}\/
$(\M_{l,l+1}^-,\M_{l,l+1}^+), l\in \Zp$ is a pair of
sequences of rectangular symbolic matrices
$\M_{l,l+1}^-$ over $\Sigma^-$ and
$\M_{l,l+1}^+$ over $\Sigma^+$  
 satisfying  
the following five conditions:
\begin{enumerate}
\renewcommand{\theenumi}{\roman{enumi}}
\renewcommand{\labelenumi}{\textup{(\theenumi)}}
\item
Both $\M_{l,l+1}^-$ and $\M_{l,l+1}^+$ are $m(l)\times m(l+1)$ 
rectangular  symbolic matrices with  $m(l) \in \N$  for $l \in \Zp.$
\item
(1) For $i,$ there exists $j$ such that $\M_{l,l+1}^-(i,j)\ne 0$, and

\hspace{6mm} for $i,$ there exists $j$ such that $\M_{l,l+1}^+(i,j)\ne 0$.

(2) For $j,$ there exists $i$ such that $\M_{l,l+1}^-(i,j)\ne 0$, and

\hspace{6mm} for $j,$ there exists $i$ such that $\M_{l,l+1}^+(i,j)\ne 0$.

\item Each component of both $\M_{l,l+1}^-$ and $\M_{l,l+1}^+$ 
does not have multiple symbols. 
This means that 
if $\M_{l,l+1}^-(i,j) = \alpha_1+\cdots+\alpha_n,$
then the symbols $\alpha_1,\dots,\alpha_n$ 
are all distinct each other.
The same  condition is required for  $\M_{l,l+1}^+.$
\item 
For each $j=1,2,\dots,m(l+1),$
both the $j$th columns 
$[\M_{l,l+1}^-(i,j)]_{i=1}^{m(l)}$
and
$[\M_{l,l+1}^+(i,j)]_{i=1}^{m(l)}$
do not have multiple symbols.
Namely, if a symbol $\alpha$ appears in $\M_{l,l+1}^-(i,j)$ for some 
$i\in \{1,2,\dots, m(l)\},$
then it does not appear in  any other row $\M_{l,l+1}^-(i',j)$ for 
$i' \ne i,$
and $\M_{l,l+1}^+$ has the same property.
\item
 $\M_{l,l+1}^- \M_{l+1,l+2}^+ \overset{\kappa}{\simeq}
  \M_{l,l+1}^+ \M_{l+1,l+2}^-$ 
 for $l \in \Zp,$
that means for $i=1,2,\dots,m(l),\, j=1,2,\dots,m(l+2),$
\begin{equation}
\sum_{k=1}^{m(l+1)}\M_{l,l+1}^-(i,k) \M_{l+1,l+2}^+(k,j)
 =\sum_{k=1}^{m(l+1)} \kappa\left( \M_{l,l+1}^+(i,k)\M_{l+1,l+2}^-(k,j) \right),
\label{eq:MLocal}
\end{equation}
where $\kappa$ is the exchanging specification between 
$\Sigma$ and $\Sigma'.$
\end{enumerate}
The matrix $\M_{l,l+1}^-$ (resp.  $\M_{l,l+1}^+$) satisfying
the condition (iv) is said to be right-resolving (resp. left-resolving). 
The condition (v) exactly corresponds to the local property of $\lambda$-graph bisystems (v)
in Definition \ref{def:lambdabisystem}.
The pair $(\M^-,\M^+)$ is called {\it a symbolic matrix bisystem over}\/ $\Sigma^\pm.$
It is easy to see that a symbolic matrix bisystem is exactly 
a matrix presentation of $\lambda$-graph bisystem.
\end{definition}
A symbolic matrix bisystem $(\M^-, \M^+)$ is said to be {\it standard}\/
if  $m(0) =1$, that is its row sizes of the matrices $\M^-_{0,1}$ and $\M^+_{0,1}$ are one.
A symbolic matrix bisystem  $(\M^-, \M^+)$ is said to {\it have a common alphabet}\/
if  $\Sigma^- = \Sigma^+.$
In this case, write the alphabet $\Sigma^- = \Sigma^+$
as $\Sigma,$
and say that $(\M^-, \M^+)$ is a symbolic matrix bisystem over common alphabet $\Sigma.$
It is said to satisfy
{\it Follower-Predecessor Compatibility Condition},\/
FPCC for short,
if for every $l \in \N$ and $j = 1,2,\dots,m(l),$
the set of words  appearing in $[\M_{0,1}^-\M_{1,2}^-\cdots\M^-_{l-1,l}](1,j)$
coincides with the set of transposed words appearing in 
$[\M_{0,1}^+\M_{1,2}^+\cdots\M^+_{l-1,l}](1,j).$

Two symbolic matrix bisystems 
$(\M^-,\M^+)$ over  $\Sigma_\M^\pm$
and
$(\SN^-, \SN^+)$ over  $\Sigma_\SN^\pm$
are said to be isomorphic 
if their sizes $m(l)\times m(l+1)$  and $n(l) \times n(l+1)$ 
of the matrices $\M^{\pm}_{l,l+1}$ and $\SN^\pm_{l,l+1}$ 
coincide, that is $m(l) = n(l)$,   for each $l \in \Zp$ and
 there exists a specification $\phi$ 
from $\Sigma_\M$ to $\Sigma_\SN$ 
and an $m(l) \times m(l)$-permutation matrix
$P_l$ for each $l \in \Zp$ such that
$$
P_l \M_{l,l+1}^- \overset{\phi}{\simeq} \SN_{l,l+1}^-P_{l+1},
\qquad
P_l \M_{l,l+1}^+ \overset{\phi}{\simeq} \SN_{l,l+1}^+P_{l+1} \qquad
\text{ for }
\quad l \in \Zp.
$$

Let 
$\A = [\alpha_{ij}]_{i,j=1}^N$  
be an $N\times N$ symbolic matrix
over $\Sigma =\{ \alpha_{ij}\mid i,j=1,\dots, N \}.$
We set alphabets 
$\Sigma^- =\{ \alpha_{ij}^- \mid i,j=1,\dots, N \}$
and
$\Sigma^+ =\{ \alpha_{ij}^+\mid i,j=1,\dots, N \}.$
We will define 
$N^2 \times N^2$ symbolic matrices 
$\M_\A^-$ over $\Sigma^-$  and 
$\M_\A^+$ over $\Sigma^+$  
 in the following way. 
Define first the $N \times N$ matrix
$
\A^+ := 
\begin{bmatrix}
\alpha_{11}^+ & \cdots & \alpha_{1N}^+ \\
\vdots   &        & \vdots  \\
\alpha_{N1}^+ & \cdots & \alpha_{NN}^+
\end{bmatrix}
$
and the diagonal matrix
$\alpha^-_{ij}I_N$ 
whose diagonal entries are
$(\alpha^-_{ij}, \dots, \alpha^-_{ij}).$
We define
$N^2 \times N^2$ symbolic matrices 
$\M_\A^-, \M_\A^+$ 
by setting
\begin{equation*}
\M_\A^- := 
\begin{bmatrix}
\alpha_{11}^- I_N & \alpha_{21}^-I_N  & \cdots & \alpha_{N1}^-I_N  \\
\alpha_{12}^-I_N  & \alpha_{22}^-I_N  & \cdots  & \alpha_{N2}^-I_N  \\
\vdots              & \vdots              & \ddots & \vdots    \\
\alpha_{1N}^-I_N  & \alpha_{2N}^- I_N &  \cdots& \alpha_{NN}^- I_N 
\end{bmatrix},
\qquad
\M_\A^+ := 
\begin{bmatrix}
\A^+ & 0    & \dots & 0 \\
0     & \A^+& \ddots & \vdots \\
\vdots     & \ddots  & \ddots & 0 \\
0     & \dots     &  0 & \A^+
\end{bmatrix}.
\end{equation*}
For $N=2$ and a $2\times 2$ symbolic matrix 
$\A = 
\begin{bmatrix}
a & b \\
c & d
\end{bmatrix},
$
we have 
\begin{equation}
\M_\A^- = 
\begin{bmatrix}
a^- & 0   & c^- & 0 \\
0   & a^- & 0   & c^- \\
b^- & 0   & d^- & 0  \\
0   & b^- & 0   & d^- 
\end{bmatrix}, \qquad
\M_\A^+ =
\begin{bmatrix}
a^+ & b^+ & 0 & 0 \\
c^+ & d^+ & 0 & 0 \\
0 & 0 & a^+ & b^+ \\
0 & 0 & c^+ & d^+ \\ 
\end{bmatrix}.  \label{eq:symbma2}
\end{equation}
Let 
$\kappa: 
\Sigma^-\cdot \Sigma^+ \longrightarrow \Sigma^+\cdot \Sigma^-
$
be the exchanging specification defined by
$\kappa(\beta\cdot\alpha) = \alpha\cdot\beta$
for $\alpha \in \Sigma^+, \beta \in \Sigma^-.$
We have the following lemma by straightforward calculation. 
\begin{lemma}
For an $N\times N$ symbolic matrix
$\A = [\alpha_{ij}]_{i,j=1}^N,$
we have a specified equivalence
$\M_\A^-\cdot \M_\A^+ \overset{\kappa}{\simeq} \M_\A^+\cdot \M_\A^-.$
\end{lemma}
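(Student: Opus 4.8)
The plan is to exploit the block structure of the two $N^2\times N^2$ matrices and reduce the claim to a single entrywise comparison of length-two words. First I would index the $N^2$ rows and columns by pairs, regarding both $\M_\A^-$ and $\M_\A^+$ as $N\times N$ arrays of $N\times N$ blocks. Reading off the displayed definitions, the $(a,b)$-block of $\M_\A^-$ is the scalar-symbol multiple $\alpha_{ba}^- I_N$ (note the transposed index order forced by the layout of the $\alpha_{ij}^- I_N$ in the definition of $\M_\A^-$), while the $(a,b)$-block of $\M_\A^+$ is $\delta_{ab}\A^+$, since $\M_\A^+$ is block-diagonal with every diagonal block equal to $\A^+ = [\alpha_{ij}^+]$.

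Next I would compute the two block products. For $\M_\A^-\M_\A^+$ the $(a,c)$-block is $\sum_b (\alpha_{ba}^- I_N)(\delta_{bc}\A^+) = \alpha_{ca}^-\A^+$, whose $(i,j)$ entry is the single word $\alpha_{ca}^-\cdot\alpha_{ij}^+ \in \Sigma^-\cdot\Sigma^+$. For $\M_\A^+\M_\A^-$ the $(a,c)$-block is $\sum_b (\delta_{ab}\A^+)(\alpha_{cb}^- I_N) = \A^+\cdot\alpha_{ca}^- I_N$, whose $(i,j)$ entry is $\alpha_{ij}^+\cdot\alpha_{ca}^- \in \Sigma^+\cdot\Sigma^-$. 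The key point that makes the comparison clean is that the block-diagonal shape of $\M_\A^+$ collapses each internal summation to a single surviving term, so every entry of both products is a single length-two word rather than a genuine formal sum; this sidesteps any need to match multiset sums of words.

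Finally I would apply the exchanging specification $\kappa$. By its definition $\kappa(\alpha_{ca}^-\cdot\alpha_{ij}^+) = \alpha_{ij}^+\cdot\alpha_{ca}^-$, so applying $\kappa$ entry by entry to $\M_\A^-\M_\A^+$ reproduces exactly $\M_\A^+\M_\A^-$, which is the asserted specified equivalence $\M_\A^-\M_\A^+ \overset{\kappa}{\simeq} \M_\A^+\M_\A^-$.

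There is no serious obstacle here; the argument is a routine computation once the block indexing is fixed. The only point demanding genuine care is the bookkeeping of the double index, in particular the transposed order $\alpha_{ba}^-$ appearing in the blocks of $\M_\A^-$ against the straight order in $\A^+$, since an index slip there would spoil the entrywise matching under $\kappa$. To guard against this I would first verify the block rules and the two products against the explicit $N=2$ matrices displayed in \eqref{eq:symbma2} before asserting the general identity.
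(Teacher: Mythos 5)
Your proof is correct and follows essentially the same route as the paper: both compute the $(a,c)$-block of each product, find $\alpha_{ca}^-\A^+$ for $\M_\A^-\M_\A^+$ and $\A^+\alpha_{ca}^-$ for $\M_\A^+\M_\A^-$, and conclude by applying $\kappa$ entrywise. Your bookkeeping of the transposed index in the blocks of $\M_\A^-$ matches the paper's $\alpha_{ji}^-$ exactly.
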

\begin{proof}
The matrices
$\M_\A^-\cdot \M_\A^+$
and 
$\M_\A^+\cdot \M_\A^-$
are  $N\times N$ matrices over $N\times N$ symbolic matrices.
Their  $(i,j)$th block matrices  are
\begin{equation*}
\begin{bmatrix}
\alpha_{ji}^-\alpha_{11}^+ & \alpha_{ji}^-\alpha_{12}^+ &\cdots &\alpha_{ji}^-\alpha_{1N}^+  \\
\alpha_{ji}^-\alpha_{21}^+ & \alpha_{ji}^-\alpha_{22}^+ &\cdots &\alpha_{ji}^-\alpha_{2N}^+  \\
\vdots              & \vdots              & \ddots & \vdots    \\
\alpha_{ji}^-\alpha_{N1}^+ & \alpha_{ji}^-\alpha_{N2}^+ &\cdots &\alpha_{ji}^-\alpha_{NN}^+  \\
\end{bmatrix}, 
\qquad
\begin{bmatrix}
\alpha_{11}^+\alpha_{ji}^- & \alpha_{12}^+\alpha_{ji}^- &\cdots &\alpha_{1N}^+\alpha_{ji}^-  \\
\alpha_{21}^+\alpha_{ji}^- & \alpha_{22}^+\alpha_{ji}^- &\cdots &\alpha_{2N}^+\alpha_{ji}^-  \\
\vdots              & \vdots              & \ddots & \vdots    \\
\alpha_{N1}^+\alpha_{ji}^- &\alpha_{N2}^+\alpha_{ji}^- &\cdots &\alpha_{NN}^+ \alpha_{ji}^- \\
\end{bmatrix}
\end{equation*}
respectively, so that we have
 a specified equivalence
$\M_\A^-\cdot \M_\A^+ \overset{\kappa}{\simeq} \M_\A^+\cdot \M_\A^-.$
\end{proof}
The following proposition is straightforward.
\begin{proposition}
For an $N\times N$ symbolic matrix
$\A = [\alpha_{ij}]_{i,j=1}^N,$
we put
\begin{gather*}
\M_{\A_{0,1}}^- =
[\alpha_{11}^-, \alpha_{12}^-,\cdots, \alpha_{1N}^-,
 \alpha_{21}^-, \alpha_{22}^-,\cdots, \alpha_{2N}^-, \cdots,
\alpha_{N1}^-, \alpha_{N2}^-,\cdots, \alpha_{NN}^-], \\ 
\M_{\A_{0,1}}^+ =
[\alpha_{11}^+, \alpha_{12}^+,\cdots, \alpha_{1N}^+,
 \alpha_{21}^+, \alpha_{22}^+,\cdots, \alpha_{2N}^+, \cdots,
\alpha_{N1}^+, \alpha_{N2}^+,\cdots, \alpha_{NN}^+], \\
\M_{\A_{l,l+1}}^- =\M_\A^- \quad \text{ for } l= 1,2,\dots, \\
\M_{\A_{l,l+1}}^+ =\M_\A^+ \quad \text{ for } l= 1,2,\dots. \\
\end{gather*}
Then $(\M_{\A_{l,l+1}}^-, \M_{\A_{l,l+1}}^+)_{l\in \Zp}$ 
becomes a standard symbolic matrix bisystem.
\end{proposition}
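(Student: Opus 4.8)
The plan is to verify directly the five defining conditions (i)--(v) of a symbolic matrix bisystem together with the standardness requirement $m(0)=1$, exploiting the fact that conditions (i)--(iv) are purely local (size, single-symbol entries, resolvingness) and hence can be read off the explicit block forms, while the only genuinely global condition (v) has already been settled at the interior levels by the preceding Lemma.

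First I would record the sizes: $m(0)=1$ and $m(l)=N^2$ for $l\ge1$, so that $\M_{\A_{0,1}}^{\pm}$ are $1\times N^2$ while $\M_{\A_{l,l+1}}^{\pm}=\M_\A^{\pm}$ are $N^2\times N^2$; this yields condition (i) and standardness. Every entry of each of these matrices is either $0$ or a single symbol $\alpha_{ij}^{\pm}$, so condition (iii) is automatic. For condition (iv) I index the $N^2$ rows and columns by pairs $(i,j)$ in the lexicographic order of the definition; then $\M_\A^+((i,j),(i',j'))=\delta_{ii'}\alpha_{jj'}^+$ and $\M_\A^-((i,j),(i',j'))=\delta_{jj'}\alpha_{i'i}^-$, so each column of $\M_\A^+$ is a column of $\A^+$ carrying the distinct symbols $\alpha_{1j'}^+,\dots,\alpha_{Nj'}^+$, and each column of $\M_\A^-$ carries the distinct symbols $\alpha_{1i}^-,\dots,\alpha_{Ni}^-$; hence $\M_\A^-$ is right-resolving and $\M_\A^+$ is left-resolving. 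Condition (ii) follows from the same description, the diagonal $I_N$ blocks of $\M_\A^-$ and the diagonal $\A^+$ blocks of $\M_\A^+$ placing a nonzero entry in every row and column, while every entry of the level-$0$ row vectors is a single symbol.

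The substance is condition (v). For the interior steps $l\ge1$ the required relation $\M_\A^-\M_\A^+\overset{\kappa}{\simeq}\M_\A^+\M_\A^-$ is exactly the content of the Lemma, so nothing new is needed there. The only remaining case is the boundary step $l=0$, where I must check $\M_{\A_{0,1}}^-\M_\A^+\overset{\kappa}{\simeq}\M_{\A_{0,1}}^+\M_\A^-$ between two $1\times N^2$ row vectors. Using the pair-indexing above, a direct computation gives, in column $(i',j')$,
\[
(\M_{\A_{0,1}}^-\M_\A^+)(1,(i',j'))=\sum_{j}\alpha_{i'j}^-\alpha_{jj'}^+,
\qquad
(\M_{\A_{0,1}}^+\M_\A^-)(1,(i',j'))=\sum_{i}\alpha_{ij'}^+\alpha_{i'i}^-,
\]
and applying the exchanging specification $\kappa$ to the first sum turns each term $\alpha_{i'j}^-\alpha_{jj'}^+$ into $\alpha_{jj'}^+\alpha_{i'j}^-$; relabelling $j$ as $i$ matches it termwise with the second sum. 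This establishes condition (v) for $l=0$ and completes the verification.

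I do not expect a real obstacle here, as the proposition is flagged as straightforward; the one point demanding care is the bookkeeping of the two indexings (the single index of the level-$0$ vertex versus the pair index of the $N^2$ vertices at deeper levels), so that the level-$0$ product lines up with the block structure of $\M_\A^{\pm}$, together with applying $\kappa$ in the correct direction $\Sigma^-\cdot\Sigma^+\to\Sigma^+\cdot\Sigma^-$. Everything else reduces to reading entries off the displayed block matrices.
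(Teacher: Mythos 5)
Your proof is correct and follows exactly the route the paper intends: the paper declares the proposition straightforward and supplies no argument beyond the preceding Lemma, which is precisely what you invoke for condition (v) at levels $l\ge 1$, while conditions (i)--(iv) and standardness are the routine entry-by-entry checks you carry out. Your explicit verification of the boundary case $l=0$ (which the paper leaves unstated) is accurate, and the only blemishes are cosmetic index slips in the description of the columns of $\M_\A^-$ that do not affect the conclusion.
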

If in particular we identify the symbols $\alpha_{ij}^-$ 
with $\alpha_{ij}^+$ for $i,j=1,\dots,N$ 
and put $\Sigma = \Sigma^- = \Sigma^+$, then the symbolic matrix bisystem
$(\M_{\A_{l,l+1}}^-, \M_{\A_{l,l+1}}^+)_{l\in \Zp}$ satisfies FPCC.
\begin{remark}
Let $(I, \M)$ be a symbolic matrix system over $\Sigma$
as in Section 2.
It satisfies the equality \eqref{eq:sms}.
We set
$\Sigma^+ = \Sigma$ and 
$\Sigma^- = \{ 1 \}.
$
By putting 
\begin{equation*}
\M^-_{l,l+1} = I_{l,l+1}, \qquad \M^+_{l,l+1} = \M_{l,l+1}, \qquad \l\in \Zp,
\end{equation*}
we have a symbolic matrix bisystem $(\M^-, \M^+).$
\end{remark}

\section{Subshifts and $\lambda$-graph bisystems}
We will show that any $\lambda$-graph bisystem $\LGBS$ gives rise to two  subshifts
written $\Lambda_{{\frak L}^-}$ and $\Lambda_{{\frak L}^+}$.
They are called the presenting subshifts by $\LGBS.$
If in particular $\LGBS$ satisfies FPCC, 
then the two subshifts coincide, and determine one specific subshift.  
Conversely any subshift yields a $\lambda$-graph bisystem satisfying FPCC whose presenting subshift 
is the original subshift.
We fix an arbitrary $\lambda$-graph bisystem $\LGBS$ over $\Sigma^{\pm}$. 
Let us denote by $W_{{\frak L}^-}, W_{{\frak L}^+}$
the set of words appearing  in the labeled Bratteli diagram 
 ${\frak L}^-, {\frak L}^+$ respectively.
They are defined by  
\begin{align*}
W_{{\frak L}^-} 
=& \{ 
(\lambda^-(f_{l+n}), \lambda^-(f_{l+n-1}),\dots,\lambda^-(f_{1+n})) \in {(\Sigma^-)}^l 
\mid  f_m \in E_{m,m-1}^-, m=n+1, \dots, n+l, \\
& \quad 
t(f_m) =s(f_{m-1}), m= n+2,\dots, n+l \text{ for some } n \in \Zp \text{ and } l \in \N \}, \\ 
W_{{\frak L}^+} 
=& \{
 (\lambda^+(e_{n+1}), \lambda^+(e_{n+2}),\dots,\lambda^+(e_{n+l})) \in {(\Sigma^+)}^l 
\mid e_m \in E_{m-1,m}^+, m=n+1, \dots, n+l, \\  
& \quad 
t(e_m)= s(e_{m+1}), m= n+1,\dots, n+l -1 \text{ for some } n \in \Zp \text{ and } l \in \N \}.
\end{align*}
It is easy to see that if  $\LGBS$ satisfies FPCC, 
then $W_{{\frak L}^-} =W_{{\frak L}^+}.$
In this case we write $W_{\LGBS}:= W_{{\frak L}^+} (= W_{{\frak L}^-}).$
\begin{lemma}\label{lem:5.1}
For a $\lambda$-graph bisystem $\LGBS$(not necessarily satisfying FPCC),
there exist a unique pair of subshifts  
$\Lambda_{{\frak L}^-}$ and $\Lambda_{{\frak L}^+}$
such that their sets of admissible words 
are 
$W_{{\frak L}^-}$ and $W_{{\frak L}^+},$
respectively.
\end{lemma}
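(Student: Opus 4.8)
The plan is to identify $W_{{\frak L}^-}$ (and symmetrically $W_{{\frak L}^+}$) as the language of a subshift by verifying the two standard closure properties that characterise such languages (see \cite{LM}): a set $W$ of finite words over $\Sigma^-$ equals $B_*(\Lambda)$ for a subshift $\Lambda$ if and only if $W$ is \emph{factorial} (closed under taking subwords) and \emph{prolongable} (every $w \in W$ extends, on both the left and the right, to a longer word in $W$). Granting these, the only possible candidate is
\[
\Lambda_{{\frak L}^-} = \{ x \in (\Sigma^-)^{\Z} \mid \text{every finite subword of } x \text{ lies in } W_{{\frak L}^-} \},
\]
which is visibly closed and $\sigma$-invariant. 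The inclusion $B_*(\Lambda_{{\frak L}^-}) \subseteq W_{{\frak L}^-}$ holds by construction, while the reverse inclusion follows from prolongability together with a compactness (König's lemma) argument extending any $w \in W_{{\frak L}^-}$ to a bi-infinite point of $\Lambda_{{\frak L}^-}$; factoriality guarantees all its subwords remain in $W_{{\frak L}^-}$. Uniqueness is then automatic, since a subshift is determined by its language.

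Factoriality is immediate: a subword of a word read along an $E^-$-path is read along a subpath, which is again admissible. Left-prolongability (prepending a symbol) is also immediate from condition (iii)(2): the topmost vertex $u \in V_{n+l}$ of an occurrence of $w$ lies at level $n+l \ge 1$, hence admits an incoming edge $e^- \in E^-_{n+l+1,n+l}$ with $t(e^-)=u$, and $\lambda^-(e^-)$ prepended to $w$ yields a longer word in $W_{{\frak L}^-}$. Both points hold verbatim for $W_{{\frak L}^+}$ with the roles of source and terminal, and of $E^-$ and $E^+$, interchanged.

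The one genuinely nontrivial point, and the main obstacle, is right-prolongability: appending a symbol requires an occurrence of $w$ whose bottom vertex sits at level $\ge 1$, whereas a given occurrence may descend all the way to $V_0$. I would resolve this by a \emph{sliding lemma}: any occurrence of $w$ with bottom vertex at level $n$ admits another occurrence with bottom vertex at level $n+1$. This is exactly where the local property (v) is used, and I would prove it by induction on the length of $w$, threading an $E^+$-edge through the occurrence. One starts with an edge $e^+ \in E^+$ emanating from the topmost source vertex (which exists by (iii)(2)); applying the bijection $\varphi \colon E_+^-(u,v) \to E_-^+(u,v)$ simultaneously replaces the top $E^-$-edge by one with the same $\Sigma^-$-label but all level indices increased by one, and outputs a new $E^+$-edge out of the next vertex, which is fed into $\varphi$ at the following step. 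Iterating down the path produces a coherent $E^+$-path alongside the slid $E^-$-path whose bottom vertex now lies at level $n+1$; the label preservation built into $\varphi$ ensures the slid path still spells $w$. Once the bottom level exceeds $0$, condition (iii)(2) supplies an outgoing $E^-$-edge to append. With factoriality, both prolongabilities, and the sliding lemma established, the argument of the first paragraph applies to $W_{{\frak L}^-}$, and the symmetric argument (threading an $E^-$-edge through $\varphi^{-1}$) applies to $W_{{\frak L}^+}$; this parallels the reasoning for $\lambda$-graph systems in \cite{MaDocMath1999}.
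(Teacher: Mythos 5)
Your proposal is correct and follows essentially the same route as the paper: both verify that $W_{{\frak L}^\pm}$ is factorial and prolongable on both sides and then invoke the standard correspondence between such languages and subshifts, with the local property (v) supplying the one nontrivial prolongation direction (the side where the path may terminate at $V_0$). The paper compresses that step into the single sentence ``by the local property of $\lambda$-graph bisystem, we may find $\beta^+$ with $\beta^+ w \in W_{{\frak L}^+}$''; your sliding lemma, iterating the label-preserving bijection $\varphi$ down the path, is exactly the argument being invoked there.
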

\begin{proof}
It suffices to show that the set 
 $W_{{\frak L}^+}$ is a language in the sense of \cite[Definition 1.3.1]{LM}
because of \cite[Proposition 1.3.4]{LM}.
It is clear that 
any subword of a word of $W_{{\frak L}^+}$ belongs to $W_{{\frak L}^+}.$
We will show any word of $W_{{\frak L}^+}$ may extend to its both sides.
For $w \in W_{{\frak L}^+},$
it is obvious that 
there exists $\alpha^+ \in \Sigma^+$ such that 
$w \alpha^+  \in W_{{\frak L}^+}.$
By the local property of $\lambda$-graph bisystem,
we may find
 $\beta^+ \in \Sigma^+$ such that 
$\beta^+ w\in W_{{\frak L}^+}.$ 
Hence a word of $W_{{\frak L}^+}$ can extend to its both sides,
proving the set $W_{{\frak L}^+}$ is a language.  
We similarly see that $W_{{\frak L}^-}$ is a language.
Therefore they give rise to subshifts
written 
$\Lambda_{{\frak L}^+}$ and $\Lambda_{{\frak L}^-}$, respectively,
such that 
their admissible words 
are $W_{{\frak L}^+}$ and   $W_{{\frak L}^+}$, respectively.
\end{proof}
The subshifts $\Lambda_{{\frak L}^-}$ and $\Lambda_{{\frak L}^+}$
are  called the subshifts presented by $\LGBS.$
If  in particular  $\LGBS$ satisfies FPCC, 
then $W_{{\frak L}^-} =W_{{\frak L}^+}$
so that 
their presenting subshifts $\Lambda_{{\frak L}^-}$ and $\Lambda_{{\frak L}^+}$
coincide. 
Hence a $\lambda$-graph bisystem $\LGBS$ satisfying FPCC
yields a unique subshift which is called the subshift presented by 
$\LGBS$ and written 
$\Lambda_{\LGBS}.$
  
\medskip

We will next construct a $\lambda$-graph bisystem satisfying FPCC
from an arbitrary subshift $\Lambda$
such that the presented subshift 
by the $\lambda$-graph bisystem coincides with the original subshift.
We fix a subshift $\Lambda$ over $\Sigma.$
For $k,l \in \Z$ with $k < l$ and $x=(x_n)_{n\in \Z} \in \Lambda,$
we set
\begin{align*}
W_{k,l}(x) := 
& \{ (\mu_{k+1},\mu_{k+2},\dots,\mu_{l-1}) \in B_{l-k-1}(\Lambda) \mid  \\
& \qquad  (\dots, x_{k-1}, x_k, \mu_{k+1},\mu_{k+2},\dots,\mu_{l-1}, 
x_l, x_{l+1},\dots ) \in \Lambda \},
\end{align*}
\begin{equation*}
\cdots \underset{x_{k-1}}{\longrightarrow} \underset{x_k}{\longrightarrow}
\square\square \cdots \square 
\underset{x_l}{\longrightarrow} \underset{x_{l+1}}{\longrightarrow}
\cdots
\end{equation*}
and
$
W_{k,k}(x) :=
\emptyset. 
$
In the above picture, the finite sequence of boxes
$\square\square \cdots \square$
denotes the words 
$(\mu_{k+1},\mu_{k+2},\dots,\mu_{l-1})$ of length $l-k-1$
that can put in between the left infinite sequence
$(\dots, x_{k-1}, x_k)$ of $x$
and the right infinite sequence
$(x_l, x_{l+1},\dots )$ of $x$. 
Two bi-infinite sequences $x, y \in \Lambda$
are said to be $(k,l)$-{\it{centrally equivalent}}\/ if $W_{k,l}(x) = W_{k,l}(y),$
and written $x\overset{c}{\underset{(k,l)}{\sim}}y.$
This means that the set of words
put in between $x_{(-\infty,k]}$ and $x_{[l,\infty)}$
coincides with 
the set of words put in between
$y_{(-\infty,k]}$ and $y_{[l,\infty)},$
where $x_{(-\infty,k]} = (x_n)_{n\le k}$
and
$x_{[l,\infty)}= (x_n)_{n\ge l}$.
Define the set of equivalence classes
$$
\Omega_{k,l}^c = \Lambda/\overset{c}{\underset{(k,l)}{\sim}},
$$
that is a finite set because the set of words whose lengths are less than or equal to 
$l-k-1$ is finite.
Let 
$\{ C_1^{k,l},C_2^{k,l}, \dots, C_{m(k,l)}^{k,l} \}
$ be the set 
$\Omega_{k,l}^c$
of $\overset{c}{\underset{(k,l)}{\sim}}$ equivalence classes.

For $x=(x_n)_{n\in \Z}  \in C_i^{k,l}$ and $\alpha \in \Sigma,$
suppose that 
$ (\mu_{k+1},\mu_{k+2},\dots,\mu_{l-2},\alpha) \in W_{k,l}(x)$ 
for some $\mu =( \mu_{k+1},\mu_{k+2},\dots,\mu_{l-2}) \in B_{l-k-2}(\Lambda)$
so that the bi-infinite sequence
\begin{equation*}
 x(\mu, \alpha) : = 
(\dots, x_{k-1}, x_k, \mu_{k+1},\mu_{k+2},\dots,\mu_{l-2},\alpha, x_l, x_{l+1},\dots ) \in \Lambda
\end{equation*}
belongs to $\Lambda.$
If $x(\mu, \alpha)$ belongs to $C_j^{k,l-1},$
then we write
$\alpha C_i^{k,l} \subset C_j^{k,l-1}.$

Similarly for
$x=(x_n)_{n\in \Z}  \in C_i^{k,l}$ and $\beta \in \Sigma,$
suppose that 
$ (\beta, \nu_{k+2},\nu_{k+3},\dots,\nu_{l-1}) \in W_{k,l}(x)$ 
for some $\nu =( \nu_{k+2},\nu_{k+3},\dots,\nu_{l-1}) \in B_{l-k-2}(\Lambda)$
so that the bi-infinite sequence
\begin{equation*}
 x(\beta,\nu) : = 
(\dots, x_{k-1}, x_k, \beta,\nu_{k+2},\nu_{k+3},\dots,\nu_{l-1}, x_l, x_{l+1},\dots ) \in \Lambda
\end{equation*}
belongs to $\Lambda.$
If $x(\beta, \nu)$ belongs to $C_h^{k+1,l},$
then we write
$C_i^{k,l}\beta \subset C_h^{k+1,l}.$
\begin{lemma}
Keep the above notation.
\hspace{4cm}
\begin{enumerate}
\renewcommand{\theenumi}{\roman{enumi}}
\renewcommand{\labelenumi}{\textup{(\theenumi)}}
\item
The notation $\alpha C_i^{k,l} \subset C_j^{k,l-1}$
is well-defined, that is, it does not depend on the choice of 
$x=(x_n)_{n\in \Z}  \in C_i^{k,l}$ and
$\mu =( \mu_{k+1},\mu_{k+2},\dots,\mu_{l-2}) \in B_{l-k-2}(\Lambda)$
as long as  
$ (\mu_{k+1},\mu_{k+2},\dots,\mu_{l-2},\alpha) \in W_{k,l}(x).$
\item
The notation $C_i^{k,l}\beta \subset C_h^{k+1,l}$
is well-defined, that is, it does not depend on the choice of 
$x=(x_n)_{n\in \Z}  \in C_i^{k,l}$ and
$\nu =( \nu_{k+2},\nu_{k+3},\dots,\nu_{l-1}) \in B_{l-k-2}(\Lambda)$
as long as  
$ (\beta, \nu_{k+2},\nu_{k+3},\dots,\nu_{l-1}) \in W_{k,l}(x).$ 
\end{enumerate}
\end{lemma}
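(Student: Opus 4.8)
The plan is to show that in each case the central equivalence class of the modified sequence is completely determined by the word set $W_{k,l}(x)$ together with the appended symbol. Since $W_{k,l}(x)$ is by definition constant on the class $C_i^{k,l}$, this at once yields independence of the representative $x$; and since the filler word ($\mu$ in (i), $\nu$ in (ii)) never enters the resulting word set, independence of the filler follows simultaneously. Thus the whole lemma reduces to one explicit computation of a word set, carried out twice in mirror form.

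For part (i), I would first record the coordinates of $x(\mu,\alpha)$: it agrees with $x$ at every coordinate $n\le k$ and every $n\ge l$, carries $\mu$ on positions $k+1,\dots,l-2$, and carries $\alpha$ on position $l-1$. The key step is to compute $W_{k,l-1}(x(\mu,\alpha))$ directly. A word $(w_{k+1},\dots,w_{l-2})$ of length $l-k-2$ lies in this set exactly when
\[
(\dots,x_{k-1},x_k,w_{k+1},\dots,w_{l-2},\alpha,x_l,x_{l+1},\dots)\in\Lambda,
\]
because the relevant left tail of $x(\mu,\alpha)$ is $(\dots,x_{k-1},x_k)$ and its relevant right tail is $(\alpha,x_l,x_{l+1},\dots)$. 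But this condition says precisely that the length-$(l-k-1)$ word $(w_{k+1},\dots,w_{l-2},\alpha)$ lies in $W_{k,l}(x)$. Hence
\[
W_{k,l-1}(x(\mu,\alpha))=\{\, w\in B_{l-k-2}(\Lambda)\mid (w,\alpha)\in W_{k,l}(x)\,\},
\]
and the right-hand side manifestly depends only on $W_{k,l}(x)$ and $\alpha$, so the class $C_j^{k,l-1}$ containing $x(\mu,\alpha)$ is well-defined.

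Part (ii) is the mirror image. Here $x(\beta,\nu)$ agrees with $x$ for $n\le k$ and $n\ge l$, carries $\beta$ on position $k+1$ and $\nu$ on positions $k+2,\dots,l-1$; its relevant left tail at level $k+1$ is $(\dots,x_k,\beta)$ and its right tail at level $l$ is $(x_l,x_{l+1},\dots)$. The same computation gives
\[
W_{k+1,l}(x(\beta,\nu))=\{\, w\in B_{l-k-2}(\Lambda)\mid (\beta,w)\in W_{k,l}(x)\,\},
\]
again depending only on $W_{k,l}(x)$ and $\beta$.

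The argument has no genuine obstacle beyond careful bookkeeping, and the one point that must be checked with care is the index shift: for $x(\mu,\alpha)$ one computes the central word set at $(k,l-1)$ rather than $(k,l)$, and for $x(\beta,\nu)$ at $(k+1,l)$, so that the appended symbol becomes part of the frozen right (respectively left) tail while the remaining insertion slot shrinks by one. Once the tails of the modified sequence are correctly matched against those of $x$, the filler word drops out automatically, and the invariance of $W_{k,l}$ on $C_i^{k,l}$ does the rest.
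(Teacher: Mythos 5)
Your proof is correct and follows essentially the same route as the paper: both arguments rest on the observation that $W_{k,l-1}(x(\mu,\alpha))$ (resp.\ $W_{k+1,l}(x(\beta,\nu))$) is determined by $W_{k,l}(x)$ together with the appended symbol, which is exactly the step the paper invokes when it passes from $W_{k,l}(x)=W_{k,l}(z)$ to $W_{k,l-1}(x(\mu,\alpha))=W_{k,l-1}(z(\nu,\alpha))$. Your version merely makes that computation explicit by writing the new word set as $\{w \mid (w,\alpha)\in W_{k,l}(x)\}$, which is a slightly cleaner packaging of the same argument.
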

\begin{proof}
(i)
Take 
$x=(x_n)_{n\in \Z}, z=(z_n)_{n\in \Z}  \in C_i^{k,l}$
such that 
$x\overset{c}{\underset{(k,l)}{\sim}}z.$
Suppose that $\alpha \in \Sigma$ satisfies
$ (\mu_{k+1},\mu_{k+2},\dots,\mu_{l-2},\alpha) \in W_{k,l}(x)$
and
$ (\nu_{k+1},\nu_{k+2},\dots,\nu_{l-2},\alpha) \in W_{k,l}(z)$
for some
 $\mu =( \mu_{k+1},\mu_{k+2},\dots,\mu_{l-2}) $
and
$\nu =( \nu_{k+1},\nu_{k+2},\dots,\nu_{l-2}) \in B_{l-k-2}(\Lambda).$
Consider
the bi-infinite sequences
$x(\mu,\alpha), z(\nu,\alpha) \in \Lambda.$
Since
$W_{k,l}(x)=W_{k,l}(z),$
we have
$W_{k,l-1}(x(\mu,\alpha))=W_{k,l-1}(z(\nu,\alpha)).$
This implies that
$x(\mu,\alpha)\overset{c}{\underset{(k,l-1)}{\sim}}z(\nu,\alpha),$
proving the class
$C_j^{k,l-1}$ containing $x(\mu,\alpha)$
does not depend on the choice of  
$x=(x_n)_{n\in \Z}  \in C_i^{k,l}$ and
$\mu =( \mu_{k+1},\mu_{k+2},\dots,\mu_{l-2}) \in B_{l-k-2}(\Lambda)$
as long as  
$ (\mu_{k+1},\mu_{k+2},\dots,\mu_{l-2},\alpha) \in W_{k,l}(x).$
Hence the class 
$C_j^{k,l-1}$ is well-defined. 
(ii) is similarly shown to (i).
\end{proof}
The following lemma is now clear.
\begin{lemma}
For $x, y \in \Lambda$, we have
$x\overset{c}{\underset{(k,l)}{\sim}}z$
if and only if
$\sigma(x)\overset{c}{\underset{(k-1,l-1)}{\sim}}\sigma(z).$
\end{lemma}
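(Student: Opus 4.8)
The plan is to reduce the biconditional to a single translation identity: for every $x \in \Lambda$ the word-set $W_{k,l}(x)$ coincides with $W_{k-1,l-1}(\sigma(x))$, once one reindexes the insertion positions. Granting this, the lemma is immediate, since $\overset{c}{\underset{(k,l)}{\sim}}$ is by definition equality of the sets $W_{k,l}(\cdot)$, so that
\[
x \overset{c}{\underset{(k,l)}{\sim}} z
\iff W_{k,l}(x) = W_{k,l}(z)
\iff W_{k-1,l-1}(\sigma(x)) = W_{k-1,l-1}(\sigma(z))
\iff \sigma(x) \overset{c}{\underset{(k-1,l-1)}{\sim}} \sigma(z).
\]
First I would record that the two word-sets consist of words of the same length: $W_{k,l}(x)$ collects words of length $l-k-1$, while $W_{k-1,l-1}(\sigma(x))$ collects words of length $(l-1)-(k-1)-1 = l-k-1$, so both are subsets of $B_{l-k-1}(\Lambda)$ and a coordinate-wise comparison is meaningful after the obvious shift of the insertion indices.

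The heart of the argument is the two-sided invariance of $\Lambda$. Since $\sigma$ is a homeomorphism of $\Sigma^\Z$ and $\sigma(\Lambda) = \Lambda$, we also have $\sigma^{-1}(\Lambda) = \Lambda$, so a bi-infinite sequence $w$ lies in $\Lambda$ if and only if $\sigma^{-1}(w)$ does. Now I would fix a word $\mu = (\mu_k,\dots,\mu_{l-2})$ and consider the candidate interpolant for $\sigma(x)$,
\[
w = (\dots, (\sigma x)_{k-1}, \mu_k, \dots, \mu_{l-2}, (\sigma x)_{l-1}, \dots),
\]
whose membership in $\Lambda$ is exactly the statement $\mu \in W_{k-1,l-1}(\sigma(x))$. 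Applying $\sigma^{-1}$ shifts every coordinate one place to the right; using $(\sigma x)_n = x_{n+1}$ one checks directly that $\sigma^{-1}(w)$ is precisely the sequence obtained from $x$ by inserting the word $\mu$, now carried by positions $k+1,\dots,l-1$, between the left tail $(x_n)_{n\le k}$ and the right tail $(x_n)_{n\ge l}$. By the invariance just noted, $w \in \Lambda$ holds iff $\sigma^{-1}(w) \in \Lambda$, and the latter says exactly that $\mu$ (after reindexing) belongs to $W_{k,l}(x)$. Hence $W_{k-1,l-1}(\sigma(x)) = W_{k,l}(x)$, which is the desired identity.

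The only point demanding care is the bookkeeping of indices under $\sigma^{-1}$, together with the explicit appeal to two-sided invariance $\sigma^{-1}(\Lambda)=\Lambda$ rather than merely $\sigma(\Lambda)=\Lambda$; beyond this there is no genuine obstacle, since the equivalence relation is defined purely in terms of the word-sets and the identity transports them verbatim.
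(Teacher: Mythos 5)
Your proof is correct and is exactly the verification the paper has in mind: the paper states this lemma with no proof at all ("The following lemma is now clear"), and your argument — that $\sigma^{-1}$ carries the interpolated sequence for $\sigma(x)$ with word $\mu$ in positions $k,\dots,l-2$ to the interpolated sequence for $x$ with the same word in positions $k+1,\dots,l-1$, so that two-sided $\sigma$-invariance of $\Lambda$ gives $W_{k-1,l-1}(\sigma(x))=W_{k,l}(x)$ and hence the equivalence — is the routine bookkeeping that justifies that claim. Nothing is missing.
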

Hence we may identify 
$\Omega^c_{k,l}$ with $\Omega^c_{k-1,l-1}$
and 
$C_i^{k,l}$ with $C_i^{k-1,l-1}$ for $i=1,2,\dots,m(k,l)=m(k-1,l-1)$
through the shift
$\sigma: \Lambda\longrightarrow \Lambda$
so that we identify
$\Omega^c_{k,l}$ with $\Omega^c_{k+n,l+n}$
and 
$C_i^{k,l}$ with $C_i^{k+n,l+n}$ for all $n \in \Z$ and 
$i=1,2,\dots,m(k,l) =m(k+n,l+n).$

Let us in particular specify the following equivalence classes
 $\Omega^c_{-l,1}$ and $\Omega^c_{-1,l}$
and define the vertex sets  $V_l^-$ and $V_l^+$ for $l=0,1,2,\dots$
by setting
\begin{align*}
V_0^-&:= \{ \Lambda\}, \hspace{11mm} V_0^+:= \{ \Lambda\},\\ 
V_1^-&:= \Omega^c_{-1,1}, \qquad V_1^+:= \Omega^c_{-1,1},\\
V_2^-&:= \Omega^c_{-2,1}, \qquad V_2^+:= \Omega^c_{-1,2}, \\
V_3^-&:= \Omega^c_{-3,1}, \qquad V_3^+:= \Omega^c_{-1,3}, \\
       & \cdots  \hspace{25mm} \cdots \\
V_l^-&:= \Omega^c_{-l,1}, \qquad V_l^+:= \Omega^c_{-1,l}, \\
      & \cdots \hspace{25mm} \cdots
\end{align*}
Write $V_0 = V_0^- = V_0^+.$
Put $m(l) = m(-l,1) (=m(-1,l))$ and let
$$
V_l^- = \{ C_1^{-l,1}, C_2^{-l,1},\dots, C_{m(l)}^{-l,1}\}, \qquad
V_l^+ = \{ C_1^{-1,l}, C_2^{-1,l},\dots, C_{m(l)}^{-1,l}\}.
$$
Through the bijective correspondence
$$
x \in V_l^- \longleftrightarrow \sigma^{1-l}(x) \in V_l^+,
$$
the classes $ C_{i}^{-l,1}$ and $C_{i}^{-1,l}$ for each $i=1,2,\dots,m(l)$ 
are identified with each other and denoted by 
$C_i^l$.
Hence the sets 
 $V_l^-$ and $V_l^+$ 
are identified  for each $l\in \Zp.$
They are  denoted by $V_l.$
We regard $C_i^l$ 
as a vertex denoted by $v_i^l$ for $i=1,2,\dots,m(l),$
and define an edge $e^+$ labeled $\alpha\in \Sigma$ from
$v_i^l$ to $v_j^{l+1}$
if $\alpha C_j^{l+1}\subset C_i^l$.
We write $s(e^+) = v_i^l$ the source vertex of $e^+$
and
$t(e^+) = v_j^{l+1}$ the terminal vertex of $e^+,$
and the label $\lambda^+(e^+) = \alpha.$
The set of such edges from
$v_i^l$ to $v_j^{l+1}$ for some $i=1,2,\dots,m(l), \, j=1,2,\dots, m(l+1)$
is denoted by
$E_{l,l+1}^+.$
This situation is written
\begin{equation*}
v_i^l \overset{\alpha}{\underset{e^+}{\longrightarrow}} v_j^{l+1}
\qquad \text{ if } \qquad \alpha C_j^{l+1}\subset C_i^l.
\end{equation*}
Similarly 
we define an edge $e^-$ labeled $\beta\in \Sigma$ from
$v_j^{l+1}$ to $v_i^l$  
if $C_j^{l+1}\beta \subset C_i^l$.
We write $s(e^-) = v_j^{l+1}$ the source vertex of $e^-$
and
$t(e^-) = v_i^l$ the terminal vertex of $e^-,$
and the label $\lambda^-(e^-) = \beta.$
The set of such edges from
 $v_j^{l+1}$ to $v_i^l$  for some 
$i=1,2, \dots,m(l), \, j=1,2, \dots, m(l+1)$
is denoted by
$E_{l+1,l}^-.$
This situation is written
\begin{equation*}
v_j^{l+1} \overset{\beta}{\underset{e^-}{\longrightarrow}} v_i^l
\qquad \text{ if }\qquad C_j^{l+1}\beta \subset C_i^l.
\end{equation*}
We set 
$
E^+ =\bigcup_{l=0}^{\infty} E_{l,l+1}^+, \,
E^- =\bigcup_{l=0}^{\infty} E_{l+1,l}^-.
$ 
The natural labeling maps 
$\lambda^+: E^+ \longrightarrow \Sigma$
and 
$\lambda^-: E^- \longrightarrow \Sigma$
are denoted by 
$\lambda^+$ and $\lambda^-,$
respectively.
We now have  the pair 
$ {\frak L}^+_\Lambda =(V,E^+, \lambda^+)$ and
$ {\frak L}^-_\Lambda =(V,E^-, \lambda^-)$
of labeled Bratteli diagrams.
\begin{proposition}\label{prop:5.4}
The pair $\LLGBS$ of labeled Bratteli diagrams
is a $\lambda$-graph bisystem satisfying FPCC and presenting the subshift $\Lambda.$ 
\end{proposition}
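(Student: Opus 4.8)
The plan is to verify in turn the five defining conditions (i)--(v) of Definition \ref{def:lambdabisystem}, then the FPCC property, and finally that the presented subshift is $\Lambda$. Conditions (i) and (ii) are immediate from the construction: each $V_l = \Omega^c_{-l,1} = \Omega^c_{-1,l}$ is finite because only finitely many words of length at most $l-1$ occur, the identification $V_l^- = V_l^+$ is built in through the shift bijection $x \leftrightarrow \sigma^{1-l}(x)$, and $E^\pm$ are finite unions of finite edge sets since $\Sigma$ and the $V_l$ are finite. For (iii), the endpoint conditions on edges hold by their very definition, while the existence of an outgoing and an incoming edge at each vertex $v \in V_l$ with $l \neq 0$ follows from the fact that $\Lambda$ is a subshift: any $x$ in the class $C_v^l$ supplies a nonempty insertion word (so an outgoing edge exists), and the one-sided tails of $x$ can always be extended by one symbol (so an incoming edge exists). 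Condition (iv) is where I would use disjointness of equivalence classes: if two $+$-edges share terminal $v_j^{l+1}$ and label $\alpha$, then both sources must contain the set $\alpha C_j^{l+1}$, and since the classes at level $l$ partition $\Lambda$, the source is unique; this is exactly left-resolvingness of $\lambda^+$, and right-resolvingness of $\lambda^-$ is the symmetric statement using $C_j^{l+1}\beta$.

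The heart of the proof is the local property (v), and this is the step I expect to be the main obstacle. Fix $u \in V_l$ and $v \in V_{l+2}$. First I would note that an element of $E_+^-(u,v)$ amounts to a triple $(\beta,w,\alpha)$ with $w \in V_{l+1}$, $\alpha C_v^{l+2} \subset C_w^{l+1}$ and $C_w^{l+1}\beta \subset C_u^l$, and that here the middle vertex $w$ is \emph{uniquely determined} by the pair $(\alpha,v)$, since $\alpha C_v^{l+2}$ meets a single class at level $l+1$; symmetrically, an element of $E_-^+(u,v)$ is a triple $(\alpha,w',\beta)$ with $\alpha C_{w'}^{l+1} \subset C_u^l$ and $C_v^{l+2}\beta \subset C_{w'}^{l+1}$, where $w'$ is uniquely determined by $(\beta,v)$. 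Thus both sets are parametrized by pairs of symbols $(\beta,\alpha)$, and the candidate bijection $\varphi$ sends the first triple to the second, preserving the labels $\alpha = \lambda^+(e^+) = \lambda^+(f^+)$ and $\beta = \lambda^-(e^-) = \lambda^-(f^-)$. The content of (v) then reduces to the assertion that, for $u$ at level $l$ and $v$ at level $l+2$ and symbols $\alpha,\beta$, appending $\alpha$ to $v$ and then prepending $\beta$ lands in $u$ if and only if prepending $\beta$ to $v$ and then appending $\alpha$ lands in $u$. I would prove this by choosing a representative $y \in C_v^{l+2}$ together with witnessing insertion words and observing that the two procedures produce the \emph{same} bi-infinite sequence, obtained from $y$ by inserting $\beta$ on the left of the central gap and $\alpha$ on its right; the well-definedness lemmas then guarantee independence of the chosen representatives. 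The genuine technical difficulty is the bookkeeping forced by the two coordinate presentations $\Omega^c_{-m,1}$ and $\Omega^c_{-1,m}$ of the same vertex set $V_m$, which are tied together only through the shift identification; tracking positions carefully so that left-insertion and right-insertion literally commute is the delicate point.

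For FPCC I would compute both $F(u)$ and $P(u)$ for a vertex $u = C_i^l$ and a representative $x \in \Lambda$. Tracing the $-$-edges downward from $u$ moves the left boundary successively from $-l$ to $0$ and records the word inserted into the gap, giving $F(u) = W_{-l,1}(x)$; tracing the $+$-edges upward to $u$ from $V_0$ records the word inserted into the gap of the representative $\sigma^{1-l}(x)$, giving $P(u) = W_{-1,l}(\sigma^{1-l}(x))$. A direct coordinate computation shows that $(\sigma^{1-l}(x))_n = x_{n+1-l}$ carries the gap of $\Omega^c_{-1,l}$ exactly onto the gap of $\Omega^c_{-l,1}$, so the two word sets coincide and $F(u) = P(u)$. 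This makes the symmetry of the construction the whole reason FPCC holds, requiring no further hypotheses on $\Lambda$.

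Finally, to show the pair presents $\Lambda$, it suffices by FPCC and Lemma \ref{lem:5.1} to prove $W_{{\frak L}^+} = B_*(\Lambda)$. For the inclusion $W_{{\frak L}^+} \subset B_*(\Lambda)$, I would unwind a $+$-path $v_{i_0}^{m} \to \cdots \to v_{i_n}^{m+n}$: each step $\alpha_k C_{i_k}^{m+k} \subset C_{i_{k-1}}^{m+k-1}$ exhibits $\alpha_k$ as an admissible insertion, and concatenating the witnesses shows $(\alpha_1,\dots,\alpha_n)$ occurs in some point of $\Lambda$. For the reverse inclusion, given an admissible word $w = (\alpha_1,\dots,\alpha_n)$ I would pick $x \in \Lambda$ containing $w$ at positions $1,\dots,n$ and read off the classes of the relevant shifts of $x$ to build a $+$-path labeled $w$; since the classes $\Omega^c_{-1,m}$ refine as $m$ grows and capture ever longer insertable words, this path exists. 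Hence $W_{{\frak L}^+} = B_*(\Lambda)$, so $\Lambda_{\LLGBS} = \Lambda$, completing the proof.
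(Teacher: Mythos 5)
Your proposal is correct and follows essentially the same route as the paper: the local property (v) is established by observing that inserting $\beta$ on the left and $\alpha$ on the right of the central gap of a representative yields one and the same bi-infinite sequence (the paper's $x(\beta,\nu\alpha)$), so the two orders of composition land in the same classes, and the presentation claim is proved by the identity $W_{{\frak L}_\Lambda^+}=B_*(\Lambda)$ via unwinding labeled paths in both directions. Your explicit verification of conditions (i)--(iv) and of FPCC is more detailed than the paper, which treats these as immediate from the construction, but the substance is identical.
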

\begin{proof}
We first show that 
the pair $\LLGBS$ satisfies the local property of $\lambda$-graph bisystem.
For $v_i^{l}(= C_i^{l}) \in V_{l}, \, v_j^{l+2} (= C_j^{l+2}) \in V_{l+2},$ 
take
$(e^-, e^+) \in E_+^-(v_i^{l}, v_j^{l+2})$ so that 
$t(e^-) = v_i^{l}, \,t(e^+)=v_j^{l+2} $ and 
$s(e^-) = s(e^+)$ denoted by $v_k^{l+1}.$
This means that 
$\lambda^+(e^+) C_j^{l+2} \subset C_k^{l+1}$
and
$C_k^{l+1} \lambda^-(e^-) \subset C_i^{l}.$
Put 
$\alpha = \lambda^+(e^+), \, \beta =\lambda^-(e^-) \in \Sigma.$
For 
$x = (x_n)_{n\in \Z} \in C_j^{l+2}$ with 
$x \in C_j^{-(l+2),1},$   
take
$\nu =(\nu_{-l},\nu_{-l+1},\dots,\nu_{-1}) \in B_{l}(\Lambda)$
with
$(\beta, \nu_{-l},\nu_{-l+1},\dots,\nu_{-1},\alpha) \in W_{-(l+2),1}(x)$
so that 
\begin{equation*}
 x(\beta,\nu\alpha) = 
(\dots, x_{-(l+3)}, x_{-(l+2)}, \beta,\nu_{-l},\nu_{-l+1},\dots,\nu_{-1}, \alpha, x_1, x_2,\dots ) \in \Lambda
\end{equation*}
where $\nu \alpha = (\nu_{-l},\nu_{-l+1},\dots,\nu_{-1}, \alpha).$
Let $C_{k'}^{l+1}$ be the class of $ x(\beta,\nu\alpha) $
in $\Omega_{-{(l+1)},1}^c,$ 
so that we have
$C_j^{l+2}\beta \subset C_{k'}^{l+1},$
and there exists an edge, denoted by $f^-,$
from $v_j^{l+2}$ to $v_{k'}^{l+1}$ labeled $\beta.$
The class of  $ x(\beta,\nu\alpha) $
in $\Omega_{-(l+1),0}^c$ 
is identified with the class of
 $ \sigma^{-l}(x(\beta,\nu\alpha)) $
in $\Omega_{-1, l}^c.$ 
As 
$\lambda^+(e^+) C_j^{l+2}\lambda^-(e^-) \subset C_i^{l},$
it belongs to $C_i^{l},$
so that 
there exists an edge, denoted by $f^+,$ 
from $v_i^{l}$ to $v_{k'}^{l}$
such that 
$\lambda^+(f^+) = \alpha.$
We thus have 
$(f^+,f^-) \in E^+_-(v_i^{l}, v_j^{l+2}).$
It is easy to see that 
the correspondence
$$
(e^-, e^+) \in E_+^-(v_i^{l}, v_j^{l+2})
\longrightarrow
(f^+,f^-) \in E^+_-(v_i^{l}, v_j^{l+2})
$$
gives rise to a desired bijection  
between
$E_+^-(v_i^{l}, v_j^{l+2})$
and
$E_-^+(v_i^{l}, v_j^{l+2}).$

\medskip

We second show that 
both ${\frak{L}}_\Lambda^-$ and ${\frak{L}}_\Lambda^+$
present the subshift $\Lambda.$
Let $W_{{\frak{L}}_\Lambda^+}$
be the set of admissible words appearing in the labeled graph
${\frak{L}}_\Lambda^+$
defined before Lemma \ref{lem:5.1}.
For any word 
$\mu =(\mu_1,\mu_2,\dots,\mu_l) \in W_{{\frak{L}}_\Lambda^+},$
there exist
$e_m \in E_{m-1,m}^+, m=n+1, \dots, n+l$
such that 
$\mu_1 =\lambda^+(e_{n+1}), 
 \mu_2 =\lambda^+(e_{n+2}),\dots,
\mu_l =\lambda^+(e_{n+l})
$ and 
$t(e_m)= s(e_{m+1}), m= n+1,\dots, n+l -1
$
for some $n \in \Z_+.$
Let
$v_i^n = s(e_{n+1}),\,
v_j^{n+l-1} = t(e_{n+l}).
$
We then have
$$
\lambda^+(e_{n+1})\lambda^+(e_{n+2})\cdots
\lambda^+(e_{n+l}) C_i^l \subset C_j^{n+l-1}.
$$
This means that $(\mu_1,\dots,\mu_l) \in B_l(\Lambda).$

Conversely
it is obvious that any word $\mu =(\mu_1,\dots,\mu_l) \in B_l(\Lambda)$
appears as a labeled path in ${\frak L}_\Lambda^+.$
Hence the set 
$W_{{\frak{L}}_\Lambda^+}$
coincides with the set $B_*(\Lambda)$ of admissible words of $\Lambda,$
so that  ${\frak L}_\Lambda^+$ and similarly  ${\frak L}_\Lambda^-$
present $\Lambda$. 
\end{proof}
\begin{definition}
The $\lambda$-graph bisystem 
$({\frak L}_\Lambda^-, {\frak L}_\Lambda^+)$
for a subshift $\Lambda$ is called the {\it canonical}\/  $\lambda$-graph bisystem 
for $\Lambda$. 
Its symbolic matrix bisystem 
$({\M}_\Lambda^-, {\M}_\Lambda^+)$
is called the {\it canonical}\/ symbolic matrix bisystem for $\Lambda$.   
\end{definition}
The $\lambda$-graph bisystems presented in Example \ref{ex:3.2}
(ii), (iii) and (iv) are the canonical $\lambda$-graph bisystems for the full $N$-shift,
the golden mean shift and the even shift, respectively.


\section{Strong shift equivalence}
In the theory of subshifts, one of most interesting and important subjects 
is their classification.
R. Williams  in \cite{Williams} proved that two topological Markov shifts are topologically conjugate
if and only if their underlying matrices have a special algebraic relation,
called strong shift equivalence.
His result and its proof  have been giving a great influence on 
further reserch of symbolic dynamical systems
(cf. \cite{LM}).
After Williams, M. Nasu in \cite{Nasu},
generalized Williams's result to sofic shifts.
The author introduced  a notion of (properly) strong shift equivalence in symbolic matrix systems 
in \cite{MaDocMath1999} (cf. \cite{MaETDS2003}) and 
proved that two subshifts are topologically conjugate 
if and only if their canonical symbolic matrix systems are (properly) strong shift equivalent.

As seen in the preceding section, any subshift is presented by a $\lambda$-graph bisystem
satisfying FPCC,
and hence by a symbolic matrix bisystem having a common alphabet.
In the first part of this section, 
we will introduce a notion of properly strong shift equivalence in 
symbolic matrix bisystems satisfying FPCC, and prove that 
 two subshifts are topologically conjugate 
if and only if their canonical symbolic matrix bisystems are properly strong shift equivalent.
Throughout the first part of this section, 
we assume that all $\lambda$-graph bisystems and symbolic matrix bisystems have common alphabets 
and satisfy FPCC.

Let
$\A$ and $\A'$ be symbolic matrices over alphabets 
$\Sigma$ and $\Sigma'$ respectively.
Let 
 $\phi$ be a bijection from a subset of $\Sigma$ onto a subset of $\Sigma'.$
Recall that 
 $\A$ and $\A'$ are said to be  specified equivalent under specification
 $\phi$ if $\A'$ can be obtained from $\A$
  by replacing every symbol 
 $\alpha$ appearing in components of $\A$ by $\phi(\alpha)$.
 We write it as
 $\A \overset{\phi}{\simeq} \A'$. 
 We call $\phi$ a specification from $\Sigma$ to $\Sigma'$.
If we do not specify the specification $\phi$, we simply write it
 $\A \simeq \A'$.
For an alphabet $\Sigma,$
we denote by ${\frak S}_\Sigma$ the set of finite formal sums of elements of $\Sigma.$
For alphabets $C,D$,
 put
$C \cdot D = \{ cd \, | \, c \in C,d \in D \}.$
For 
$x = {\displaystyle\sum_{j}}c_j \in {\frak S}_C$
and
$ y ={\displaystyle\sum_{k}} d_k \in {\frak S}_D$,
 define
 $xy = {\displaystyle\sum_{j,k}} c_jd_k \in {\frak S}_{C\cdot D}$.
Recall that 
the exchanging specification $\kappa$ from $C\cdot D$ to $D\cdot C$
is a bijection 
from ${\frak S}_{C\cdot D}$ to ${\frak S}_{D\cdot C}$ 
defined by
$$
\kappa(\sum_{j,k} c_jd_k ) =
\sum_{j,k} d_k c_j \in {\frak S}_{D\cdot C}
\quad\text{ for }
\sum_{j,k} c_jd_k \in {\frak S}_{C\cdot D}.
$$

We first define properly strong shift equivalence in $1$-step 
between two symbolic matrix bisystems satisfying FPCC 
as a generalization of 
strong shift equivalences in $1$-step between
two nonnegative matrices defined by R. Williams in \cite{Williams},
 and between
two symbolic  matrices defined by Nasu in \cite{Nasu}.

Let $(\M^-,\M^+)$ and $(\SN^{-}, \SN^{+})$ be symbolic matrix bisystems over alphabets
$\Sigma_\M$ and $\Sigma_{\SN}$ respectively,
both of them satisfy FPCC,
where
$\M^-_{l,l+1}, \M^+_{l,l+1}$ are $m(l)\times m(l+1)$ symbolic matrices and
$\SN^-_{l,l+1}, \SN^+_{l,l+1}$ are $n(l)\times n(l+1)$ symbolic matrices.

\begin{definition}\label{def:PSSE}
Two symbolic matrix bisystems 
 $(\M^-,\M^+)$ and $(\SN^-, \SN^+)$
 are said to be {\it properly strong shift equivalent in}\/ $1$-{\it step}\/  
 if there exist alphabets 
$C,D$ and specifications 
$$
 \varphi: \Sigma_\M \rightarrow C\cdot D,
 \qquad
 \phi: \Sigma_\SN \rightarrow D \cdot C
$$
 and  sequences $c(l),d(l)$ on $l \in \Zp$
 such that for each $l\in \Zp$, there exist
\begin{enumerate}
\renewcommand{\theenumi}{\arabic{enumi}}
\renewcommand{\labelenumi}{\textup{(\theenumi)}}
\item a $c(l)\times d(l+1)$ matrix ${\P}_l$ over $C,$ 
\item a $d(l)\times c(l+1)$ matrix ${\Q}_l$ over $D,$
\item a $d(l)\times d(l+1)$ matrix $\X_l $ over   $D$ for $l$ being odd,
\item a $c(l)\times c(l+1)$ matrix $\X_l $ over   $D$ for $l$ being even,
\item a $c(l)\times c(l+1)$ matrix $\Y_l $ over   $C$ for $l$ being odd,
\item a $d(l)\times d(l+1)$ matrix $\Y_l $ over   $C$ for $l$ being even,
\end{enumerate}
 satisfying the following equations: 
\begin{gather}
\M^+_{l,l+1} 
\overset{\varphi}{\simeq} {\P}_{2l}{\Q}_{2l+1},\qquad
\SN^+_{l,l+1} 
\overset{\phi}{\simeq} {\Q}_{2l}{\P}_{2l+1}, \label{eq:PSSE1} \\
\M^-_{l,l+1} \overset{\kappa\varphi}{\simeq} \X_{2l}\Y_{2l+1}, \qquad 
\SN^-_{l,l+1} \overset{\kappa\phi}{\simeq} \Y_{2l} \X_{2l+1} \\ 
\intertext{and}
\Y_{2l+1}{\P}_{2l+2} \overset{\kappa}{\simeq} {\P}_{2l+1}\Y_{2l+2},\qquad
\X_{2l+1}{\Q}_{2l+2}\overset{\kappa}{\simeq} {\Q}_{2l+1}\X_{2l+2}, \label{eq:PSSE3}\\ 
\X_{2l}{\P}_{2l+1} \overset{\kappa}{\simeq} {\P}_{2l}\X_{2l+1},\qquad
\Y_{2l}{\Q}_{2l+1}\overset{\kappa}{\simeq} {\Q}_{2l}\Y_{2l+1}, \label{eq:PSSE4}
\end{gather}
where $\kappa$ is the exchanging specification defined by
$\kappa(a\cdot b) = b\cdot a$,
and $\kappa \varphi, \kappa\phi$ denote the compositions
$\kappa \circ\varphi, \kappa\circ\phi$, respectively.

\medskip
We write this situation as
$(\M^-,\M^+)\underset{1-\pr}{\approx} (\SN^-, \SN^+).
$
By \eqref{eq:PSSE1}, we know 
$c(2l) = m(l)$ and $ d(2l) = n(l)$ for $l \in \Zp$.

Two symbolic matrix bisystems 
 $(\M^-,\M^+)$ and $(\SN^-,\SN^+)$ 
 are said to be {\it  properly strong shift equivalent in }\/ $\ell$-{\it step}\/ 
 if there exists a sequence of  symbolic matrix bisystems
$(\M_{(i)}^-,\M_{(i)}^+),\, i=1,2,\dots, \ell-1$
such that
\begin{equation*}
(\M^-,\M^+) \underset{1-\pr}{\approx} (\M_{(1)}^-,\M_{(1)}^+)
       \underset{1-\pr}{\approx} 
       \cdots 
       \underset{1-\pr}{\approx} 
(\M_{(\ell-1)}^-,\M_{(\ell-1)}^+) \underset{1-\pr}{\approx} 
( \SN^-, \SN^+).
\end{equation*}
We denote this situation by
$(\M^-, \M^+) \underset{\ell-\pr}{\approx} (\SN^-,\SN^+)
$
and simply call it a {\it properly strong shift equivalence}.
\end{definition}
\begin{proposition}\label{prop:DM4.1}
Properly strong shift equivalence in symbolic matrix bisystems is an equivalence relation.
\end{proposition}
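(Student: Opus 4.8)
The plan is to verify the three properties of an equivalence relation for the multi-step relation $\underset{\pr}{\approx}$ of Definition \ref{def:PSSE}, with transitivity and symmetry being essentially formal and the only genuine work lying in reflexivity.

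Transitivity is built into the definition of $\ell$-step properly strong shift equivalence: if $(\M^-,\M^+)\underset{\ell-\pr}{\approx}(\SN^-,\SN^+)$ and $(\SN^-,\SN^+)\underset{k-\pr}{\approx}(\K^-,\K^+)$, I would simply concatenate the two witnessing chains of $1$-step equivalences into a single chain of length $\ell+k$, yielding $(\M^-,\M^+)\underset{(\ell+k)-\pr}{\approx}(\K^-,\K^+)$. For symmetry it suffices to treat the $1$-step case and then reverse chains. Given a $1$-step equivalence witnessed by alphabets $C,D$, specifications $\varphi,\phi$ and matrices $\P_l,\Q_l,\X_l,\Y_l$, I would form the reversed data by the substitution $C'=D$, $D'=C$, $\varphi'=\phi$, $\phi'=\varphi$, $\P_l'=\Q_l$, $\Q_l'=\P_l$, $\X_l'=\Y_l$, $\Y_l'=\X_l$. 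Since $c'=d$ and $d'=c$, the size and parity constraints of Definition \ref{def:PSSE} are respected, and the eight defining equations are permuted among themselves by this substitution; hence the primed data witness $(\SN^-,\SN^+)\underset{1-\pr}{\approx}(\M^-,\M^+)$.

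The substantive step is reflexivity, which I would establish by exhibiting an explicit $1$-step self-equivalence. I would take $C=\Sigma_\M$ and let $D=\{1\}$ be the one-symbol alphabet whose symbol is a unit, so that $1\cdot\alpha$ and $\alpha\cdot 1$ are both identified with $\alpha$, and set $\varphi(\alpha)=\alpha\cdot 1$, $\phi(\alpha)=1\cdot\alpha$. I would then put $c(2l)=c(2l+1)=d(2l)=m(l)$ and $d(2l+1)=m(l+1)$, choose $\P_{2l}=\P_{2l+1}=\M^+_{l,l+1}$ and $\Y_{2l}=\Y_{2l+1}=\M^-_{l,l+1}$ as matrices over $C$, and take every $\Q_l$ and every $\X_l$ to be the identity matrix over $D$ of the size forced by Definition \ref{def:PSSE}. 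With these choices the four factorization requirements in and after \eqref{eq:PSSE1} collapse to $\M^\pm_{l,l+1}\cdot I=\M^\pm_{l,l+1}=I\cdot\M^\pm_{l,l+1}$, read off through the relabelings $\varphi,\phi,\kappa\varphi,\kappa\phi$; and the two identities in \eqref{eq:PSSE4} together with the second identity in \eqref{eq:PSSE3} hold automatically, since in each the two products involve an identity factor over the unit alphabet and therefore agree up to the placement of the unit symbol, which $\kappa$ exchanges.

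The one identity that is not formal is the first identity in \eqref{eq:PSSE3}, namely $\Y_{2l+1}\P_{2l+2}\overset{\kappa}{\simeq}\P_{2l+1}\Y_{2l+2}$; with the above choices it reads $\M^-_{l,l+1}\M^+_{l+1,l+2}\overset{\kappa}{\simeq}\M^+_{l,l+1}\M^-_{l+1,l+2}$, which is precisely the commutation relation \eqref{eq:MLocal} defining a symbolic matrix bisystem. Thus reflexivity holds exactly because of the local property of the $\lambda$-graph bisystem. I expect the only delicate part of the write-up to be the bookkeeping: checking that each of $\P_l,\Q_l,\X_l,\Y_l$ carries the row and column sizes and the even/odd parity prescribed by Definition \ref{def:PSSE}, and confirming that the trivial factorizations genuinely respect the specifications $\varphi,\phi$ and the exchanging specification $\kappa$. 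This is routine rather than conceptual, once reflexivity has been reduced to \eqref{eq:MLocal}.
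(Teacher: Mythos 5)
Your proof is correct and follows essentially the same route as the paper: transitivity and symmetry are dismissed as formal, and reflexivity is established by the same explicit $1$-step self-equivalence with $C=\Sigma_\M$, $D$ a unit alphabet, $\P_{2l}=\P_{2l+1}=\M^+_{l,l+1}$, $\Y_{2l}=\Y_{2l+1}=\M^-_{l,l+1}$, and identity matrices for the $\Q_l$ and $\X_l$. Your observation that the one nontrivial compatibility identity in \eqref{eq:PSSE3} reduces exactly to the commutation relation \eqref{eq:MLocal} is a worthwhile detail that the paper leaves implicit under ``it is straightforward to see.''
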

\begin{proof}
It is clear that properly
strong shift equivalence is symmetric and transitive.
It suffices  to show that  
$
(\M^-,\M^+) \underset{1-\pr}{\approx} (\M^-,\M^+).
$
Put
$
C = \Sigma_\M, \, D = \{0,1\}.
$
Define
$
\varphi:a \in \Sigma_\M \rightarrow a\cdot 1 \in C \cdot D
$
and
$ 
\phi:a \in \Sigma_\M \rightarrow 1\cdot a \in D \cdot C.
$
Let
$E_k $ be the $k \times k$ identity matrix.
Set
$
c(2l) = c(2l+1) = d(2l) = m(l), \, d(2l+1) = m(l+1) 
$
for
$ l \in \Zp,$
and
\begin{gather*}
\P_{2l} = \P_{2l+1} = \M_{l,l+1}^+, \quad 
  \Q_{2l} = E_{m(l)}, \quad  \Q_{2l+1} = E_{m(l+1)},\\
\Y_{2l} = \Y_{2l+1} = \M_{l,l+1}^-,\quad
\X_{2l} = E_{m(l)},\quad
\X_{2l+1} = E_{m(l+1)}.
\end{gather*}
It is straightforward to see 
that they give a properly strong shift equivalence in 1-step 
between 
$(\M^-,\M^+)$ and $(\M^-,\M^+)$.
\end{proof}

We will prove the following theorem.
\begin{theorem}\label{thm:DM4.2}
Two  subshifts are topologically conjugate
if and only if  
their canonical symbolic matrix bisystems
are properly strong shift equivalent.
\end{theorem}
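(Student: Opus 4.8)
The plan is to prove each implication by the classical bipartite technique of Williams \cite{Williams} and Nasu \cite{Nasu}, adapted to bisystems along the lines of \cite[Theorem 4.2]{MaDocMath1999}, with the \emph{bipartite $\lambda$-graph bisystem} as the central interpolating object. Since properly strong shift equivalence is an equivalence relation (Proposition \ref{prop:DM4.1}) and topological conjugacy is transitive and closed under composition, for the if part it suffices to show that a $1$-step properly strong shift equivalence yields a topological conjugacy of the presented subshifts, and for the only if part it suffices to connect the two canonical bisystems by finitely many $1$-step equivalences.

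For the \emph{if} part I would attach to a $1$-step properly strong shift equivalence between $(\M^-,\M^+)$ and $(\SN^-,\SN^+)$ a single bipartite $\lambda$-graph bisystem $(\CZ^-,\CZ^+)$: at each level $k$ its vertices split into a $C$-part of size $c(k)$ and a $D$-part of size $d(k)$, the matrices $\P_k$ over $C$ carry the $+$-edges from the $C$-part at level $k$ to the $D$-part at level $k+1$, the $\Q_k$ over $D$ carry the $+$-edges in the other file, and $\X_k,\Y_k$ carry the $-$-edges. The cross relations \eqref{eq:PSSE3} and \eqref{eq:PSSE4} are exactly the commutation identities needed for the local property of $\lambda$-graph bisystem, so $(\CZ^-,\CZ^+)$ is legitimate. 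By \eqref{eq:PSSE1} and its $-$ counterpart, grouping the edges of $(\CZ^-,\CZ^+)$ two at a time as $\P\Q$ recovers $(\M^-,\M^+)$ through $\varphi$, while the grouping $\Q\P$ recovers $(\SN^-,\SN^+)$ through $\phi$. A bi-infinite labelled path of $\CZ^+$ has label string alternating between $C$ and $D$; decoding its consecutive $C\cdot D$-blocks by $\varphi^{-1}$ gives a point of $\Lambda_{(\M^-,\M^+)}$, whereas decoding the $D\cdot C$-blocks obtained after a unit shift by $\phi^{-1}$ gives a point of $\Lambda_{(\SN^-,\SN^+)}$. This assignment is a sliding block code with sliding block inverse, hence a topological conjugacy; since the canonical bisystems present their subshifts (Proposition \ref{prop:5.4}), the if part follows, and the general $\ell$-step case follows by composing conjugacies.

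For the \emph{only if} part, following the method of \cite[Theorem 4.2]{MaDocMath1999}, I would show directly that whenever $\Phi\colon\Lambda\to\Lambda'$ is a topological conjugacy, the canonical $\lambda$-graph bisystems $\LLGBS$ and $({\frak L}^-_{\Lambda'},{\frak L}^+_{\Lambda'})$ are connected by a finite chain of bipartite $\lambda$-graph bisystems, each link of which produces a $1$-step properly strong shift equivalence. Writing $\Phi$ and $\Phi^{-1}$ as sliding block codes and passing to a sufficiently high block presentation, one reduces each link to an elementary conjugacy, a single bipartite splitting or amalgamation. For such a step I would build a bipartite $\lambda$-graph bisystem whose even levels carry the central-equivalence vertex sets $\Omega^c$ of $\Lambda$ and whose odd levels carry those of $\Lambda'$, reading the alternating $+$-edge matrices $\P_k,\Q_k$ from the elementary code and the $-$-edge matrices $\X_k,\Y_k$ from the reversed diagrams. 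Extracting $\P_k,\Q_k,\X_k,\Y_k$ and verifying \eqref{eq:PSSE1}--\eqref{eq:PSSE4} together with the preservation of FPCC then yields the $1$-step equivalence, and composing over the finitely many links proves this direction.

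The main obstacle is precisely this construction in the only if part. Unlike the if part, where any bipartite bisystem suffices, here the two readings of the interpolating bipartite object must be the \emph{canonical} bisystems, not merely some presentations of $\Lambda$ and $\Lambda'$. Since the canonical construction is intrinsic, built from the central-equivalence classes $C_i^{-l,1}$ and $C_i^{-1,l}$ in $\Omega^c_{-l,1}$ and $\Omega^c_{-1,l}$, one must track how these classes transform under the elementary conjugacy and check that the vertex sets and labelled edges of the bipartite system match the canonical ones at both parities, while the Follower--Predecessor Compatibility Condition is maintained along the whole chain. Showing that topological conjugacy acts on this central-equivalence data compatibly with a single bipartite splitting/amalgamation is the delicate heart of the argument and is where the bulk of the work lies.
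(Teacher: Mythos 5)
Your only-if strategy coincides with the paper's: factorize the conjugacy via Nasu's theorem into symbolic and bipartite conjugacies, show that a subshift is bipartite exactly when its canonical symbolic matrix bisystem is bipartite (Proposition \ref{prop:DM4.5}), and read off the $1$-step properly strong shift equivalence between the canonical bisystems of $\Lambda_{CD}$ and $\Lambda_{DC}$ from the bipartite block structure (Lemma \ref{lem:DM4.3}, Corollary \ref{cor:DM4.6}). You also correctly locate the delicate point, namely that the two contractions of the interpolating bipartite object must be the canonical bisystems and not merely some presentations; that is precisely what Proposition \ref{prop:DM4.5} together with the identification in Corollary \ref{cor:DM4.6} delivers.

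For the if part, however, your route has a genuine gap. You propose to assemble the data $\P_l,\Q_l,\X_l,\Y_l$ of a $1$-step equivalence into a bipartite $\lambda$-graph bisystem and then decode bi-infinite labelled paths two letters at a time. But Definition \ref{def:PSSE} imposes only size constraints and the commutation relations \eqref{eq:PSSE1}--\eqref{eq:PSSE4} on these matrices; it does not require them to satisfy conditions (ii)--(iv) of the definition of symbolic matrix bisystem in Section 4 (no zero rows or columns, no multiple symbols in an entry, column-resolving), so the assembled block matrices need not define a $\lambda$-graph bisystem at all, and the path-space and presented-subshift machinery of Section 5 is not available for your decoding argument. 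Moreover, decoding the shifted $D\cdot C$-blocks by $\phi^{-1}$ presupposes that each block $d_1c_2$ lies in the range of $\phi$, which by definition is only a bijection onto a \emph{subset} of $D\cdot C$; establishing this is exactly the content of Lemma \ref{lemDM4.10} and requires the specified-equivalence computation with $\M^-_{l-1,l}\M^+_{l,l+1}\M^+_{l+1,l+2}$. The paper's proof of Proposition \ref{prop:DM4.9} sidesteps both issues by never assembling an intermediate bisystem: it defines the $2$-block map $\varPhi(x_1x_2)=\phi^{-1}(d_1c_2)$ directly from the specifications and verifies that the image of every admissible word is admissible by commuting the $\P$'s and $\Q$'s past the $\X$'s and $\Y$'s. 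Your underlying sliding block code is the right one, but well-definedness and admissibility must be proved at the level of the matrices rather than through the unconstructed bipartite object.
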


We will first show the only if part of the theorem above.
In our proof,
we will use Nasu's factorization theorem for topological conjugacy 
between subshifts into bipartite codes (\cite{Nasu}).
We now introduce the notion of bipartite symbolic matrix bisystem.

\begin{definition}\label{def:bipartitesmbs}
A symbolic matrix bisystem $(\M^-,\M^+)$ over 
common alphabet $\Sigma$ is said to be {\it bipartite}
if there exist disjoint subsets 
$C,D \subset \Sigma$ with $\Sigma = C\sqcup D$
 and sequences $c(l), d(l)$ on $l \in \Zp$
 with 
$m(l) = c(l) + d(l), \, l \in \N$
 such that
 for each $l\in \Zp$, 
 there exist
\begin{enumerate}
\renewcommand{\theenumi}{\arabic{enumi}}
\renewcommand{\labelenumi}{\textup{(\theenumi)}}
\item a $c(l)\times d(l+1)$ matrix ${\P}_{l,l+1}$ over $C,$ 
\item a $d(l)\times c(l+1)$ matrix ${\Q}_{l,l+1}$ over $D,$
\item a $d(l)\times d(l+1)$ matrix $\X_{l,l+1} $ over   $D$ for $l$ being odd,
\item a $c(l)\times c(l+1)$ matrix $\X_{l,l+1} $ over   $D$ for $l$ being even,
\item a $c(l)\times c(l+1)$ matrix $\Y_{l,l+1} $ over   $C$ for $l$ being odd,
\item a $d(l)\times d(l+1)$ matrix $\Y_{l,l+1} $ over   $C$ for $l$ being even,
\end{enumerate}
satisfying the following equations: 
\begin{equation}
\M^+_{l,l+1}
=
\begin{bmatrix}
0 & {\P}_{l,l+1} \\
{\Q}_{l,l+1} & 0
\end{bmatrix},  \qquad  
\M^-_{l,l+1} 
=
\begin{cases}
{\begin{bmatrix}
 \Y_{l,l+1} & 0 \\
 0 & \X_{l,l+1} 
\end{bmatrix}}
& \text{ if } l \text{ is odd}, \\[14pt] 
{\begin{bmatrix}
 \X_{l,l+1} & 0 \\
 0 & \Y_{l,l+1} 
\end{bmatrix}}
& \text{ if } l \text{ is even}.
\end{cases} \label{eq:5.4MM}
\end{equation}
Under the assumption that $(\M^-, \M^+)$ is standard so that
$m(0) =1$,  we require
that $c(0) = d(0) =1$ and the above equalities
\eqref{eq:5.4MM} for $l =0$  mean  
\begin{equation}
\M^+_{0,1}
=
\begin{bmatrix}
{\Q}_{0,1}\,\,\,   {\P}_{0,1} 
\end{bmatrix},  \qquad  
\M^-_{0,1} 
=
\begin{bmatrix}
 {\X}_{0,1} \,\,\, {\Y}_{0,1}
\end{bmatrix}. \label{eq:5.40MM}
\end{equation}
\end{definition}
We thus see
\begin{lemma}\label{lem:DM4.3}
For a bipartite symbolic matrix bisystem  $(\M^-,\M^+)$ as above,
put
$$
{\P}_l = {\P}_{l,l+1},\quad
{\Q}_l = {\Q}_{l,l+1},\quad
\Y_l = \Y_{l,l+1},\quad
\X_l = \X_{l,l+1}
$$
and
$$
{\M}_{l,l+1}^{CD+} 
:= {\P}_{2l}{\Q}_{2l+1},\qquad
{\M}_{l,l+1}^{DC+} 
:= {\Q}_{2l}{\P}_{2l+1},
$$
$$
\M_{l,l+1}^{CD-} 
:\overset{\kappa}{\simeq} \X_{2l}\Y_{2l+1}, \qquad 
\M_{l,l+1}^{DC-} 
:\overset{\kappa}{\simeq} \Y_{2l}\X_{2l+1}.  
$$
Then the both pairs 
$(\M^{CD-}, \M^{CD+})$
and
$(\M^{DC-},\M^{DC+})$
are
symbolic matrix bisystems over alphabets $C\cdot D$ and $D \cdot C$
respectively and
 they are properly strong shift equivalent in 1-step,
where
$
\M_{l,l+1}^{CD-} 
:\overset{\kappa}{\simeq} \X_{2l}\Y_{2l+1} 
$
means that
$
\M_{l,l+1}^{CD-}(i,j)
$
is defined by 
$ 
\kappa(\X_{2l}\Y_{2l+1}(i,j)) \in C\cdot D
$ for all $i=1,2,\dots,m(l),\, j= 1,2,\dots,m(l+1)$,
and
$\M_{l,l+1}^{DC-} 
:\overset{\kappa}{\simeq} \Y_{2l}\X_{2l+1}
$ 
is similarly defined.  
\end{lemma}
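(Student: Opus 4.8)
The plan is to read the required data straight off the bipartite structure \eqref{eq:5.4MM} and then verify the defining relations of Definition \ref{def:PSSE} by block-matrix bookkeeping. First I would take the two alphabets to be $C\cdot D$ and $D\cdot C$, the specifications $\varphi,\phi$ to be the respective identity maps, and the matrices $\P_l,\Q_l,\X_l,\Y_l$ to be exactly those furnished by the bipartite hypothesis. With these choices the relations \eqref{eq:PSSE1} together with $\M^{CD-}_{l,l+1}\overset{\kappa}{\simeq}\X_{2l}\Y_{2l+1}$ and $\M^{DC-}_{l,l+1}\overset{\kappa}{\simeq}\Y_{2l}\X_{2l+1}$ hold by the very definitions of $\M^{CD\pm}$ and $\M^{DC\pm}$; only the intertwining identities \eqref{eq:PSSE3}--\eqref{eq:PSSE4} remain to be produced, and once the two pairs are known to be symbolic matrix bisystems this yields $(\M^{CD-},\M^{CD+})\underset{1-\pr}{\approx}(\M^{DC-},\M^{DC+})$.

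The key observation is that these intertwining identities are nothing but the local property \eqref{eq:MLocal} of the ambient bipartite bisystem $(\M^-,\M^+)$ read off block by block. Writing $\M^-_{l,l+1}\M^+_{l+1,l+2}\overset{\kappa}{\simeq}\M^+_{l,l+1}\M^-_{l+1,l+2}$ with the block forms \eqref{eq:5.4MM}, the products are block-antidiagonal, and matching the two off-diagonal blocks gives, for $l$ even, $\X_{l}\P_{l+1}\overset{\kappa}{\simeq}\P_{l}\X_{l+1}$ and $\Y_{l}\Q_{l+1}\overset{\kappa}{\simeq}\Q_{l}\Y_{l+1}$ (with $l=2m$ this is exactly \eqref{eq:PSSE4}), while for $l$ odd it gives $\Y_{l}\P_{l+1}\overset{\kappa}{\simeq}\P_{l}\Y_{l+1}$ and $\X_{l}\Q_{l+1}\overset{\kappa}{\simeq}\Q_{l}\X_{l+1}$ (with $l=2m+1$ this is exactly \eqref{eq:PSSE3}). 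So the full content of \eqref{eq:PSSE3}--\eqref{eq:PSSE4} is merely the local property of the bipartite bisystem split into its even and odd levels.

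It then remains to check that each pair satisfies the five conditions defining a symbolic matrix bisystem. Conditions (i)--(iv) (sizes, no empty rows or columns, no repeated symbols within a component, and column-resolving) transfer routinely from the corresponding properties of the blocks $\P,\Q,\X,\Y$ inherited from $(\M^-,\M^+)$, using that a word of $\M^{CD+}_{l,l+1}=\P_{2l}\Q_{2l+1}$ is a product $cd$ with $c\in C$, $d\in D$ and similarly for the other matrices, so distinctness is preserved. The substantive point is condition (v): I would verify $\M^{CD-}_{l,l+1}\M^{CD+}_{l+1,l+2}\overset{\kappa}{\simeq}\M^{CD+}_{l,l+1}\M^{CD-}_{l+1,l+2}$ by substituting the definitions to reach $\X_{2l}\Y_{2l+1}\P_{2l+2}\Q_{2l+3}$ and then applying the four relations \eqref{eq:PSSE3}--\eqref{eq:PSSE4} in succession — $\Y_{2l+1}\P_{2l+2}\to\P_{2l+1}\Y_{2l+2}$, then $\X_{2l}\P_{2l+1}\to\P_{2l}\X_{2l+1}$, then $\Y_{2l+2}\Q_{2l+3}\to\Q_{2l+2}\Y_{2l+3}$, then $\X_{2l+1}\Q_{2l+2}\to\Q_{2l+1}\X_{2l+2}$ — until the product becomes $\P_{2l}\Q_{2l+1}\X_{2l+2}\Y_{2l+3}$, which is $\M^{CD+}_{l,l+1}\M^{CD-}_{l+1,l+2}$. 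The computation for $(\M^{DC-},\M^{DC+})$ is the mirror one under $\P\leftrightarrow\Q$, $\X\leftrightarrow\Y$, under which the relation set \eqref{eq:PSSE3}--\eqref{eq:PSSE4} is invariant. The boundary level $l=0$ uses the row forms \eqref{eq:5.40MM} in place of \eqref{eq:5.4MM}.

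The hard part will be the careful bookkeeping of the exchanging specification $\kappa$ along this chain of moves: each elementary relation swaps one adjacent pair of factors and carries its own $\kappa$, and one must confirm that the composite of these swaps acting on a length-four word $cdc'd'$ reproduces precisely the single exchanging specification of condition (v) on the common alphabet $C\cdot D$, viewing each $cd$ as one symbol. This requires tracking at every step which factor originates in $C$ and which in $D$, and reconciling the internal $\kappa$ built into the definition of $\M^{CD-}$; once these specifications are matched the remaining manipulations are purely formal.
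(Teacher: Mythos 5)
Your proposal is correct and follows essentially the same route as the paper: you extract the block relations $\X_l\P_{l+1}\overset{\kappa}{\simeq}\P_l\X_{l+1}$, $\Y_l\Q_{l+1}\overset{\kappa}{\simeq}\Q_l\Y_{l+1}$ (for $l$ even) and $\Y_l\P_{l+1}\overset{\kappa}{\simeq}\P_l\Y_{l+1}$, $\X_l\Q_{l+1}\overset{\kappa}{\simeq}\Q_l\X_{l+1}$ (for $l$ odd) from the local property of the ambient bipartite bisystem in block form, identify these with \eqref{eq:PSSE3}--\eqref{eq:PSSE4}, and then verify condition (v) for $(\M^{CD-},\M^{CD+})$ and $(\M^{DC-},\M^{DC+})$ by the identical four-step chain of adjacent swaps that the paper uses. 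The $\kappa$-bookkeeping you flag as the remaining work is handled in the paper by the same inspection of the composite specified equivalences, so no additional idea is needed.
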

\begin{proof}
By the relations
$
\M^-_{l,l+1} \M^+_{l+1,l+2} \overset{\kappa}{\simeq} \M^+_{l,l+1}\M^-_{l+1,l+2}
$
for $l\in \Zp$,
we have for $l$ being odd, 
\begin{equation*}
\begin{bmatrix}
 \Y_{l} & 0 \\
 0 & \X_{l} 
\end{bmatrix}
\begin{bmatrix}
0 & \P_{l+1} \\
 \Q_{l+1} & 0
\end{bmatrix}
\overset{\kappa}{\simeq}
\begin{bmatrix}
0 & \P_{l} \\
 \Q_{l} & 0
\end{bmatrix}
\begin{bmatrix}
 \X_{l+1} & 0 \\
 0 & \Y_{l+1} 
\end{bmatrix}
\end{equation*}
so that 
\begin{equation}
\Y_l \P_{l+1} \overset{\kappa}{\simeq} \P_l \Y_{l+1},
\qquad
\X_l \Q_{l+1} \overset{\kappa}{\simeq} \Q_l \X_{l+1}. \label{eq:5.51}
\end{equation}
For  $l$ being even, 
\begin{equation*}
\begin{bmatrix}
 \X_{l} & 0 \\
 0 & \Y_{l} 
\end{bmatrix}
\begin{bmatrix}
0 & \P_{l+1} \\
 \Q_{l+1} & 0
\end{bmatrix}
\overset{\kappa}{\simeq}
\begin{bmatrix}
0 & \P_{l} \\
 \Q_{l} & 0
\end{bmatrix}
\begin{bmatrix}
 \Y_{l+1} & 0 \\
 0 & \X_{l+1} 
\end{bmatrix}
\end{equation*}
so that 
\begin{equation}
\X_l \P_{l+1} \overset{\kappa}{\simeq} \P_l \X_{l+1},
\qquad
\Y_l \Q_{l+1} \overset{\kappa}{\simeq} \Q_l \Y_{l+1}. \label{eq:5.52}
\end{equation}
We then have 
\begin{align*}
\M^{CD-}_{l,l+1}\cdot\M^{CD+}_{l+1,l+2}
\simeq & \X_{2l}\Y_{2l+1}{\P}_{2l+2}{\Q}_{2l+3} \\
\simeq & \X_{2l}{\P}_{2l+1} \Y_{2l +2}{\Q}_{2l+3} \\
\simeq & \P_{2l} \X_{2l+1} \Q_{2l+2} \Y_{2l+3} \\
\simeq & \P_{2l} \Q_{2l+1} \X_{2l+2} \Y_{2l+3} \\
\simeq & \M^{CD+}_{l,l+1}\cdot\M^{CD-}_{l+1,l+2}, \\
\intertext{and}
\M^{DC+}_{l,l+1}\cdot\M^{DC-}_{l+1,l+2}
\simeq & \Y_{2l} \X_{2l+1}{\Q}_{2l+2}{\P}_{2l+3} \\
\simeq & \Y_{2l}{\Q}_{2l+1} \X_{2l +2}{\P}_{2l+3} \\
\simeq & \Q_{2l} \Y_{2l+1} \P_{2l+2} \X_{2l+3} \\
\simeq & \Q_{2l} \P_{2l+1} \Y_{2l+2} \X_{2l+3} \\
\simeq & \M^{DC-}_{l,l+1}\cdot\M^{DC+}_{l+1,l+2}.
\end{align*}
By looking at  the above specified equivalences, 
we know that 
\begin{align*}
\M^{CD-}_{l,l+1}\cdot\M^{CD+}_{l+1,l+2}
\overset{\kappa}{\simeq}
& \M^{CD+}_{l,l+1}\cdot\M^{CD-}_{l+1,l+2}, \\
\intertext{and}
\M^{DC+}_{l,l+1}\cdot\M^{DC-}_{l+1,l+2}
\overset{\kappa}{\simeq}
& \M^{DC-}_{l,l+1}\cdot\M^{DC+}_{l+1,l+2}.
\end{align*}
Hence
both pairs 
$(\M^{CD-},\M^{CD+})$
and
$(\M^{DC-},\M^{DC+})$
are
symbolic matrix bisystems.
The corresponding equations to 
\eqref{eq:PSSE3} and \eqref{eq:PSSE4}
come from \eqref{eq:5.51}, \eqref{eq:5.52}
 so that 
 they are properly strong shift equivalent in 1-step.
\end{proof}

\begin{definition}
A $\lambda$-graph bisystem
$\LGBS$
over common alphabet
$\Sigma$ is said to be {\it bipartite}
if
there exist disjoint subsets $C,D \subset \Sigma$ such that
$\Sigma = C\cup D$ and disjoint subsets 
$V^C_l,V^D_l \subset V_l$ for each $l \in \Zp$
such that
$V^C_l \cup V^D_l = V_l$ and
\begin{enumerate}
\renewcommand{\theenumi}{\arabic{enumi}}
\renewcommand{\labelenumi}{\textup{(\theenumi)}}
\item
for each $e^+ \in E^+_{l,l+1},$
\begin{align*}
\lambda^+(e^+) \in C
\quad\text{ if and only if }&\quad 
s(e^+) \in  V^C_l, \quad t(e^+) \in V^D_{l+1}, \\ 
\lambda^+(e^+) \in D
\quad\text{ if and only if }& \quad
s(e^+) \in  V^D_l, \quad t(e^+) \in V^C_{l+1},
\end{align*}
\item
for each $e^- \in E^-_{l+1,l},$
\begin{align*}
\lambda^-(e^-) \in C
\quad\text{ if and only if }&\quad 
{\begin{cases}
s(e^-) \in  V^C_{l+1}, \quad t(e^-) \in V^C_{l} & \text{ for } l \text{ being odd,} \\ 
s(e^-) \in  V^D_{l+1}, \quad t(e^-) \in V^D_{l} & \text{ for } l \text{ being even,}
\end{cases}} \\
\lambda^-(e^-) \in D
\quad\text{ if and only if }&\quad 
{\begin{cases}
s(e^-) \in  V^D_{l+1}, \quad t(e^-) \in V^D_{l} & \text{ for } l \text{ being odd,} \\ 
s(e^-) \in  V^C_{l+1}, \quad t(e^-) \in V^C_{l} & \text{ for } l \text{ being even.}
\end{cases}} 
\end{align*}
\end{enumerate}
\end{definition}

\begin{proposition}\label{lem:DM4.4}
A symbolic matrix bisystem is bipartite 
if and only if the associated $\lambda$-graph bisystem
 is bipartite.
\end{proposition}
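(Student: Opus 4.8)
The plan is to treat this as a direct translation between the block decomposition of the matrices $\M^\pm_{l,l+1}$ and the combined vertex-partition/edge-labeling data of the graph. The bridge is the fact, already recorded in Section 4, that a symbolic matrix bisystem is exactly a matrix presentation of a $\lambda$-graph bisystem: the entry $\M^+_{l,l+1}(i,j)$ is the formal sum of the labels $\alpha$ of downward edges $e^+$ with $s(e^+)=v_i^l,\ t(e^+)=v_j^{l+1}$, while $\M^-_{l,l+1}(i,j)$ is the formal sum of the labels $\beta$ of upward edges $e^-$ with $t(e^-)=v_i^l,\ s(e^-)=v_j^{l+1}$. Since bipartiteness of a symbolic matrix bisystem is preserved under the permutation isomorphisms of Section 4, I may at each level $l\ge 1$ reorder $V_l$ so that all vertices of $V_l^C$ precede all vertices of $V_l^D$; under such a compatible ordering the row and column index sets split into a first block of size $c(l)$ and a second of size $d(l)$, and the block form of $\M^\pm_{l,l+1}$ and the graph's edge conditions become literally the same data.

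First I would prove that bipartiteness of the $\lambda$-graph bisystem forces the matrices into the shape \eqref{eq:5.4MM}. Order the vertices as above. Condition (1) of the bipartite $\lambda$-graph bisystem definition says an edge of $E^+_{l,l+1}$ exists only from $V_l^C$ to $V_{l+1}^D$ (then labeled in $C$) or from $V_l^D$ to $V_{l+1}^C$ (then labeled in $D$), with no $C\to C$ or $D\to D$ edges. Reading this off $\M^+_{l,l+1}$ yields the anti-diagonal block matrix with upper-right block $\P_{l,l+1}$ over $C$ of size $c(l)\times d(l+1)$ and lower-left block $\Q_{l,l+1}$ over $D$ of size $d(l)\times c(l+1)$, which is the first equation of \eqref{eq:5.4MM}. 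For $\M^-$ one uses condition (2): every edge of $E^-_{l+1,l}$ is type-preserving (it joins $V^C$ to $V^C$ or $V^D$ to $V^D$), so $\M^-_{l,l+1}$ is block-diagonal, and the parity clause determines which diagonal block carries which label set. For $l$ odd the $C\to C$ block is over $C$ and the $D\to D$ block over $D$, giving $\operatorname{diag}(\Y,\X)$; for $l$ even the assignment is swapped, giving $\operatorname{diag}(\X,\Y)$. These are precisely the two cases of the second equation of \eqref{eq:5.4MM}.

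The converse is the same computation read backwards: given the block decompositions \eqref{eq:5.4MM}, declare $V_l^C$ (resp. $V_l^D$) to be the set of vertices indexing the first $c(l)$ (resp. last $d(l)$) rows and columns, so that $V_l = V_l^C \sqcup V_l^D$. Vanishing of the diagonal blocks of $\M^+_{l,l+1}$ together with the fact that $\P_{l,l+1}$ has entries in $C$ and $\Q_{l,l+1}$ in $D$ reproduces condition (1), and the block-diagonal form of $\M^-_{l,l+1}$ with its parity-dependent identification of the $\X$- and $\Y$-blocks reproduces condition (2). The only boundary point needing separate care is the standard level $l=0$, where $m(0)=1$: here the single top vertex must be counted as lying in both $V_0^C$ and $V_0^D$ (with $c(0)=d(0)=1$), and the row-vector identities \eqref{eq:5.40MM} are matched by the same bookkeeping, the top vertex emitting both the $\Q_{0,1},\P_{0,1}$ part of $\M^+_{0,1}$ and the $\X_{0,1},\Y_{0,1}$ part of $\M^-_{0,1}$.

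I do not expect a genuine obstacle; the content is entirely bookkeeping. The two points to keep straight are the direction convention for $E^-$ (its edges point upward, so in $\M^-_{l,l+1}(i,j)$ the row index $i$ labels the terminal vertex at level $l$ and the column index $j$ the source vertex at level $l+1$), and the parity-dependent swap between label set and diagonal block in condition (2), which is exactly what produces the alternation between $\operatorname{diag}(\Y,\X)$ and $\operatorname{diag}(\X,\Y)$ in \eqref{eq:5.4MM}. Once a compatible ordering of the vertex sets is fixed across all levels, both implications reduce to comparing the supports and label sets of the matrix blocks against the graph's edge conditions.
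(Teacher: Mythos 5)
Your proposal is correct and follows essentially the same route as the paper: both arguments are a direct bookkeeping translation between the block form \eqref{eq:5.4MM} (after ordering each $V_l$ so that $V_l^C$ precedes $V_l^D$) and conditions (1)--(2) of the bipartite $\lambda$-graph bisystem definition, with the paper writing out the graph-to-matrix direction via the explicit construction of $\P_{l,l+1},\Q_{l,l+1},\X_{l,l+1},\Y_{l,l+1}$ and declaring the converse clear. Your extra care with the direction convention for $E^-$, the parity swap, and the level-$0$ convention $c(0)=d(0)=1$ is consistent with the paper's treatment.
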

\begin{proof}
It is clear that
a bipartite symbolic matrix bisystem 
gives rise to a bipartite
$\lambda$-graph bisystem.
Conversely,
suppose that
a $\lambda$-graph bisystem 
$\LGBS$
is bipartite.
Let $c(l)$ and $d(l)$ be the cardinalities of the sets
$V^D_l$ and $V^C_l$ respectively.
We may identify
$V^D_l$ and $V^C_l$ with
the sets
$\{1,2, \dots,c(l)\}$ and
$\{1,2, \dots,d(l)\}$ respectively.
For $i \in V^C_l, j \in V^D_{l+1}$, 
put 
$$
{\P}_{l,l+1}(i,j) = \lambda^+(e^+_1) + \cdots + \lambda^+(e^+_{n_p})
$$
where
$e^+_k\in E^+_{l,l+1}, k=1, 2, \dots,n_p$ are all edges of
$E^+_{l,l+1}$ satisfying
$s(e^+_k)= i, t(e^+_k) = j,$
so that ${\P}_{l,l+1}(i,j)\in {\frak S}_C.$
Similarly we define
for $i \in V^D_l, j \in V^C_{l+1}$, 
put 
$$
{\Q}_{l,l+1}(i,j) = \lambda^+(f^+_1) + \cdots + \lambda^+(f^+_{n_q})
$$
where
$f^+_k\in E^+_{l,l+1}, k=1, 2, \dots,n_q$ are  all edges of
$E^+_{l,l+1}$ satisfying
$s(f^+_k)= i, t(f^+_k) = j,$
so that $\Q_{l,l+1}(i,j) \in {\frak S}_D.$
For $i \in V^C_l, j \in V^C_{l+1}$ with $l$ being odd, 
put 
$$
\Y_{l,l+1}(i,j) = \lambda^-(e^-_1) + \cdots + \lambda^-(e^-_{n_y})
$$
where
$e^-_k\in E^-_{l+1,l}, k=1, 2, \dots,n_y$ are  all edges of
$E^-_{l+1,l}$ satisfying
$s(e^-_k)= j, t(e^-_k) =i,$
so that $\Y_{l,l+1}(i,j)\in {\frak S}_C.$
For $i \in V^C_l, j \in V^C_{l+1}$ with $l$ being even, 
put 
$$
\X_{l,l+1}(i,j) = \lambda^-(f^-_1) + \cdots + \lambda^-(f^-_{n_x})
$$
where
$f^-_k\in E^-_{l+1,l}, k=1, 2, \dots,n_x$ are all edges of
$E^-_{l+1,l}$ satisfying
$s(f^-_k)= j, t(f^-_k) =i,$
so that $\X_{l,l+1}(i,j)\in {\frak S}_D.$
For $i \in V^D_l, j \in V^C_{l+1}$,
we similarly define 
 $\Y_{l,l+1}(i,j)\in {\frak S}_C$ for $l$ being even, 
and  $\X_{l,l+1}(i,j)\in {\frak S}_D$ for $l$ being odd.
Let $(\M^-, \M^+)$ be the corresponding symbolic matrix bisystem for $\LGBS.$
It is now clear that 
they satisfy the equalities \eqref{eq:5.4MM}.
Then
the symbolic matrix bisystem $(\M^-, \M^+)$ for
$\LGBS$
is  bipartite. 
\end{proof}

Nasu introduced the notion of bipartite subshift in \cite{Nasu} and \cite{NasuMemoir}.
 A subshift $\Lambda$ over alphabet $\Sigma$ is said to be
 bipartite if there exist disjoint subsets
 $C, D \subset \Sigma$ with $\Sigma = C\sqcup D$ such that
 any $(x_n)_{n \in \Z} \in \Lambda$ is either
$$
 x_n \in C \text{ and } x_{n+1} \in D \text{ for all } n \in \Z
\quad
\text{  or }
\quad
 x_n \in D \text{ and } x_{n+1} \in C \text{ for all } n \in \Z.
$$
Let
$\Lambda^{(2)}$ be the 2-higher power shift for $\Lambda$ 
that is defined by the subshift 
\begin{equation*}
\Lambda^{(2)} = \{ (x_{[2n, 2n+1]})_{n \in \Z} \in (\Sigma^2)^\Z 
\mid (x_n)_{n \in \Z} \in \Lambda\}
\end{equation*}
over alphabet $\Sigma^2$ where
$x_{[2n, 2n+1]} = (x_{2n}, x_{2n+1}), n \in \Z.$
Put 
\begin{align*}
\Lambda_{CD} & = 
  \{ (c_id_i)_{i \in \Z} \in \Lambda^{(2)} \mid c_i \in C, d_i \in D \},\\
\Lambda_{DC} & =
  \{ (d_ic_{i+1})_{i \in \Z} \in \Lambda^{(2)} \mid  c_i \in C, d_i \in D \}.
\end{align*}
They are subshifts over alphabets $C\cdot D$ and $D\cdot C$ 
respectively.
Hence $\Lambda^{(2)}$ is partitioned into the two subshifts 
$\Lambda_{CD}$ and $\Lambda_{DC}$.

\begin{proposition}\label{prop:DM4.5}
A subshift $\Lambda$ is bipartite if and only if 
its canonical symbolic matrix bisystem $(\M^-, \M^+)$ is bipartite.
\end{proposition}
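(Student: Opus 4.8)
The plan is to translate the global colour-alternation property that defines a bipartite subshift into a two-colouring of the vertex sets $V_l$ of the canonical $\lambda$-graph bisystem, and to run the translation in both directions. By Proposition \ref{lem:DM4.4} bipartiteness of a symbolic matrix bisystem is equivalent to bipartiteness of its associated $\lambda$-graph bisystem, so I would carry out the argument at the level of the labelled Bratteli diagrams $({\frak L}^-_\Lambda, {\frak L}^+_\Lambda)$, where edges and labels can be read off directly, and then invoke Proposition \ref{lem:DM4.4} to return to the block form \eqref{eq:5.4MM} of the matrices.

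For the direction asserting that a bipartite canonical bisystem gives a bipartite subshift (the ``if'' part), I would use that $\Lambda = \Lambda_{{\frak L}^+_\Lambda}$ is presented by reading labels along paths in ${\frak L}^+_\Lambda$ (Lemma \ref{lem:5.1} and Proposition \ref{prop:5.4}). The anti-diagonal form of $\M^+_{l,l+1}$ in \eqref{eq:5.4MM} says precisely that every $C$-labelled edge runs from $V^C_l$ to $V^D_{l+1}$ and every $D$-labelled edge from $V^D_l$ to $V^C_{l+1}$. Hence along any labelled path the terminal vertices alternate between the two colour classes, so the labels alternate between $C$ and $D$; every admissible word of $\Lambda$ therefore has consecutive symbols of opposite colour, which is exactly the alternation condition defining a bipartite subshift.

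For the converse (the ``only if'' part), suppose $\Lambda$ is bipartite with $\Sigma = C \sqcup D$. I would colour the vertices by means of the central words. Since $V^+_l = \Omega^c_{-1,l}$, a vertex $v^l_i = C^{-1,l}_i$ is the class determined by the word set $W_{-1,l}(x)$, whose completion words occupy positions $0,\dots,l-1$. Bipartiteness forces each such word to carry, at position $0$, the colour opposite to that of $x_{-1}$; this colour depends only on the class, giving a well-defined splitting $V_l = V^C_l \sqcup V^D_l$ (the singleton $V_0$ being treated by the degenerate convention $c(0)=d(0)=1$ and \eqref{eq:5.40MM}). Checking that $\M^+_{l,l+1}$ acquires anti-diagonal form is then direct: an edge with $\alpha C^{l+1}_j \subset C^l_i$ has $\alpha \in C$ exactly when it passes from $V^C_l$ to $V^D_{l+1}$. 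For $\M^-$ I would exploit the other presentation $V^-_l = \Omega^c_{-l,1}$, identified with $V^+_l$ through $x \mapsto \sigma^{1-l}(x)$.

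The step I expect to be the main obstacle is producing the parity-dependent placement of $\X$ and $\Y$ in the block form of $\M^-_{l,l+1}$. The identification $x \in V^-_l \leftrightarrow \sigma^{1-l}(x) \in V^+_l$ changes the phase of the alternation by $l-1$, so the colour a vertex receives from the ${\frak L}^-$ side agrees with the one from the ${\frak L}^+$ side only up to the parity of $l$; this is what forces $\X$ (over $D$) and $\Y$ (over $C$) to exchange their diagonal positions according as $l$ is even or odd. I would make this precise by tracking, for an edge $e^-$ with $C^{l+1}_j\,\beta \subset C^l_i$, whether $\beta \in C$ or $\beta \in D$ against the colours of $s(e^-)$ and $t(e^-)$, verifying the two cases of condition (2) in the definition of a bipartite $\lambda$-graph bisystem, and then applying Proposition \ref{lem:DM4.4} to recover the required block structure \eqref{eq:5.4MM} of the symbolic matrix bisystem.
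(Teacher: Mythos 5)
Your reduction to the $\lambda$-graph bisystem via Proposition \ref{lem:DM4.4} and your argument for the ``if'' direction follow the paper's route. The problem is in the ``only if'' direction: the vertex colouring you propose is the wrong one of the two available boundary colours. You colour a vertex $v_i^l=C_i^{-1,l}$ by the colour its completion words carry at position $0$, equivalently by the colour of $x_{-1}$. But for an edge $e^+$ labelled $\alpha$ given by $\alpha C_j^{l+1}\subset C_i^l$, the symbol $\alpha$ is appended at the \emph{right} end of the central gap: the target $C_j^{l+1}=[x]_{-1,l+1}$ and the source $C_i^l=[x(\mu,\alpha)]_{-1,l}$ share the same coordinate $x_{-1}$, hence the same colour at position $0$. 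Under your splitting every edge of $E^+_{l,l+1}$ therefore joins two vertices of the \emph{same} colour, $\M^+_{l,l+1}$ comes out block-diagonal rather than anti-diagonal, and your claimed verification of condition (1) of a bipartite $\lambda$-graph bisystem (equivalently the form \eqref{eq:5.4MM} of $\M^+$) fails. Symmetrically, your colouring alternates along $E^-$ edges, whereas condition (2) requires $s(e^-)$ and $t(e^-)$ to lie in the same colour class.

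The correct splitting is governed by the right-hand boundary of the gap: the paper places $[x]_{-1,l}$ in $V_l^C$ or $V_l^D$ according to the colour of $x_l$ (in the identified picture $V_l^-=\Omega^c_{-l,1}$, the colour of $x_1$), via the bookkeeping $V_l^{ZW\pm}$ with $Z=W$ for $l$ odd and $Z\ne W$ for $l$ even. For the source of an $\alpha$-labelled $e^+$ edge this coordinate is $\alpha$ itself, and for the target it is $x_{l+1}$, which has the opposite colour because $\Lambda$ is bipartite; that is exactly what makes $\M^+$ anti-diagonal, while along $E^-$ edges the coordinate $x_1$ is unchanged, giving the block-diagonal $\M^-$. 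Your remark about the phase shift $l-1$ under $x\mapsto\sigma^{1-l}(x)$ does correctly identify the source of the parity-dependent placement of $\X$ and $\Y$, but that refinement cannot rescue the argument once the basic colouring is taken from the wrong end of the gap.
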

\begin{proof}
It is clear that
a bipartite  symbolic matrix bisystem yields a bipartite $\lambda$-graph bisystem, that  gives rise to 
a bipartite subshift by its construction of the subshift from the 
$\lambda$-graph bisystem.

Suppose that
$\Lambda$ 
is bipartite with respect to alphabets $C,D$.
It suffices to show that its canonical $\lambda$-graph bisystem
$\LGBS$ is bipartite.
Let us denote by 
${\frak L}^- = (V^-, E^-, \lambda^-)$ and
${\frak L}^+ = (V^+, E^+, \lambda^+).$
Let $[x]_{k,l}\in \Omega_{k.l}^c$ 
denote the $(k,l)$-central equivalence class of $x \in \Lambda.$
Define for $Z, W =C\text{ or } D,$ 
\begin{align*}
V_l^{ZW-}= & \{[x]_{-l,1}\in V_l^- \mid x_{-l} \in Z, \, x_1 \in W \}, \\
V_l^{ZW+}= & \{[x]_{-1,l}\in V_l^+ \mid x_{-1} \in Z, \, x_l \in W \}.
\end{align*}
Since $\Lambda$ is bipartite,
we know that 
$V_l^{CD-}, \, V_l^{DC-}, \, V_l^{CD+}, \, V_l^{DC+} $ are all empty if $l$ is odd,
whereas
$V_l^{CC-}, \, V_l^{DD-}, \, V_l^{CC+}, \, V_l^{DD+} $ are all empty if $l$ is even
so that  
\begin{align*}
V_l^- =&
{\begin{cases}
V_l^{CC-} \cup V_l^{DD-} & \text{ if } l \text{ is odd,} \\
V_l^{DC-} \cup V_l^{CD-} & \text{ if } l \text{ is even,}  
\end{cases}} \\
V_l^+ =&
{\begin{cases}
V_l^{CC+} \cup V_l^{DD+} & \text{ if } l \text{ is odd,} \\
V_l^{DC+} \cup V_l^{CD+} & \text{ if } l \text{ is even.}  
\end{cases}} \\
\end{align*}
Let $\pi: x \in V_l^+ \longrightarrow \sigma^{l-1}(x)\in V_l^-$ 
be the bijection that satisfies for $Z, W = C \text{ or } D,$
$\pi(V_l^{ZW+}) = V_l^{ZW-}, l \in \Zp.$
We identify 
$V_l^+$ with $V_l^-$ through the map 
$\pi: V_l^+ \longrightarrow  V_l^-.$
We set 
$V_l := V_l^-$ and
$V_l^C:=V_l^{ZC-},\, V_l^D:=V_l^{ZD-}$ for $Z =C \text{ or } D.$ 
 Then we have
$$
V_l = V_l^C \sqcup V_l^D \quad \text{ for all } l \in \N
\quad \text{ and }
\quad V_0 = V_0^C = V_0^D =\{ \emptyset\}.
$$
We regard $V_0,  V_0^C, V_0^D$ as all singletons. 
For each $e^- \in E^-_{l+1,l},$
it is easy to see that 
\begin{align*}
\lambda^-(e^-) \in C
\quad\text{ if and only if }&\quad 
{\begin{cases}
s(e^-) \in  V^{DC-}_{l+1}, \quad t(e^-) \in V^{CC-}_{l} & \text{ for } l \text{ being odd,} \\ 
s(e^-) \in  V^{DD-}_{l+1}, \quad t(e^-) \in V^{CD-}_{l} & \text{ for } l \text{ being even,}
\end{cases}} \\
\lambda^-(e^-) \in D
\quad\text{ if and only if }&\quad 
{\begin{cases}
s(e^-) \in  V^{CD-}_{l+1}, \quad t(e^-) \in V^{DD-}_{l} & \text{ for } l \text{ being odd,} \\ 
s(e^-) \in  V^{CC-}_{l+1}, \quad t(e^-) \in V^{DC-}_{l} & \text{ for } l \text{ being even.}
\end{cases}} 
\end{align*}
and
for each $e^+ \in E^+_{l,l+1}$
\begin{align*}
\lambda^+(e^+) \in C
\quad\text{ if and only if }&\quad 
s(e^+) \in  V^{ZC+}_l, \quad t(e^+) \in V^{ZD+}_{l+1}, \\ 
\lambda^+(e^+) \in D
\quad\text{ if and only if }& \quad
s(e^+) \in  V^{WD+}_l, \quad t(e^+) \in V^{WC+}_{l+1}
\end{align*}
for $Z, W = C \text{ or } D.$
Therefore 
the $\lambda$-graph bisystem
$\LGBS$ is bipartite.
\end{proof}

Let $\Lambda$ be a bipartite subshift over $\Sigma$
with respect to alphabets
$C, D.$
As in Lemma \ref{lem:DM4.3}, we have
two symbolic matrix bisystems
$(\M^{CD-},\M^{CD+})$
and
$(\M^{DC-},\M^{DC+})$
over alphabets
$C\cdot D$ and
$D\cdot C$
from the bipartite canonical symbolic matrix bisystem 
$(\M^-_{\Lambda},\M^+_{\Lambda})$ for
$\Lambda$ respectively.
They are naturally identified with
the canonical symbolic matrix bisystems for the subshifts $\Lambda_{CD}$
and $\Lambda_{DC}$ respectively.
By Lemma \ref{lem:DM4.3}, we thus see a corollary below of Proposition \ref{prop:DM4.5}.
\begin{corollary}\label{cor:DM4.6}
For a bipartite subshift $\Lambda$ 
with respect to alphabets $C,D$,
we have a properly strong shift equivalence in 1-step:
$$
(\M^{CD-},\M^{CD+}) 
 \underset{1-\pr}{\approx} 
(\M^{DC-},\M^{DC+}).
$$
\end{corollary}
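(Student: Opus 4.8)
The plan is to obtain this corollary as an immediate synthesis of Proposition \ref{prop:DM4.5} and Lemma \ref{lem:DM4.3}, so that no genuinely new computation is required at this level. First I would invoke Proposition \ref{prop:DM4.5}: since $\Lambda$ is bipartite with respect to the alphabet partition $\Sigma = C \sqcup D$, its canonical symbolic matrix bisystem $(\M^-_\Lambda, \M^+_\Lambda)$ is itself bipartite. Concretely this means that, after identifying the vertex sets $V_l$ with $V_l^C \sqcup V_l^D$ as in the proof of that proposition, each $\M^\pm_{\Lambda, l, l+1}$ acquires the block form \eqref{eq:5.4MM}, which in turn produces the four families of component matrices $\P_l, \Q_l, \X_l, \Y_l$ recorded in Lemma \ref{lem:DM4.3}.

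Next I would feed this bipartite canonical symbolic matrix bisystem directly into Lemma \ref{lem:DM4.3}. Setting $\M^{CD+}_{l,l+1} = \P_{2l}\Q_{2l+1}$, $\M^{DC+}_{l,l+1} = \Q_{2l}\P_{2l+1}$ and $\M^{CD-}_{l,l+1} \overset{\kappa}{\simeq} \X_{2l}\Y_{2l+1}$, $\M^{DC-}_{l,l+1} \overset{\kappa}{\simeq} \Y_{2l}\X_{2l+1}$ exactly as in the lemma, one obtains two symbolic matrix bisystems over the product alphabets $C \cdot D$ and $D \cdot C$. Lemma \ref{lem:DM4.3} then already asserts, and verifies, that $(\M^{CD-}, \M^{CD+})$ and $(\M^{DC-}, \M^{DC+})$ are properly strong shift equivalent in $1$-step; the specifications $\varphi, \phi$ of Definition \ref{def:PSSE} are supplied by the inclusions of $\Sigma_\M$ into $C\cdot D$ and $D\cdot C$, while the intertwining identities \eqref{eq:PSSE1}--\eqref{eq:PSSE4} are precisely the commutation relations \eqref{eq:5.51} and \eqref{eq:5.52} extracted from the local property of the bipartite $\lambda$-graph bisystem. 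Hence the displayed equivalence $(\M^{CD-},\M^{CD+}) \underset{1-\pr}{\approx} (\M^{DC-},\M^{DC+})$ follows at once.

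The only point that demands any care, and the place where I expect the real (though already-completed) work to sit, is the bookkeeping of the block decomposition: one must check that the parity conventions in \eqref{eq:5.4MM} are compatible with the partition of $V_l$ recorded in Proposition \ref{prop:DM4.5}, so that the matrices $\P_l,\Q_l,\X_l,\Y_l$ genuinely have entries in the asserted sets ${\frak S}_C$ and ${\frak S}_D$ of formal sums and the products $\P_{2l}\Q_{2l+1}$ and their companions are well defined. Since both the parity conventions and the entrywise alphabet memberships have been fixed in the statements we are citing, this reduces to matching indices and involves no new argument. I would also remark, following the surrounding discussion, that $(\M^{CD-},\M^{CD+})$ and $(\M^{DC-},\M^{DC+})$ may be identified with the canonical symbolic matrix bisystems of the subshifts $\Lambda_{CD}$ and $\Lambda_{DC}$; this identification is not needed for the corollary as stated, but it is what gives the equivalence its dynamical meaning.
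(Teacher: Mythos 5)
Your proposal is correct and follows exactly the paper's route: Proposition \ref{prop:DM4.5} gives bipartiteness of the canonical symbolic matrix bisystem, and Lemma \ref{lem:DM4.3} then delivers the properly strong shift equivalence in $1$-step between $(\M^{CD-},\M^{CD+})$ and $(\M^{DC-},\M^{DC+})$. The paper treats the corollary as an immediate consequence of those two results, just as you do.
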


The following notion of bipartite conjugacy has been introduced by 
Nasu in \cite{Nasu}, \cite{NasuMemoir}.
The conjugacy from $\Lambda_{CD}$ onto $\Lambda_{DC}$ that maps
$(c_id_i)_{i\in \Z}$ to
$(d_ic_{i+1})_{i\in \Z}$ 
is called the forward bipartite conjugacy.
The conjugacy from $\Lambda_{CD}$ onto $\Lambda_{DC}$ that maps
$(c_id_i)_{i\in \Z}$ to
$(d_{i-1}c_{i})_{i\in \Z}$ 
is called the backward bipartite conjugacy.
A topological conjugacy between subshifts is called 
a symbolic conjugacy if it is a 1-block map given by a bijection between the underlying alphabets of the subshifts.
Nasu proved the following factorization theorem for topological conjugacies between subshifts. 

\begin{lemma}[{M. Nasu \cite{Nasu}}]
Any topological conjugacy $\psi$ between subshifts is factorized into
finite compositions of the form
$$
\psi = 
\kappa_n \zeta_n \kappa_{n-1} \zeta_{n-1} \cdots \kappa_1 \zeta_1 \kappa_0
$$
where
$\kappa_0,\dots, \kappa_n$ are symbolic conjugacies and
$\zeta_1,\dots, \zeta_n$ are either forward or backward bipartite conjugacies.
\end{lemma}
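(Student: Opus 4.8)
The statement is M. Nasu's factorization theorem, quoted essentially verbatim from \cite{Nasu} (see also \cite{NasuMemoir}), so the honest plan is to invoke it as a black box rather than to reprove it. Nevertheless I can describe the shape of Nasu's argument, which is the two-sided analogue of Williams' decomposition of a conjugacy of shifts of finite type into elementary equivalences; this is the right mental picture for understanding why the factorization produces exactly the alternating product $\kappa_n\zeta_n\cdots\zeta_1\kappa_0$.

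The starting point is that a topological conjugacy $\psi$ between subshifts is, by the Curtis--Hedlund--Lyndon theorem, a sliding block code whose inverse is again a sliding block code. First I would pass to a higher block presentation: after recoding both subshifts by an $N$-block map, $\psi$ can be normalized so that its memory and anticipation are bounded by $1$, and the recoding maps themselves are assembled from symbolic conjugacies (relabelings by alphabet bijections) together with the passage to the higher power shift $\Lambda^{(2)}$. On the higher power shift the bipartite structure $\Lambda^{(2)} = \Lambda_{CD}\sqcup\Lambda_{DC}$ introduced before Proposition \ref{prop:DM4.5} becomes visible, and a normalized conjugacy of this restricted type is realized precisely as a forward bipartite conjugacy $(c_id_i)_i \mapsto (d_ic_{i+1})_i$ or a backward bipartite conjugacy $(c_id_i)_i \mapsto (d_{i-1}c_i)_i$, conjugated by relabelings.

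The heart of the matter is to show that \emph{every} conjugacy, once the symbolic recodings are peeled off, reduces to a finite alternating product of these elementary bipartite moves. This is the analogue of the state-splitting lemma: one verifies that a single step of memory/anticipation reduction is implemented by one bipartite conjugacy flanked by symbolic conjugacies, and then iterates. The bookkeeping that makes the alternation come out correctly — in particular that each $\zeta_i$ can be chosen to be forward or backward and that the symbolic pieces $\kappa_i$ absorb all remaining relabelings — is the main technical obstacle, and it is exactly what Nasu carries out in \cite{Nasu}.

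For the use in the present paper I would then feed this into the earlier machinery: symbolic conjugacies induce isomorphisms of the associated canonical symbolic matrix bisystems, while each bipartite conjugacy corresponds, through Propositions \ref{lem:DM4.4} and \ref{prop:DM4.5} and Corollary \ref{cor:DM4.6}, to a properly strong shift equivalence in $1$-step. Composing these along the factorization of $\psi$ yields the only-if direction of Theorem \ref{thm:DM4.2}.
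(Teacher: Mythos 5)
The paper gives no proof of this lemma at all---it is quoted directly from Nasu \cite{Nasu} as an external result---and your proposal likewise treats it as a black box, so you are taking exactly the same approach. Your accompanying sketch of Nasu's argument and of how the factorization feeds into Theorem \ref{thm:DM4.2} via Corollary \ref{cor:DM4.6} is consistent with how the paper actually uses the lemma.
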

Thanks to the Nasu's  result above,
we reach the following theorem.
\begin{theorem}\label{thm:DM4.8}
If two subshifts are topologically conjugate,
their canonical symbolic matrix bisystems
are properly strong shift equivalent.
\end{theorem}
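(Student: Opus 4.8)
The plan is to invoke Nasu's factorization theorem (the Lemma immediately above) in order to reduce the claim to two elementary building blocks, and then to verify that each building block preserves the properly strong shift equivalence class of the canonical symbolic matrix bisystem. Since properly strong shift equivalence is an equivalence relation by Proposition \ref{prop:DM4.1}, it suffices to show that the canonical symbolic matrix bisystems of any two subshifts related by (a) a symbolic conjugacy, or (b) a forward or backward bipartite conjugacy, are properly strong shift equivalent. Given a topological conjugacy $\psi = \kappa_n \zeta_n \cdots \kappa_1 \zeta_1 \kappa_0$, each factor is a link of exactly one of these two types between consecutive subshifts, so chaining the link-wise equivalences and applying transitivity would yield the theorem.

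For the symbolic conjugacy case, suppose $\Lambda$ over $\Sigma$ and $\Gamma$ over $\Sigma'$ are related by a symbolic conjugacy induced by a bijection $\beta:\Sigma \to \Sigma'$. First I would observe that the construction of the canonical $\lambda$-graph bisystem in Section 5 is natural under alphabet renaming: the central equivalence relation $\overset{c}{\underset{(k,l)}{\sim}}$ and the classes in $\Omega^c_{k,l}$ for $x \in \Lambda$ correspond bijectively to those of $\beta(x) \in \Gamma$, so the vertex sets $V_l$ coincide and the edge sets are carried to one another by replacing each label $\alpha$ with $\beta(\alpha)$. Consequently the canonical symbolic matrix bisystems $(\M^-_\Lambda, \M^+_\Lambda)$ and $(\M^-_\Gamma, \M^+_\Gamma)$ are isomorphic through the specification $\beta$ with all permutation matrices equal to the identity, in the sense defined in Section 4. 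It then remains to note that isomorphic symbolic matrix bisystems are properly strong shift equivalent in $1$-step; this is checked exactly as in the proof of reflexivity in Proposition \ref{prop:DM4.1}, twisting the specifications $\varphi, \phi$ used there by $\beta$.

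For the bipartite conjugacy case, let $\Lambda$ be a bipartite subshift with respect to alphabets $C, D$, so that $\Lambda^{(2)}$ decomposes into $\Lambda_{CD}$ and $\Lambda_{DC}$. By Proposition \ref{prop:DM4.5} the canonical symbolic matrix bisystem $(\M^-_\Lambda, \M^+_\Lambda)$ is bipartite, and Lemma \ref{lem:DM4.3} extracts from it the two symbolic matrix bisystems $(\M^{CD-}, \M^{CD+})$ and $(\M^{DC-}, \M^{DC+})$, which are properly strong shift equivalent in $1$-step by Corollary \ref{cor:DM4.6}. The essential point is that these two bisystems are precisely the canonical symbolic matrix bisystems of $\Lambda_{CD}$ and $\Lambda_{DC}$; granting this, a forward bipartite conjugacy $\zeta:\Lambda_{CD} \to \Lambda_{DC}$ relates exactly those two subshifts and the claim follows. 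A backward bipartite conjugacy is treated identically, the only difference being a shift in the alignment of the blocks, which is absorbed by the identification $\Omega^c_{k,l} \cong \Omega^c_{k+n,l+n}$ established in Section 5.

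The main obstacle I anticipate is the identification asserted in the bipartite case: that $(\M^{CD-}, \M^{CD+})$ and $(\M^{DC-}, \M^{DC+})$, obtained algebraically from the bipartite decomposition of $(\M^-_\Lambda, \M^+_\Lambda)$, genuinely coincide with the canonical symbolic matrix bisystems built directly from $\Lambda_{CD}$ and $\Lambda_{DC}$. Establishing this requires comparing the central equivalence classes of the power subshifts $\Lambda_{CD}, \Lambda_{DC}$ over the alphabets $C\cdot D, D\cdot C$ with the block structure of the vertices $V_l^{ZW\pm}$ produced in the proof of Proposition \ref{prop:DM4.5}, and checking that the follower and predecessor edge labels match after the exchange $\kappa$. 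This bookkeeping, together with the naturality of the canonical construction under symbolic conjugacies at the level of $\overset{c}{\underset{(k,l)}{\sim}}$, constitutes the technical heart of the argument; the remaining steps are formal consequences of Proposition \ref{prop:DM4.1}.
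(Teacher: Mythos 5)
Your proposal is correct and follows essentially the same route as the paper, which derives the theorem directly from Nasu's factorization lemma together with Proposition \ref{prop:DM4.5}, Lemma \ref{lem:DM4.3} and Corollary \ref{cor:DM4.6}; the paper's own proof is in fact just the one-line remark ``Thanks to the Nasu's result above,'' so your write-up supplies the details it leaves implicit. You also correctly single out the one assertion the paper states without proof --- that $(\M^{CD-},\M^{CD+})$ and $(\M^{DC-},\M^{DC+})$ are naturally identified with the canonical symbolic matrix bisystems of $\Lambda_{CD}$ and $\Lambda_{DC}$ --- as the genuine technical content.
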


We will prove the converse implication of the theorem above.
We will indeed prove the following proposition.

\begin{proposition}\label{prop:DM4.9}
If two  symbolic matrix bisystems are properly strong shift equivalent in 1-step,
their presenting subshifts  are topologically conjugate. 
\end{proposition}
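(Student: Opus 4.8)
The plan is to realize the properly strong shift equivalence data as a single \emph{bipartite} symbolic matrix bisystem, and then read off the topological conjugacy from the explicit bipartite conjugacy between its two phases, mirroring the Williams--Nasu argument in the present two-sided setting. Throughout I work in the FPCC framework of this part of the section, and I write $\Lambda_{(\M^-,\M^+)}$, $\Lambda_{(\SN^-,\SN^+)}$ for the presented subshifts.

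First I would assemble, from the specifications $\varphi,\phi$ and the matrices $\P_l,\Q_l,\X_l,\Y_l$ furnished by Definition \ref{def:PSSE}, a bipartite symbolic matrix bisystem $(\widehat{\M}^-,\widehat{\M}^+)$ over the disjoint alphabet $C\sqcup D$, setting $\widehat{\M}^+_{l,l+1}$ and $\widehat{\M}^-_{l,l+1}$ equal to the block matrices of Definition \ref{def:bipartitesmbs} built from $\P_l,\Q_l,\X_l,\Y_l$. The only point to verify is that this is a genuine symbolic matrix bisystem, i.e.\ that the local relation $\widehat{\M}^-_{l,l+1}\widehat{\M}^+_{l+1,l+2}\overset{\kappa}{\simeq}\widehat{\M}^+_{l,l+1}\widehat{\M}^-_{l+1,l+2}$ holds; expanding the $2\times 2$ block products shows that this reduces exactly to the commutation relations \eqref{eq:PSSE3} and \eqref{eq:PSSE4}, so it is the computation of Lemma \ref{lem:DM4.3} run in reverse. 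By Lemma \ref{lem:DM4.3}, the two composed bisystems $(\M^{CD-},\M^{CD+})$ and $(\M^{DC-},\M^{DC+})$ extracted from $(\widehat{\M}^-,\widehat{\M}^+)$ are themselves symbolic matrix bisystems, and from \eqref{eq:PSSE1} together with the entrywise definitions one checks that $\varphi$ gives an isomorphism $(\M^-,\M^+)\cong(\M^{CD-},\M^{CD+})$ and $\phi$ an isomorphism $(\SN^-,\SN^+)\cong(\M^{DC-},\M^{DC+})$ of symbolic matrix bisystems (on the minus side the two applications of $\kappa$ cancel).

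Next I would pass to the presented subshifts. Writing $\widehat{\Lambda}$ for the subshift presented by $(\widehat{\M}^-,\widehat{\M}^+)$ via Lemma \ref{lem:5.1} and Proposition \ref{prop:5.4}, the off-diagonal block form of $\widehat{\M}^+$ forces every admissible path, hence every admissible bi-infinite sequence, to alternate symbols of $C$ and of $D$, so $\widehat{\Lambda}$ is a bipartite subshift with respect to $C,D$ in Nasu's sense. I would then identify the two phases $\widehat{\Lambda}_{CD}$ and $\widehat{\Lambda}_{DC}$ of the double $\widehat{\Lambda}^{(2)}$ with the subshifts presented by $(\M^{CD-},\M^{CD+})$ and $(\M^{DC-},\M^{DC+})$ respectively: grouping consecutive $C$- and $D$-labelled edges of a path in $\widehat{\M}^+$ into a single $C\cdot D$- (resp.\ $D\cdot C$-) labelled edge is precisely the passage encoded by the products $\P_{2l}\Q_{2l+1}$ (resp.\ $\Q_{2l}\P_{2l+1}$), so the admissible-word sets coincide. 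Since an isomorphism of symbolic matrix bisystems is a bijective relabelling of the alphabet, its induced $1$-block map is a symbolic conjugacy; combined with the previous paragraph this gives $\Lambda_{(\M^-,\M^+)}\cong\widehat{\Lambda}_{CD}$ via $\varphi$ and $\Lambda_{(\SN^-,\SN^+)}\cong\widehat{\Lambda}_{DC}$ via $\phi$.

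Finally, the forward bipartite conjugacy $(c_id_i)_{i\in\Z}\mapsto(d_ic_{i+1})_{i\in\Z}$ is a topological conjugacy $\widehat{\Lambda}_{CD}\cong\widehat{\Lambda}_{DC}$, and chaining the three conjugacies produces $\Lambda_{(\M^-,\M^+)}\cong\Lambda_{(\SN^-,\SN^+)}$, as required. The main obstacle is the middle step: proving rigorously that the subshift presented by the bipartite $\lambda$-graph bisystem really decomposes as $\widehat{\Lambda}^{(2)}=\widehat{\Lambda}_{CD}\sqcup\widehat{\Lambda}_{DC}$ with the two phases carrying exactly the admissible words of the composed bisystems $(\M^{CD-},\M^{CD+})$ and $(\M^{DC-},\M^{DC+})$. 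This demands a careful tracking of paths through the block structure, using the construction of the presented subshift in Section 5 together with the matrix identities \eqref{eq:PSSE1}; by contrast the bipartite-bisystem check is the block-matrix computation of Lemma \ref{lem:DM4.3}, and the two outer conjugacies are the standard fact that a bijective relabelling of alphabets is a topological conjugacy.
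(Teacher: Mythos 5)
Your route is genuinely different from the paper's. The paper proves Proposition \ref{prop:DM4.9} directly: Lemma \ref{lemDM4.10} produces the $2$-block map $\varPhi(x_1x_2)=\phi^{-1}(d_1c_2)$ straight from the specifications, and admissibility of $\varPhi(w)$ in $\Lambda_\SN$ is checked by sliding the $\X,\Y$ factors through the products $\M^-_{l-1,l}\M^+_{l,l+n}$ via \eqref{eq:PSSE3} and \eqref{eq:PSSE4}; no intermediate object is ever built. You instead reassemble the data of Definition \ref{def:PSSE} into a single bipartite symbolic matrix bisystem and appeal to the forward bipartite conjugacy between its two phases, i.e.\ you run Definition \ref{def:bipartitesmbs}, Lemma \ref{lem:DM4.3} and Corollary \ref{cor:DM4.6} backwards. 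This is the natural converse of the paper's argument for the ``only if'' direction and makes the structural slogan (1-step properly strong shift equivalence $=$ bipartite decomposition) explicit; the cost is that each step of the paper's word-level computation reappears as a property of the intermediate object that must be verified. After unwinding, your composite $\Lambda_\M\to\widehat{\Lambda}_{CD}\to\widehat{\Lambda}_{DC}\to\Lambda_\SN$ is literally the paper's $\varPhi_\infty$.

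Two of those verifications are genuine gaps as written. First, the claim that ``the only point to verify is the local relation'' overstates the case: Definition \ref{def:PSSE} hands you matrices $\P_l,\Q_l,\X_l,\Y_l$ over $C$ and $D$ but does not guarantee conditions (ii)--(iv) of the definition of a symbolic matrix bisystem for the block matrices. The relations \eqref{eq:PSSE1} force every row of $\P_{2l}$ and of $\Q_{2l}$ and every column of $\P_{2l+1}$ and of $\Q_{2l+1}$ to be nonzero, but say nothing about the remaining rows and columns, so $(\widehat{\M}^-,\widehat{\M}^+)$ may have vertices without predecessors or successors and need not be left- or right-resolving; you must either prune such degeneracies (and check that pruning does not change the word sets) or argue at the level of languages without invoking Lemma \ref{lem:5.1} verbatim. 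Second, identifying $\widehat{\Lambda}_{CD}$ with the subshift presented by $(\M^{CD-},\M^{CD+})$ is not merely a matter of grouping labels in pairs: admissible words of $\widehat{\Lambda}$ read off a path beginning at a $C$-vertex of an \emph{odd} level are entries of $\P_{2l+1}\Q_{2l+2}\cdots$, not of $\M^{CD+}_{l,l+1}\M^{CD+}_{l+1,l+2}\cdots$, and transporting such paths to even levels requires the intertwining relations \eqref{eq:PSSE3}, \eqref{eq:PSSE4} together with the local property. That transport is exactly the computation the paper performs inside the proofs of Lemma \ref{lemDM4.10} and Proposition \ref{prop:DM4.9}, so it cannot be omitted from your middle step; once it is supplied, your argument closes.
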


To prove the proposition, we provide a notation and a lemma.
For $(\M^-, \M^+)$, set  the $ m(l) \times m(l+k)$ matrices:
\begin{align*}
\M^-_{l,l+k} & = \M^-_{l,l+1}\cdot \M^-_{l+1,l+2}\cdots \M^-_{l+k-1,l+k}, \\
\M^+_{l,l+k} & = \M^+_{l,l+1}\cdot \M^+_{l+1,l+2}\cdots \M^+_{l+k-1,l+k}
\end{align*}
for each $l,k \in \Zp$.
Let us denote by $\Lambda_\M$ the presented subshift by $(\M^-, \M^+).$

\begin{lemma}\label{lemDM4.10}
Assume that two symbolic matrix bisystems
$(\M^-,\M^+)$ over $\Sigma_\M$ and
 $(\SN^-, \SN^+)$ over $\Sigma_\SN$ 
are properly strong shift equivalent in 1-step.
Let
$\varphi : \Sigma_\M \rightarrow C\cdot D$
and
$\phi : \Sigma_\SN \rightarrow D\cdot C$
be specifications 
that give rise to the properly strong shift equivalence in 1-step
between them.
For any word 
$x_1x_2 \in B_2(\Lambda_{\M})$ 
of length two in the presenting subshift $\Lambda_{\M}$,
put  
$\varphi(x_i) = c_id_i, i=1,2$ 
where
$c_i \in C,d_i \in D$.
Then there uniquely exists a symbol 
$y_0 \in \Sigma_\SN$ such that
$\phi(y_0) = d_1c_2$.
\end{lemma}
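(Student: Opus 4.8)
The plan is to split the claim into the immediate uniqueness part and the substantive existence part, reducing the latter to showing that the two-letter block $d_1c_2$ lies in $\ran(\phi)$, i.e. that it occurs as a summand of some entry of an $\SN^+$-matrix. Uniqueness is automatic: $\phi$ is a specification, hence injective as a bijection of $\Sigma_\SN$ onto its image in $D\cdot C$, so there is at most one $y_0\in\Sigma_\SN$ with $\phi(y_0)=d_1c_2$. Thus the entire content is the existence of such a $y_0$, equivalently $d_1c_2\in\ran(\phi)$.

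First I would realize $x_1x_2$ as a labeled path in the $+$-diagram ${\frak L}_\M^+$. By Lemma \ref{lem:5.1} and the construction of $\Lambda_\M$, there are a level $l$ and vertices $i,k,j$ so that $x_1$ is a summand of $\M^+_{l,l+1}(i,k)$ and $x_2$ a summand of $\M^+_{l+1,l+2}(k,j)$. Applying $\varphi$ and the factorizations $\M^+_{l,l+1}\overset{\varphi}{\simeq}\P_{2l}\Q_{2l+1}$ and $\M^+_{l+1,l+2}\overset{\varphi}{\simeq}\P_{2l+2}\Q_{2l+3}$ from \eqref{eq:PSSE1}, I would factor $\varphi(x_1)=c_1d_1$ through an intermediate vertex $h_1$ (so that $c_1$ is a summand of $\P_{2l}(i,h_1)$ and $d_1$ of $\Q_{2l+1}(h_1,k)$) and $\varphi(x_2)=c_2d_2$ through a vertex $h_2$ (so that $c_2$ is a summand of $\P_{2l+2}(k,h_2)$ and $d_2$ of $\Q_{2l+3}(h_2,j)$). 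The decisive observation is that the $\Q_{2l+1}$-edge carrying $d_1$ terminates at the same vertex $k$ at which the $\P_{2l+2}$-edge carrying $c_2$ originates; hence $d_1c_2$ occurs as a summand of $(\Q_{2l+1}\P_{2l+2})(h_1,h_2)$, i.e. as the label of a two-edge $\Q\P$-path through $k$.

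The main obstacle is a phase (parity) mismatch. The occurrence just produced sits in a product $\Q_{2l+1}\P_{2l+2}$ of odd-then-even factors, whereas by \eqref{eq:PSSE1} the entries of $\SN^+$ are governed by the even-then-odd products $\Q_{2l'}\P_{2l'+1}$, so $\ran(\phi)$ is a priori only known to contain the $D\cdot C$-blocks arising from the latter phase. To close this gap I would show that the set of admissible two-edge $\Q\P$-transitions (equivalently, the admissible length-one blocks over $D\cdot C$) is independent of the parity of the connecting level, so that the odd-phase $\Q\P$-tower $\Q_1\P_2\Q_3\P_4\cdots$ and the even-phase tower $\Q_0\P_1\Q_2\P_3\cdots$ present one and the same subshift $\Lambda_\SN$. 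This is exactly where the commutation relations \eqref{eq:PSSE3}--\eqref{eq:PSSE4} together with the local property \eqref{eq:MLocal} of the bisystem enter: they transport an admissible $\Q_{2l+1}\P_{2l+2}$-transition to an admissible transition of the even phase and back, so that $d_1c_2$ is realized as a summand of some $\Q_{2l'}\P_{2l'+1}$ and hence, after applying $\phi$, of $\SN^+$. Since $\varphi$ and $\phi$ are \emph{global} specifications, membership of $d_1c_2$ in $\ran(\phi)$ is a property of the block alone; combining this with the injectivity of $\phi$ produces the unique symbol $y_0\in\Sigma_\SN$ with $\phi(y_0)=d_1c_2$. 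I expect the verification that admissible $\Q\P$-transitions are parity-independent to be the technical heart of the argument.
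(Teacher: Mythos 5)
Your proposal is correct and follows essentially the same route as the paper: uniqueness from injectivity of $\phi$, and existence by locating $c_1d_1c_2d_2$ in a component of $\P_{2l}\Q_{2l+1}\P_{2l+2}\Q_{2l+3}$ and then shifting phase so that $d_1c_2$ lands in $\Q_{2l}\P_{2l+1}\overset{\phi}{\simeq}\SN^+_{l,l+1}$. The ``parity-independence'' step you flag as the technical heart is carried out in the paper in one line: left-multiply by $\M^-_{l-1,l}\overset{\kappa\varphi}{\simeq}\X_{2l-2}\Y_{2l-1}$ and slide the $\Y$-factor rightward through the entire product using \eqref{eq:PSSE3} and \eqref{eq:PSSE4}, which decrements every $\P,\Q$ index by one and converts the odd-phase product into the even-phase one.
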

\begin{proof}
Note that by definition the specification
$\phi$ is not necessarily surjective onto $D\cdot C.$
Since $\phi$ is injective, 
it suffices to show the existence of $y_0$
such that $\phi(y_0) = d_1c_2$.
As
$x_1x_2 \in B_2(\Lambda_{\M})$, 
for any fixed
$l \ge 3$, we may find 
$j=1,2,\dots,m(l+2)$ and
$k=1,2,\dots,m(l)$ 
such that
$x_1x_2$ appears in $\M^+_{l,l+2}(k,j)$,
and hence in some component of
$\M^+_{l,l+1}\M^+_{l+1,l+2}.$
Under specifications appeared in Definition \ref{def:PSSE},
 we know  the following specified equivalences:
\begin{align*}
\M^-_{l-1,l} \M^+_{l,l+1}\M^+_{l+1,l+2} 
& \simeq 
\X_{2l-2} \Y_{2l-1} \P_{2l}  \Q_{2l+1} \P_{2l+2} \Q_{2l+3} \\
& \simeq 
\X_{2l-2}  \P_{2l-1} \Q_{2l} \P_{2l+1} \Q_{2l+2} \Y_{2l+3}.
\end{align*}
Since
$\varphi(x_1x_2) = c_1d_1c_2d_2$
that  appears in some component of 
 $\P_{2l} \Q_{2l+1} \P_{2l+2} \Q_{2l+3}$
 and hence of 
 $\P_{2l-1} \Q_{2l} \P_{2l+1} \Q_{2l+2}.$
This implies that 
the word
$d_1c_2$ appears in some  component of 
${\Q}_{2l}{\P}_{2l+1}.$
By the specified equivalence
$
{\SN}^+_{l,l+1} 
\overset{\phi}{\simeq} {\Q}_{2l}{\P}_{2l+1}
$
in \eqref{eq:PSSE1}, 
we may find a unique symbol $y_0$ in $\Sigma_\SN$ such that 
$\phi(y_0) = d_1c_2$.
\end{proof}

{\it Proof of Proposition \ref{prop:DM4.9}.}\/
Suppose that
 $(\M^-,\M^+)$ and $(\SN^-,\SN^+)$ 
are properly strong shift equivalent in 1-step.
We use the same notation as in Definition \ref{def:PSSE}.
By the preceding lemma, 
we have a $2$-block map $\varPhi$
from
$B_2(\Lambda_\M)$ to $\Sigma_\SN$ defined by
$\varPhi(x_1x_2) = y_0$
where
$\varphi(x_i) = c_id_i, i=1,2$
and
$\phi(y_0) = d_1c_2$.
Let
$\varPhi_{\infty}$ be
the sliding block code induced by
$\varPhi$
so that
$\varPhi_{\infty}$
is a map from
$\Lambda_\M$ to
${(\Sigma_\SN)}^{\Z}$
(see \cite{LM} for sliding block code).
We also write as 
$\varPhi$ the map from $B_*(\Lambda_\M)$ 
to the set of all words of 
$\Sigma_\SN$ defined by
$$
\varPhi(x_1x_2 \cdots x_n)= 
\varPhi(x_1x_2) 
\varPhi(x_2x_3)\cdots 
\varPhi(x_{n-1}x_n). 
$$
We will prove that
$\varPhi_{\infty}(\Lambda_\M) \subset \Lambda_\SN$.
To prove this,  it suffices to show that
for any word $w$ in $\Lambda_\M,$
$\varPhi(w)$ is an admissible word in $\Lambda_\SN$.
For
$w=w_1w_2\cdots w_n \in B_n(\Lambda_\M)$
and any
 fixed
$l \ge n+1$, we find 
$j=1,2,\dots,m(l+n)$ and
$k=1,2,\dots,m(l)$ 
such that
$w$ appears in $\M^+_{l,l+n}(k,j)$.
Take
$i=1,2,\dots,m(l-1)$ with
$\M^-_{l-1,l}(i,k) \ne 0,$
so that $w$ appears in $\M^-_{l-1,l}\M^+_{l,l+n}(i,j)$.
Put
$\varphi(w_i) = c_id_i, i=1,2,\dots,n$.
Under specifications appeared in Definition \ref{def:PSSE}, 
we have the following specified equivalences:
\begin{align*}
\M^-_{l-1,l}\M^+_{l,l+n} 
& {\simeq} 
\X_{2l-2} \Y_{2l-1}{\P}_{2l}{\Q}_{2l+1}{\P}_{2l+2}\cdots 
{\Q}_{2l+2n-1}{\P}_{2l+2n}\Q_{2l+2n+1} \\
& {\simeq} 
\X_{2l-2} \P_{2l-1}{\Q}_{2l}{\P}_{2l+1}{\Q}_{2l+2}\cdots 
{\P}_{2l+2n-1}{\Q}_{2l+2n} \Y_{2l+2n+1}, 
\end{align*}
so that the word
$d_1c_2d_2c_3\cdots d_{n-1}c_n$ 
appears in some  component of 
$
{\Q}_{2l}{\P}_{2l+1}{\Q}_{2l+2}\cdots {\P}_{2l+2n-1}.
$
Hence the word
$ 
  \phi^{-1}(d_1c_2)
  \phi^{-1}(d_2c_3)\cdots 
  \phi^{-1}(d_{n-1}c_n)
$
appears in the corresponding component of
$\SN^+_{l,l+1} 
\SN^+_{l+1,l+2} \cdots
\SN^+_{l+n-2,l+n-1}.
$
Thus we see that
$
\varPhi(w)
$
is an  admissible word in 
$\Lambda_\SN$
and that 
 the sliding block code
 $\varPhi_{\infty}$ maps $\Lambda_\M$ to $\Lambda_\SN$.
  Similarly, 
  we can construct a sliding block code
 $\varPsi_{\infty}$ from
 $\Lambda_\SN$ to $\Lambda_\M$
 that is the  inverse of $\varPhi_{\infty}$.
 Thus two subshifts
 $\Lambda_\SN$ and $\Lambda_\M$
 are topologically conjugate.  
\qed

\smallskip

Therefore we conclude the following theorem

\begin{theorem}\label{thm:DM4.11}
If two  symbolic matrix bisystems are properly strong shift equivalent,
their associated subshifts  are topologically conjugate. 
\end{theorem}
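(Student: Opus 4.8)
The plan is to reduce the general ($\ell$-step) assertion to the $1$-step case already established in Proposition \ref{prop:DM4.9}, invoking transitivity of topological conjugacy. By Definition \ref{def:PSSE}, saying that two symbolic matrix bisystems $(\M^-,\M^+)$ and $(\SN^-,\SN^+)$ are properly strong shift equivalent means precisely that there is a finite chain of symbolic matrix bisystems (each satisfying FPCC, hence each presenting a well-defined subshift by Lemma \ref{lem:5.1})
$$
(\M^-,\M^+) \underset{1-\pr}{\approx} (\M_{(1)}^-,\M_{(1)}^+)
\underset{1-\pr}{\approx} \cdots \underset{1-\pr}{\approx}
(\M_{(\ell-1)}^-,\M_{(\ell-1)}^+) \underset{1-\pr}{\approx} (\SN^-,\SN^+),
$$
in which every consecutive pair is properly strong shift equivalent in $1$-step.

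First I would pass to the associated subshifts, obtaining a finite sequence
$\Lambda_\M, \Lambda_{\M_{(1)}}, \dots, \Lambda_{\M_{(\ell-1)}}, \Lambda_\SN$.
Applying Proposition \ref{prop:DM4.9} to each $1$-step link of the chain produces a topological conjugacy between each consecutive pair of these subshifts. Composing these finitely many conjugacies—topological conjugacy being an equivalence relation, hence in particular transitive—yields a topological conjugacy between the two end subshifts $\Lambda_\M$ and $\Lambda_\SN$, which is exactly the claim. Together with Theorem \ref{thm:DM4.8}, this completes the proof of Theorem \ref{thm:DM4.2}.

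There is essentially no genuine obstacle here: the entire dynamical substance is carried by Proposition \ref{prop:DM4.9}, whose proof supplies an explicit sliding block code $\varPhi_\infty$ together with its inverse $\varPsi_\infty$. The only point meriting a remark is that the conjugacies from successive links compose legitimately, which is immediate because the composite of two sliding block codes is again a sliding block code, and the inverse of the composite is the composite of the individual inverses taken in reverse order.
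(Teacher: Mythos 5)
Your argument is correct and is exactly the paper's route: the paper states Theorem \ref{thm:DM4.11} immediately after Proposition \ref{prop:DM4.9} with no further argument, the intended (and only needed) step being precisely the reduction to the $1$-step case along the defining chain and composition of the resulting conjugacies. Nothing is missing.
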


By Theorem \ref{thm:DM4.8} and Theorem \ref{thm:DM4.11}, 
we conclude Theorem \ref{thm:DM4.2}.

\smallskip

\begin{remark}
If there exist the matrices
${\P}_l,{\Q}_l$  for all sufficiently large number $l$
in Definition \ref{def:PSSE},
we may show that the presenting subshifts are topologically conjugate
by following the proof of Proposition \ref{prop:DM4.9}.
\end{remark}

\medskip

Properly strong shift equivalence exactly corresponds to 
a finite sequence of bipartite decompositions of symbolic matrix bisystems and $\lambda$-graph bisystems.
The definition of properly strong shift equivalence for symbolic matrix bisystems however
needs rather complicated formulations than that of strong shift equivalence
for nonnegative matrices in \cite{Williams}.
  We will in the rest of this section introduce the notion of  strong shift equivalence 
between two symbolic matrix bisystems 
that is simpler 
and weaker than properly strong shift equivalence.
It is also a generalization of the notion of strong shift equivalence 
between nonnegative matrices defined by Williams in \cite{Williams}, 
between symbolic matrices defined by Nasu in \cite{Nasu},
and between 
symbolic matrix systems defined in \cite{MaDocMath1999}. 
From now on, we will treat general symbolic matrix bisystems.
We do not assume that symbolic matrix bisystems satisfy FPCC.

Let $(\M^-, \M^+), (\SN^-,\SN^+)$
 be two symbolic matrix bisystems over alphabets
$\Sigma_\M^\pm, \Sigma_\SN^\pm,$ respectively.
Let
$m(l), n(l)$
be the sequences for which 
$\M^-_{l,l+1}, \M^+_{l,l+1}$ are $m(l)\times m(l+1)$ symbolic matrices and
$\SN^-_{l,l+1}, \SN^+_{l,l+1}$ are $n(l)\times n(l+1)$ symbolic matrices,
 respectively.

\begin{definition}\label{def:SSE}
Two symbolic matrix bisystems $(\M^-,\M^+)$ and $(\SN^-, \SN^+)$ 
 are said to be {\it  strong shift equivalent in 1-step} 
 if  there exist alphabets 
$C,D$ and
 specifications 
\begin{gather*}
 \varphi_1: \Sigma_\M^-\cdot \Sigma_\M^+ \longrightarrow C\cdot D,
 \qquad
 \varphi_2: \Sigma_\SN^-\cdot \Sigma_\SN^+ \longrightarrow D\cdot C \\
\intertext{and}
\phi_C^{\pm}:  \Sigma_\M^\pm \cdot C\longrightarrow C \cdot \Sigma_\SN^\pm,
\qquad
\phi_D^{\pm}:  \Sigma_\SN^\pm \cdot D\longrightarrow D \cdot \Sigma_\M^\pm,
\qquad (\text{double-sign corresponds})
\end{gather*}
 such that
 for each $l\in \Zp$, 
 there exist
an $ m(l)\times n(l+1)$ matrix $ {\H}_l $ over $ C $
 and
an $ n(l)\times m(l+1)$ matrix $ {\K}_l $ over $ D $ 
 satisfying the following equations: 
$$
\M^-_{l,l+1} \M^+_{l+1,l+2} 
\overset{\varphi_1}{\simeq} {\H}_l{\K}_{l+1},\qquad
\SN^-_{l,l+1} \SN^+_{l+1,l+2} 
\overset{\varphi_2}{\simeq} {\K}_l{\H}_{l+1}
$$
and
\begin{gather*}
\M^+_{l,l+1} \H_{l+1} \overset{\phi_C^+}{\simeq} {\H}_l \SN^+_{l+1,l+2},\qquad
\SN^+_{l,l+1}{\K}_{l+1}\overset{\phi_D^+}{\simeq}{\K}_l \M^+_{l+1,l+2}, \\
 \M^-_{l,l+1} \H_{l+1}\overset{\phi_C^-}{\simeq}{\H}_l \SN^-_{l+1,l+2}, \qquad
\SN^-_{l,l+1}{\K}_{l+1} \overset{\phi_D^-}{\simeq} {\K}_l \M^-_{l+1,l+2}. \\
\end{gather*}
\medskip
We write this situation as
$
(\M^-,\M^+) \underset{1-\st}{\approx}(\SN^-, \SN^+).
$
\end{definition}
Two symbolic matrix bisystems 
 $(\M^-, \M^+)$ and $(\SN^-, \SN^+)$ 
 are said to be {\it  strong shift equivalent in $\ell$-step} 
 if there exist symbolic matrix bisystems
$(\M^-_{(i)}, \M^+_{(i)}), i=1,2,\dots, \ell-1$
such that
\begin{equation*}
(\M^-, \M^+) \underset{1-\st}{\approx} (\M^-_{(1)}, \M^+_{(1)})
      \underset{1-\st}{\approx}
        \cdots 
       \underset{1-\st}{\approx} 
(\M^-_{(\ell-1)}, \M^+_{(\ell-1)}) \underset{1-\st}{\approx} 
( \SN^-, \SN^+).
\end{equation*}
We denote this situation by
$
(\M^-,\M^+) \underset{\ell-\st}{\approx} (\SN^-, \SN^+)
$
and simply call it a {\it  strong shift equivalence}.
\begin{remark}
If $(\M^-, \M^+)$ and $(\SN^-,\SN^+)$ come from
symbolic matrix systems
$(I_\M,\M)$ and $(I_{\SN}, \SN),$ respectively,
then the above definition of strong shift equivalence coincides with
the strong shift equivalence in symbolic matrix systems \cite[p. 304]{MaDocMath1999}.
\end{remark}

Similarly to the case of properly strong shift equivalence,
we see
that  strong shift equivalence on symbolic matrix bisystems is  an equivalence relation.

\begin{proposition}\label{prop:PSSESSE}
For symbolic matrix bisystems satisfying FPCC,
properly strong shift equivalence in 1-step implies
strong shift equivalence in 1-step.
\end{proposition}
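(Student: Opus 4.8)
The plan is to manufacture the strong shift equivalence data of Definition \ref{def:SSE} directly from the properly strong shift equivalence data of Definition \ref{def:PSSE}. Write the latter as $(C,D,\varphi,\phi)$ together with the matrices $\P_l,\Q_l,\X_l,\Y_l$, and recall from the remark after \eqref{eq:PSSE1} that $c(2l)=m(l)$ and $d(2l)=n(l)$. The dimensional point is that, because $m$ and $n$ both sit at the \emph{even} indices, no single PSSE matrix has the shape $m(l)\times n(l+1)$ required of $\H_l$; one must take a length-two product. I therefore set
\[
\H_l := \P_{2l}\X_{2l+1}, \qquad \K_l := \Q_{2l}\Y_{2l+1},
\]
which have sizes $m(l)\times n(l+1)$ and $n(l)\times m(l+1)$ and whose entries are formal sums of words in $C\cdot D$ and $D\cdot C$, respectively. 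I then let $\tilde C,\tilde D$ be copies of $C\cdot D$ and $D\cdot C$, regarding each two-letter word as a single new symbol, so that $\H_l$ becomes a matrix over $\tilde C$ and $\K_l$ a matrix over $\tilde D$, as Definition \ref{def:SSE} demands.

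Next I would verify the two factorization relations. For the $\M$-side, compute $\H_l\K_{l+1}=\P_{2l}\X_{2l+1}\Q_{2l+2}\Y_{2l+3}$; the exchange relation $\X_{2l+1}\Q_{2l+2}\overset{\kappa}{\simeq}\Q_{2l+1}\X_{2l+2}$ from \eqref{eq:PSSE3} converts this into $\P_{2l}\Q_{2l+1}\X_{2l+2}\Y_{2l+3}$, which by \eqref{eq:PSSE1} and the companion equivalences for $\M^-$ is the $\varphi$/$\kappa\varphi$-image of $\M^+_{l,l+1}\M^-_{l+1,l+2}$. The local property \eqref{eq:MLocal} then exchanges this to $\M^-_{l,l+1}\M^+_{l+1,l+2}$, giving $\M^-_{l,l+1}\M^+_{l+1,l+2}\overset{\varphi_1}{\simeq}\H_l\K_{l+1}$ for the resulting composite specification $\varphi_1$. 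The $\SN$-relation $\SN^-_{l,l+1}\SN^+_{l+1,l+2}\overset{\varphi_2}{\simeq}\K_l\H_{l+1}$ follows by the mirror computation, interchanging $\P\leftrightarrow\Q$, $\X\leftrightarrow\Y$ and $C\leftrightarrow D$ and using the second column of \eqref{eq:PSSE3}.

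Then I would check the four intertwining relations, each of which collapses, after cancelling a common factor, to a double application of \eqref{eq:PSSE3} and \eqref{eq:PSSE4}. For instance $\M^+_{l,l+1}\H_{l+1}=\P_{2l}\Q_{2l+1}\P_{2l+2}\X_{2l+3}$ and $\H_l\SN^+_{l+1,l+2}=\P_{2l}\X_{2l+1}\Q_{2l+2}\P_{2l+3}$ agree up to $\kappa$ because $\X_{2l+1}\Q_{2l+2}\overset{\kappa}{\simeq}\Q_{2l+1}\X_{2l+2}$ followed by $\X_{2l+2}\P_{2l+3}\overset{\kappa}{\simeq}\P_{2l+2}\X_{2l+3}$; the relations for $\SN^+\K$, $\M^-\H$ and $\SN^-\K$ are entirely analogous, each cancelling the appropriate common $\P$, $\Q$, $\X$ or $\Y$ factor and invoking one relation from \eqref{eq:PSSE3} and one from \eqref{eq:PSSE4}. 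These computations simultaneously define the specifications $\phi^\pm_C,\phi^\pm_D$ as the composites of the $\kappa$-exchanges used.

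The main obstacle is not any individual equation but the bookkeeping: I must check that $\varphi_1,\varphi_2,\phi^\pm_C,\phi^\pm_D$ are genuine bijections between the relevant two-letter words, that they are independent of the level $l$, and that the chains of $\kappa$-exchanges assemble into precisely the four shapes of specification required by Definition \ref{def:SSE}. Because every ingredient ($\varphi$, $\phi$, the local exchange \eqref{eq:MLocal}, and the relations \eqref{eq:PSSE3}, \eqref{eq:PSSE4}) is a label-preserving bijection applied uniformly in $l$, this should go through, but it requires care. A secondary point to dispatch is the top level $l=0$, where standardness forces $m(0)=1$ and $\M^\pm_{0,1}$ are rows, so the factorizations there must be read off the boundary equations; this is routine once the generic level is settled.
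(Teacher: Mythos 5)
Your construction is correct and is essentially the paper's proof: the paper simply sets $\H_l=\X_{2l}\P_{2l+1}$ and $\K_l=\Y_{2l}\Q_{2l+1}$ and declares the verification straightforward, and your $\H_l=\P_{2l}\X_{2l+1}$, $\K_l=\Q_{2l}\Y_{2l+1}$ are exactly the $\kappa$-images of these by \eqref{eq:PSSE4}, so the two choices differ only by a specification. The only cosmetic cost of your ordering is that the factorization $\M^-_{l,l+1}\M^+_{l+1,l+2}\simeq\H_l\K_{l+1}$ needs one extra application of the local property \eqref{eq:MLocal}, which the paper's ordering avoids; otherwise the checks you outline are the intended ones.
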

\begin{proof}
Let
$\P_l,\Q_l, \X_l$ and $\Y_l$ be the matrices 
in Definition \ref{def:PSSE}
between 
$(\M^-, \M^+) $
and
$(\SN^-, \SN^+).$
We set
$$
\H_l = \X_{2l}\P_{2l+1},\qquad
\K_l = \Y_{2l}\Q_{2l+1}.
$$
It is straightforward to see that they give rise to a strong shift equivalence in 1-step between
$(\M^-, \M^+)$ and $(\SN^-, \SN^+)$.
\end{proof}

\section{\'Etale groupoids for $\lambda$-graph bisystems
and its $C^*$-algebras}
Let $\LGBS$ be a $\lambda$-graph bisystem over alphabet $\Sigma^\pm.$
We will first construct two continuous graphs 
$E_{{\frak L}^-}^+$
and
$E_{{\frak L}^+}^-$
in the sense of Deaconu \cite{Deaconu3} 
(cf. \cite{Deaconu}, \cite{Deaconu2}, \cite{Deaconu4}),
and its shift dynamical systems
$(X_{{\frak L}^-}^+, \sigma_{{\frak L}^-})$
and
$(X_{{\frak L}^+}^-, \sigma_{{\frak L}^+}),$
respectively.
Let ${\frak L}^- = (V^-, E^-, \lambda^-)$ and 
${\frak L}^+ = (V^+, E^+, \lambda^+).$
Let $V_l$ be the common vertex sets  $V^-_l= V^+_l$
and denote it by $\{ v_1^l, \dots, v_{m(l)}^l\}.$

We first define two spaces of label sequences as follows:
\begin{align*}
\Omega_{{\frak L}^-} :=
& \{ ( \dots, u_{3}, \beta_{-3}, u_{2}, \beta_{-2}, u_{1}, \beta_{-1}) 
\in \prod_{l=1}^\infty (V_l \times \Sigma^-) \mid \\
& \quad u_l \in V_l, \,\, \beta_{-l} = \lambda^-(e_l^-) 
\text{ for some } e_l^-\in E_{l,l-1}^-,\, l=1,2,\dots, \\
& \quad \text{ such that }
 t(e_{l+1}^-) =u_l =s(e_{l}^-), l= 1,2,\dots \}. \\ 
\end{align*}
Each element of $\Omega_{{\frak L}^-}$ is a left-infinite sequence written 
\begin{equation*}
\cdots \longrightarrow 
u_3\overset{\beta_{-3}}{\longrightarrow} 
u_2\overset{\beta_{-2}}{\longrightarrow} 
u_1\overset{\beta_{-1}}{\longrightarrow}. 
\end{equation*}
As ${\frak L}^-$ is right-resolving, the edge 
$e_l^- \in E_{l,l-1}^-$
is uniquely determined by its source vertex $u_l \in V_l$ and 
its label $\beta_{-l}.$  
An element 
$( \dots, u_{3}, \beta_{-3}, u_{2}, \beta_{-2}, u_{1}, \beta_{-1})
\in \Omega_{{\frak L}^-}$
is denoted by
$$
\omega = (u_l, \beta_{-l})_{l=1}^\infty \in \Omega_{{\frak L}^-}.
$$
For the pair 
$(u_1,\beta_{-1}),$ 
the edge $e_1^-\in E_{1,0}^-$ satisfying
$\lambda^-(e_1^-) =\beta_{-1}, s(e_1^-) =u_1$
is unique, 
so that the terminal vertex $t(e_1^-) \in V_0$ 
is uniquely determined by $(u_1,\beta_{-1})$, that is denoted by 
$u_0$ or $u_0(\omega).$

The other space $\Omega_{{\frak L}^+}$
 is defined similarly as follows:
\begin{align*}
\Omega_{{\frak L}^+} :=
& \{ ( \alpha_1, u_{1}, \alpha_{2}, u_{2}, \alpha_{3}, u_{3},\dots ) 
\in \prod_{l=1}^\infty (\Sigma^+\times V_l) \mid \\
& \quad u_l \in V_l,\,\, \alpha_{l} = \lambda^+(e_l^+) 
\text{ for some } e_l^+\in E_{l-1,l}^+,\, l=1,2, \dots, \\
& \quad \text{ such that }
 t(e_l^+) = u_l = s(e_{l+1}^+), l= 1,2,\dots \}. \\ 
\end{align*}
Each element of $\Omega_{{\frak L}^+}$ is a right-infinite sequence written 
\begin{equation*}
\overset{\alpha_{1}}{\longrightarrow} u_1
\overset{\alpha_{2}}{\longrightarrow} u_2
\overset{\alpha_{3}}{\longrightarrow} u_3
\cdots.
\end{equation*}
As ${\frak L}^+$ is left-resolving, the edge $e_l^+ \in E_{l-1,l}^+$
is uniquely determined by its terminal vertex $u_l \in V_l$ and 
its label $\alpha_l.$  
An element
$ ( \alpha_1, u_{1}, \alpha_{2}, u_{2}, \alpha_{3}, u_{3},\dots )\in \Omega_{{\frak L}^+} $
is denoted by
$\omega =(\alpha_l, u_l)_{l=1}^\infty \in \Omega_{{\frak L}^+}.$ 
Similarly to $\Omega_{{\frak L}^-},$
the left-resolving property of ${\frak L}^+$
ensures us that 
the edge $e_1^+\in E_{0,1}^+$ satisfying
$\lambda^+(e_1^+) =\alpha_{1}, t(e_1^+) =u_1$
is unique for the pair 
$(\alpha_1, u_1),$  
so that the source vertex $s(e_1^+) \in V_0$ is uniquely determined by $(\alpha_1,u_1)$, 
that is denoted by 
$u_0$ or $u_0(\omega).$

We endow both the spaces 
$\Omega_{{\frak L}^-}$ and $\Omega_{{\frak L}^+}$
with the relative topology of the infinite  product topology
on $\prod_{l=1}^\infty (V_l\times\Sigma^-)$ and $\prod_{l=1}^\infty (\Sigma^+\times V_l)$,
respectively,
so that they are compact Hausdorff spaces. 

We will next define two continuous graphs 
written 
$E_{{\frak L}^-}^+$ and $E_{{\frak L}^+}^-$
from  
$\LGBS$ in the following way:
\begin{align*}
E_{{\frak L}^-}^+ :=
& \{ (\omega, \alpha, \omega' )
\in \Omega_{{\frak L}^-}\times \Sigma^+\times \Omega_{{\frak L}^-} \mid 
 \omega = (u_l, \beta_{-l})_{l=1}^\infty, \,
   \omega' = (u'_l, \beta_{-l+1})_{l=1}^\infty, \\
& \quad \alpha = \lambda^+(e_{l,l+1}^+) 
\text{ for some } e_{l,l+1}^+\in E_{l,l+1}^+ \, \\
& \quad \text{ such that }
 u_l = s(e_{l,l+1}^+), \,  u'_{l+1} =t(e_{l,l+1}^+), l=0,1,2,\dots \}. \\ 
\end{align*}
Each element of $E_{{\frak L}^-}^+$ is figured such as 
\begin{equation*}
\begin{CD}
\cdots @>{\beta_{-4}}>>u_3 @>{\beta_{-3}}>>u_2 @>{\beta_{-2}}>>u_1
@>{\beta_{-1}}>>u_0\\
@. @V{e_{3,4}^+}V{\alpha}V @V{e_{2,3}^+}V{\alpha}V @V{e_{1,2}^+}V{\alpha}V 
@V{e_{0,1}^+}V{\alpha}V @.\\
\cdots @>{\beta_{-4}}>>u'_4 @>{\beta_{-3}}>>u'_3 @>{\beta_{-2}}>>u'_2 
@>{\beta_{-1}}>> u'_1 @>{\beta_0}>>u'_0.
\end{CD}
\end{equation*}
Similarly
\begin{align*}
E_{{\frak L}^+}^-:=
& \{ (\omega, \beta, \omega') \in \Omega_{{\frak L}^+}\times \Sigma^-\times \Omega_{{\frak L}^+} \mid 
 \omega = (\alpha_l, u_l)_{l=1}^\infty, \,
   \omega' = (\alpha_{l-1}, u'_l)_{l=1}^\infty, \\
& \quad \beta = \lambda^-(e_{l+1,l}^-) 
\text{ for some } e_{l+1,l}^-\in E_{l+1,l}^- \\
& \quad \text{ such that }
 u_l = t(e_{l+1,l}^-), \,  u'_{l+1} =s(e_{l+1,l}^-), l= 0, 1,2,\dots \}. \\ 
\end{align*}
Each element of $E_{{\frak L}^+}^-$ is figured such as
\begin{equation*}
\begin{CD}
@. u_0 @>{\alpha_1}>> u_1 @>{\alpha_2}>> u_2 @>{\alpha_3}>> u_3
@>{\alpha_4}>> \cdots \\
@. @A{e_{1,0}^-}A{\beta}A @A{e_{2,1}^-}A{\beta}A @A{e_{3,2}^-}A{\beta}A 
@A{e_{4,3}^-}A{\beta}A @.\\
u'_0 @>{\alpha_0}>> u'_1 @>{\alpha_1}>> u'_2 @>{\alpha_2}>> u'_3
@>{\alpha_3}>> u'_4  @>{\alpha_4}>>\cdots. \\
\end{CD}
\end{equation*}
Following Deaconu \cite{Deaconu3}
 (cf. \cite{Deaconu}, \cite{Deaconu2}, \cite{Deaconu4}), 
we construct a shift dynamical system:
\begin{align*}
X_{{\frak L}^-}^+ :=
& \{ (\alpha_i, \omega^i )_{i=1}^\infty \in \prod_{i=1}^\infty (\Sigma^+\times \Omega_{{\frak L}^-})
\mid 
\omega^i = (u_l^{i}, \beta_{-l+i+1})_{l=1}^\infty \in \Omega_{{\frak L}^-}, i=1,2,\dots, \\
& (\omega^i, \alpha_{i+1}, \omega^{i+1}) \in E_{{\frak L}^-}^+, i=1,2,\dots, \,
   (\omega^0, \alpha_1, \omega^1) \in E_{{\frak L}^-}^+ \text{ for some } 
\omega^0 \in \Omega_{{\frak L}^-}
  \}
\end{align*}
and the shift map 
$\sigma_{{\frak L}^-}: 
X_{{\frak L}^-}^+ \longrightarrow X_{{\frak L}^-}^+$ by setting
\begin{equation*}
\sigma_{{\frak L}^-}((\alpha_i, \omega^i )_{i=1}^\infty ) 
= (\alpha_{i+1}, \omega^{i+1} )_{i=1}^\infty,
\qquad (\alpha_i, \omega^i )_{i=1}^\infty  \in X_{{\frak L}^-}^+.
\end{equation*}
The set $X_{{\frak L}^-}^+$ is endowed with
 the relative topology of the infinite product topology of
$\Sigma^+ \times \Omega_{{\frak L}^-}.$
It is a zero-dimensional compact Hausdorff space.
The shift map
$\sigma_{{\frak L}^-}: X_{{\frak L}^-}^+ \longrightarrow X_{{\frak L}^-}^+$
is continuous and a local homeomorphism.
As ${\frak L}^+$ is left-resolving, 
for any element
$x =(\alpha_i, \omega^i )_{i=1}^\infty \in X_{{\frak L}^-}^+,$ 
there uniquely exists 
$\omega^0 \in \Omega_{{\frak L}^-}$ such that 
$ (\omega^0, \alpha_1, \omega^1) \in E_{{\frak L}^-}^+$
by the local property of $\lambda$-graph bisystem.
We denote $\omega^0$ 
 by $\omega^0(x)$, which is uniquely determined by $x \in X_{{\frak L}^-}^+.$  
Therefore an element 
$x =(\alpha_i, \omega^i )_{i=1}^\infty \in X_{{\frak L}^-}^+$
defines a two-dimensional diagram as follows:
\begin{equation*}
\begin{CD}
\omega^0\cdots @>{\beta_{-2}}>>u_2^{0} @>{\beta_{-1}}>>u_1^{0} 
@>{\beta_0}>>u_0^{0}\\
@. @VV{\alpha_{1}}V @VV{\alpha_{1}}V @VV{\alpha_{1}}V \\
\omega^1\cdots @>{\beta_{-2}}>>u_3^{1} @>{\beta_{-1}}>>u_2^{1} 
@>{\beta_0}>>u_1^{1} @>{\beta_1}>>u_0^{1}\\
@. @VV{\alpha_{2}}V @VV{\alpha_{2}}V @VV{\alpha_{2}}V @VV{\alpha_{2}}V @.\\
\omega^2\cdots @>{\beta_{-2}}>>v_4^{2} @>{\beta_{-1}}>>v_3^{2} 
@>{\beta_0}>>v_2^{2} @>{\beta_1}>>v_1^{2} @>{\beta_2}>>v_0^{2}\\
@. @VV{\alpha_{3}}V @VV{\alpha_{3}}V @VV{\alpha_{3}}V @VV{\alpha_{3}}V 
@VV{\alpha_{3}}V @.\\
\omega^3\cdots @>{\beta_{-2}}>>v_5^{3} @>{\beta_{-1}}>>v_4^{3} 
@>{\beta_0}>>v_3^{3} @>{\beta_1}>>v_2^{3} @>{\beta_2}>>v_1^{3}
@>{\beta_3}>>v_0^{3}\\
@. @VV{\alpha_{4}}V @VV{\alpha_{4}}V @VV{\alpha_{4}}V @VV{\alpha_{4}}V @VV{\alpha_{4}}V @VV{\alpha_{4}}V @.\\
\end{CD}
\end{equation*}
We may similarly construct a shift dynamical system
$( X_{{\frak L}^+}^-,\sigma_{{\frak L}^+})$ from the other continuous graph
$E_{{\frak L}^+}^-.$ 

By the now standard 
Deaconu--Renault groupoid construction 
(see. \cite{Deaconu3}, \cite{Deaconu4}, \cite{Renault2}, \cite{Renault3},  cf.
\cite{Sims}), we have an amenable and \'etale groupoid written
$\G_{{\frak L}^-}^{+}$
from the shift dynamical system
$( X_{{\frak L}^-}^+,\sigma_{{\frak L}^-})$.
 The space of the groupoid is defined by 
\begin{equation*}
\G_{{\frak L}^-}^{+} :=
\{ (x,n,y) \in  X_{{\frak L}^-}^+\times \Z \times X_{{\frak L}^-}^+ \mid 
\exists k,l \in \Zp ; n = k-l, \sigma_{{\frak L}^-}^k(x) =\sigma_{{\frak L}^-}^l(y)\}.  
\end{equation*}
The  unit space $(\G_{{\frak L}^-}^{+})^{(0)}$ is defined by
\begin{equation*}
(\G_{{\frak L}^-}^{+})^{(0)} :=
\{ (x, 0, x) \in \G_{{\frak L}^-}^{+} \mid x \in  X_{{\frak L}^-}^+\}.  
\end{equation*}
It is identified with the space
$ X_{{\frak L}^-}^+$ as a topological space.
The range map and the source map
are defined by
$
r(x,n,y) = x, \,s(x,n,y) = y.
$
The product and the inverse operations
are defined by 
$$
(x,n,y)(y,m,z) = (x,n+m,z), \qquad 
(x,n,y)^{-1} = (y,-n,x).
$$

We may similarly construct the other amenable and \'etale
groupoid $ \G_{{\frak L}^+}^{-}$ from the shift dynamical system $(X_{{\frak L}^+}^{-}, \sigma_{{\frak L}^+}).$

 Now we will define our $C^*$-algebra $\OALMP$  in the following way.
\begin{definition}
The $C^*$-algebra $\OALMP$ associated with  a $\lambda$-graph bisystem 
$\LGBS$ is defined to be the  $C^*$-algebra 
$C^*(\G_{{\frak L}^-}^+)$
of the groupoid
$\G_{{\frak L}^-}^+.$
Similarly we define 
the  $C^*$-algebra $\OALPM$ from the other groupoid 
$\G_{{\frak L}^+}^-.$
\end{definition}
For general theory of $C^*$-algebras of \'etale groupoids, see 
(\cite{Renault}, \cite{Renault2}, \cite{Renault3},  
cf. \cite{Deaconu3}, \cite{Deaconu4}, \cite{Sims}, etc.).
Let $C_c(\G_{{\frak L}^-}^+) $ be the set of complex-valued  continuous functions on
$\G_{{\frak L}^-}^+$ with compact support. 
It has a natural product structure and $*$-involution of 
given by
\begin{align*}
(f*g)(s) &  = 
 \sum_{\underset{r(t) = r(s)}{t;}} f(t) g(t^{-1}s) 
           = 
 \sum_{\underset{s = t_1 t_2}{t_1,t_2;}} f(t_1) g(t_2), \\
  f^*(s) & = \overline{f(s^{-1})}, 
  \qquad f,g \in C_c(\G_{{\frak L}^-}^+), \quad s \in \G_{{\frak L}^-}^+.     
\end{align*}
The algebra 
$C_c(\G_{{\frak L}^-}^+)$ is a dense $*$-subalgebra of  $C^*(\G_{{\frak L}^-}^+)$.

We will study the algebraic structure of the $C^*$-algebra $\OALMP$.

Recall that $F(v_i^l)$  denotes the follower set
of $v_i^l \in V_l$ in ${\frak L}^-,$
that is defined after Definition \ref{def:lambdabisystem}.
We define the cylinder set 
$U_{\Omega_{{\frak L}^-}}(v_i^l;{\xi)} \subset \Omega_{{\frak L}^-}$
for $\xi =(\xi_1, \xi_2,\dots,\xi_l) \in F(v_i^l)$
by setting
\begin{align*}
U_{\Omega_{{\frak L}^-}}(v_i^l;{\xi)}
 :=&
 \{ ( \dots, u_{3},\beta_{-3}, u_{2}, \beta_{-2}, u_{1}, \beta_{-1}) 
\in \Omega_{{\frak L}^-} \mid \\
& \qquad u_l = v_i^l, \beta_{-l} = \xi_1,\beta_{-l+1} = \xi_2,
\dots
\beta_{-1} = \xi_l
 \}.
\end{align*}
Each element of
$U_{\Omega_{{\frak L}^-}}(v_i^l;{\xi)}$
is figured such as
\begin{equation*}
\cdots \longrightarrow v_i^l 
\overset{\xi_1}{\longrightarrow}\bigcirc
\overset{\xi_2}{\longrightarrow}\bigcirc
\longrightarrow \cdots
\overset{\xi_l}{\longrightarrow}\bigcirc.
\end{equation*}
We note that the vertices $u_{l-1}, u_{l-2}, \dots,u_{0}$
 of the terminals of the labeled edges 
$\xi_1, \xi_2,\dots,\xi_l$ 
are  automatically determined by $v_i^l$ and $\xi$, because 
${\frak L}^-$ is right-resolving.
The set of cylinder sets $U_{\Omega_{{\frak L}^-}}(v_i^l;{\xi)}$ 
form a basis of open sets 
of $\Omega_{{\frak L}^-}.$
Let us define a clopen set of $X_{{\frak L}^-}^+$ by setting
\begin{equation}
U_{X^+_{{\frak L}^-}}(v_i^l;{\xi}) =
\{ x \in X^+_{{\frak L}^-} \mid \omega^0(x) \in U_{\Omega_{{\frak L}^-}}(v_i^l;{\xi)} \}.
\label{eq:uxvilxi}
\end{equation} 
Recall that $B_m(\Lambda_{{\frak L}^+})$ 
denotes the set of admissible words of 
the subshift $\Lambda_{{\frak L}^+}$
with length $m.$
For $v_i^l\in V_l$,
$
\xi =(\xi_1, \xi_2,\dots,\xi_l) \in F(v_i^l)
$ and
$
\mu = (\mu_1,\dots,\mu_m) \in B_m(\Lambda_{{\frak L}^+})$ with $m\le l$,
define the cylinder set
$U_{X_{{\frak L}^-}^+}(\mu,v_i^l;{\xi})
$ of $X_{{\frak L}^-}^+$ by setting 
\begin{equation}
U_{X_{{\frak L}^-} ^+}(\mu,v_i^l;{\xi})
=  \{ (\alpha_i, \omega^i)_{i=1}^\infty \in X_{{\frak L}^-}^+ \mid 
\alpha_1 = \mu_1, \alpha_2 = \mu_2,\dots, \alpha_m = \mu_m, \,
 \omega^m \in  U_{\Omega_{{\frak L}^-}}(v_i^l;{\xi)} \}.
\label{eq:muvil}
\end{equation}
Each element of 
$U_{X_{{\frak L}^-}^+}(\mu,v_i^l;{\xi})$ is figured such as 
\begin{equation*}
\begin{CD}
\omega^0\cdots @>>>u_{l-m}^{0} @>{\xi_1}>>\cdots 
@>{\xi_{l-m}}>>u_{0}^{0} \\ 
@. @VV{\mu_1}V  @VV{\mu_1}V   @VV{\mu_1}V \\
\omega^1\cdots @>>>u_{l-m+1}^{1} @>{\xi_1}>>\cdots 
@>{\xi_{l-m}}>>u_{1}^{1} @>{\xi_{l-m+1}}>>u_{0}^{1} \\
@. @VV{\mu_2}V  @VV{\mu_2}V  @VV{\mu_2}V @VV{\mu_2}V \\
\cdots @>>>\vdots @>{\xi_1}>>\cdots 
@>>>\vdots @>{\xi_{l-m+1}}>>\vdots @>{\xi_{l-1}}>> \\
@. @VV{\mu_m}V @VV{\mu_m}V @VV{\mu_m}V @VV{\mu_m}V  @VV{\mu_m}V \\
\omega^m\cdots @>>>u_l^m =v_i^{l} @>{\xi_1}>>\cdots 
@>>>\vdots @>>>u_{2}^m @>{\xi_{l-1}}>>u_{1}^{m} @>{\xi_l}>>u_{0}^{m}\\
@. @VVV  @VVV @VVV  @VVV @VVV   @VVV\\
\end{CD}
\end{equation*}
For 
$x = (\alpha_i, \omega^i)_{i=1}^\infty \in X_{{\frak L}^-}^+,$
we put
$\lambda_i(x) = \alpha_i \in \Sigma^+,$
$\omega^i(x) =\omega^i \in \Omega_{{\frak L}^-}$
for $i \in \N,$ 
respectively, so that 
$x = (\lambda_i(x), \omega^i(x))_{i=1}^\infty.$
Now ${\frak L}^+$ is left-resolving
so that there uniquely exists 
$\omega^0(x) \in \Omega_{{\frak L}^-}$ satisfying
$(\omega^0(x),\alpha_1,\omega^1) \in E_{{\frak L}^-}^+.$
Define 
$U(\mu)\subset \G_{{\frak L}^-}^+$ 
for 
$\mu = (\mu_1,\dots,\mu_k) \in B_k(\Lambda_{{\frak L}^+}),$
and
$U(v_i^l;\xi)\subset \G_{{\frak L}^-}^+$
for $v_i^l \in V_l, \xi\in F(v_i^l)$ by
\begin{align*}
 U(\mu) 
 = & \{ (x,k,z) \in \G_{{\frak L}^-}^+ \mid
 \sigma_{{\frak L}^-}^k(x) = z,   
\lambda_1(x) =\mu_1,\dots,\lambda_k(x) =\mu_k \}, \quad \text{ and}\\
 U({v}_i^l;\xi) 
 = & \{ (x,0,x) \in \G_{{\frak L}^-}^+ \ \mid
 \ \omega^0(x) \in U_{\Omega_{{\frak L}^-}}({v}_i^l; \xi) \}
(=U_{X_{{\frak L}^-}^+}(v_i^l;{\xi})).
\end{align*}
They are clopen sets of $\G_{{\frak L}^-}^+$.
We set
$$
S_{\mu} = \chi_{U(\mu)}, \qquad 
E_i^{l-}(\xi) = \chi_{U({v}_i^l;\xi)  }
\qquad \text{ in }\quad C_c(\G_{{\frak L}^-}^+ )
$$
where 
$
\chi_{F}
\in C_c(\G_{{\frak L}^-}^+ )
$ denotes 
the characteristic function of a clopen set
$F$
on the groupoid
$
\G_{{\frak L}^-}^+.
$
We in particular write $S_\mu$ as $S_\alpha$
for the symbol $\mu = \alpha \in \Sigma^+$.
For $\mu \not\in B_*(\Lambda_{{\frak L}^+}),$
$\xi\not\in F(v_i^l),$
we recognize that 
$S_\mu =0, E_i^{l-}(\xi) =0.$
The following lemma is straightforward.
\begin{lemma}
\hspace{4cm}
\begin{enumerate}
\renewcommand{\theenumi}{\roman{enumi}}
\renewcommand{\labelenumi}{\textup{(\theenumi)}}
\item
For $\mu = (\mu_1,\dots,\mu_m) \in B_m(\Lambda_{{\frak L}^+})$, we have
$S_\mu = S_{\mu_1}\cdots S_{\mu_m}$.
\item
For $\xi \in F(v_i^l)$, the operator
$E_i^{l-}(\xi)$ is a projection such that the family
$E_i^{l-}(\xi), \xi \in F(v_i^l), i=1,\dots,m(l), l\in \Zp$
are mutually commuting projections.
\end{enumerate}
\end{lemma}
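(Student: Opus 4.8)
The plan is to read off both statements directly from the convolution product on $C_c(\G_{{\frak L}^-}^+)$, exploiting that the clopen sets $U(\mu)$ and $U(v_i^l;\xi)$ sit inside the Deaconu--Renault groupoid in a way that makes products of their characteristic functions again characteristic functions. Write $\sigma = \sigma_{{\frak L}^-}$ and recall that $\sigma$ is a local homeomorphism, that $\sigma((\alpha_i,\omega^i)_{i=1}^\infty) = (\alpha_{i+1},\omega^{i+1})_{i=1}^\infty$, and hence that $\lambda_i(\sigma(x)) = \lambda_{i+1}(x)$; recall also that the left-resolving property of ${\frak L}^+$ guarantees the unique backward extension $\omega^0(x)$ used to define these sets. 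These are the only structural facts I expect to need.

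For part (i) the whole assertion reduces, by induction on $m$, to the single identity $S_{\mu_1}*S_{\mu_2} = S_{\mu_1\mu_2}$. To prove this I would unwind $(S_{\mu_1}*S_{\mu_2})(s) = \sum_{s = t_1t_2}\chi_{U(\mu_1)}(t_1)\chi_{U(\mu_2)}(t_2)$ and observe that any contributing factorization must have $t_1 = (x,1,z)\in U(\mu_1)$ and $t_2 = (z,1,w)\in U(\mu_2)$, since membership in $U(\mu_i)$ forces the $\Z$-component to be exactly $1$. This pins down $z = \sigma(x)$, so the factorization is \emph{unique}, and the resulting $s = (x,2,w)$ satisfies $\sigma^2(x) = w$, $\lambda_1(x) = \mu_1$, and $\lambda_2(x) = \lambda_1(\sigma(x)) = \mu_2$ --- that is, $s \in U(\mu_1\mu_2)$. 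Conversely every $s = (x,2,w)\in U(\mu_1\mu_2)$ factors this way through $z = \sigma(x)$. Thus the convolution sum has exactly one term, taking value $1$ precisely on $U(\mu_1\mu_2)$, giving $S_{\mu_1}*S_{\mu_2} = \chi_{U(\mu_1\mu_2)} = S_{\mu_1\mu_2}$. Admissibility of $\mu$ in $\Lambda_{{\frak L}^+}$ keeps the sets nonempty, and the convention $S_\nu = 0$ for $\nu \notin B_*(\Lambda_{{\frak L}^+})$ covers the degenerate case.

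For part (ii) the key observation is that each $U(v_i^l;\xi)$ is a clopen subset of the unit space $(\G_{{\frak L}^-}^+)^{(0)}$, consisting only of triples $(x,0,x)$. For any clopen $K \subseteq (\G_{{\frak L}^-}^+)^{(0)}$ I would check that $\chi_K$ is a self-adjoint idempotent: self-adjointness follows from $(x,0,x)^{-1} = (x,0,x)$ together with $\chi_K$ being real-valued, so $\chi_K^*(s) = \overline{\chi_K(s^{-1})} = \chi_K(s)$; idempotency follows because the product $(x,0,x)(y,0,y)$ is defined only when $x = y$, in which case it equals $(x,0,x)$, so the sum collapses to one term and $\chi_K*\chi_K = \chi_K$. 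The very same computation yields $\chi_K*\chi_{K'} = \chi_{K\cap K'}$ for two clopen subsets $K,K'$ of the unit space, and since $K\cap K' = K'\cap K$ this gives commutativity. Applying this with $K = U(v_i^l;\xi)$ and $K' = U(v_j^{l'};\xi')$ shows that all the $E_i^{l-}(\xi)$ are mutually commuting projections.

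I do not expect a genuine obstacle here --- the lemma is essentially a bookkeeping exercise about the groupoid convolution, which is why it can be called straightforward. The one point deserving explicit care is the uniqueness of the factorization $s = t_1t_2$ in part (i): it is this uniqueness (equivalently, that the $U(\alpha)$ are bisections and that the $\Z$-grading pins down the decomposition) that makes the product of two characteristic functions again a characteristic function with coefficient $1$ rather than an integer-weighted sum, and I would make sure to state it explicitly rather than leave it implicit.
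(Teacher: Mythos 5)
Your proof is correct and is exactly the computation the paper has in mind: the paper states this lemma without proof (``straightforward''), and its proofs of the neighboring identities in the same section carry out precisely the same kind of convolution bookkeeping on $C_c(\G_{{\frak L}^-}^+)$ that you do here. The point you single out---uniqueness of the factorization $s=t_1t_2$, forced by the $\Z$-grading and the bisection property of $U(\alpha)$---is indeed what makes the products collapse to characteristic functions with coefficient $1$, and your treatment of it, as well as the reduction of part (ii) to $\chi_K*\chi_{K'}=\chi_{K\cap K'}$ for clopen subsets of the unit space, is sound.
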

The transition matrices $A^-_{l,l+1}, A^+_{l,l+1}$
for  ${\frak L}^-, {\frak L}^+$ respectively
are defined by setting
\begin{align*}
A_{l,l+1}^-(i,\beta,j)
 & =
{\begin{cases}
1 &  
    \text{ if } \ t(e^-) = v_i^{l}, \lambda^-(e^-) = \beta,
                       s(e^-) = v_j^{l+1} 
    \text{ for some }    e^- \in E_{l+1,l}^-, \\
0           & \text{ otherwise,}
\end{cases}} \\
A_{l,l+1}^+(i,\alpha,j)
 & =
{\begin{cases}
1 &  
    \text{ if } \ s(e^+) = v_i^{l}, \lambda^+(e^+) = \alpha,
                       t(e^+) = v_j^{l+1} 
    \text{ for some }    e^+ \in E_{l,l+1}^+, \\
0           & \text{ otherwise}
\end{cases}} 
\end{align*}
for
$
i=1,2,\dots,m(l),\ j=1,2,\dots,m(l+1),  \, \beta\in \Sigma^-, \, \alpha \in \Sigma^+.$ 
Let us denote by 
$\ALM$  the $C^*$-subalgebra of $\OALMP$  generated by
$E_i^{l-}(\xi), v_i^l \in V_l, \xi\in F(v_i^l).$
We define the other $C^*$-subalgebra 
$\ALP$ of $\OALPM$
in a similar way.
Let us denote by $C(\Omega_{{\frak L}^-})$
the commutative $C^*$-algebra of complex valued continuous functions on 
$\Omega_{{\frak L}^-}$.
For a subset $B \subset \A$ of a $C^*$-algebra $\A,$
we denote by $C^*(B)$ the $C^*$-subalgebra of $\A$ generated by $B.$ 
\begin{lemma}\label{lem:abelian}
\hspace{4cm}
\begin{enumerate}
\renewcommand{\theenumi}{\roman{enumi}}
\renewcommand{\labelenumi}{\textup{(\theenumi)}}
\item
Each operator $E_i^{l-}(\xi)$ indexed by vertex $v_i^l \in  V_l$ and 
admissible word $\xi =(\xi_1,\dots, \xi_l) \in F(v_i^l)$
is a projection satisfying the following operator relations:
\begin{gather}
 \sum_{\xi \in F(v_i^l)} \sum_{i=1}^{m(l)} 
 E_i^{l-}(\xi)    =  1,  \label{eq:L11}\\ 
 E_i^{l-} (\xi)   = \sum_{\beta\in \Sigma^-}  \sum_{j=1}^{m(l+1)}
A_{l,l+1}^-(i,\beta,j)E_j^{l+1-}(\beta\xi),  \label{eq:L12}
\end{gather}
where the word
$\beta\xi $ in \eqref{eq:L12} is defined by 
$\beta \xi = (\beta, \xi_1,\dots,\xi_l)$
for $\beta \in \Sigma^-,
\xi = (\xi_1,\dots,\xi_l) \in F(v_i^l)$,
and
$E_j^{l+1}(\beta\xi)=0$ unless $\beta \xi \in F(v_j^{l+1}).$
\item
The correspondence 
\begin{equation}
\varphi_l: E_i^{l-} (\xi) \in \ALM \longrightarrow 
\chi_{U_{\Omega_{{\frak L}^-}}(v_i^l;{\xi)}} \in C(\Omega_{{\frak L}^-}) \label{eq:ALMC}
\end{equation}
 gives rise to an isomorphism of $C^*$-algebras between 
$\ALM$ and  $C(\Omega_{{\frak L}^-}).$
\end{enumerate}
\end{lemma}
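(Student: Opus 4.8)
The plan is to translate both assertions into elementary statements about the cylinder sets of $\Omega_{{\frak L}^-}$ and then transport them to $\ALM$ through the unit space of $\G_{{\frak L}^-}^+$. Recall that each $E_i^{l-}(\xi) = \chi_{U_{X_{{\frak L}^-}^+}(v_i^l;\xi)}$ is the characteristic function of a clopen subset of the unit space $(\G_{{\frak L}^-}^+)^{(0)} = X_{{\frak L}^-}^+$. Since the convolution of two functions supported on the unit space is their pointwise product and the involution fixes them, every $E_i^{l-}(\xi)$ is a self-adjoint idempotent, hence a projection; mutual commutativity was established in the preceding lemma. For \eqref{eq:L11} I would observe that, at a fixed level $l$, an element $\omega=(u_m,\beta_{-m})_{m=1}^\infty\in\Omega_{{\frak L}^-}$ determines the single vertex $u_l=v_i^l$ and the single word $\xi=(\beta_{-l},\dots,\beta_{-1})\in F(v_i^l)$; thus the cylinders $U_{\Omega_{{\frak L}^-}}(v_i^l;\xi)$ partition $\Omega_{{\frak L}^-}$ as $(i,\xi)$ ranges over admissible pairs. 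Pulling this partition back along $x\mapsto\omega^0(x)$ as in \eqref{eq:uxvilxi} partitions $X_{{\frak L}^-}^+$, so the sum of the corresponding characteristic functions equals $\chi_{X_{{\frak L}^-}^+}$, the unit of $\OALMP$ (the unit space being compact).

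For \eqref{eq:L12} I would refine this partition by one level. Fixing $\omega\in U_{\Omega_{{\frak L}^-}}(v_i^l;\xi)$ and reading off the next coordinate $(u_{l+1},\beta_{-l-1})$, the underlying edge $e_{l+1}^-\in E_{l+1,l}^-$ satisfies $t(e_{l+1}^-)=u_l=v_i^l$, $s(e_{l+1}^-)=u_{l+1}=:v_j^{l+1}$ and $\lambda^-(e_{l+1}^-)=\beta_{-l-1}=:\beta$, that is $A_{l,l+1}^-(i,\beta,j)=1$, and then $\omega$ lies in $U_{\Omega_{{\frak L}^-}}(v_j^{l+1};\beta\xi)$ with $\beta\xi=(\beta,\xi_1,\dots,\xi_l)$. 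Because ${\frak L}^-$ is right-resolving the pair $(\beta,j)$ is uniquely determined by $\omega$, and by the predecessor axiom of Definition \ref{def:lambdabisystem}(iii) at least one such pair exists, while distinct pairs give disjoint cylinders. Hence
\[
U_{\Omega_{{\frak L}^-}}(v_i^l;\xi) = \bigsqcup_{\beta,\,j:\,A_{l,l+1}^-(i,\beta,j)=1} U_{\Omega_{{\frak L}^-}}(v_j^{l+1};\beta\xi),
\]
and passing to characteristic functions (again pulled back along $x\mapsto\omega^0(x)$) yields \eqref{eq:L12}.

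For part (ii) I would realize $\ALM$ inside $C(X_{{\frak L}^-}^+)=C_0((\G_{{\frak L}^-}^+)^{(0)})$ as the image of the pullback $\pi^*$ of the map $\pi:X_{{\frak L}^-}^+\to\Omega_{{\frak L}^-}$, $\pi(x)=\omega^0(x)$. This $\pi$ is continuous, since each coordinate of $\omega^0(x)$ depends on only finitely many coordinates of $x$, and it is surjective: given $\omega\in\Omega_{{\frak L}^-}$ one extends it downward to a full two-dimensional diagram of the type displayed above, using the local property Definition \ref{def:lambdabisystem}(v) together with the existence of outgoing $E^+$-edges, producing some $x$ with $\omega^0(x)=\omega$ via a compactness argument. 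Surjectivity makes $\pi^*:C(\Omega_{{\frak L}^-})\to C(X_{{\frak L}^-}^+)$ an isometric unital $*$-embedding, and by construction $\pi^*\!\left(\chi_{U_{\Omega_{{\frak L}^-}}(v_i^l;\xi)}\right)=\chi_{U_{X_{{\frak L}^-}^+}(v_i^l;\xi)}=E_i^{l-}(\xi)$. Since $\Omega_{{\frak L}^-}$ is compact and totally disconnected and the cylinders form a clopen basis separating points, the functions $\chi_{U_{\Omega_{{\frak L}^-}}(v_i^l;\xi)}$ generate $C(\Omega_{{\frak L}^-})$ as a $C^*$-algebra by Stone--Weierstrass. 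Therefore $\pi^*$ maps $C(\Omega_{{\frak L}^-})$ isomorphically onto $\ALM$, and its inverse is precisely the correspondence $\varphi_l$ of \eqref{eq:ALMC}.

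The only genuinely non-formal point, and the step I expect to be the main obstacle, is the surjectivity of $\pi$; equivalently, that each $E_i^{l-}(\xi)$ with $\xi\in F(v_i^l)$ is nonzero and that the finite-dimensional commutative subalgebras generated at successive levels embed exactly as prescribed by \eqref{eq:L12}. This is where the local property and the predecessor/successor axioms are indispensable, and I would isolate it as a short extension lemma asserting that every left-infinite path in ${\frak L}^-$ completes to an element of $X_{{\frak L}^-}^+$. Granting that lemma, one may even bypass $\pi$ and argue directly that both $\ALM=\overline{\bigcup_l\ALM^{(l)}}$ and $C(\Omega_{{\frak L}^-})=\overline{\bigcup_l\mathcal{B}_l}$ are inductive limits of the \emph{same} system of finite-dimensional commutative $C^*$-algebras, with connecting maps dictated by \eqref{eq:L12} and level-$l$ identification $\varphi_l$, whence the limit isomorphism follows automatically.
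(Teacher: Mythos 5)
Your proof of part (i) follows the paper's argument exactly: both reduce \eqref{eq:L11} and \eqref{eq:L12} to the level-$l$ partition of $\Omega_{{\frak L}^-}$ into cylinder sets and its one-level refinement, pulled back to $X_{{\frak L}^-}^+$ along $x \mapsto \omega^0(x)$. For part (ii) you take a genuinely different, though equivalent, route. The paper exhibits $\ALM$ and $C(\Omega_{{\frak L}^-})$ as inductive limits of the finite-dimensional commutative subalgebras generated at each level, with connecting maps dictated by \eqref{eq:L12}, and identifies the two towers level by level; you instead realize $\ALM$ as the image of the unital $*$-embedding $\pi^*$ induced by $\pi(x)=\omega^0(x)$ and invoke Stone--Weierstrass on the clopen cylinder basis. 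The two arguments buy the same conclusion and hinge on the same non-formal fact, which you are right to isolate: that $\pi$ is surjective, equivalently that $E_i^{l-}(\xi)\ne 0$ for every $\xi\in F(v_i^l)$. Without this, $\pi^*$ (and likewise the paper's level-$l$ isomorphism $\varphi_l$, which must match the nonzero minimal projections on both sides) would only identify $\ALM$ with a proper quotient of $C(\Omega_{{\frak L}^-})$. The paper does not address this point; your sketch --- seed an outgoing $E^+$-edge at a deep level using Definition \ref{def:lambdabisystem} (iii)(2), propagate a new row of the two-dimensional diagram via the local property (v), and pass to a limit by a K\"onig's lemma argument using finiteness of $\Sigma^+$ and of each $V_l$ --- is the correct way to fill the gap (it is the same propagation mechanism used in the proof of Lemma \ref{lem:keysaesa}), so your write-up is, if anything, more complete than the paper's.
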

\begin{proof}
(i) Any element $x \in X_{{\frak L}^-}^+$ defines an element
$\omega^0(x) \in \Omega_{{\frak L}^-}.$
As the set $X_{{\frak L}^-}^+$ 
 is a disjoint union:
$$
X_{{\frak L}^-}^+
= \bigcup_{i=1}^{m(l)}\bigcup_{\xi \in F(v_i^l)}  U_{X_{{\frak L}^-}^+}(v_i^l;\xi)
$$
for a fixed $l \in \N$,
we know the equality \eqref{eq:L11}.
The disjoint union
\begin{equation}
U_{\Omega_{{\frak L}^-}}(v_i^l;\xi)
   = \bigcup_{\beta\in \Sigma^-}  
\bigcup^{m(l+1)}_{
\underset{ A_{l,l+1}^-(i,\beta,j)=1}{j=1; }}
U_{\Omega_{{\frak L}^-}}(v_j^{l+1};\beta\xi)  
\label{eq:L123}
\end{equation} 
yields the identity
\begin{equation}
\chi_{U(v_i^l;{\xi)}}
   = \sum_{\beta\in \Sigma^-}  \sum_{j=1}^{m(l+1)}
A_{l,l+1}^-(i,\beta,j) \chi_{U(v_j^{l+1};\beta\xi)}
\label{eq:L121}
\end{equation} 
that leads to the equality \eqref{eq:L12}.

(ii)
Define the $C^*$-subalgebras for $l \in \N$
\begin{align*}
\mathcal{A}_{{\frak L}^-,l} 
:=&  C^*(E_i^{l-}(\xi) \mid \xi \in F(v_i^l), i=1,2,\dots,m(l)) \subset \ALM,\\
C(\Omega_{{\frak L}^-,l}) 
:=& C^*(\chi_{U_{\Omega_{{\frak L}^-}}(v_i^l;{\xi)}}  \mid \xi \in F(v_i^l), i=1,2,\dots,m(l))
\subset C(\Omega_{{\frak L}^-}).
\end{align*}
Then the correspondence \eqref{eq:ALMC} gives rise to an isomorphism
$\varphi_l: \mathcal{A}_{{\frak L}^-,l}\longrightarrow C(\Omega_{{\frak L}^-,l}) $
of the commutative finite dimensional $C^*$-algebras.
By the identity \eqref{eq:L12} together  with \eqref{eq:L121},
we have embeddings
\begin{equation*}
\mathcal{A}_{{\frak L}^-,l} 
\hookrightarrow \mathcal{A}_{{\frak L}^-,l+1},
\qquad
C(\Omega_{{\frak L}^-,l}) 
\hookrightarrow 
C(\Omega_{{\frak L}^-,l+1}),
\end{equation*}
that are compatible to the isomorphisms
$\varphi_l: \mathcal{A}_{{\frak L}^-,l}\longrightarrow C(\Omega_{{\frak L}^-,l}), l \in \N.$
As the algebras $\ALM,  
C(\Omega_{{\frak L}^-l})$
are inductive limits 
$\ALM = {\displaystyle{\lim_{l\to \infty}}}\mathcal{A}_{{\frak L}^-,l},
C(\Omega_{{\frak L}^-l})
=
{\displaystyle{\lim_{l\to\infty}}}
C(\Omega_{{\frak L}^-,l}) 
$
respectively,
we conclude that the $C^*$-algebras
$\ALM $ and 
$C(\Omega_{{\frak L}^-l})$
are isomorphic.
\end{proof}
The following lemma is a key to proving the identity \eqref{eq:saeas}.
\begin{lemma}\label{lem:keysaesa}
For $\xi =(\xi_1,\xi_2,\dots,\xi_l) \in F(v_i^l), \alpha \in \Sigma^+$
and $x \in X^+_{{\frak L}^-}$, the following two conditions are equivalent:
\begin{enumerate}
\renewcommand{\theenumi}{\roman{enumi}}
\renewcommand{\labelenumi}{\textup{(\theenumi)}}
\item
There exists $z \in X^+_{{\frak L}^-}$ such that 
\begin{equation}
\omega^0(z) \in U_{\Omega_{{\frak L}^-}}(v_i^l;\xi), \qquad 
\sigma_{{\frak L}^-}(z) = x, \qquad
\lambda_1(z ) = \alpha. \label{eq:key1}
\end{equation}
\item There exist $\beta \in \Sigma^-$ and $j=1,2,\dots, m(l+1)$ such that 
\begin{equation}
\xi \beta \in F(v_j^{l+1}), \qquad
A^+_{l,l+1}(i,\alpha,j) =1,\qquad
x\in U_{X^+_{{\frak L}^-}}(v_j^{l+1};\xi\beta). \label{eq:key2}
\end{equation}
\end{enumerate}
Such $z$ and $\beta$ bijectively correspond to each other.
  \end{lemma}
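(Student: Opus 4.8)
The plan is to translate both conditions into statements about the single ``top row'' $\omega^0(z) \in \Omega_{{\frak L}^-}$ and the continuous-graph edge set $E_{{\frak L}^-}^+$, and then to read off the correspondence from the local property of the $\lambda$-graph bisystem. First I would unwind the defining relations $\sigma_{{\frak L}^-}(z) = x$ and $\lambda_1(z) = \alpha$. Writing $z = (\gamma_i, \eta^i)_{i=1}^\infty$, the equation $\sigma_{{\frak L}^-}(z) = x$ forces $\gamma_{i+1} = \lambda_i(x)$ and $\eta^{i+1} = \omega^i(x)$ for $i \geq 1$, while $\gamma_1 = \alpha$; moreover, since ${\frak L}^+$ is left-resolving, the relation $(\eta^1, \gamma_2, \eta^2) \in E_{{\frak L}^-}^+$ together with $(\omega^0(x), \lambda_1(x), \omega^1(x)) \in E_{{\frak L}^-}^+$ forces $\eta^1 = \omega^0(x)$. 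Thus the sequence $(\gamma_i, \eta^i)_{i\geq 1}$ is already determined by $x$ and $\alpha$, and $z$ lies in $X_{{\frak L}^-}^+$ if and only if there exists $\eta^0 =: \omega^0(z) \in \Omega_{{\frak L}^-}$ with $(\eta^0, \alpha, \omega^0(x)) \in E_{{\frak L}^-}^+$; such $\eta^0$ is then unique. Consequently, condition (i) is equivalent to the existence of $\eta^0 \in \Omega_{{\frak L}^-}$ with $(\eta^0, \alpha, \omega^0(x)) \in E_{{\frak L}^-}^+$ and $\eta^0 \in U_{\Omega_{{\frak L}^-}}(v_i^l; \xi)$, and then $z$ is recovered uniquely from $\eta^0$.

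Next I would analyze the ``ladder'' that an edge $(\eta^0, \alpha, \omega^0(x)) \in E_{{\frak L}^-}^+$ represents: two $\Omega_{{\frak L}^-}$-paths joined by a column of ${\frak L}^+$-edges all labelled $\alpha$, with the crucial index shift that the label carried by the edge of $\eta^0$ between levels $m$ and $m-1$ coincides with the label carried by the edge of $\omega^0(x)$ between levels $m+1$ and $m$. Imposing $\eta^0 \in U_{\Omega_{{\frak L}^-}}(v_i^l;\xi)$ fixes the vertex $u_l(\eta^0) = v_i^l$ and the labels $\xi_1, \dots, \xi_l$ on the bottom $l$ edges of $\eta^0$; the vertical $\alpha$-edge at level $l$ then reads $v_i^l \to u'_{l+1}(\omega^0(x))$, which is precisely the statement that $A_{l,l+1}^+(i,\alpha,j) = 1$ for $j$ the index of $u'_{l+1}(\omega^0(x))$, while the index shift transports the labels $\xi_1, \dots, \xi_l$ onto the edges of $\omega^0(x)$ between levels $l+1$ and $1$, leaving the bottom edge of $\omega^0(x)$ carrying a single new symbol $\beta$. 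Setting $\beta := \beta_{-1}(\omega^0(x))$ then yields exactly $\omega^0(x) \in U_{\Omega_{{\frak L}^-}}(v_j^{l+1}; \xi\beta)$, i.e. condition (ii); since $j$ and $\beta$ are read off from the fixed element $\omega^0(x)$, the pair $(\beta, j)$ is unique, matching the uniqueness of $z$.

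The substantive step, and the main obstacle, is to upgrade the single-level datum $A_{l,l+1}^+(i,\alpha,j) = 1$ in (ii) to the existence of the entire infinite column of $\alpha$-edges required for $(\eta^0, \alpha, \omega^0(x)) \in E_{{\frak L}^-}^+$. Here I would invoke the local property (v) of the $\lambda$-graph bisystem (Definition \ref{def:lambdabisystem}) as a two-way propagation. Going downward, applied to the pair $(u_{m-1}(\eta^0), u'_{m+1}(\omega^0(x)))$ the bijection $\varphi \colon E_+^-(u,v) \to E_-^+(u,v)$ converts the known corner (the $\alpha$-edge at level $m$ together with the $\xi$-labelled ${\frak L}^-$-edge of $\eta^0$) into the $\alpha$-edge at level $m-1$, using that ${\frak L}^-$ is right-resolving to identify the produced ${\frak L}^-$-edge with the prescribed edge of $\omega^0(x)$; iterating down to level $0$ produces the bottom rungs. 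Going upward, applying $\varphi^{-1}$ to $(v_i^l, u'_{l+2}(\omega^0(x)))$ converts the $\alpha$-edge at level $l$ and the ${\frak L}^-$-edge of $\omega^0(x)$ into the next vertex $u_{l+1}(\eta^0)$, its ${\frak L}^-$-edge, and the $\alpha$-edge at level $l+1$; iterating upward builds the whole top row $\eta^0$ and completes the ladder. Reading the construction backwards gives (i) $\Rightarrow$ (ii), and since each direction determines the other datum uniquely, the assignment $z \leftrightarrow \beta$ is the desired bijection. The only points demanding care are the bookkeeping of the level/label index shift and checking that the right- and left-resolving hypotheses are exactly what make the edges produced by $\varphi$ agree with the prescribed edges of $\omega^0(x)$.
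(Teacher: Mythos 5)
Your proposal is correct and follows essentially the same route as the paper: reduce (i) to the existence of a unique top row $\omega^0(z)$ with $(\omega^0(z),\alpha,\omega^0(x))\in E^+_{{\frak L}^-}$ (using that ${\frak L}^+$ is left-resolving), read off $\beta$ and $j$ from the index shift in the ladder for (i)$\Rightarrow$(ii), and for (ii)$\Rightarrow$(i) propagate the single $\alpha$-edge at level $l$ both downward and upward by iterating the local property, with the right-resolving hypothesis identifying the produced ${\frak L}^-$-edges with those prescribed by $\omega^0(x)$. Your write-up is in fact somewhat more explicit than the paper's about the downward propagation and the uniqueness underlying the bijection $z\leftrightarrow\beta$, but the argument is the same.
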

\begin{proof}
(i) $\Longrightarrow$ (ii):
Suppose that  $z \in X^+_{{\frak L}^-}$
satisfies the conditions \eqref{eq:key1}.
Since we see $(\omega^0(z), \alpha,\omega^1(z)) \in E^+_{{\frak L}^-},$
we have $\omega^1(z) = \omega^0(x).$
Let $\omega^0(x) = (u_l, \beta_{-l})_{l=1}^\infty \in \Omega_{{\frak L}^-},$
so that 
$\beta_{-1} =\xi_l, \, \beta_{-2} =\xi_{l-1}, \dots, \beta_{-l} =\xi_1,$
such as the following figure:
\begin{equation*}
\begin{CD}
\cdots @>>>v_i^l@>\xi_1>>\bigcirc @>\xi_2 >>
\cdots 
@>{\xi_l}>>\bigcirc \\
@. @VV{\alpha}V @VV{\alpha}V 
@. @VV{\alpha}V @.\\
\cdots @>>>u_{l+1} @>{\beta_{-l}}>> u_l @>{\beta_{-l+1}}>> 
\cdots 
@>{\beta_{-1}}>>u_1 @>{\beta_0}>>u_0.
\end{CD}
\end{equation*}
Put
$\beta = \beta_0$ and
$v_j^{l+1} = u_{l+1}.$
Then the condition \eqref{eq:key2} holds.

(ii) $\Longrightarrow$ (i):
Conversely let $\beta \in \Sigma^-$ and $j=1,2,\dots,m(l+1)$ 
satisfy the condition \eqref{eq:key2}.
Let $\omega^0(x) = (u_l, \beta_{-l})_{l=1}^\infty \in \Omega_{{\frak L}^-},$
so that 
$\beta_{-1} =\xi_l, \, \beta_{-2} =\xi_{l-1}, \dots, \beta_{-l} =\xi_1.$
By the hypothesis 
 $\xi =(\xi_1,\xi_2,\dots,\xi_l) \in F(v_i^l)$, there exist unique labeled edges
$f_{n,n-1}^- \in E_{n,n-1}^-$ for $n= 1,2,\dots,l$ such that 
$s(f_{l,l-1}^-) = v_i^l, \, t(f_{n,n-1}^-) = s(f_{n-1, n-2}^-), \,
 \lambda^-(f_{n,n-1}^-) = \xi_{l-n+1}$
for $n= 1,2,\dots,l$.
We put
$u'_n = s(f_{n,n-1}^-) $ for 
 $n= 1,2,\dots,l$, so that $u'_l = v_i^l$.
By the hypothesis 
$A^+_{l,l+1}(i,\alpha,j) =1$ with the local property of $\lambda$-graph bisystem,
we may find  labeled edges 
$e_{n,n+1}^+ \in E_{n,n+1}^+$ for $n=0,1,\dots,l$ such that 
$s(e_{n,n+1}^+) = u'_n, \,  
t(e_{n,n+1}^+) = u_n, \, 
\lambda^+(e_{n,n+1}^+) = \alpha$.
By the condition 
$ (u_l, \beta_{-l})_{l=1}^\infty \in \Omega_{{\frak L}^-},$
there exists an labeled edge
$e_{l+2, l+1}^- \in  E_{l+2, l+1}^- $
such that 
$s(e_{l+2, l+1}^-) = u_{l+2}, \, 
t(e_{l+2, l+1}^-) = u_{l+1} (= v_j^{l+1}), \, 
\lambda^-(e_{l+2, l+1}^-) = \beta_{-l-1}.
$
By applying the local property of $\lambda$-graph bisystem for the pair
$e_{l,l+1}^+$ and $e_{l+2, l+1}^-$ 
satisfying
$t(e_{l,l+1}^+) = t(e_{l+2, l+1}^-),$ 
we may find labeled edges 
$f_{l+1,l}^- \in E_{l+1,l}^-$
and
$e_{l+1, l+2}^+ \in E_{l+1, l+2}^+$ 
 such that 
\begin{gather*}
t(f_{l+1,l}^-) = u'_l (=v_i^l), \quad
s(f_{l+1,l}^-) = s(e_{l+1, l+2}^+), \quad
t(e_{l+1, l+2}^+) = u_{l+2}, \\
\lambda^-(f_{l+1,l}^-) =\beta_{-l-1}, \quad
\lambda^+(e_{l+1, l+2}^+) =\alpha.
\end{gather*}
We put
$u'_{l+1} =s(f_{l+1,l}^-) \in V_{l+1}$.
Like this way, 
by successively applying the local property of $\lambda$-graph bisystem,
we may find 
$\omega' =({u'}_l, \beta_{-l-1})_{l=1}^\infty \in \Omega_{{\frak L}^-}(v_i^l;\xi)$
such that 
$\beta_{-1} = \xi_l, \, \dots, \beta_{-l} = \xi_1$ 
and
$(\omega',  \alpha, \omega^0(x)) \in E_{{\frak L}^-}^+.$
By defining 
$
z = (\alpha_i, \omega^i)_{i=1}^\infty 
\in \prod_{i=1}^\infty (\Sigma^+\times \Omega_{{\frak L}^-})
$
such that 
$\alpha_1 = \alpha, \omega^1 = \omega^0(x)$ and
$(\alpha_i, \omega^i)_{i=2}^\infty =x$, 
we have  
$z \in X^+_{{\frak L}^-}$, 
$\sigma_{{\frak L}^-}(z) = x$,  $\lambda_1(z) =\alpha$
and $\omega^0(z) = \omega' \in \Omega_{{\frak L}^-}(v_i^l;\xi).$
\end{proof}
\begin{lemma} For $\alpha \in \Sigma^+$ and $\xi\in F(v_i^l),$
we have 
\begin{equation}
S_{\alpha}^*E_i^{l-}(\xi) S_{\alpha}   =  
\sum_{\beta\in \Sigma^-}\sum_{j=1}^{m(l+1)} A_{l,l+1}^+(i,\alpha,j)E_j^{l+1-}(\xi\beta),
\label{eq:saeas}
\end{equation}
where $E_j^{l+1-}(\xi \beta)=0$ unless
$\xi \beta \in F(v_j^{l+1}).$
 \end{lemma}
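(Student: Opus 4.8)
The plan is to run the whole computation inside the convolution $*$-algebra $C_c(\G_{{\frak L}^-}^+)$, where the three operators in question are characteristic functions of clopen bisections. Indeed $S_\alpha=\chi_{U(\alpha)}$, $S_\alpha^*=\chi_{U(\alpha)^{-1}}$, and $E_i^{l-}(\xi)=\chi_{U(v_i^l;\xi)}$ with $U(v_i^l;\xi)$ contained in the unit space. A preliminary point I would record is that $U(\alpha)$ really is a bisection: since ${\frak L}^+$ is left-resolving, for a fixed symbol $\alpha$ the map $\sigma_{{\frak L}^-}$ is injective on $\{x\in X^+_{{\frak L}^-}\mid\lambda_1(x)=\alpha\}$, so both the range and the source maps are injective on $U(\alpha)$. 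With this in hand, on characteristic functions of clopen bisections convolution is set multiplication, $\chi_A*\chi_B=\chi_{AB}$, and the adjoint is inversion, $\chi_A^*=\chi_{A^{-1}}$.

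First I would evaluate the groupoid product
$$
W:=U(\alpha)^{-1}\,U(v_i^l;\xi)\,U(\alpha)\subset\G_{{\frak L}^-}^+ .
$$
Tracking the $\Z$-valued cocycle together with the source/range matchings, and using that $U(v_i^l;\xi)$ lies in the unit space (so that $E_i^{l-}(\xi)$ is a diagonal element transported by the partial isometry $S_\alpha$), a direct computation shows that $W$ lies entirely in the unit space and equals
$$
W=\{(z,0,z)\mid \exists\, x\in X^+_{{\frak L}^-}:\ \sigma_{{\frak L}^-}(x)=z,\ \lambda_1(x)=\alpha,\ \omega^0(x)\in U_{\Omega_{{\frak L}^-}}(v_i^l;\xi)\}.
$$
Consequently $S_\alpha^*E_i^{l-}(\xi)S_\alpha=\chi_W$ as an element of $C_c(\G_{{\frak L}^-}^+)$.

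Next I would feed this description of $W$ into Lemma \ref{lem:keysaesa}. That lemma states exactly that the defining condition on a base point $z$ of $W$ — the existence of a preimage $x$ with $\lambda_1(x)=\alpha$ and $\omega^0(x)\in U_{\Omega_{{\frak L}^-}}(v_i^l;\xi)$ — is equivalent to the existence of $\beta\in\Sigma^-$ and $j$ with $A^+_{l,l+1}(i,\alpha,j)=1$, $\xi\beta\in F(v_j^{l+1})$ and $z\in U_{X^+_{{\frak L}^-}}(v_j^{l+1};\xi\beta)$, and moreover that the preimage $x$ and the symbol $\beta$ correspond bijectively. Hence
$$
W=\bigcup_{\beta\in\Sigma^-}\ \bigcup_{\substack{j=1,\dots,m(l+1)\\ A^+_{l,l+1}(i,\alpha,j)=1}} U(v_j^{l+1};\xi\beta),
$$
with the convention that a summand is empty unless $\xi\beta\in F(v_j^{l+1})$. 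I would then check that this union is disjoint: through $\omega^0$ a cylinder $U(v_j^{l+1};\xi\beta)$ pins down both the vertex $u_{l+1}=v_j^{l+1}$ and the most recent label determined by $\beta$, so distinct pairs $(\beta,j)$ give disjoint cylinders. Since the union is disjoint, $\chi_W=\sum_{\beta,j}A^+_{l,l+1}(i,\alpha,j)\,\chi_{U(v_j^{l+1};\xi\beta)}$, which is precisely the right-hand side of \eqref{eq:saeas}.

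The main obstacle is not the algebra but the combinatorial bookkeeping that is packaged in Lemma \ref{lem:keysaesa}: a priori $\sigma_{{\frak L}^-}$ need not be globally injective, so one must be certain the convolution neither overcounts nor undercounts relative to the sum on the right. The bijective correspondence between admissible preimages $x$ and symbols $\beta$ supplied by that lemma is exactly what makes $W$ split as the stated disjoint union, and it is the only nontrivial ingredient; the surrounding bisection calculus and the disjointness check are routine.
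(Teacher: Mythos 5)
Your proof is correct and follows essentially the same route as the paper: the substance in both cases is the reduction to Lemma \ref{lem:keysaesa}, which converts the existence of a preimage $x$ with $\lambda_1(x)=\alpha$ and $\omega^0(x)\in U_{\Omega_{{\frak L}^-}}(v_i^l;\xi)$ into the disjoint union of cylinders $U(v_j^{l+1};\xi\beta)$ indexed by the pairs $(\beta,j)$ with $A^+_{l,l+1}(i,\alpha,j)=1$. The only difference is presentational: you package the routine part via the compact open bisection calculus $\chi_A*\chi_B=\chi_{AB}$ (justified by the left-resolving property of ${\frak L}^+$ making $U(\alpha)$ a bisection), whereas the paper carries out the same convolution pointwise; both land on the identical identity \eqref{eq:saeas}.
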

\begin{proof}
It suffices to show that the equality
\begin{equation}
\chi_{U(\alpha)}^* *\chi_{U(v_i^l;\xi)}*\chi_{U(\alpha)}
=\sum_{\beta\in \Sigma^-}
  \sum_{j=1}^{m(l+1)} A_{l,l+1}^+(i,\alpha,j) \chi_{U(v_j^{l+1};\xi\beta)}
\label{eq:chiUalpha}
\end{equation}
holds. 
We then have for $s = (x,n,z) \in \G_{{\frak L}^-}^+,$
\begin{align*}
 &[\chi_{U(\alpha)}^* *\chi_{U(v_i^l;\xi)}*\chi_{U(\alpha)}](x,n,z) 
 \\
=& \sum_{
\underset{r(t) = r(s)}{t; }
} \overline{\chi_{U(\alpha)}(t^{-1})}
(\chi_{U(v_i^l;\xi)}*\chi_{U(\alpha)})(t^{-1}s)  
\qquad (\text{Put } t=(y,m,w)\in \G_{{\frak L}^-}^+)
\\
=& \sum_{m, w} \chi_{U(\alpha)}(w,-m,x)
(\chi_{U(v_i^l;\xi)}*\chi_{U(\alpha)})(w, n-m,z)  
\\
=& \sum_{
\underset{x = \sigma_{{\frak L}^-}(w), \lambda_1(w) = \alpha}{w;} } 
(\chi_{U(v_i^l;\xi)}*\chi_{U(\alpha)})(w, n+1,z),
\end{align*}
because  
$\chi_{U(\alpha)}(w,-m,x) =1$
 if and only if  
$m=-1, \, x= \sigma_{{\frak L}^-}(w),\, \lambda_1(w) =\alpha$.
Now 
\begin{align*}
& 
(\chi_{U(v_i^l;\xi)}*\chi_{U(\alpha)})(w, n+1,z)   \\
=& \sum_{
\underset{r(t') = w}{t';}
} 
\chi_{U(v_i^l;\xi)}(t')\chi_{U(\alpha)} ({t'}^{-1}\cdot (w,n+1, z))  
\qquad (\text{Put } (y',m',w') =t'\in \G_{{\frak L}^-}^+) \\
=& \sum_{
\underset{y' = w}{m', y';}
} 
\chi_{U(v_i^l;\xi)}(y', m', w')\chi_{U(\alpha)} (w',-m'+n+1, z)  \\
=&\sum_{ 
\underset{z=\sigma_{{\frak L}^-}(w'), \lambda_1(w') = \alpha}
{w';}} 
\chi_{U(v_i^l;\xi)}(w, n, w'),
\end{align*}
because  $\chi_{U(\alpha)}(w',-m'+n+1,z) =1$ 
if and only if
$m'=n, \,  z= \sigma_{{\frak L}^-}(w'), \, \lambda_1(w') = \alpha.$
Since
$\chi_{U(v_i^l;\xi)}(w, n, w')=1$ 
if and only if
 $w=w', n=0$ and $\omega^0(w) \in U_{\Omega_{{\frak L}^-}^+}(v_i^l;\xi),$
we have
\begin{equation*}
 [\chi_{U(\alpha)}^* *\chi_{U(v_i^l;\xi)}*\chi_{U(\alpha)}](x,n,z) 
= \sum_{
\underset{x = z=\sigma_{{\frak L}^-}(w), \lambda_1(w) = \alpha}{w; }} 
\chi_{U_{\Omega_{{\frak L}^-}^+}(v_i^l;\xi)}(\omega^0(w)).
\end{equation*}
By Lemma \ref{lem:keysaesa} with the hypothesis $\xi \in F(v_i^l),$
we have
$\omega^0(w)\in U_{\Omega_{{\frak L}^-}^+}(v_i^l;\xi)$ 
with $x=z = \sigma_{{\frak L}^-}(w), \lambda_1(w) =\alpha$
if and only if 
$x= z \in U_{X_{{\frak L}^-}^+}(v_j^{l+1};\xi\beta)$
for some $j$ and $\beta \in \Sigma^-$ 
such that 
$ A_{l,l+1}^+(i,\alpha,j) =1$
and
$\xi\beta \in F(v_j^{l+1}).$
Hence we obtain the equality 
\begin{equation*}
   \sum_{\underset{x = z=\sigma_{{\frak L}^-}(w), \lambda_1(w) = \alpha}{w;}} 
\chi_{U_{\Omega_{{\frak L}^-}^+}(v_i^l;\xi)}(\omega^0(w)) 
=
 \sum_{\beta \in \Sigma^-} \sum_{j=1}^{m(l+1)} 
A_{l,l+1}^+(i,\alpha,j) \chi_{U(v_j^{l+1};\xi\beta)}(x,n,z),
\end{equation*}
proving the equality 
\eqref{eq:chiUalpha}.
\end{proof}
By the formula \eqref{eq:saeas}
for a fixed $l\in \Zp$,
we have the identity
\begin{equation}
S_{\alpha}^* S_{\alpha}   =  
\sum_{i=1}^{m(l)} \sum_{j=1}^{m(l+1)} \sum_{\eta \in F(v_j^{l+1})}
A_{l,l+1}^+(i,\alpha,j)E_j^{l+1-}(\eta).
\label{eq:sassa}
\end{equation}

For $\mu =(\mu_1,\dots,\mu_m), \nu=(\nu_1,\dots,\nu_n)\in B_*(\Lambda_{{\frak L}^+})$
and
$v_i^l\in V_l, \xi \in F(v_i^l)$
with $m,n \le l$,
let $U(\mu,\nu, v_i^l;\xi)$ be
 the clopen set of $\G_{{\frak L}^-}^+$ 
defined by 
\begin{align*}
& U(\mu,\nu, v_i^l;\xi) \\
=& \{  (x,m-n, z) \in \G_{{\frak L}^-}^+
\mid   \lambda_{[1,m]}(x) =\mu, \lambda_{[1,n]}(z) = \nu,\, 
 \sigma_{{\frak L}^-}^m(x) = \sigma_{{\frak L}^-}^n(z)\in \Omega_{{\frak L}^-}(v_i^l; \xi) \}
\end{align*}
where 
$x=(\lambda_i(x), \omega^i(x))_{i=1}^\infty, 
 z=(\lambda_i(z), \omega^i(z))_{i=1}^\infty
\in X_{{\frak L}^-}^+$
and
$\lambda_{[1,m]} (x)=(\lambda_1(x),\dots,\lambda_m(x))$,
\,
$\lambda_{[1,n]}(z) =(\lambda(z)_1,\dots,\lambda_n(z))
 \in B_*(\Lambda_{{\frak L}^+}).$
For $\mu = \nu$, we write 
$U(\mu,\mu, v_i^l;\xi) $ as 
$U(\mu,v_i^l;\xi)$, that is identified with 
$
U_{X_{{\frak L}^-} ^+}(\mu,v_i^l;{\xi})
$
defined in \eqref{eq:muvil}. 
Then we have
\begin{lemma}\label{lem:smeilsm}
\begin{equation*}
S_\mu E_i^{l-}(\xi) S_\nu^* = \chi_{U(\mu,\nu,v_i^l;\xi)}.
\end{equation*}
In particular, for the clopen set $U(\mu,v_i^l;\xi)$ with
$\mu \in (\mu_1,\dots,\mu_m)\in B_m(\Lambda_{{\frak L}^+}), v_i^l \in V_l$
defined in \eqref{eq:muvil},
we have 
\begin{equation}
S_\mu E_i^{l-}(\xi) S_\mu^* = \chi_{U(\mu,v_i^l;\xi)}. \label{eq:smueilsmu}
\end{equation}
\end{lemma}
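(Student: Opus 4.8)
The plan is to compute the convolution product $S_\mu * E_i^{l-}(\xi) * S_\nu^*$ directly on the \'etale groupoid $\G_{{\frak L}^-}^+$ and to identify the answer with the characteristic function of $U(\mu,\nu,v_i^l;\xi)$. The underlying mechanism is the standard one for \'etale groupoids: if $U$ and $V$ are clopen bisections, i.e. clopen sets on which both $r$ and $s$ are injective, then $\chi_U * \chi_V = \chi_{UV}$, where $UV = \{ t_1 t_2 \mid t_1 \in U,\, t_2 \in V,\, s(t_1) = r(t_2)\}$. This holds because for a fixed $s$ the sum $\sum_{s = t_1 t_2} \chi_U(t_1)\chi_V(t_2)$ has at most one nonzero term: once $t_1 \in U$ with $r(t_1) = r(s)$ is fixed (unique by injectivity of $r|_U$), the factor $t_2 = t_1^{-1}s$ is determined.

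First I would verify that the three relevant sets are clopen bisections. The set $U(v_i^l;\xi)$ lies in the unit space and is trivially a bisection. For $U(\mu)$ with $\mu = (\mu_1,\dots,\mu_m)$, the range map is injective since both $m = |\mu|$ and $w = \sigma_{{\frak L}^-}^m(x)$ are determined by $x$; the source map is injective because, given $w$ and the prescribed labels $\mu_1,\dots,\mu_m$, the left-resolving property of ${\frak L}^+$ together with the local property of the $\lambda$-graph bisystem reconstructs the intermediate elements $\omega^m, \omega^{m-1},\dots,\omega^1 \in \Omega_{{\frak L}^-}$ uniquely (each $\omega^{k}$ is forced by the edge relation $(\omega^{k},\alpha_{k+1},\omega^{k+1}) \in E_{{\frak L}^-}^+$ from $\alpha_{k+1}$ and $\omega^{k+1}$), so that $x$ is uniquely recovered from $w$. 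Hence $U(\mu)$ is a clopen bisection, and since $S_\nu^* = \chi_{U(\nu)}^* = \chi_{U(\nu)^{-1}}$, so is $U(\nu)^{-1}$.

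Next I would compute the triple product set. Writing a generic composable triple as $t_1 = (x,m,w) \in U(\mu)$, $t_2 = (w,0,w) \in U(v_i^l;\xi)$ and $t_3 = (w,-n,z) \in U(\nu)^{-1}$ (composability forcing the source of each factor to match the range of the next), their product is $(x, m-n, z)$, subject to $\sigma_{{\frak L}^-}^m(x) = w = \sigma_{{\frak L}^-}^n(z)$, $\omega^0(w) \in U_{\Omega_{{\frak L}^-}}(v_i^l;\xi)$, $\lambda_{[1,m]}(x) = \mu$ and $\lambda_{[1,n]}(z) = \nu$. This is exactly the defining description of $U(\mu,\nu,v_i^l;\xi)$, so $U(\mu)\cdot U(v_i^l;\xi)\cdot U(\nu)^{-1} = U(\mu,\nu,v_i^l;\xi)$. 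Applying $\chi_U * \chi_V = \chi_{UV}$ twice then yields $S_\mu * E_i^{l-}(\xi) * S_\nu^* = \chi_{U(\mu,\nu,v_i^l;\xi)}$. Setting $\mu = \nu$ and identifying $U(\mu,\mu,v_i^l;\xi)$ with $U_{X_{{\frak L}^-}^+}(\mu,v_i^l;\xi)$ as in \eqref{eq:muvil} gives the special case \eqref{eq:smueilsmu}.

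The main obstacle is precisely the injectivity of the source map on $U(\mu)$ — equivalently, the uniqueness of the decomposition $s = t_1 t_2 t_3$ that collapses the convolution to a single term. This is where the structural hypotheses (the left-resolving property of ${\frak L}^+$ and the local property of the $\lambda$-graph bisystem) are used, through the unique backward reconstruction of an element of $X_{{\frak L}^-}^+$ from its $m$-fold shift together with its prescribed initial labels. All remaining bookkeeping — checking composability, matching the $\Z$-coordinate to $m-n$, and confirming clopenness — is routine.
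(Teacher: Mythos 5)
Your proposal is correct and follows essentially the same route as the paper: both compute the convolution $\chi_{U(\mu)} * \chi_{U(v_i^l;\xi)} * \chi_{U(\nu)}^*$ directly on the groupoid $\G_{{\frak L}^-}^+$, collapse the sums to a single term by uniqueness of the factorization, and identify the resulting support with $U(\mu,\nu,v_i^l;\xi)$. One small remark: the collapse only needs that an element of $U(\mu)$ (resp.\ $U(\nu)$) is determined by its \emph{range}, which is immediate from the definitions; the source-map injectivity on $U(\mu)$ that you single out as the main obstacle (and justify via the left-resolving and local properties) is a genuine fact but is not actually required for this particular product.
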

\begin{proof}
It suffices to show the equality
\begin{equation}
\chi_{U(\mu)} * \chi_{U(v_i^l;\xi)} *\chi_{U(\nu)}^* =
\chi_{U(\mu,\nu,v_i^l;\xi)} \label{eq: chiumvilunu}
\end{equation}
holds.
Suppose  
$\mu =(\mu_1,\dots,\mu_m), \nu=(\nu_1,\dots,\nu_n)\in B_*(\Lambda_{{\frak L}^+}).$
For $s =(x, p, z) \in \G_{{\frak L}^-}^+,$
we have
\begin{align*}
& [\chi_{U(\mu)} * \chi_{U(v_i^l;\xi)} *\chi_{U(\nu)}^*](x, p, z) \\
 =&  \sum_{\underset{r(t) = r(s)}{t;}}
\chi_{U(\mu)}(t) [ \chi_{U(v_i^l;\xi)} *\chi_{U(\nu)}^*](t^{-1}s) 
\qquad(\text{Put } t=(y,q,w)) \\
=&  \sum_{q,w}
\chi_{U(\mu)}(x,q,w) [ \chi_{U(v_i^l;\xi)} *\chi_{U(\nu)}^*](w, p-q, z). 
\end{align*}
We know
$(x,q,w) \in U(\mu)$ if and only if 
$q = m, \, \lambda_{[1,m]}(x) = \mu$ and $\sigma_{{\frak L}^-}^m(x) = w,$
so that we have
\begin{align*}
  & [\chi_{U(\mu)} * \chi_{U(v_i^l;\xi)} *\chi_{U(\nu)}^*](x, p, z) \\
= &  [ \chi_{U(v_i^l;\xi)} *\chi_{U(\nu)}^*]
(\sigma_{{\frak L}^-}^m(x), p-m, z) \\
=&  \sum_{\underset{r(t') = \sigma_{{\frak L}^-}^m(x)}{t';}}
 \chi_{U(v_i^l;\xi)}(t')
\chi_{U(\nu)}^*({t'}^{-1}\cdot (\sigma_{{\frak L}^-}^m(x), p-m, z)) 
\quad (\text{Put } t' = (y', q', w'), y' = \sigma_{{\frak L}^-}^m(x) )\\
=&  \sum_{q', w'}
 \chi_{U(v_i^l;\xi)}(y', q', w')
\chi_{U(\nu)}^*(w', p-m-q', z) \\
=&  \sum_{q', w'}
 \chi_{U(v_i^l;\xi)}(\sigma_{{\frak L}^-}^m(x), q', w')
\chi_{U(\nu)}(z,-p+m+q', w').
\end{align*}
Now we have
$(z,-p+m+q', w') \in U(\nu)$ if and only if
$\sigma_{{\frak L}^-}^n(z) = w', \lambda_{[1,n]}(z) = \nu$ and $ -p + m+q' = n,$
so that 
\begin{equation*}
  [\chi_{U(\mu)} * \chi_{U(v_i^l;\xi)} *\chi_{U(\nu)}^*](x, p, z) 
=  
 \chi_{U(v_i^l;\xi)}
( \sigma_{{\frak L}^-}^m(x), p+n-m,  \sigma_{{\frak L}^-}^n(z)).
\end{equation*}
As
$( \sigma_{{\frak L}^-}^m(x), p+n-m,  \sigma_{{\frak L}^-}^n(z)) \in U(v_i^l;\xi)$
with $\lambda_{[1,m]}(x) = \mu,\,\lambda_{[1,n]}(z) = \nu$
if and only if 
$(x, p, z) \in U(\mu,\nu,v_i^l;\xi),$
we have the identity \eqref{eq: chiumvilunu}.
\end{proof}
\begin{lemma}
The set of finite linear combinations of elements of the form
\begin{equation}
S_\mu E_i^{l-}(\xi) S_\nu^*, \qquad
\mu\in B_m(\Lambda_{{\frak L}^+}),\,  \nu \in  B_n(\Lambda_{{\frak L}^+}), \,  
\xi\in F(v_i^l), \, i=1,2,\dots,m(l), \,
m,n \le l \label{eq:dense}
\end{equation}
is dense in the $C^*$-algebra
$\OALMP.$
\end{lemma}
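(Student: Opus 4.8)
The plan is to reduce the statement to the general structure theory of $C^*$-algebras of ample (étale with zero-dimensional unit space) groupoids. Since $C_c(\G_{{\frak L}^-}^+)$ is a dense $*$-subalgebra of $\OALMP = C^*(\G_{{\frak L}^-}^+)$, it suffices to show that every $f \in C_c(\G_{{\frak L}^-}^+)$ lies in the linear span of the elements $S_\mu E_i^{l-}(\xi) S_\nu^*$. By Lemma \ref{lem:smeilsm} these elements are exactly the characteristic functions $\chi_{U(\mu,\nu,v_i^l;\xi)}$ of the clopen bisections $U(\mu,\nu,v_i^l;\xi)$, so the task becomes purely topological: I must show that the linear span of $\{\chi_{U(\mu,\nu,v_i^l;\xi)}\}$ exhausts $C_c(\G_{{\frak L}^-}^+)$. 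First I would record that $\G_{{\frak L}^-}^+$ is ample, since its unit space $X_{{\frak L}^-}^+$ is zero-dimensional compact Hausdorff and the groupoid is étale; consequently $\G_{{\frak L}^-}^+$ is totally disconnected, its compact open bisections form a basis, and $C_c(\G_{{\frak L}^-}^+)$ is the linear span of their indicator functions.

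Next I would identify the relevant basis. The cylinder sets $U_{X_{{\frak L}^-}^+}(\mu,v_i^l;\xi)$ of \eqref{eq:muvil}, indexed by $\mu \in B_m(\Lambda_{{\frak L}^+})$, $v_i^l \in V_l$ and $\xi \in F(v_i^l)$ with $m \le l$, form a clopen basis of $X_{{\frak L}^-}^+$: fixing $\alpha_1,\dots,\alpha_m$ together with the cylinder $U_{\Omega_{{\frak L}^-}}(v_i^l;\xi)$ pins down the initial labels and the boundary data, and letting $m,l \to \infty$ these shrink to points. On such a cylinder, with the first $m$ labels fixed, the iterated shift $\sigma_{{\frak L}^-}^m$ is injective --- this is where left-resolvingness of ${\frak L}^+$ enters, since each $\omega^{i}$ is recovered from $\alpha_{i+1}$ and $\omega^{i+1}$ --- and it maps the cylinder homeomorphically onto $U_{X_{{\frak L}^-}^+}(v_i^l;\xi)$. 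Hence $\sigma_{{\frak L}^-}^m$ and $\sigma_{{\frak L}^-}^n$ are homeomorphisms on the source and range cylinders with common image $U_{X_{{\frak L}^-}^+}(v_i^l;\xi)$, and $U(\mu,\nu,v_i^l;\xi)$ is precisely the associated compact open bisection carrying degree $m-n$. The canonical basic bisections generating the topology of the Deaconu--Renault groupoid are, after refining their source and range to basic cylinders and their common image to a basic cylinder, exactly of this form; thus the $U(\mu,\nu,v_i^l;\xi)$ constitute a basis of compact open bisections.

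Finally I would dispose of the constraint $m,n \le l$ and convert finite unions into finite linear combinations. Given any compact open bisection $B$, compactness together with the basis property expresses $B$ as a finite union of sets $U(\mu,\nu,v_i^l;\xi)$. If the level $l$ is too small relative to the word lengths, I refine the common-image cylinder using the disjoint decomposition \eqref{eq:L123}, namely $U_{\Omega_{{\frak L}^-}}(v_i^l;\xi) = \bigsqcup_{\beta,j} U_{\Omega_{{\frak L}^-}}(v_j^{l+1};\beta\xi)$, iterating until $l \ge \max(m,n)$; this replaces each piece by a finite disjoint union of admissible pieces. Since these cylinder-bisections refine one another level by level and so are closed under intersection and relative complement up to finite disjoint unions, $\chi_B$ becomes a finite $\Z$-linear combination of the $\chi_{U(\mu,\nu,v_i^l;\xi)}$. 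Combining this with Lemma \ref{lem:smeilsm} places $f$ in the linear span of the $S_\mu E_i^{l-}(\xi) S_\nu^*$, and density in $\OALMP$ then follows from the density of $C_c(\G_{{\frak L}^-}^+)$.

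I expect the main obstacle to be the second step: proving rigorously that the $U(\mu,\nu,v_i^l;\xi)$ genuinely form a basis of compact open bisections --- in particular, verifying the injectivity of $\sigma_{{\frak L}^-}^m$ on initial cylinders (so that these sets are true bisections with the stated common image) and matching them with the canonical generating bisections of the Deaconu--Renault groupoid. The degree bookkeeping (tracking $m-n$) and the level refinements in the last step are routine, but must be carried out carefully to remain within the prescribed index ranges $m,n \le l$ and $\xi \in F(v_i^l)$.
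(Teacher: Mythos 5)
Your proposal is correct and follows essentially the same route as the paper: the paper's own proof simply observes that the sets $U(\mu,\nu,v_i^l;\xi)$ form a basis of open sets of $\G_{{\frak L}^-}^+$ and invokes Lemma \ref{lem:smeilsm}, which is exactly the argument you elaborate. Your additional care about ampleness, the bisection property, and the level refinement to enforce $m,n\le l$ just makes explicit what the paper leaves implicit.
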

\begin{proof}
Since the sets
of the form $U(\mu,\nu,v_i^l;\xi)$ form a basis of open sets of the groupoid
$\G_{{\frak L}^-}^+,$ 
the set of finite linear combinations of elements of the form of \eqref{eq:dense}
becomes a dense $*$-subalgebra of $\OALMP$
because of Lemma \ref{lem:smeilsm}.
\end{proof}
Put for $\alpha \in \Sigma^+$
\begin{equation*}
X^+_{{\frak L}^-}(\alpha) = \{ x \in X^+_{{\frak L}^-} \mid \lambda_1(x) =\alpha\}. 
\end{equation*}
Regard it as a clopen subset 
$\{(x,0,x) \in \G_{{\frak L}^-}^+ \mid \lambda_1(x) =\alpha\}$
of $(\G_{{\frak L}^-}^+)^{(0)}$
and hence of $\G_{{\frak L}^-}^+.$
Then we have
\begin{lemma}\label{lem:sasas}
\hspace{4cm}
\begin{enumerate}
\renewcommand{\theenumi}{\roman{enumi}}
\renewcommand{\labelenumi}{\textup{(\theenumi)}}
\item
$S_\alpha S_\alpha^* = \chi_{X^+_{{\frak L}^-}(\alpha)}.$
\item
$S_\alpha S_\alpha^* E_i^{l-}(\xi) = E_i^{l-}(\xi) S_\alpha S_\alpha^*$ \,
for $\xi \in F(v_i^l).$
\end{enumerate}
\end{lemma}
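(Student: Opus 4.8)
The plan is to compute the convolution $S_\alpha S_\alpha^* = \chi_{U(\alpha)} * \chi_{U(\alpha)}^*$ directly on the groupoid $\G_{{\frak L}^-}^+$, in exactly the style of the bookkeeping that established \eqref{eq:saeas}, and then to invoke the left-resolving property of ${\frak L}^+$ to cut the support down to the diagonal. First I would record that $S_\alpha^*(s) = \overline{\chi_{U(\alpha)}(s^{-1})} = \chi_{U(\alpha)^{-1}}(s)$, where $U(\alpha)^{-1} = \{(z,-1,x) \mid \sigma_{{\frak L}^-}(x) = z, \, \lambda_1(x) = \alpha\}$. Writing a general element as $s = (x,n,z)$ and factoring $s = t_1 t_2$, the summand $\chi_{U(\alpha)}(t_1)\chi_{U(\alpha)^{-1}}(t_2)$ is nonzero only when $t_1 = (x,1,\sigma_{{\frak L}^-}(x))$ with $\lambda_1(x)=\alpha$ and $t_2 = (\sigma_{{\frak L}^-}(z),-1,z)$ with $\lambda_1(z)=\alpha$; compatibility of the factorization forces $\sigma_{{\frak L}^-}(x)=\sigma_{{\frak L}^-}(z)$ and $n=0$. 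Hence $S_\alpha S_\alpha^*$ is the characteristic function of the set of $(x,0,z)$ with $\sigma_{{\frak L}^-}(x)=\sigma_{{\frak L}^-}(z)$ and $\lambda_1(x)=\lambda_1(z)=\alpha$, taking value one wherever it is nonzero.

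The key step is to show this support is exactly the diagonal $\{(x,0,x)\mid \lambda_1(x)=\alpha\}$, that is, that the conditions $\sigma_{{\frak L}^-}(x)=\sigma_{{\frak L}^-}(z)$ and $\lambda_1(x)=\lambda_1(z)=\alpha$ force $x=z$. Writing $x = (\alpha_i,\omega^i)_{i=1}^\infty$ and $z = (\gamma_i,\zeta^i)_{i=1}^\infty$, the equality $\sigma_{{\frak L}^-}(x)=\sigma_{{\frak L}^-}(z)$ gives $\alpha_i=\gamma_i$ and $\omega^i=\zeta^i$ for all $i\ge 2$, while $\lambda_1(x)=\lambda_1(z)=\alpha$ gives $\alpha_1=\gamma_1=\alpha$; only the single coordinate $\omega^1=\zeta^1$ remains to be matched. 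Here I would invoke the uniqueness of predecessors noted after the definition of $X_{{\frak L}^-}^+$: since ${\frak L}^+$ is left-resolving, the local property of the $\lambda$-graph bisystem guarantees that for a prescribed pair $(\alpha_2,\omega^2)$ there is a unique $\omega\in\Omega_{{\frak L}^-}$ with $(\omega,\alpha_2,\omega^2)\in E_{{\frak L}^-}^+$. Applying this to $(\omega^1,\alpha_2,\omega^2)\in E_{{\frak L}^-}^+$ and to $(\zeta^1,\alpha_2,\omega^2)\in E_{{\frak L}^-}^+$ (recall $\gamma_2=\alpha_2$, $\zeta^2=\omega^2$) yields $\omega^1=\zeta^1$, hence $x=z$. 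This establishes assertion (i), namely $S_\alpha S_\alpha^* = \chi_{X_{{\frak L}^-}^+(\alpha)}$.

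For assertion (ii) I would simply observe that, by (i), both $S_\alpha S_\alpha^* = \chi_{X_{{\frak L}^-}^+(\alpha)}$ and $E_i^{l-}(\xi) = \chi_{U(v_i^l;\xi)}$ are characteristic functions of clopen subsets of the unit space $(\G_{{\frak L}^-}^+)^{(0)}$. For functions $f,g$ supported on the unit space, the convolution reduces to the pointwise product: $(f*g)(s)$ is nonzero only for a unit $s=(u,0,u)$, where it equals $f(s)g(s)$. In particular the product of two such characteristic functions is the characteristic function of the intersection of the underlying clopen sets, an operation that is manifestly commutative, so $S_\alpha S_\alpha^* E_i^{l-}(\xi) = \chi_{X_{{\frak L}^-}^+(\alpha)\cap U(v_i^l;\xi)} = E_i^{l-}(\xi) S_\alpha S_\alpha^*$.

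The main obstacle is the second step. The convolution computation of the first step is routine bookkeeping identical to that of the earlier lemmas, and assertion (ii) is purely formal once (i) is in hand; but the reduction of the support to the diagonal is where the structural content enters, as it genuinely requires the left-resolving hypothesis on ${\frak L}^+$ (equivalently, the uniqueness of the predecessor $\omega^0$ supplied by the local property of the $\lambda$-graph bisystem).
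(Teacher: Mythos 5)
Your proof is correct, and both parts take a route that differs in organization from the paper's. For (i), the paper does not recompute the convolution $\chi_{U(\alpha)}*\chi_{U(\alpha)}^*$ from scratch: it inserts the partition of unity $\sum_{i}\sum_{\xi\in F(v_i^l)}E_i^{l-}(\xi)=1$ between $S_\alpha$ and $S_\alpha^*$ and invokes Lemma \ref{lem:smeilsm} with $\mu=\nu=\alpha$, so that $S_\alpha S_\alpha^*=\sum_{i,\xi}\chi_{U(\alpha,v_i^l;\xi)}$, and then observes that these clopen sets exhaust $X^+_{{\frak L}^-}(\alpha)$. The structural input you isolate --- that $\sigma_{{\frak L}^-}(x)=\sigma_{{\frak L}^-}(z)$ and $\lambda_1(x)=\lambda_1(z)=\alpha$ force $x=z$ because ${\frak L}^+$ is left-resolving --- is present in the paper too, but it is absorbed into the earlier (asserted, not spelled out) identification of $U(\mu,\mu,v_i^l;\xi)$ with the unit-space set $U_{X_{{\frak L}^-}^+}(\mu,v_i^l;\xi)$ of \eqref{eq:muvil}; your version has the merit of making that collapse-to-the-diagonal step explicit, and of correctly locating it as the only place where the left-resolving hypothesis is used. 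For (ii), the paper carries out two parallel convolution computations, one for each order of the product, and checks that both supports coincide; your observation that any two characteristic functions of clopen subsets of the unit space convolve to the characteristic function of the intersection, hence commute for trivial reasons, is shorter and more conceptual, and it is a standard fact for \'etale groupoids that applies verbatim here. Both approaches buy the same lemma; yours is self-contained at the groupoid level, while the paper's recycles Lemma \ref{lem:smeilsm} and so keeps all convolution bookkeeping in one place.
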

\begin{proof}
(i)
By Lemma \ref{lem:smeilsm}, we have for a fixed $l \in \N,$
\begin{equation*}
S_\alpha S_\alpha^*
=  \sum_{i=1}^{m(l)} \sum_{\xi\in F(v_i^l)} S_\alpha E_i^{l-}(\xi) S_\alpha^* 
=  \sum_{i=1}^{m(l)} \sum_{\xi\in F(v_i^l)} \chi_{U(\alpha,v_i^l;\xi)} 
=  \chi_{
\cup_{i=1}^{m(l)} \cup_{\xi\in F(v_i^l)} U(\alpha,v_i^l;\xi)}. 
\end{equation*}
As
$
\bigcup_{i=1}^{m(l)} \bigcup_{\xi\in F(v_i^l)} U(\alpha,v_i^l;\xi)
=X^+_{{\frak L}^-}(\alpha) ,
$
we obtain the equality
$S_\alpha S_\alpha^* = \chi_{X^+_{{\frak L}^-}(\alpha)}.$

(ii) 
For $s = (x,n,z) \in \G_{{\frak L}^-}^+,$
we have
\begin{align*}
 &[\chi_{X^+_{{\frak L}^-}(\alpha)} *\chi_{U(v_i^l;\xi)}](x,n,z) \\
=& \sum_{\underset{r(t) = r(s)}{t; }} 
\overline{
\chi_{X^+_{{\frak L}^-}(\alpha)}(t^{-1})}
\chi_{U(v_i^l;\xi)}(t^{-1}s)  
\qquad (\text{Put } t=(y,m,w)\in \G_{{\frak L}^-}^+) \\
=& \sum_{m, w} \chi_{X^+_{{\frak L}^-}(\alpha)}(w,-m,x)
\chi_{U(v_i^l;\xi)}(w, n-m,z).  
\end{align*}
Now 
$\chi_{X^+_{{\frak L}^-}(\alpha)}(w,-m,x) =1$
if and only if $m=0, \, w = x, \, \lambda_1(x) = \alpha.$
Since we have 
$\chi_{U(v_i^l;\xi)}(x, n, z)=1$
if and only if $x=z, \, n=0, \, \omega^0(x) \in U_{\Omega_{{\frak L}^-}}(v_i^l;\xi),$
so that 
\begin{equation*}
 [\chi_{X^+_{{\frak L}^-}(\alpha)} *\chi_{U(v_i^l;\xi)}](x,n,z) 
= 
{
\begin{cases}
1 & \text{ if } x=z, \, n=0,\, \lambda_1(x) =\alpha, \, 
                   \omega^0(x) \in U_{\Omega_{{\frak L}^-}}(v_i^l;\xi),\\
0 & \text{ otherwise. }
\end{cases}
}
\end{equation*}
On the other hand
\begin{align*}
 &[\chi_{U(v_i^l;\xi)}*\chi_{X^+_{{\frak L}^-}(\alpha)}](x,n,z) \\
=& \sum_{\underset{r(t) = r(s)}{t;}} \overline{
\chi_{U(v_i^l;\xi)}(t^{-1})}
   \chi_{X^+_{{\frak L}^-}(\alpha)}(t^{-1}s)  
\qquad (\text{Put } t=(y,m,w)\in \G_{{\frak L}^-}^+) \\
=& \sum_{m, w} 
\chi_{U(v_i^l;\xi)}(w, -m, x)
   \chi_{X^+_{{\frak L}^-}(\alpha)}(w,n-m,z).  
\end{align*}
Now 
$\chi_{X^+_{{\frak L}^-}(\alpha)}(w,n-m,z) =1$
if and only if $n=m, \, w = z, \, \lambda_1(z) = \alpha.$
Since we have
$\chi_{U(v_i^l;\xi)}(z, -n,x)=1$
if and only if $z=x, \, n=0, \, \omega^0(x) \in U_{\Omega_{{\frak L}^-}}(v_i^l;\xi),$
so that 
\begin{equation*}
 [\chi_{U(v_i^l;\xi)}*\chi_{X^+_{{\frak L}^-}(\alpha)}](x,n,z) 
= 
{
\begin{cases}
1 & \text{ if } x=z, \, n=0, \, \lambda_1(x) =\alpha, \,
                   \omega^0(x) \in U_{\Omega_{{\frak L}^-}}(v_i^l;\xi),\\
0 & \text{ otherwise, }
\end{cases}
}
\end{equation*}
proving    
$\chi_{X^+_{{\frak L}^-}(\alpha)} *\chi_{U(v_i^l;\xi)}
=\chi_{U(v_i^l;\xi)}*
\chi_{X^+_{{\frak L}^-}(\alpha)} 
$
and hence
$S_\alpha S_\alpha^* E_i^{l-}(\xi) = E_i^{l-}(\xi) S_\alpha S_\alpha^*.$
\end{proof}

\begin{proposition}\label{prop:main6}
\hspace{4cm}
\begin{enumerate}
\renewcommand{\theenumi}{\roman{enumi}}
\renewcommand{\labelenumi}{\textup{(\theenumi)}}
\item
The $C^*$-algebra $\OALMP$
is generated  by
partial isometries
$S_{\alpha}$ indexed by
$\alpha \in \Sigma^+$
and mutually commuting projections
$E_i^{l-}(\xi)$ indexed by vertices $v_i^l \in  V_l$ and 
admissible words $\xi =(\xi_1,\dots, \xi_l) \in F(v_i^l).$ 
\item
The partial isometries
$S_{\alpha}, \alpha \in \Sigma^+$
and the mutually commuting projections
$E_i^{l-}(\xi), \xi \in F(v_i^l)$
satisfy the  following operator relations called $\LGBS$:
\begin{align}
\sum_{\alpha \in \Sigma^+} S_{\alpha}S_{\alpha}^*   
& =  \sum_{i=1}^{m(l)} \sum_{\xi \in F(v_i^l)} E_i^{l-}(\xi) =  1, \label{eq:L2}\\ 
 S_\alpha S_\alpha^* & E_i^{l-}(\xi)   =   E_i^{l-}(\xi) S_\alpha S_\alpha^*, 
\label{eq:L3}\\
 E_i^{l-} (\xi)   & = \sum_{\beta\in \Sigma^-}  \sum_{j=1}^{m(l+1)}
A_{l,l+1}^-(i,\beta,j)E_j^{l+1-}(\beta\xi),  \label{eq:L4}\\
S_{\alpha}^*E_i^{l-}(\xi) S_{\alpha} &  =  
\sum_{\beta\in \Sigma^-}
\sum_{j=1}^{m(l+1)} A_{l,l+1}^+(i,\alpha,j)E_j^{l+1-}(\xi\beta). \label{eq:L6}
\end{align}
\end{enumerate}
\end{proposition}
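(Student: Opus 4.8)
The plan is to observe that Proposition \ref{prop:main6} is essentially an assembly of the lemmas already established in this section, so that the work reduces to citing the right statements and filling one small gap. For part (i), I would start from the density lemma above, which asserts that finite linear combinations of the elements $S_\mu E_i^{l-}(\xi) S_\nu^*$ are dense in $\OALMP$. Since each $S_\mu$ factors as $S_{\mu_1}\cdots S_{\mu_m}$, a product of the generators $S_\alpha$, every such spanning element lies in the $*$-subalgebra generated by $\{S_\alpha \mid \alpha \in \Sigma^+\}$ together with $\{E_i^{l-}(\xi) \mid v_i^l \in V_l,\ \xi \in F(v_i^l)\}$. Hence that subalgebra is dense, and being a closed $*$-subalgebra it must coincide with $\OALMP$; this gives (i). Along the way I would record that each $U(\alpha)$ is a compact open bisection of the \'etale groupoid $\G_{{\frak L}^-}^+$, so that $S_\alpha = \chi_{U(\alpha)}$ is genuinely a partial isometry, while the fact that the $E_i^{l-}(\xi)$ are mutually commuting projections is part of the earlier lemma.

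For part (ii), three of the four relations are immediate transcriptions of results already in hand: relation \eqref{eq:L3} is exactly Lemma \ref{lem:sasas}(ii); relation \eqref{eq:L4} is identity \eqref{eq:L12} of Lemma \ref{lem:abelian}; and relation \eqref{eq:L6} is the identity \eqref{eq:saeas}, obtained through Lemma \ref{lem:keysaesa}. Likewise, the second equality in \eqref{eq:L2}, namely $\sum_{i=1}^{m(l)}\sum_{\xi\in F(v_i^l)} E_i^{l-}(\xi) = 1$, is precisely \eqref{eq:L11}. Thus the only clause in (ii) requiring an independent argument is the first equality of \eqref{eq:L2}, namely $\sum_{\alpha\in\Sigma^+} S_\alpha S_\alpha^* = 1$.

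To establish this last relation, I would invoke Lemma \ref{lem:sasas}(i), which gives $S_\alpha S_\alpha^* = \chi_{X^+_{{\frak L}^-}(\alpha)}$. The key point is that the clopen sets $X^+_{{\frak L}^-}(\alpha) = \{ x \in X^+_{{\frak L}^-} \mid \lambda_1(x) = \alpha\}$, for $\alpha \in \Sigma^+$, partition $X^+_{{\frak L}^-}$: every $x \in X^+_{{\frak L}^-}$ has a well-defined first label $\lambda_1(x) \in \Sigma^+$, and distinct labels index disjoint sets. Summing the corresponding characteristic functions then yields $\sum_{\alpha\in\Sigma^+} \chi_{X^+_{{\frak L}^-}(\alpha)} = \chi_{X^+_{{\frak L}^-}}$, which is the unit of $C^*(\G_{{\frak L}^-}^+) = \OALMP$, since the groupoid is \'etale with compact unit space identified with $X^+_{{\frak L}^-}$. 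This delivers $\sum_\alpha S_\alpha S_\alpha^* = 1$ and completes the verification of the relations $\LGBS$.

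Because the substantive computations, namely the bisection structure underlying the partial isometries, the identification of the abelian subalgebra $\ALM$ in Lemma \ref{lem:abelian}, and the crucial formula \eqref{eq:saeas}, are all already proved, there is no serious analytic obstacle remaining. The only point demanding care is the bookkeeping of the partition $X^+_{{\frak L}^-} = \bigsqcup_{\alpha\in\Sigma^+} X^+_{{\frak L}^-}(\alpha)$ and the explicit identification of $\chi_{X^+_{{\frak L}^-}}$ with the identity of the unital groupoid $C^*$-algebra; I would make the latter precise so that the normalization $\sum_\alpha S_\alpha S_\alpha^* = 1$ is justified rather than merely assumed.
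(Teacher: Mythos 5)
Your proposal is correct and follows essentially the same route as the paper, which states the proposition without a separate proof precisely because it is an assembly of the preceding lemmas (the density lemma for (i); Lemma \ref{lem:abelian}, the identity \eqref{eq:saeas}, and Lemma \ref{lem:sasas} for (ii)). Your treatment of the one step left implicit, namely deducing $\sum_{\alpha\in\Sigma^+}S_\alpha S_\alpha^*=1$ from Lemma \ref{lem:sasas}(i) together with the partition $X^+_{{\frak L}^-}=\bigsqcup_{\alpha\in\Sigma^+}X^+_{{\frak L}^-}(\alpha)$ and the identification of $\chi_{(\G^+_{{\frak L}^-})^{(0)}}$ with the unit of the groupoid $C^*$-algebra, is exactly right.
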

Similarly we have
\begin{proposition}\label{prop:main7}
\hspace{4cm}
\begin{enumerate}
\renewcommand{\theenumi}{\roman{enumi}}
\renewcommand{\labelenumi}{\textup{(\theenumi)}}
\item
The $C^*$-algebra $\OALPM$
is generated  by
partial isometries
$T_{\beta}$ indexed by $\beta \in \Sigma^-$
and mutually commuting projections
$E_i^{l+}(\eta)$ indexed by vertices $v_i^l \in  V_l$ and 
admissible words $\eta =(\eta_1,\dots, \eta_l) \in P(v_i^l).$ 
\item
The partial isometries
$T_{\beta}, \beta \in \Sigma^-$
and the mutually commuting projections
$E_i^{l+}(\eta), \eta \in P(v_i^l)$
satisfy the  following operator relations called $({\frak L}^+,{\frak L}^-)$:
\begin{align}
\sum_{\beta \in \Sigma^-} T_{\beta}T_{\beta}^*   
& =  \sum_{i=1}^{m(l)} \sum_{\eta \in P(v_i^l)}  E_i^{l+}(\eta) =  1, \label{eq:P2}\\ 
 T_\beta T_\beta^* & E_i^{l+}(\eta)   =   E_i^{l+}(\eta) T_\beta T_\beta^*, 
\label{eq:P3}\\
 E_i^{l+} (\eta)   & = \sum_{\alpha\in \Sigma^+}  \sum_{j=1}^{m(l+1)}
A_{l,l+1}^+(i,\alpha,j)E_j^{l+1+}(\eta\alpha),  \label{eq:P4}\\
T_{\beta}^*E_i^{l+}(\eta) T_{\beta} &  =  
\sum_{\alpha\in \Sigma^+}
\sum_{j=1}^{m(l+1)} A_{l,l+1}^-(i,\beta,j)E_j^{l+1+}(\alpha\eta). \label{eq:P6}
\end{align}
\end{enumerate}
\end{proposition}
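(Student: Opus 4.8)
The proof of Proposition \ref{prop:main7} is obtained from that of Proposition \ref{prop:main6} by the symmetry interchanging the two labeled Bratteli diagrams ${\frak L}^-$ and ${\frak L}^+$. The plan is to observe that the construction of $\OALPM = C^*(\G_{{\frak L}^+}^-)$ out of the shift dynamical system $(X_{{\frak L}^+}^-, \sigma_{{\frak L}^+})$ is word-for-word the construction of $\OALMP = C^*(\G_{{\frak L}^-}^+)$ out of $(X_{{\frak L}^-}^+, \sigma_{{\frak L}^-})$, under the substitution in which the base space $\Omega_{{\frak L}^-}$ of left-infinite ${\frak L}^-$-paths is replaced by the space $\Omega_{{\frak L}^+}$ of right-infinite ${\frak L}^+$-paths, the shift alphabet $\Sigma^+$ is replaced by $\Sigma^-$, the follower sets $F(v_i^l)$ are replaced by the predecessor sets $P(v_i^l)$, and the two transition matrices $A^-_{l,l+1}, A^+_{l,l+1}$ exchange their roles.

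To make this precise I would pass to the transpose $\LGBS^t = ({\frak L}^{+t}, {\frak L}^{-t})$ defined at the end of Section 3, which is again a $\lambda$-graph bisystem. Reversing all edges exchanges the right-resolving property of ${\frak L}^-$ with the left-resolving property of ${\frak L}^+$: two edges of ${\frak L}^{+t}$ sharing a source and a label are two edges of ${\frak L}^+$ sharing a terminal and a label, so ${\frak L}^{+t}$ is right-resolving and serves as the ``minus'' diagram of $\LGBS^t$ over $\Sigma^+$, while ${\frak L}^{-t}$ is left-resolving and serves as the ``plus'' diagram over $\Sigma^-$. Under this reversal the follower set of a vertex in ${\frak L}^{+t}$ is precisely the predecessor set $P$ of that vertex in ${\frak L}^+$ read in the opposite order, and the transition matrices of $\LGBS^t$ are the transposes of those of $\LGBS$. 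Since $\G_{{\frak L}^+}^-$ is canonically isomorphic to $\G_{{\frak L}^{+t}}^+$, we have $\OALPM \cong \mathcal{O}_{{\frak L}^{+t}}^+$, and Proposition \ref{prop:main6} applied to $\LGBS^t$ yields both assertions: the generators $S_\beta$ and $E_i^{l-}(\xi)$ attached to $\LGBS^t$ become the $T_\beta$ and $E_i^{l+}(\eta)$ here, and relations \eqref{eq:L2}--\eqref{eq:L6} for $\LGBS^t$ become relations \eqref{eq:P2}--\eqref{eq:P6}.

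Alternatively, and perhaps more transparently, I would simply rerun the chain of lemmas underlying Proposition \ref{prop:main6} in the mirrored direction. Set $T_\beta = \chi_{U(\beta)}$ for $\beta \in \Sigma^-$ and $E_i^{l+}(\eta) = \chi_{U(v_i^l;\eta)}$ for $\eta \in P(v_i^l)$ as the characteristic functions of the corresponding clopen sets of $\G_{{\frak L}^+}^-$. Then the analogue of Lemma \ref{lem:abelian} gives the commuting-projection property and the identity \eqref{eq:P4} (now with $A^+_{l,l+1}$ and with prepending and appending exchanged, so that $\beta\xi$ is replaced by $\eta\alpha$); the analogue of Lemma \ref{lem:keysaesa}, run with the two diagrams and the two shift directions interchanged, gives \eqref{eq:P6}; and the analogues of Lemmas \ref{lem:smeilsm} and \ref{lem:sasas} give the covering identity and the commutation relations \eqref{eq:P2} and \eqref{eq:P3}. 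The density lemma transcribes verbatim, establishing the generation claim in (i).

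The only genuinely substantive step, and the one requiring care, is the mirrored version of Lemma \ref{lem:keysaesa}: the equivalence between the existence of a $\sigma_{{\frak L}^+}$-preimage lying over a prescribed cylinder with prescribed first label and the combinatorial condition $A^-_{l,l+1}(i,\beta,j)=1$ together with $\alpha\eta \in P(v_j^{l+1})$. This is again proved by successively applying the local property of $\lambda$-graph bisystem in Definition \ref{def:lambdabisystem}(v), but one now builds the two-dimensional diagram in the direction opposite to the one used for Proposition \ref{prop:main6}. The delicate bookkeeping is that $\eta$ is prepended rather than appended, reflecting that predecessor words are read from $V_0$ upward whereas follower words are read downward; everything else is a routine transcription of the groupoid-convolution computations.
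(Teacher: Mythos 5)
Your proposal is correct and matches the paper's own treatment: the paper gives no separate argument for Proposition \ref{prop:main7}, stating only ``Similarly we have'' after Proposition \ref{prop:main6}, which is exactly your mirrored rerun of Lemmas \ref{lem:abelian}--\ref{lem:sasas} with $\Omega_{{\frak L}^-}$, $\Sigma^+$, $F(v_i^l)$, $A^{\pm}_{l,l+1}$ replaced by $\Omega_{{\frak L}^+}$, $\Sigma^-$, $P(v_i^l)$ and the transposed roles of the transition matrices. Your transpose formulation via $({\frak L}^{+t},{\frak L}^{-t})$ is a clean way of packaging the same symmetry and is consistent with the identification $\OALPM \cong {\mathcal{O}}^+_{{\frak L}^{+t}}$ that the paper itself records at the end of Section 8.
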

We will prove in the following section that 
the above operator relations among the generators of the $C^*$-algebras
exactly determine the algebraic structure of the $C^*$-algebras.

\section{Structure of the $C^*$-algebra $\OALMP$}
In what follows, an endomorphism on a unital $C^*$-algebra means 
a $*$-endomorphism that is not  necessarily unital. 
For a unital $C^*$-algebra $\A$, let us denote by
$\End(\A)$ the set of endomorphisms on $\A$. 
In \cite{MaCrelle}, the notion of $C^*$-symbolic dynamical system
$(\A,\rho,\Sigma)$ was introduced as a generalization of both a $\lambda$-graph system and an automorphism on a unital $C^*$-algebra.
Following \cite{MaCrelle}, a finite family 
$\rho_\alpha \in \End(\A), \alpha \in \Sigma$
of endomorphisms on a unital $C^*$-algebra $\A$
indexed by a finite alphabet $\Sigma$ 
is said to be {\it essential}\/ if $\rho_\alpha(1) \ne 0$
for all $\alpha \in \Sigma$
and the ideal of $\A$ generated by $\rho_\alpha(1), \alpha \in \Sigma$
coincides with $\A$.
It is said to be {\it faithful}\/ if for any nonzero 
$a \in \A$, there exists a symbol $\alpha \in \Sigma$ such that 
$\rho_\alpha(a) \ne 0.$
A $C^*$-{\it symbolic dynamical system}\/ is defined by a triplet 
$(\A,\rho, \Sigma)$ consisting on a unital $C^*$-algebra $\A$ and 
a finite family of endomorphisms    
$\{\rho_\alpha \}_{\alpha \in \Sigma}$ of $\A$, that is essential and faithful
(\cite{MaCrelle}, cf. \cite{MaCM2009}, \cite{MaMZ2010}).
A $C^*$-symbolic dynamical system
$(\A,\rho,\Sigma)$ gives rise to a subshift
$\Lambda$ and a $C^*$-algebra written 
$\A\rtimes_\rho\Lambda$
called the $C^*$-symbolic crossed product in \cite{MaCrelle}.
The  $C^*$-algebra $\A\rtimes_\rho\Lambda$
is constructed by certain Hilbert $C^*$-bimodule associated to 
the endomorphisms $\{\rho_\alpha \}_{\alpha \in \Sigma}$ of $\A$
(cf.  \cite{KPW}, \cite{KW}, \cite{Pim}, etc.).
If $(\A,\rho,\Sigma)$ satisfies condition (I)
in the sense of \cite{MaCM2009},
the algebraic structure of the
$C^*$-algebra $\A\rtimes_\rho\Lambda$
is uniquely determined by certain operator relations among its canonical generators. 
In this section, we will show that our $C^*$-algebra
$\OALMP$ may be realized as a $C^*$-symbolic crossed product 
$\ALM\rtimes_{\rho^+}\Lambda_{{\frak L}^+},$
so that  we will know that 
the operator relations called 
$\LGBS$ in Proposition \ref{prop:main6}
uniquely determine the algebraic structure of the $C^*$-algebra
$\OALMP$
under certain condition called 
 condition (I) on the $C^*$-symbolic dynamical system
$(\ALM, {\rho^+}, \Sigma^+)$.


Let $\ALM$  be the $C^*$-subalgebra
of $\OALMP$
generated by the mutually commuting projections
$E_i^{l-}(\xi)$ for $ v_i^l \in V_l, \xi\in F(v_i^l)$.
The $C^*$-subalgebra
$\ALP$ of
$\OALPM$ is similarly defined.
Let us denote by
$\rho_\alpha^+\in \End(\ALM), \alpha \in \Sigma^+$
the endomorphism on $\ALM$ defined by setting
\begin{equation}
\rho_\alpha^+(E_i^{l-}(\xi))  =  
\sum_{\beta \in \Sigma^-}
\sum_{j=1}^{m(l+1)} A_{l,l+1}^+(i,\alpha,j)E_j^{l+1-}(\xi\beta), \qquad \xi\in F(v_i^l).
\label{eq:defofrhoalphaA}
\end{equation}
Then we have a finite family of endomorphisms
$\rho_\alpha^+, \alpha \in \Sigma^+$
on $\ALM.$
The other endomorphisms
$\rho_\beta^-\in \End(\ALP), \beta \in \Sigma^-$
are similarly defined.
\begin{lemma}\label{lem:CsymbdynA}
The triplets 
$(\ALM, \rho^+, \Sigma^+)$
and similarly 
$(\ALP, \rho^-, \Sigma^-)$
are both $C^*$-symbolic dynamical systems.
\end{lemma}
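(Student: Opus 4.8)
The plan is to exploit the groupoid picture established in the previous section, by which Lemma~\ref{lem:abelian} identifies $\ALM$ with the commutative unital $C^*$-algebra $C(\Omega_{{\frak L}^-})$, its unit being $\sum_{i=1}^{m(l)}\sum_{\xi\in F(v_i^l)}E_i^{l-}(\xi)$ for any fixed $l$ by relation \eqref{eq:L2}. The key observation I would establish first is that the endomorphism defined abstractly on generators by \eqref{eq:defofrhoalphaA} is nothing but the compression $\rho_\alpha^+(a)=S_\alpha^* a S_\alpha$ for $a\in\ALM$: indeed relation \eqref{eq:L6} says precisely that $S_\alpha^* E_i^{l-}(\xi)S_\alpha$ equals the right-hand side of \eqref{eq:defofrhoalphaA}. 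Presenting $\rho_\alpha^+=\Ad(S_\alpha^*)|_{\ALM}$ this way makes well-definedness automatic and reduces everything to formal manipulation of the relations $\LGBS$.

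Next I would verify that each $\rho_\alpha^+$ is a (non-unital) $\ast$-endomorphism of $\ALM$. Linearity and $\rho_\alpha^+(a^*)=\rho_\alpha^+(a)^*$ are immediate. For multiplicativity, for $a,b\in\ALM$ I would write $\rho_\alpha^+(a)\rho_\alpha^+(b)=S_\alpha^* a (S_\alpha S_\alpha^*) b S_\alpha$, use \eqref{eq:L3} to slide the projection $S_\alpha S_\alpha^*$ past $b$, and then apply the partial-isometry identity $S_\alpha S_\alpha^* S_\alpha=S_\alpha$, obtaining $S_\alpha^* ab S_\alpha=\rho_\alpha^+(ab)$. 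That the range lies in $\ALM$ follows from \eqref{eq:L6} on generators together with continuity and multiplicativity of $\rho_\alpha^+$. This yields a finite family $\{\rho_\alpha^+\}_{\alpha\in\Sigma^+}\subset\End(\ALM)$.

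For essentiality I would first note $\rho_\alpha^+(1)=S_\alpha^* S_\alpha\ne 0$, since every symbol $\alpha\in\Sigma^+$ occurs as a label of ${\frak L}^+$ at some level $l_0$, whence \eqref{eq:sassa} exhibits $S_\alpha^* S_\alpha$ at that level as a nonzero sum of orthogonal projections. To see that the ideal generated by $\{\rho_\alpha^+(1)\}_\alpha$ is all of $\ALM$, I would sum \eqref{eq:sassa} over $\alpha$ at a fixed level $l$ to get
\begin{equation*}
\sum_{\alpha\in\Sigma^+}\rho_\alpha^+(1)
=\sum_{j=1}^{m(l+1)}\sum_{\eta\in F(v_j^{l+1})}
\Big(\sum_{\alpha\in\Sigma^+}\sum_{i=1}^{m(l)}A_{l,l+1}^+(i,\alpha,j)\Big)E_j^{l+1-}(\eta).
\end{equation*}
The inner coefficient counts the incoming ${\frak L}^+$-edges of $v_j^{l+1}$, which is at least $1$ by condition (iii)(2) of Definition~\ref{def:lambdabisystem}; hence $\sum_\alpha\rho_\alpha^+(1)\ge\sum_{j,\eta}E_j^{l+1-}(\eta)=1$ is invertible, so the ideal it generates equals $\ALM$. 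I expect this to be the one step where the structural hypotheses on the $\lambda$-graph bisystem, rather than the purely formal relations, genuinely enter, and hence the main point to get right.

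Finally, for faithfulness, given a nonzero $a\in\ALM$ I would use \eqref{eq:L2} to write $a=\sum_{\alpha}S_\alpha S_\alpha^* a$, so that $S_\alpha S_\alpha^* a\ne 0$ for some $\alpha$; then, again invoking \eqref{eq:L3} and idempotency of $S_\alpha S_\alpha^*$, I compute $S_\alpha\,\rho_\alpha^+(a)\,S_\alpha^*=(S_\alpha S_\alpha^*)a(S_\alpha S_\alpha^*)=a\,S_\alpha S_\alpha^*\ne 0$, forcing $\rho_\alpha^+(a)\ne 0$. This shows the family is faithful, so $(\ALM,\rho^+,\Sigma^+)$ is a $C^*$-symbolic dynamical system. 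The argument for $(\ALP,\rho^-,\Sigma^-)$ is verbatim the same, with the roles of ${\frak L}^-,{\frak L}^+$, the generators $T_\beta, E_i^{l+}(\eta)$, and the relations of Proposition~\ref{prop:main7} interchanged.
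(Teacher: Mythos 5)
Your proof is correct and follows essentially the same route as the paper: the essentiality argument via $\sum_{\alpha}\rho_\alpha^+(1)\ge 1$ at a fixed level is exactly the computation the paper gives. The only difference is that you supply the faithfulness verification (which the paper dismisses as ``easy to see'') by realizing $\rho_\alpha^+$ as the compression $\Ad(S_\alpha^*)|_{\ALM}$ and using \eqref{eq:L2}--\eqref{eq:L3}; this is consistent with the paper's own identity \eqref{eq:csds} and is a welcome filling-in of detail rather than a genuinely different argument.
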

\begin{proof}
Since we have
\begin{align*}
\sum_{\alpha\in \Sigma^+}\rho^+_\alpha(1) 
= & \sum_{\alpha\in \Sigma^+} \sum_{i=1}^{m(l)} \sum_{\xi \in F(v_i^l)}
\rho^+_\alpha(E_i^{l-}(\xi)) \\
= & \sum_{\alpha\in \Sigma^+} \sum_{i=1}^{m(l)} \sum_{\xi \in F(v_i^l)}
\sum_{\beta \in \Sigma^-}
\sum_{j=1}^{m(l+1)} A_{l,l+1}^+(i,\alpha,j)E_j^{l+1-}(\xi\beta) 
\ge  1,
\end{align*}
the family
$\{ \rho_\alpha^+\}_{\alpha \in \Sigma^+}$
is essential.
It is easy to see that $\{ \rho_\alpha^+\}_{\alpha \in \Sigma^+}$
is faithful, so that 
the triplets 
$(\ALM, \rho^+, \Sigma^+)$
and similarly 
$(\ALP, \rho^-, \Sigma^-)$
are both $C^*$-symbolic dynamical systems.
\end{proof}
We will concentrate on the algebra $\OALMP$,
the other algebra $\OALPM$ has a symmetric structure.
We define a $C^*$-subalgebra 
$\DLMP$ of $\OALMP$ by
\begin{equation}
\DLMP
= C^*( S_\mu E_i^{l-}(\xi) S_\mu^* \mid i=1,2,\dots, m(l),\, \xi \in F(v_i^l), \, \mu \in B_*(\Lambda_{{\frak L}^+}) ). \label{eq:DLMP}
\end{equation} 
Let $\varphi_{\frak{L}^-}: X_{\frak{L}^-}^+\longrightarrow \Omega_{\frak{L}^-}$
be the continuous surjection defined by 
$\varphi_{\frak{L}^-}(x) = \omega^0(x), \, x \in X_{\frak{L}^-}^+.$
It induces an embedding
$C(\Omega_{\frak{L}^-}) \hookrightarrow C( X_{\frak{L}^-}^+)$
corresponding to a natural inclusion
$\ALM\subset\DLMP.$
Recall the condition (I) for $C^*$-symbolic dynamical systems introduced in
\cite{MaMZ2010}.
\begin{definition}[{\cite[Section 3, Definition]{MaMZ2010}}]
The $C^*$-symbolic dynamical system 
$(\A_{\frak{L}^-},\rho^+,\Sigma)$
satisfies {\it condition }(I) if
there exists a unital increasing sequence
$$
\A_1 \subset \A_2 \subset \cdots \subset \A_{\frak{L}^-}
$$
of $C^*$-subalgebras of $\A_{\frak{L}^-}$ 
such that 
$\rho^+_\alpha(\A_l) \subset \A_{l+1}$ 
for all $l \in \N, \alpha \in \Sigma^+$
 and 
the union $\bigcup_{l=1}^\infty \A_l $ is dense in $\A_{\frak{L}^-}$
and 
for $k,l\in \N$ with $k \le l$,
there exists a projection 
$
q_k^l \in \DLMP
$
commuting with all elements of 
$\A_l$
such that
\begin{enumerate}
\renewcommand{\theenumi}{\arabic{enumi}}
\renewcommand{\labelenumi}{\textup{(\theenumi)}}
\item $q_k^l a \ne 0 $ for all nonzero $a \in \A_l$,
\item $q_k^l \phi_{_{\frak{L}^+}}^m(q_k^l) = 0 $ for all $m= 1,2,\dots, k,$
\end{enumerate}
where 
$
 \phi_{_{\frak{L}^+}}^m(X) = 
 \sum_{\mu \in B_m(\Lambda_{{\frak L}^+})} S_\mu X S_\mu^{*}.
$

By Proposition \ref{prop:main6} (ii) and \eqref{eq:defofrhoalphaA},
we know that our partial isometries $S_\alpha,\alpha \in \Sigma^+$  
satisfy the relations
\begin{equation}
\sum_{\alpha\in \Sigma^+} S_\alpha S_\alpha^* = 1, \qquad 
S_\alpha S_\alpha^* E_i^{l-}(\xi) = E_i^{l-}(\xi)S_\alpha S_\alpha^*, \qquad
\rho^+_\alpha(E_i^{l-}(\xi)) = S_\alpha^* E_i^{l-}(\xi) S_\alpha \label{eq:csds}
\end{equation} 
for $\alpha \in \Sigma^+, \xi \in F(v_i^l)$.
Let $s_\alpha, \alpha\in \Sigma^+$ be another family of partial isometries satisfying the relations \eqref{eq:csds}.
We may consider the corresponding $C^*$-algebra
$\mathcal{D}_{{\frak L}^-}^+ $, that is generated by projections
of the form $s_\mu E_i^{l-}(\xi) s_\mu^*$,
 and the homomorphism 
$\phi_{_{\frak{L}^+}}^m$ on it by using $s_\alpha, \alpha\in \Sigma^+$,
instead of $S_\alpha, \alpha\in \Sigma^+$.
 Then \cite[Lemma 3.2]{MaMZ2010} tells us that the condition (I) 
does not depend on the choice of such partial isometries 
satisfying the relations \eqref{eq:csds}.
Hence 
 the condition (I) for $(\A_{\frak{L}^-},\rho^+,\Sigma^+)$
 is intrinsically determined by $(\A_{\frak{L}^-},\rho^+,\Sigma^+)$
 from \cite[Lemma 3.2]{MaMZ2010}.
\end{definition}


\begin{definition}[{\cite[p. 19]{Renault2}}]
The topological dynamical system
$(X_{{\frak L}^-}^+, \sigma_{{\frak L}^-})$ is said to be essentially free
if  the set 
$X_{m,n}(\sigma_{{\frak L}^-}) =\{ x \in X_{{\frak L}^-}^+ \mid 
\sigma_{{\frak L}^-}^m(x) = \sigma_{{\frak L}^-}^n(x) \}
$
for $m,n \in \Zp$ with $m\ne n$ does not have non empty interior.
\end{definition}
A point $x\in X_{{\frak L}^-}^+$ is said to be eventually periodic
if $\sigma_{{\frak L}^-}^m(x) = \sigma_{{\frak L}^-}^n(x)$ for some $m,n\in \Zp$ 
with $m\ne n.$
The set of  eventually periodic points in $X_{{\frak L}^-}^+$ 
is denoted by 
 $\Pev(\sigma_{{\frak L}^-}).$ 
Hence we have
\begin{equation}
\Pev(\sigma_{{\frak L}^-}) = 
\bigcup_{\underset{m\ne n}{m,n;}}X_{m,n}(\sigma_{{\frak L}^-}).\label{eq:PevX}
\end{equation}
The following lemma is known for more general dynamical system.
As the author has not been able to find a complete proof in literature,
the proof is given for the sake of completeness.
\begin{lemma}[{cf. \cite[Proposition 3.1]{Renault2}}]
The topological dynamical system
$(X_{{\frak L}^-}^+, \sigma_{{\frak L}^-})$ is essentially free
if and only if 
the set $\Pev(\sigma_{{\frak L}^-})^c$ of non-eventually periodic points is dense in 
$X_{{\frak L}^-}^+.$
\end{lemma}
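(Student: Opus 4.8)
The plan is to prove the equivalence by characterizing the interior of the ``periodicity'' set $\Pev(\sigma_{{\frak L}^-})$ and relating it to the sets $X_{m,n}(\sigma_{{\frak L}^-})$. Recall from \eqref{eq:PevX} that $\Pev(\sigma_{{\frak L}^-}) = \bigcup_{m\ne n} X_{m,n}(\sigma_{{\frak L}^-})$, a \emph{countable} union of closed sets (each $X_{m,n}(\sigma_{{\frak L}^-})$ is closed because $\sigma_{{\frak L}^-}$ is continuous and $X_{{\frak L}^-}^+$ is Hausdorff). The definition of essential freeness says precisely that each $X_{m,n}(\sigma_{{\frak L}^-})$ with $m\ne n$ has empty interior. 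So the statement to be proved is exactly an instance of the Baire category theorem: a countable union of closed nowhere-dense sets has dense complement, and conversely.

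First I would establish the ``only if'' direction. Assume essential freeness, so each closed set $X_{m,n}(\sigma_{{\frak L}^-})$ ($m\ne n$) has empty interior, i.e.\ is nowhere dense. Since $X_{{\frak L}^-}^+$ is a zero-dimensional compact Hausdorff space (as noted in Section 7), it is in particular a complete metric space, hence a Baire space. By the Baire category theorem, the countable union $\Pev(\sigma_{{\frak L}^-}) = \bigcup_{m\ne n} X_{m,n}(\sigma_{{\frak L}^-})$ is of first category, so its complement $\Pev(\sigma_{{\frak L}^-})^c$ is a dense $G_\delta$ set; in particular it is dense. For the converse ``if'' direction, assume $\Pev(\sigma_{{\frak L}^-})^c$ is dense. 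Then for each fixed pair $m\ne n$ we have $X_{m,n}(\sigma_{{\frak L}^-}) \subset \Pev(\sigma_{{\frak L}^-})$, whose complement is dense; hence the open set $X_{m,n}(\sigma_{{\frak L}^-})^c$ is dense, which forces the closed set $X_{m,n}(\sigma_{{\frak L}^-})$ to have empty interior. Thus essential freeness holds. This direction is elementary and does not even need Baire category, only monotonicity of interiors under inclusion.

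The main (and only genuine) obstacle is the ``only if'' direction, and specifically making sure the ambient space qualifies for the Baire category theorem. The clean route is to invoke compactness: $X_{{\frak L}^-}^+$ is compact Hausdorff, and compact Hausdorff spaces are Baire spaces, so no metrizability argument is needed. I would state this explicitly, citing the zero-dimensional compact Hausdorff structure of $X_{{\frak L}^-}^+$ recorded in Section 7. The remaining care is purely bookkeeping: confirming that $X_{m,n}(\sigma_{{\frak L}^-})$ is closed (continuity of iterates of the local homeomorphism $\sigma_{{\frak L}^-}$) and that the index set $\{(m,n): m,n\in\Zp,\ m\ne n\}$ is countable so Baire applies. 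I would phrase the nowhere-dense observation as: a closed set with empty interior is nowhere dense, so the hypothesis of essential freeness is exactly the nowhere-density of each member of the union. With these pieces in place the proof is a two-line application of the Baire category theorem in each direction.
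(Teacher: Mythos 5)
Your proof is correct and rests on the same key tool as the paper's: the Baire category theorem applied to the decomposition $\Pev(\sigma_{{\frak L}^-})=\bigcup_{m\ne n}X_{m,n}(\sigma_{{\frak L}^-})$ into closed sets, with the converse direction being the same elementary observation in both. The only cosmetic difference is that the paper argues the forward direction by contradiction, using regularity to produce a closed neighborhood $\overline{U_x}\subset\Pev(\sigma_{{\frak L}^-})$ and applying Baire to that compact set, whereas you apply Baire directly to the whole compact Hausdorff space $X_{{\frak L}^-}^+$ — a slightly cleaner packaging of the identical idea.
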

\begin{proof}
Assume that 
$(X_{{\frak L}^-}^+, \sigma_{{\frak L}^-})$ is essentially free.
Suppose that 
the set $\Pev(\sigma_{{\frak L}^-})^c$ of non-eventually periodic points is not dense in 
$X_{{\frak L}^-}^+.$
Since $X_{{\frak L}^-}^+$ is compact Hausdorff and hence regular,
there exists a point $x \in X_{{\frak L}^-}^+$ and an open neighborhood
$U_x \subset X_{{\frak L}^-}^+$ of $x$
such that 
\begin{equation*}
\overline{U_x} \cap \overline{\Pev(\sigma_{{\frak L}^-})^c} =\emptyset.
\end{equation*} 
Hence   
we have
$\overline{U_x} \subset \Pev(\sigma_{{\frak L}^-})$
so that 
\begin{equation*}
\overline{U_x} = \bigcup_{\underset{m\ne n}{m,n;}} ( X_{m,n}(\sigma_{{\frak L}^-}) 
\cap \overline{U_x}). 
\end{equation*} 
By the Baire's category theorem, 
there exist $m,n \in \Zp$ with $m\ne n$ such that 
$ X_{m,n}(\sigma_{{\frak L}^-}) 
\cap \overline{U_x}$
contains an interior point in the set 
$ \overline{U_x}.$
Therefore we conclude that 
$ X_{m,n}(\sigma_{{\frak L}^-}) $ contains an interior, a contradiction to the hypothesis that  
$(X_{{\frak L}^-}^+, \sigma_{{\frak L}^-})$ is essentially free.

Assume next that the set  
$\Pev(\sigma_{{\frak L}^-})^c$ 
is dense in $X_{{\frak L}^-}^+.$
By \eqref{eq:PevX}, we know that 
if 
$X_{m,n}(\sigma_{{\frak L}^-})$ contains an open set $V$  for some 
$m,n$ with $m\ne n,$
then 
$\Pev(\sigma_{{\frak L}^-})$ contains $V$,
 a contradiction to the hypothesis that
 $\Pev(\sigma_{{\frak L}^-})^c$ 
is dense in $X_{{\frak L}^-}^+.$
\end{proof}

The essential freeness of the topological dynamical system
$(X_{{\frak L}^-}^+, \sigma_{{\frak L}^-})$
 is  equivalent to the condition that 
the \'etale groupoid
$\G_{{\frak L}^-}^+$ is essentially principal (see \cite[Proposition 3.1]{Renault2}),
so that the $C^*$-subalgebra
$\DLMP$ is maximal abelian in $\OALMP$
by \cite[Proposition 4.7]{Renault}.

Recall that a clopen set $U_{X_{{\frak L}^-}^+}(v_i^l;{\xi})$
for $v_i^l \in V_l,  \xi \in F(v_i^l),$
 in $X_{{\frak L}^-}^+$ 
is defined in \eqref{eq:uxvilxi}.
\begin{definition}
A  $\lambda$-graph bisystem $\LGBS$
is said to satisfy $\sigma_{{\frak L}^-}$-{\it condition}\/ (I)
if for any $l,k\in \N$ with $k\le l,$
there exist
$x_i^l(\xi) \in  U_{X^+_{{\frak{L}^{-}}}(v_i^l;\xi)}$
for each $i=1,2,\dots, m(l)$ and $\xi \in F(v_i^l)$
such that 
\begin{equation}
\sigma_{{\frak L}^-}^n(x_i^l(\xi)) \ne x_j^l(\eta)
\quad
\text{for }
\xi\in F(v_i^l), \eta \in F(v_j^l),\, i,j =1,2,\dots,m(l),\,  n=1,2,\dots,k \label{eq:7.6.1}
\end{equation}
(cf. \cite[Lemma 5.1]{MaJMSJ1999}).
\end{definition}

\begin{proposition}\label{prop:conditionI}
Let $\LGBS$ be a $\lambda$-graph bisystem.
Consider the following three conditions:
\hspace{4cm}
\begin{enumerate}
\renewcommand{\theenumi}{\roman{enumi}}
\renewcommand{\labelenumi}{\textup{(\theenumi)}}
\item
The  $\lambda$-graph bisystem $\LGBS$ satisfies $\sigma_{{\frak L}^-}$-condition (I).
\item
The topological dynamical system
$(X_{{\frak L}^-}^+, \sigma_{{\frak L}^-})$ is essentially free.
\item
The $C^*$-symbolic dynamical system
$(\ALM, \rho^+, \Sigma^+)$
satisfies condition (I).
\end{enumerate}
Then we have 
(i) $\Longrightarrow $ (ii) and 
(i) $\Longrightarrow $ (iii).
\end{proposition}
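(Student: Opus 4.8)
The plan is to prove both implications by manufacturing explicit clopen sets out of the separating points $x_i^l(\xi)$ that $\sigma_{{\frak L}^-}$-condition (I) provides. The basic structural fact I would use throughout is that, by Lemma \ref{lem:smeilsm}, the generators $S_\mu E_i^{l-}(\xi) S_\mu^* = \chi_{U_{X^+_{{\frak L}^-}}(\mu,v_i^l;\xi)}$ of $\DLMP$ are characteristic functions of the basic cylinders; since these cylinders form a basis of the zero-dimensional compact space $X^+_{{\frak L}^-}$, we have $\DLMP = C(X^+_{{\frak L}^-})$, so every projection in $\DLMP$ is $\chi_F$ for a clopen $F$. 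For the implication (i) $\Rightarrow$ (iii) I would take the chain $\A_l := \A_{{\frak L}^-,l} = C^*(E_i^{l-}(\xi)\mid \xi\in F(v_i^l),\, i=1,\dots,m(l))$; by Lemma \ref{lem:abelian} this is a unital increasing sequence of finite-dimensional commutative $C^*$-subalgebras with dense union $\ALM$, and $\rho^+_\alpha(\A_l)\subset \A_{l+1}$ by \eqref{eq:defofrhoalphaA}. Its minimal projections are exactly the $E_i^{l-}(\xi)=\chi_{U_{X^+_{{\frak L}^-}}(v_i^l;\xi)}$, whose underlying cylinders partition $X^+_{{\frak L}^-}$ by \eqref{eq:L11}.

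For fixed $k\le l$ I would then construct $q_k^l=\chi_F$ as follows. Choose the points $x_i^l(\xi)\in U_{X^+_{{\frak L}^-}}(v_i^l;\xi)$ given by $\sigma_{{\frak L}^-}$-condition (I), so $\sigma_{{\frak L}^-}^n(x_i^l(\xi))\neq x_j^l(\eta)$ for $1\le n\le k$ and all indices. Because $X^+_{{\frak L}^-}$ is zero-dimensional, compact and Hausdorff, $\sigma_{{\frak L}^-}$ is continuous, and there are only finitely many separation conditions, I can pick pairwise disjoint clopen neighborhoods $F_i(\xi)\ni x_i^l(\xi)$ with $F_i(\xi)\subset U_{X^+_{{\frak L}^-}}(v_i^l;\xi)$ small enough that $\sigma_{{\frak L}^-}^m(F_i(\xi))\cap F_j(\eta)=\emptyset$ for all $(i,\xi),(j,\eta)$ and $1\le m\le k$. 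Setting $F=\bigsqcup_{i,\xi}F_i(\xi)$ and $q_k^l=\chi_F\in\DLMP$, the verification is routine: $q_k^l$ commutes with $\A_l$ since both lie in the commutative algebra $\DLMP$; $q_k^l E_i^{l-}(\xi)=\chi_{F_i(\xi)}\neq 0$, and by disjointness of the $F_i(\xi)$ this forces $q_k^l a\neq 0$ for every nonzero $a\in\A_l$; and since $\phi^m_{{\frak L}^+}(\chi_F)=\sum_{\mu\in B_m(\Lambda_{{\frak L}^+})}S_\mu\chi_F S_\mu^*=\chi_{\sigma_{{\frak L}^-}^{-m}(F)}$, the wandering property gives $q_k^l\,\phi^m_{{\frak L}^+}(q_k^l)=\chi_{F\cap\sigma_{{\frak L}^-}^{-m}(F)}=0$ for $1\le m\le k$. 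This is exactly condition (I) for $(\ALM,\rho^+,\Sigma^+)$.

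For (i) $\Rightarrow$ (ii) I would show that each $X_{m,n}(\sigma_{{\frak L}^-})$ with $m>n$ has empty interior. Since $\sigma_{{\frak L}^-}$ is a local homeomorphism, hence an open map, and $X_{m,n}(\sigma_{{\frak L}^-})=\sigma_{{\frak L}^-}^{-n}(X_{p,0}(\sigma_{{\frak L}^-}))$ with $p=m-n$, it is enough to treat the set $X_{p,0}(\sigma_{{\frak L}^-})$ of points fixed by $\sigma_{{\frak L}^-}^p$. Suppose it had nonempty interior; then it contains a basic cylinder $U_{X^+_{{\frak L}^-}}(\mu,v_i^l;\xi)$, and refining we may assume $l\ge p$ and $|\mu|\le l$. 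Taking $k=p$ in $\sigma_{{\frak L}^-}$-condition (I) yields $y:=x_i^l(\xi)\in U_{X^+_{{\frak L}^-}}(v_i^l;\xi)$ with $\sigma_{{\frak L}^-}^p(y)\neq y$, so $y\notin X_{p,0}(\sigma_{{\frak L}^-})$. I would then prepend the word $\mu$ to $y$ to obtain $x\in U_{X^+_{{\frak L}^-}}(\mu,v_i^l;\xi)$ with $\sigma_{{\frak L}^-}^{|\mu|}(x)=y$; since $\sigma_{{\frak L}^-}$ sends period-$p$ points to period-$p$ points, $y=\sigma_{{\frak L}^-}^{|\mu|}(x)\notin X_{p,0}(\sigma_{{\frak L}^-})$ forces $x\notin X_{p,0}(\sigma_{{\frak L}^-})$, contradicting $U_{X^+_{{\frak L}^-}}(\mu,v_i^l;\xi)\subset X_{p,0}(\sigma_{{\frak L}^-})$. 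Hence every $X_{m,n}(\sigma_{{\frak L}^-})$ has empty interior and $(X^+_{{\frak L}^-},\sigma_{{\frak L}^-})$ is essentially free.

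The genuinely routine points are the finite simultaneous shrinking used to build $F$ and the bookkeeping identity for $\phi^m_{{\frak L}^+}$. The main obstacle is the prepending step in (ii): I must show that the specific representative $y$ of the $\omega^0$-cylinder can be realized with the prescribed label prefix $\mu$ so that the resulting point lands in the finer cylinder $U_{X^+_{{\frak L}^-}}(\mu,v_i^l;\xi)$. Here I would use that all points of $U_{X^+_{{\frak L}^-}}(v_i^l;\xi)$ share the same top vertices $u_0,\dots,u_l$ (as ${\frak L}^-$ is right-resolving), so the nonemptiness of the cylinder together with the successive application of the local property (v) of Definition \ref{def:lambdabisystem} builds the columns labeled $\mu$ above $y$ to all levels; this is precisely the delicate construction carried out in the hard direction of Lemma \ref{lem:keysaesa}, and it is where the compatibility of the two labeled Bratteli diagrams is essential.
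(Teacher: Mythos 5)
Your argument is correct, and your treatment of (i) $\Rightarrow$ (iii) is essentially the paper's: the same filtration $\A_l=\A_{{\frak L}^-,l}$, and the same $q_k^l=\chi_F$ for a clopen wandering neighbourhood $F$ of the points $x_i^l(\xi)$ (the paper takes a single clopen $V\supset Y$ with $\sigma_{{\frak L}^-}^n(V)\cap V=\emptyset$ and checks $aq_k^l\neq 0$ by evaluating at $x_{i_0}^l(\xi_0)$; your insistence that $F$ be a disjoint union of pieces $F_i(\xi)\subset U_{X^+_{{\frak L}^-}}(v_i^l;\xi)$ makes that last verification marginally cleaner but is the same idea). For (i) $\Rightarrow$ (ii) your route is genuinely different in direction, though not in substance. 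The paper pushes forward: from $U(\mu,v_i^l;\xi)\subset X_{m,n}(\sigma_{{\frak L}^-})$ it extends $\xi$ to a word $\bar\xi$ read off from a witness point and produces a \emph{full} basic cylinder $U_{X^+_{{\frak L}^-}}(v_{i_0}^{l+p};\bar\xi)\subset\sigma_{{\frak L}^-}^p(U(\mu,v_i^l;\xi))$ consisting entirely of points of period $m-n$, then contradicts condition (I) at level $l+p$ with $k>m-n$. You pull back: reduce to $X_{p,0}$ via openness of $\sigma_{{\frak L}^-}$, take the non-$p$-periodic point $y=x_i^l(\xi)$ that condition (I) supplies at level $l$ with $k=p$, and lift it through the label prefix $\mu$ into the cylinder. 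Both arguments ultimately rest on the same lifting fact — that $\sigma_{{\frak L}^-}^{|\mu|}$ maps $U(\mu,v_i^l;\xi)$ \emph{onto} $U_{X^+_{{\frak L}^-}}(v_i^l;\xi)$, proved by anchoring on the diagonal of $\mu$-labelled edges coming from a witness of nonemptiness and propagating upward with the local property (v), exactly as in the hard direction of Lemma \ref{lem:keysaesa} — and the paper's inclusion $U_{X^+_{{\frak L}^-}}(v_{i_0}^{l+p};\bar\xi)\subset\sigma_{{\frak L}^-}^p(U(\mu,v_i^l;\xi))$ is the same kind of statement, also left without detailed proof there. So you have correctly isolated the one nontrivial technical point and pointed to the right tool; what your version buys is avoiding the bookkeeping with $\bar\xi$ and the passage to level $l+p$, at the cost of having to state the lifting claim for the original cylinder rather than an enlarged one.
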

\begin{proof}
(i) $\Longrightarrow $ (ii) :
Assume  that the $\lambda$-graph bisystem
$\LGBS$ satisfies $\sigma_{{\frak L}^-}$-condition (I) 
and the topological dynamical system
$(X_{{\frak L}^-}^+, \sigma_{{\frak L}^-})$ is not essentially free.
There exist $m,n \in \Zp$ with $m> n$ 
such that  
$X_{m,n}(\sigma_{{\frak L}^-}) =\{ x \in X_{{\frak L}^-}^+ \mid 
\sigma_{{\frak L}^-}^m(x) = \sigma_{{\frak L}^-}^n(x) \}
$
has a nonempty interior.
 By taking $l\in \N$ large enough
such as 
$l >m $, we may assume that 
$U(\mu,v_i^l;\xi) \subset X_{m,n}(\sigma_{{\frak L}^-})$
for some $\mu =(\mu_1,\dots,\mu_p) \in B_p(\Lambda_{{\frak L}^+})$ 
and
$v_i^l \in V_l.$
Since the numbers $m$ and $p$ may be taken large enough,
we may assume that $p=m.$
Take $x \in U(\mu,v_i^l;\xi)$
and let
 $ \omega^p(x) = (u_l, \beta_{-l})_{l=1}^\infty \in  \Omega_{{\frak L}^-}.$
We put
$$
\xi_{l+1}:= \beta_{-p-1}, \, \,
 \xi_{l+2}:= \beta_{-p}, \, \, 
\dots, \,\,
\xi_{l+p}:= \beta_{-2}, \, \, 
\xi_{l+p+1}:= \beta_{-1}
\, \, \,
\text{ and }
\, \, \,
v_{i_0}^{l+p}: = u_{l+p}.
$$  
Let
$\bar{\xi} = (\xi_1, \xi_2,\dots, \xi_l, \xi_{l+1}, \dots,\xi_{l+p+1}) 
\in B_{l+p+1}(\Lambda_{{\frak L}^-}),$
so that 
we have
$$
U_{X^+_{\frak{L}^-}(v_{i_0}^{l+p}; \bar{\xi}) }
\subset
\sigma_{{\frak L}^-}^p(U(\mu,v_i^l;\xi) ).
$$
As
$U(\mu,v_i^l;\xi) \subset X_{m,n}(\sigma_{{\frak L}^-}),$
any point of 
$
U_{X^+_{\frak{L}^-}(v_{i_0}^{l+p}; \bar{\xi}) }
$ consists of periodic points with its period $m-n.$
Now take $k \in \N$ such as $k > m-n.$
Then there exists no points 
$x_{i_0}^{l+p}(\bar{\xi}) \in 
U_{X^+_{\frak{L}^-}(v_{i_0}^{l+p}; \bar{\xi}) }
$
such that 
$\sigma_{{\frak L}^-}^n(x_{i_0}^{l+p}(\bar{\xi})) \ne 
x_{i_0}^{l+p}(\bar{\xi})
$ for all
$n=1,2,\dots,k.$
It is a contradiction to $\sigma_{{\frak L}^-}$-condition (I).

(i) $\Longrightarrow$ (iii) : 
For a fixed $l \in \N$, let $\A_{\frak{L}^{-},l}$ be the $C^*$-subalgebra
of $\ALM$ generated by the projections
$E_i^{l-}(\xi), \xi \in F(v_i^l), i=1,2,\dots,m(l).$
It satisfies the condition
\begin{equation*}
\rho_\alpha^+(\A_{\frak{L}^{-},l}) \subset \A_{\frak{L}^{-},l+1}, 
\qquad l \in \N, \, \alpha \in \Sigma^+,
\end{equation*}
and the union $\bigcup_{l=1}^\infty \A_{\frak{L}^{-},l}$ 
is dense in $\ALM.$
Since 
the $\lambda$-graph bisystem  $\LGBS$ satisfies $\sigma_{{\frak L}^-}$-condition (I),
 for any $l,k\in \N$ with $k\le l,$
there exist
$x_i^l(\xi) \in  U_{X^+_{\frak{L}^{-}}(v_i^l;\xi)}$
for each $i=1,2,\dots, m(l)$ and $\xi \in F(v_i^l)$
satsifying \eqref{eq:7.6.1}. 
Under fixing 
$l,k\in \N$ with $k\le l,$
we set
$$
Y = \{ x_i^l(\xi) \mid i=1,2,\dots, m(l), \xi \in F(v_i^l)\}.
$$
By \eqref{eq:7.6.1},
we have 
$\sigma_{{\frak L}^-}^n(Y) \cap Y = \emptyset$
for all $n=1,2,\dots,k.$
We may find a clopen set $V\subset X_{\frak{L}^-}^+$
such that $Y \subset V$ and  
$$
\sigma_{{\frak L}^-}^n(V) \cap V = \emptyset
\quad
\text{ for all }
n=1,2,\dots,k.
$$
Let $q_k^l$ be the projection of the characteristic function 
of $V$ on $X_{\frak{L}^{-}}^{+}.$
Since 
$
\sigma_{{\frak L}^-}^n(V) \cap V = \emptyset
$
for all
$n=1,2,\dots,k,$
we know that 
$
q_k^l \phi_{{\frak L}^+}^n(q_k^l) =0
$
for all
$n=1,2,\dots,k.$
Any nonzero element
$
a \in \A_{{\frak{L}}^-,l}
$ 
is of the form
$
 a = \sum_{i=1}^{m(l)}\sum_{\xi \in F(v_i^l)}c_i^l(\xi)E_i^{l-}(\xi)
$
for some $c_i^l(\xi) \in \mathbb{C}.$
Since $a \ne 0,$
there exists $i_0, \xi_0$ such that 
$c_{i_0}^l(\xi_0) \ne 0.$
Take $x_{i_0}^l(\xi_0) \in Y \subset V$
so that we have 
\begin{equation*}
(a q_k^l)(x_{i_0}^l(\xi_0))
= a(\varphi_{{\frak{L}^-}}(x_{i_0}^l(\xi_0))) q_k^l(x_{i_0}^l(\xi_0))
 =c_{i_0}^l(\xi_0) \ne 0,
\end{equation*}
and hence 
$a q_k^l \ne 0.$
\end{proof}
\begin{theorem}\label{thm:main6}
Suppose that a $\lambda$-graph bisystem $\LGBS$ 
satisfies $\sigma_{{\frak L}^-}$-condition (I).
Then the $C^*$-algebra $\OALMP$
is  the universal unital unique $C^*$-algebra
generated by
partial isometries
$S_{\alpha}$ indexed by symbols $\alpha \in \Sigma^+$
and mutually commuting projections
$E_i^{l-}(\xi)$ indexed by vertices $v_i^l \in  V_l$ and 
admissible words $\xi =(\xi_1,\dots, \xi_l) \in F(v_i^l)$ 
 subject to the  following operator relations called $\LGBS$:
\begin{align}
\sum_{\alpha \in \Sigma^+} S_{\alpha}S_{\alpha}^*  
 & = \sum_{i=1}^{m(l)} \sum_{\xi \in F(v_i^l)}  E_i^{l-}(\xi) = 1, \label{eq:L2}\\ 
 S_\alpha S_\alpha^* & E_i^{l-}(\xi)   =   E_i^{l-}(\xi) S_\alpha S_\alpha^*, 
\label{eq:L3}\\
 E_i^{l-} (\xi)   & = \sum_{\beta\in \Sigma^-}  \sum_{j=1}^{m(l+1)}
A_{l,l+1}^-(i,\beta,j)E_j^{l+1-}(\beta\xi),  \label{eq:L4}\\
S_{\alpha}^*E_i^{l-}(\xi) S_{\alpha} &  =  
\sum_{\beta\in \Sigma^-}
\sum_{j=1}^{m(l+1)} A_{l,l+1}^+(i,\alpha,j)E_j^{l+1-}(\xi\beta), \label{eq:L6}
\end{align}
where the word
$\beta\xi $ in \eqref{eq:L4} 
and
$\xi\beta $ in \eqref{eq:L6} 
are defined by 
$\beta \xi = (\beta, \xi_1,\dots,\xi_l)$
and
$\xi \beta = (\xi_1,\dots,\xi_l,\beta)$
for $\beta \in \Sigma^-,
\xi = (\xi_1,\dots,\xi_l) \in F(v_i^l)$
and
$
i=1,2,\dots,m(l) 
$,
respectively.
\end{theorem}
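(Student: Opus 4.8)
The plan is to realize $\OALMP$ as the $C^*$-symbolic crossed product $\ALM \rtimes_{\rho^+} \Lambda_{{\frak L}^+}$ of \cite{MaCrelle} and then to transport the universality established there together with the condition-(I) uniqueness of \cite{MaMZ2010}. By Lemma \ref{lem:abelian} the coefficient algebra $\ALM$ is isomorphic to $C(\Omega_{{\frak L}^-})$ via $\varphi_l$, and by Lemma \ref{lem:CsymbdynA} the triplet $(\ALM, \rho^+, \Sigma^+)$ is a $C^*$-symbolic dynamical system whose presenting subshift is $\Lambda_{{\frak L}^+}$. First I would observe that the relations \eqref{eq:L2}, \eqref{eq:L3} and \eqref{eq:L6} are exactly the relations \eqref{eq:csds} defining the canonical generators of a $C^*$-symbolic crossed product, namely $\sum_\alpha S_\alpha S_\alpha^* = 1$, the commutation $S_\alpha S_\alpha^* E_i^{l-}(\xi) = E_i^{l-}(\xi) S_\alpha S_\alpha^*$, and $\rho^+_\alpha(E_i^{l-}(\xi)) = S_\alpha^* E_i^{l-}(\xi) S_\alpha$, while \eqref{eq:L4} is nothing but the defining relation of $\ALM \cong C(\Omega_{{\frak L}^-})$ recorded in \eqref{eq:L12}. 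Combined with the density statement \eqref{eq:dense}, which shows that $\OALMP$ is generated by $\ALM$ together with $\{S_\alpha\}_{\alpha \in \Sigma^+}$ and that the monomials $S_\mu E_i^{l-}(\xi) S_\nu^*$ span a dense $*$-subalgebra, this produces a canonical surjective $*$-homomorphism $\Phi$ from the crossed product onto $\OALMP$.

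For the universality half of the statement I would invoke \cite{MaCrelle}: the $C^*$-symbolic crossed product $\ALM \rtimes_{\rho^+}\Lambda_{{\frak L}^+}$ is by construction the universal $C^*$-algebra generated by a copy of $\ALM$ and partial isometries subject to \eqref{eq:csds}, hence subject to $\LGBS$. Thus any $C^*$-algebra $\mathcal{B}$ carrying generators $\bar S_\alpha, \bar E_i^{l-}(\xi)$ satisfying $\LGBS$ receives a $*$-homomorphism from the crossed product, and once $\Phi$ is shown to be an isomorphism this homomorphism factors through $\OALMP$, giving the required universal property.

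The uniqueness half is where $\sigma_{{\frak L}^-}$-condition (I) enters, and this is the step I expect to be the main obstacle. Here I would use that $\ALM$ sits faithfully inside $\OALMP$ (Lemma \ref{lem:abelian}) and that, by the implication (i) $\Longrightarrow$ (iii) of Proposition \ref{prop:conditionI}, the hypothesis guarantees that $(\ALM, \rho^+, \Sigma^+)$ satisfies condition (I) in the sense of \cite{MaMZ2010}. The uniqueness theorem of \cite{MaMZ2010} then asserts that any representation of the universal relations which is faithful on the coefficient algebra is automatically faithful on the whole crossed product; applying this to $\Phi$ yields injectivity of $\Phi$, so that $\Phi$ is an isomorphism $\ALM \rtimes_{\rho^+}\Lambda_{{\frak L}^+} \cong \OALMP$. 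This simultaneously identifies $\OALMP$ with the universal algebra and shows that the operator relations $\LGBS$ determine it uniquely.

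The delicate points I anticipate are twofold. First, I must check that the abstract relations $\LGBS$ imposed in an arbitrary representation force the image of $\{E_i^{l-}(\xi)\}$ to generate a \emph{faithful} copy of $\ALM$, so that the crossed-product relations \eqref{eq:csds} are genuinely recovered rather than a proper quotient; the completeness relation in \eqref{eq:L2} and the compatibility \eqref{eq:L4} are what should prevent degeneration. Second, I must reconcile the formulation of condition (I) in the dynamical picture $(X^+_{{\frak L}^-}, \sigma_{{\frak L}^-})$ used in Proposition \ref{prop:conditionI} with the bimodule formulation of condition (I) in \cite{MaMZ2010}. The projections $q_k^l \in \DLMP$ constructed in the proof of Proposition \ref{prop:conditionI} are precisely what bridge these two formulations, and verifying that they satisfy the hypotheses of \cite[Lemma 3.2]{MaMZ2010}---in particular the independence of condition (I) from the choice of partial isometries obeying \eqref{eq:csds}---is the crux of the argument.
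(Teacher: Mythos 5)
Your proposal is correct and follows essentially the same route as the paper: the paper's proof likewise treats $\OALMP$ as the crossed product $\ALM\rtimes_{\rho^+}\Lambda_{{\frak L}^+}$, verifies that any family of nonzero generators satisfying $\LGBS$ generates a copy of the commutative algebra isomorphic to $C(\Omega_{{\frak L}^-})$ (your first ``delicate point''), and then applies Proposition \ref{prop:conditionI} together with \cite[Theorem 3.9]{MaMZ2010} to extend this to an isomorphism of the full algebras. The reconciliation of the two formulations of condition (I) that you flag is exactly what \cite[Lemma 3.2]{MaMZ2010} and the projections $q_k^l$ in the proof of Proposition \ref{prop:conditionI} provide, as the paper notes.
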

\begin{proof}
The uniqueness of the $C^*$-algebra $\OALMP$
among the generators
$S_{\alpha}, \alpha \in \Sigma^+$
and 
$E_i^{l-}(\xi), v_i^l \in  V_l, \xi \in F(v_i^l)$
subject to the operator relations $\LGBS$
means that if there exist 
another family of 
nonzero partial isometries 
$\widehat{S}_{\alpha}, \alpha \in \Sigma^+$
and nonzero mutually commuting projections
$\widehat{E}_i^{l-}(\xi), v_i^l \in  V_l, \xi \in F(v_i^l)$
satisfying the above operator relations
$\LGBS,$
then the correspondence
\begin{equation*}
S_\alpha \longrightarrow \widehat{S}_{\alpha}, \qquad
E_i^{l-}(\xi) \longrightarrow \widehat{E}_i^{l-}(\xi)
\end{equation*}
yield an isomorphism from $\OALMP$
onto the $C^*$-algebra
$\widehat{\mathcal{O}}_{\frak{L}^-}^+ $
generated by
$\widehat{S}_{\alpha}, \alpha \in \Sigma^+$
and 
$\widehat{E}_i^{l-}(\xi), v_i^l \in  V_l, \xi \in F(v_i^l).$
 We will prove this property.
Let us  denote by 
$\widehat{\mathcal{A}}_{\frak{L}^-} $
the $C^*$-subalgebra of 
$\widehat{\mathcal{O}}_{\frak{L}^-}^+ $
generated by
the projections
$\widehat{E}_i^{l-}(\xi), v_i^l \in  V_l,
\xi =(\xi_1,\dots, \xi_l) \in F(v_i^l).$
By the relations below
\begin{equation*}
 \sum_{i=1}^{m(l)} \sum_{\xi \in F(v_i^l)}  \widehat{E}_i^{l-}(\xi) = 1, \qquad 
 \widehat{E}_i^{l-} (\xi)   = \sum_{\beta\in \Sigma^-}  \sum_{j=1}^{m(l+1)}
A_{l,l+1}^-(i,\beta,j) \widehat{E}_j^{l+1-}(\beta\xi)
\end{equation*}
and commutativity of the projections
$E_i^{l-}(\xi)$,
we know that 
the correspondence 
$$
 \widehat{E}_i^{l-} (\xi)  \in \widehat{\mathcal{A}}_{\frak{L}^-} \longrightarrow 
\chi_{U_{\Omega_{\frak{L}^-}(v_i^l;\xi)}} \in C(\Omega_{\frak{L}^-})
$$
gives rise to an isomorphism of $C^*$-algebras between
$
 \widehat{\mathcal{A}}_{\frak{L}^-} 
$
and
$C(\Omega_{\frak{L}^-}).
$
Hence by Lemma \ref{lem:abelian} (ii),
the $C^*$-algebras
$\mathcal{A}_{\frak{L}^-}$ 
and
$
 \widehat{\mathcal{A}}_{\frak{L}^-} 
$
are canonically isomorphic through the correspondence
$
{E}_i^{l-} (\xi)  \in \mathcal{A}_{\frak{L}^-} 
\longrightarrow 
\widehat{E}_i^{l-} (\xi)  \in \widehat{\mathcal{A}}_{\frak{L}^-}.
$
Now we are assuming that the $\lambda$-graph bisystem $\LGBS$
satisfies $\sigma_{{\frak L}^-}$-condition (I),
so that the $C^*$-symbolic dynamical system
$
(\mathcal{A}_{\frak{L}^-},\rho^+, \Sigma^+) 
$
satisfies condition (I).
By \cite[Theorem 3.9]{MaMZ2010}, we know that 
the correspondence
\begin{align*}
{E}_i^{l-} (\xi)  \in \mathcal{A}_{\frak{L}^-} 
&\longrightarrow 
 \widehat{E}_i^{l-} (\xi)  \in \widehat{\mathcal{A}}_{\frak{L}^-}, \\
S_\alpha   \in \OALMP 
&\longrightarrow 
 \widehat{S}_\alpha  \in \widehat{\mathcal{O}}_{\frak{L}^-}^+, 
\end{align*}
extends to an isomorphism of $C^*$-algebras 
between 
$\OALMP$ and
$\widehat{\mathcal{O}}_{\frak{L}^-}^+. 
$
Therefore the $C^*$-algebra $\OALMP$
is the universal $C^*$-algebra
 subject to the  operator relations $\LGBS$.
 The above discussion shows that $\OALMP$
is the unique $C^*$-algebra subject to the operator relations
$\LGBS$. 
\end{proof}
\medskip
For the other $C^*$-algebra 
$\OALPM,$ we may similarly define the topological dynamical system
$(X_{{\frak L}^+}^-, \sigma_{{\frak L}^+})$ to be essentially free
and
the $\lambda$-graph bisystem $\LGBS$ to satisfy $\sigma_{{\frak L}^+}$-condition (I).
We may show a symmetric statement to Proposition \ref{prop:conditionI} 
to lead the following theorem.
\begin{theorem}\label{thm:main7'}
Suppose that a $\lambda$-graph bisystem $\LGBS$ satisfies 
$\sigma_{{\frak L}^+}$-condition (I).
The $C^*$-algebra $\OALPM$
is realized as the universal unital unique $C^*$-algebra
generated by partial isometries
$T_{\beta}$ indexed by symbols $\beta \in \Sigma^-$ 
and mutually commuting projections
$E_i^{l+}(\mu)$ indexed by vertices $v_i^l \in  V_l$ and 
admissible words $\mu =(\mu_1,\dots, \mu_l) \in P(v_i^l)$ 
 subject to the following operator relations called $({\frak L}^+,{\frak L}^-)$:
\begin{align}
\sum_{\beta \in \Sigma^-} T_{\beta}T_{\beta}^*   
& =  \sum_{i=1}^{m(l)} \sum_{\mu \in P(v_i^l)}  E_i^{l+}(\mu) =  1, \label{eq:P2}\\ 
 T_\beta T_\beta^* & E_i^{l+}(\mu) =  E_i^{l+}(\mu) T_\beta T_\beta^*, 
\label{eq:P3}\\
 E_i^{l+} (\mu)   & = \sum_{\alpha\in \Sigma^+}  \sum_{j=1}^{m(l+1)}
A_{l,l+1}^+(i,\alpha,j)E_j^{l+1+}(\mu\alpha),  \label{eq:P4}\\
T_{\beta}^*E_i^{l+}(\mu) T_{\beta} &  =  
\sum_{\alpha\in \Sigma^+}
\sum_{j=1}^{m(l+1)} A_{l,l+1}^-(i,\beta,j)E_j^{l+1+}(\alpha\mu) \label{eq:P6}
\end{align}
for $\beta \in \Sigma^-, \mu \in P(v_i^l),$ where
$\mu \alpha = (\mu_1,\dots,\mu_l, \alpha)$ and
$\alpha\mu = (\alpha,\mu_1,\dots,\mu_l).$
\end{theorem}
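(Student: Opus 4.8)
The plan is to mirror the proof of Theorem~\ref{thm:main6} verbatim, with the roles of the two labeled Bratteli diagrams ${\frak L}^-$ and ${\frak L}^+$ interchanged throughout: replace the groupoid $\G_{{\frak L}^-}^+$ by $\G_{{\frak L}^+}^-$, the shift dynamical system $(X_{{\frak L}^-}^+,\sigma_{{\frak L}^-})$ by $(X_{{\frak L}^+}^-,\sigma_{{\frak L}^+})$, the follower sets $F(v_i^l)$ by the predecessor sets $P(v_i^l)$, and swap the transition matrices $A^+_{l,l+1}$ and $A^-_{l,l+1}$. Concretely, the generators are realized inside $C_c(\G_{{\frak L}^+}^-)$ as $T_\beta=\chi_{U(\beta)}$ for $\beta\in\Sigma^-$ and $E_i^{l+}(\eta)=\chi_{U(v_i^l;\eta)}$ for $\eta\in P(v_i^l)$, using the left-infinite/right-infinite reversal that exchanges the right-resolving property of ${\frak L}^-$ with the left-resolving property of ${\frak L}^+$. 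Since Proposition~\ref{prop:main7} already gives that these generators generate $\OALPM$ and satisfy the operator relations $({\frak L}^+,{\frak L}^-)$, only universality and uniqueness remain to be established.

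For these I would assemble the symmetric counterpart of the machinery used for $\OALMP$. The $+$-half of Lemma~\ref{lem:CsymbdynA} already states that $(\ALP,\rho^-,\Sigma^-)$ is a $C^*$-symbolic dynamical system, where $\rho^-_\beta$ is defined by the analogue of \eqref{eq:defofrhoalphaA} using $A^-_{l,l+1}$ and the word $\alpha\eta$. The mirror of Lemma~\ref{lem:abelian}(ii) identifies $\ALP$ with $C(\Omega_{{\frak L}^+})$ through $E_i^{l+}(\eta)\mapsto\chi_{U_{\Omega_{{\frak L}^+}}(v_i^l;\eta)}$, which follows from the symmetric disjoint-union decomposition corresponding to \eqref{eq:L123}. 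I would then prove the mirror of Proposition~\ref{prop:conditionI}: that $\sigma_{{\frak L}^+}$-condition (I) implies both that $(X_{{\frak L}^+}^-,\sigma_{{\frak L}^+})$ is essentially free and that $(\ALP,\rho^-,\Sigma^-)$ satisfies condition~(I). Both implications are transcriptions of the arguments already given — the essential-freeness step uses the same Baire-category separation of eventually periodic points, and the condition~(I) step builds the projections $q_k^l$ from clopen neighbourhoods $V$ of a separating finite set $Y=\{x_i^l(\eta)\}$ satisfying $\sigma_{{\frak L}^+}^n(V)\cap V=\emptyset$ for $1\le n\le k$.

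With these in hand, \cite[Theorem~3.9]{MaMZ2010} applies to the $C^*$-symbolic crossed product $\ALP\rtimes_{\rho^-}\Lambda_{{\frak L}^-}$, which is canonically isomorphic to $\OALPM$, and yields that any other family $\widehat T_\beta,\,\widehat E_i^{l+}(\eta)$ of nonzero partial isometries and projections satisfying $({\frak L}^+,{\frak L}^-)$ generates a $C^*$-algebra isomorphic to $\OALPM$ via $T_\beta\mapsto\widehat T_\beta$ and $E_i^{l+}(\eta)\mapsto\widehat E_i^{l+}(\eta)$. The main obstacle is not any single estimate but the bookkeeping required to confirm that the directional asymmetry of the definition — ${\frak L}^-$ carrying upward, right-resolving edges and ${\frak L}^+$ carrying downward, left-resolving edges — genuinely produces a symmetric theory under this exchange. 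The cleanest way to discharge this is to invoke the canonical isomorphism $\OALPM\cong{\mathcal{O}}^+_{{\frak L}^{+t}}$ for the transposed $\lambda$-graph bisystem $({\frak L}^{+t},{\frak L}^{-t})$, under which each predecessor set $P(v_i^l)$ becomes a follower set of ${\frak L}^{+t}$ and $\sigma_{{\frak L}^+}$-condition~(I) becomes $\sigma_{{\frak L}^{+t}}$-condition~(I); the statement then reduces to Theorem~\ref{thm:main6} applied to the transpose.
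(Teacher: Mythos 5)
Your primary route is exactly what the paper does: the paper's entire justification for this theorem is the sentence ``we may show a symmetric statement to Proposition \ref{prop:conditionI} to lead the following theorem,'' so your explicit mirroring of Theorem \ref{thm:main6} --- swapping $\G_{{\frak L}^-}^+$ for $\G_{{\frak L}^+}^-$, $F(v_i^l)$ for $P(v_i^l)$, $A^{\pm}$ for $A^{\mp}$, and transcribing Lemmas \ref{lem:abelian}, \ref{lem:CsymbdynA} and Proposition \ref{prop:conditionI} --- is a faithful (and more detailed) rendering of the intended argument, and it is correct.

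One caution about your proposed shortcut via the transpose. In the paper, the isomorphism $\OALPM \cong {\mathcal{O}}^+_{{\frak L}^{+t}}$ is stated \emph{after} both universality theorems and is justified precisely by the fact that both algebras are universal for the respective operator relations; so if you take that isomorphism as given and then ``reduce to Theorem \ref{thm:main6} applied to the transpose,'' you are assuming a consequence of the theorem you are trying to prove. The shortcut can be made non-circular, but only if you first establish the identification at the groupoid level: check that $({\frak L}^{+t},{\frak L}^{-t})$ satisfies Definition \ref{def:lambdabisystem} (left-resolving for ${\frak L}^+$ transposes to right-resolving for the new minus-diagram, and conversely), that the natural bijection between reversed paths gives a homeomorphism $\Omega_{{\frak L}^{+t}} \cong \Omega_{{\frak L}^{+}}$ intertwining the two shift dynamical systems, and hence an isomorphism of \'etale groupoids $\G^{+}_{{\frak L}^{+t}} \cong \G^{-}_{{\frak L}^{+}}$ carrying generators to generators and $\sigma_{{\frak L}^{+t}}$-condition (I) to $\sigma_{{\frak L}^{+}}$-condition (I). With that groupoid-level verification in place the reduction is legitimate; without it, stick to the direct mirroring, which needs no such input.
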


\medskip
We will next present the operator relations 
$\LGBS$ in Theorem \ref{thm:main6} as well as Theorem \ref{thm:main7'} into 
a simpler form than the above relations in Theorem \ref{thm:main6} as well as  Theorem \ref{thm:main7'}. 
For $l \in \N,$ $v_i^l \in V_l$
and $\beta \in \Sigma^-,$
we put
\begin{gather*}
F_\beta(v_i^l) =
\{ (\xi_1, \dots, \xi_l) \in F(v_i^l) \mid \xi_1 = \beta \} 
\subset B_l(\Lambda_{{\frak L}^-}), \\
\Sigma_1^-(v_i^l) =
\{ \lambda^-(e^-) \in \Sigma^- \mid  e^- \in E_{l,l-1}^-, s(e^-) = v_i^l\}.
\end{gather*}
We then see
$
\Sigma_1^-(v_i^l) =
\{ \beta\in \Sigma^- \mid  F_\beta(v_i^l) \ne \emptyset \}.
$
We define a projection for $\beta \in \Sigma_1^-(v_i^l)$ in $\ALM$
by
\begin{equation*}
E_i^l(\beta) := \sum_{\xi \in F_\beta(v_i^l)} E_i^{l-}(\xi) \qquad 
\text{ for } \quad \beta \in \Sigma_1^-(v_i^l). 
\end{equation*}
In case of $F_\beta(v_i^l) =\emptyset,$ we define $E_i^l(\beta) =0.$
We have the following lemma.
\begin{lemma}\label{lem:relations}
\hspace{6cm}
\begin{enumerate}
\renewcommand{\theenumi}{\roman{enumi}}
\renewcommand{\labelenumi}{\textup{(\theenumi)}}
\item
${\displaystyle\sum_{i=1}^{m(l)}  \sum_{\beta \in \Sigma^-_1(v_i^l)}  E_i^{l}(\beta) = 1.} $ 
\item
$ S_\alpha S_\alpha^*  E_i^{l}(\beta)   =   E_i^{l}(\beta) S_\alpha S_\alpha^*.$
\item
${\displaystyle\sum_{\beta \in \Sigma^-_1(v_i^l)} E_i^{l} (\beta) 
 =  \sum_{j=1}^{m(l+1)}\sum_{\gamma \in \Sigma_1^-(v_j^{l+1})}
A_{l,l+1}^-(i,\gamma,j)E_j^{l+1}(\gamma).}$
\item
${\displaystyle S_{\alpha}^*E_i^{l}(\beta) S_{\alpha}   =  
\sum_{j=1}^{m(l+1)} A_{l,l+1}^+(i,\alpha,j)E_j^{l+1}(\beta).}
$
\end{enumerate}
\end{lemma}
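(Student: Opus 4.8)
The plan is to derive all four identities from the operator relations \eqref{eq:L2}--\eqref{eq:L6} of Theorem \ref{thm:main6} together with a single combinatorial observation about follower sets. The observation is that every $\xi = (\xi_1,\dots,\xi_l) \in F(v_i^l)$ has a well-defined first symbol $\xi_1$, which necessarily lies in $\Sigma_1^-(v_i^l)$, and conversely $\beta \in \Sigma_1^-(v_i^l)$ precisely when $F_\beta(v_i^l) \ne \emptyset$. Hence $F(v_i^l) = \bigsqcup_{\beta \in \Sigma_1^-(v_i^l)} F_\beta(v_i^l)$ is a disjoint partition, which by the very definition $E_i^l(\beta) = \sum_{\xi \in F_\beta(v_i^l)} E_i^{l-}(\xi)$ yields the basic identity
\begin{equation*}
\sum_{\beta \in \Sigma_1^-(v_i^l)} E_i^l(\beta) = \sum_{\xi \in F(v_i^l)} E_i^{l-}(\xi). \tag{$\star$}
\end{equation*}
First I would record $(\star)$. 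Then (i) follows at once by summing $(\star)$ over $i=1,\dots,m(l)$ and invoking \eqref{eq:L2}, while (ii) follows from \eqref{eq:L3} since $E_i^l(\beta)$ is a finite sum of projections $E_i^{l-}(\xi)$ each commuting with $S_\alpha S_\alpha^*$.

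For (iii) I would begin from the left-hand side, rewrite it as $\sum_{\xi \in F(v_i^l)} E_i^{l-}(\xi)$ via $(\star)$, and apply \eqref{eq:L4} to each summand, obtaining $\sum_{\xi}\sum_{\gamma\in\Sigma^-}\sum_j A_{l,l+1}^-(i,\gamma,j) E_j^{l+1-}(\gamma\xi)$. The decisive point is that ${\frak L}^-$ is right-resolving, so whenever $A_{l,l+1}^-(i,\gamma,j)=1$ the unique $\gamma$-labelled edge out of $v_j^{l+1}$ terminates at $v_i^l$; consequently the words $\gamma\xi$ with $\xi$ running over $F(v_i^l)$ are exactly the members of $F_\gamma(v_j^{l+1})$, so that $\sum_{\xi\in F(v_i^l)} E_j^{l+1-}(\gamma\xi) = E_j^{l+1}(\gamma)$. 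Interchanging the order of summation then produces $\sum_j\sum_{\gamma\in\Sigma_1^-(v_j^{l+1})} A_{l,l+1}^-(i,\gamma,j) E_j^{l+1}(\gamma)$, the terms with $\gamma\notin\Sigma_1^-(v_j^{l+1})$ vanishing.

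For (iv) I would expand $S_\alpha^* E_i^l(\beta) S_\alpha = \sum_{\xi\in F_\beta(v_i^l)} S_\alpha^* E_i^{l-}(\xi) S_\alpha$ and apply \eqref{eq:L6}, reaching $\sum_j A_{l,l+1}^+(i,\alpha,j)\sum_{\xi\in F_\beta(v_i^l)}\sum_{\gamma\in\Sigma^-} E_j^{l+1-}(\xi\gamma)$. It then remains to show, for each $j$ with $A_{l,l+1}^+(i,\alpha,j)=1$, that $\sum_{\xi\in F_\beta(v_i^l)}\sum_{\gamma} E_j^{l+1-}(\xi\gamma) = E_j^{l+1}(\beta)$; equivalently, that the map $(\xi,\gamma)\mapsto \xi\gamma$ is a label-preserving bijection from $\{(\xi,\gamma): \xi\in F_\beta(v_i^l),\ \xi\gamma\in F(v_j^{l+1})\}$ onto $F_\beta(v_j^{l+1})$. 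This is the main obstacle: in contrast to (iii), the length-$l$ prefix of a word in $F_\beta(v_j^{l+1})$ is a path from $v_j^{l+1}$ reaching level $1$ rather than a follower word of $v_i^l$, so its identification with some $\xi\in F(v_i^l)$ is not formal and requires transferring the path across the $\alpha$-edge $v_i^l \to v_j^{l+1}$ by the local property of the $\lambda$-graph bisystem, applied level by level. I would establish this bijection by exactly the iterated square-filling argument already carried out in the proof of Lemma \ref{lem:keysaesa}, which sets up precisely this two-dimensional correspondence; indeed the needed identity is the combinatorial content of \eqref{eq:L6} reorganized according to the first symbol $\beta$. Once the bijection is in place, summing over $j$ delivers (iv).
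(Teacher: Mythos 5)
Your treatment of (i), (ii) and (iii) coincides with the paper's own proof: the disjoint decomposition $F(v_i^l)=\bigsqcup_{\beta\in\Sigma_1^-(v_i^l)}F_\beta(v_i^l)$ gives (i) and, combined with the right-resolving identification $F_\gamma(v_j^{l+1})=\{\gamma\xi : \xi\in F(v_i^l)\}$ whenever $A_{l,l+1}^-(i,\gamma,j)=1$, gives (iii); (ii) is immediate from the commutation of $S_\alpha S_\alpha^*$ with each $E_i^{l-}(\xi)$. These parts are correct.

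For (iv) you have correctly isolated the point on which everything hinges, namely that for $A^+_{l,l+1}(i,\alpha,j)=1$ the map $(\xi,\gamma)\mapsto\xi\gamma$ should carry $\{(\xi,\gamma):\xi\in F_\beta(v_i^l),\ \xi\gamma\in F(v_j^{l+1})\}$ onto $F_\beta(v_j^{l+1})$; the paper's proof asserts the resulting identity $\sum_{\gamma}\sum_{\xi\in F_\beta(v_i^l)}E_j^{l+1-}(\xi\gamma)=E_j^{l+1}(\beta)$ with no justification, so you are ahead of it in recognizing that something must be proved. But your proposed justification does not close the gap. The square-filling of Lemma \ref{lem:keysaesa} transports a follower word of $v_i^l$ \emph{up} across the $\alpha$-edge to a follower word of $v_j^{l+1}$; that yields injectivity and the containment of the image in $F_\beta(v_j^{l+1})$, which is exactly what is already encoded in \eqref{eq:L6}. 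Surjectivity requires the opposite transport: given $\zeta\in F_\beta(v_j^{l+1})$ one must show its length-$l$ prefix lies in $F(v_i^l)$. The local property only exchanges pairs of edges issuing from a common level-$(l+1)$ vertex with pairs of edges terminating at a common level-$(l+1)$ vertex, whereas here one faces the mixed configuration of the $\alpha$-edge entering $v_j^{l+1}$ together with the first edge of the path for $\zeta$ leaving it, and the level-by-level descent stalls after one step. In fact the surjectivity can fail: in Example \ref{ex:3.2} (iii) take $v_i^l=v_2^2$, $\alpha=\beta^+$ (so that $v_j^{l+1}=v_1^3$) and $\beta=\alpha^-$; then $F(v_2^2)=\{\alpha^-\alpha^-,\ \beta^-\alpha^-\}$, while $\alpha^-\beta^-\alpha^-\in F_{\alpha^-}(v_1^3)$ via the path $v_1^3\to v_1^2\to v_2^1\to v_1^0$, and its prefix $\alpha^-\beta^-$ does not belong to $F(v_2^2)$. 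Hence the bijection you need cannot be established as stated, and the sum $\sum_{\gamma}\sum_{\xi\in F_{\alpha^-}(v_2^2)}E_1^{3-}(\xi\gamma)$ misses the nonzero orthogonal summand $E_1^{3-}(\alpha^-\beta^-\alpha^-)$ of $E_1^{3}(\alpha^-)$. This is a genuine gap in your argument for (iv); the same unproved assertion sits, unacknowledged, at the last step of the paper's proof.
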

\begin{proof}
(i)
Since $F(v_i^l)= 
\sqcup_{\beta\in \Sigma^-_1(v_i^l)} F_\beta(v_i^l),$
we have
\begin{equation*}
\sum_{\xi \in F(v_i^l)} E_i^{l-}(\xi) 
=
\sum_{\beta\in \Sigma^-_1(v_i^l)}\sum_{\xi \in F_\beta (v_i^l)} E_i^{l-}(\xi) 
=
\sum_{\beta\in \Sigma_1^-(v_i^l)} E_i^{l}(\beta), 
\end{equation*}
so that 
$1 = \sum_{i=1}^{m(l)}
\sum_{\xi \in F(v_i^l)} E_i^{l-}(\xi)
$ 
implies
$1 = \sum_{i=1}^{m(l)}\sum_{\beta\in \Sigma^-_1(v_i^l)} E_i^{l}(\beta).
$

(ii) The desired equality is direct from \eqref{eq:L3}.

(iii)
We have
\begin{align*}
  \sum_{\beta\in \Sigma^-_1(v_i^l)} E_i^{l}(\beta) 
= &  \sum_{\xi \in F(v_i^l)} E_i^{l-}(\xi) \\
= & \sum_{\xi \in F(v_i^l)} \sum_{\gamma \in \Sigma^-}
\sum_{j=1}^{m(l+1)} A^-_{l,l+1}(i,\gamma,j) E_j^{l+1-}(\gamma\xi) \\
= & \sum_{j=1}^{m(l+1)} \sum_{\gamma \in \Sigma^-_1(v_j^{l+1}) }
 \sum_{\xi \in F(v_i^l)} A^-_{l,l+1}(i,\gamma,j) E_j^{l+1-}(\gamma\xi). 
\end{align*}
Now 
\begin{equation*}
 \sum_{\xi \in F(v_i^l)} A^-_{l,l+1}(i,\gamma,j) E_j^{l+1-}(\gamma\xi)
= A^-_{l,l+1}(i,\gamma,j)
 \sum_{\zeta \in F_\gamma(v_j^{l+1})}  E_j^{l+1-}(\zeta)
= A^-_{l,l+1}(i,\gamma,j) E_j^{l+1}(\gamma),
\end{equation*}
so that we have 
$$ \sum_{\beta \in \Sigma^-_1(v_i^l)} E_i^{l} (\beta) 
 =  \sum_{j=1}^{m(l+1)}\sum_{\gamma \in \Sigma^-_1(v_j^{l+1})}
A_{l,l+1}^-(i,\gamma,j)E_j^{l+1}(\gamma).
$$
(iv)
By \eqref{eq:L6}, we have
\begin{align*}
S_{\alpha}^*E_i^{l}(\beta) S_{\alpha}   
=  &
\sum_{\xi\in F_\beta(v_i^l)} S_{\alpha}^*E_i^{l-}(\xi) S_{\alpha} \\
=  &
\sum_{\xi\in F_\beta(v_i^l)} 
\left(
\sum_{\gamma\in \Sigma^-} \sum_{j=1}^{m(l+1)}
A_{l,l+1}^+(i,\alpha,j)E_j^{l+1-}(\xi\gamma)
\right) \\
=  & \sum_{j=1}^{m(l+1)} A_{l,l+1}^+(i,\alpha,j)
\sum_{\gamma\in \Sigma^-}
\sum_{\xi\in F_\beta(v_i^l)} 
E_j^{l+1-}(\xi\gamma) \\
= &\sum_{j=1}^{m(l+1)} A_{l,l+1}^+(i,\alpha,j)E_j^{l+1}(\beta).
\end{align*}
\end{proof}
The following lemma is obvious.
\begin{lemma} \label{lem:E12.10}
For $l \in \N$ 
and $\xi = (\xi_1,\dots,\xi_l ) \in F(v_i^l),$
let $e^-_n \in E^-_{n,n-1}, n=1,2,\dots, l$
be a finite sequence of edges satisfying 
\begin{equation*}
\xi_1 = \lambda^-(e^-_l), \, \xi_2 = \lambda^-(e^-_{l-1}), \dots,
\xi_l = \lambda^-(e^-_1), \quad
s(e_l^-) = v_i^l,\, s(e^-_n) = t(e^-_{n+1})
\end{equation*}  
for $ n=1,2,\dots, l-1$
that are figured as 
\begin{equation*}
v_i^l  \overset{\xi_1}{\underset{e^-_l}{\longrightarrow}}
v_{i_{l-1}}^{l-1} \overset{\xi_2}{\underset{e^-_{l-1}}{\longrightarrow}} \cdots
\longrightarrow v_{i_1}^1
\overset{\xi_l}{\underset{e^-_1}{\longrightarrow}}.
\end{equation*}
Put
$v_{i_n}^n =  s(e^-_n) \in V_n$
for
$ n=1,2,\dots, l-1.
$
Then we have 
\begin{equation*}
E_i^{l-}(\xi) =E_i^{l}(\xi_1)E_{i_{l-1}}^{l-1}(\xi_2) \cdots E_{i_1}^{1}(\xi_l).
\end{equation*}
\end{lemma}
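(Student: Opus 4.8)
The plan is to transport the whole identity into the commutative $C^*$-algebra $\ALM$, which by Lemma \ref{lem:abelian}(ii) is isomorphic via $\varphi_l$ to $C(\Omega_{{\frak L}^-})$, sending each generator $E_i^{l-}(\eta)$ to the characteristic function $\chi_{U_{\Omega_{{\frak L}^-}}(v_i^l;\eta)}$ of the corresponding cylinder set. Since all the projections $E_i^{l-}(\eta)$, and hence all the $E_i^l(\beta)$, lie in $\ALM$, they mutually commute and their products correspond exactly to intersections of clopen subsets of $\Omega_{{\frak L}^-}$; thus it suffices to verify the identity at the level of these sets. First I would record, straight from the definition $E_i^l(\beta)=\sum_{\eta\in F_\beta(v_i^l)}E_i^{l-}(\eta)$, that $E_i^l(\xi_1)$ is the characteristic function of the clopen set $\{\omega=(u_m,\beta_{-m})_m\in\Omega_{{\frak L}^-}\mid u_l=v_i^l,\ \beta_{-l}=\xi_1\}$, the disjoint union over $F_{\xi_1}(v_i^l)$ collapsing precisely to the condition that $\omega$ passes through $v_i^l$ at level $l$ and leaves it along the label $\xi_1$.

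The decisive step is the one-step peeling identity $E_i^{l-}(\xi)=E_i^{l}(\xi_1)\,E_{i_{l-1}}^{(l-1)-}(\xi_2,\dots,\xi_l)$, where $\xi_2,\dots,\xi_l\in F(v_{i_{l-1}}^{l-1})$ because a suffix of a follower word is again a follower word. In the set picture, $E_i^{l}(\xi_1)$ cuts out $\{u_l=v_i^l,\ \beta_{-l}=\xi_1\}$ and $E_{i_{l-1}}^{(l-1)-}(\xi_2,\dots,\xi_l)$ cuts out $\{u_{l-1}=v_{i_{l-1}}^{l-1},\ \beta_{-l+1}=\xi_2,\dots,\beta_{-1}=\xi_l\}$, and their intersection is $U_{\Omega_{{\frak L}^-}}(v_i^l;\xi)$. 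The point where right-resolvingness of ${\frak L}^-$ enters is that the edge with source $v_i^l$ and label $\xi_1$ is unique, so its terminal is forced to be $v_{i_{l-1}}^{l-1}$; hence the intermediate vertex constraint $u_{l-1}=v_{i_{l-1}}^{l-1}$ is automatically implied by the two conditions already imposed by $E_i^l(\xi_1)$, and the intersection is exactly the cylinder set defining $E_i^{l-}(\xi)$.

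From here I would finish by induction on $l$. The base case $l=1$ holds because $F_{\xi_1}(v_i^1)=\{(\xi_1)\}$ by right-resolvingness, so that $E_i^1(\xi_1)=E_i^{1-}((\xi_1))$. For the inductive step, apply the peeling identity and then the induction hypothesis to $E_{i_{l-1}}^{(l-1)-}(\xi_2,\dots,\xi_l)$, whose associated edge path is the tail $v_{i_{l-1}}^{l-1}\to\cdots\to v_{i_1}^1$ of the given one, yielding $E_{i_{l-1}}^{(l-1)-}(\xi_2,\dots,\xi_l)=E_{i_{l-1}}^{l-1}(\xi_2)\cdots E_{i_1}^1(\xi_l)$ and hence the claimed factorization. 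I expect no genuine obstacle here; the only care needed is the bookkeeping, namely checking that the suffix words indeed lie in the follower sets of the correctly indexed intermediate vertices and that these vertices are the unique ones determined by the right-resolving property, all of which is already built into the hypotheses of the lemma.
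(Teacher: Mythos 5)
Your proof is correct. The paper itself offers no argument for this lemma (it is simply declared ``obvious''), and your verification --- identifying each $E_i^{l-}(\xi)$ and $E_i^{l}(\beta)$ with the characteristic function of the corresponding clopen subset of $\Omega_{{\frak L}^-}$ via Lemma \ref{lem:abelian}(ii), using right-resolvingness of ${\frak L}^-$ to see that the condition $u_{l-1}=v_{i_{l-1}}^{l-1}$ is forced by $u_l=v_i^l$ and $\beta_{-l}=\xi_1$, and then peeling one edge at a time by induction --- is exactly the reasoning the author is implicitly relying on.
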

Now we reach the following theorem that is one of  the main results of the paper
\begin{theorem}\label{thm:themain6}
Suppose that a $\lambda$-graph bisystem $\LGBS$ 
satisfies $\sigma_{{\frak L}^-}$-condition (I).
Then the $C^*$-algebra $\OALMP$
is  the universal unital unique  $C^*$-algebra
generated by
partial isometries
$S_{\alpha}$ indexed by symbols $\alpha \in \Sigma^+$
and mutually commuting projections
$E_i^{l}(\beta)$ indexed by $\beta \in \Sigma^-_1(v_i^l)$ 
with vertices $v_i^l \in  V_l, l \in \N$  
 subject to the  following operator relations:
\begin{align}
\sum_{\alpha \in \Sigma^+} S_{\alpha}S_{\alpha}^*  
 & = \sum_{i=1}^{m(l)} \sum_{\beta \in \Sigma^-_1(v_i^l)}  E_i^{l}(\beta) = 1, \label{eq:RMP1}\\ 
 S_\alpha S_\alpha^* & E_i^{l}(\beta)   =   E_i^{l}(\beta) S_\alpha S_\alpha^*, 
\label{eq:RMP2}\\
 \sum_{\beta \in \Sigma^-_1(v_i^l)} E_i^{l} (\beta)   
& =  \sum_{j=1}^{m(l+1)}\sum_{\gamma \in \Sigma^-_1(v_j^{l+1})}
A_{l,l+1}^-(i,\gamma,j)E_j^{l+1}(\gamma),  \label{eq:RMP3}\\
S_{\alpha}^*E_i^{l}(\beta) S_{\alpha} &  =  
\sum_{j=1}^{m(l+1)} A_{l,l+1}^+(i,\alpha,j)E_j^{l+1}(\beta) \label{eq:RMP4}.
\end{align}
The above four operator relations are also called the relations $\LGBS.$
\end{theorem}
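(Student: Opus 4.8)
The plan is to deduce the theorem from the presentation of $\OALMP$ already obtained in Theorem \ref{thm:main6}, treating the relations \eqref{eq:RMP1}--\eqref{eq:RMP4} as a repackaging of the relations of Theorem \ref{thm:main6} under the substitution $E_i^l(\beta)=\sum_{\xi\in F_\beta(v_i^l)}E_i^{l-}(\xi)$ and its inverse supplied by Lemma \ref{lem:E12.10}. That the canonical generators $S_\alpha, E_i^l(\beta)$ of $\OALMP$ satisfy \eqref{eq:RMP1}--\eqref{eq:RMP4} is exactly Lemma \ref{lem:relations}, and Lemma \ref{lem:E12.10} exhibits each $E_i^{l-}(\xi)$ as a product of the $E_i^l(\beta)$; hence $S_\alpha, E_i^l(\beta)$ generate $\OALMP$. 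It remains to prove uniqueness, i.e. that any family $\widehat{S}_\alpha, \widehat{E}_i^l(\beta)$ of nonzero partial isometries and nonzero mutually commuting projections satisfying \eqref{eq:RMP1}--\eqref{eq:RMP4} yields an isomorphism $S_\alpha\mapsto\widehat{S}_\alpha,\ E_i^l(\beta)\mapsto\widehat{E}_i^l(\beta)$.

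First I would record an order-theoretic fact: since the $\widehat{E}_i^l(\beta)$ are mutually commuting projections with $\sum_{i}\sum_{\beta\in\Sigma_1^-(v_i^l)}\widehat{E}_i^l(\beta)=1$ by \eqref{eq:RMP1}, simultaneous diagonalization forces them to be pairwise orthogonal for each fixed $l$. Guided by Lemma \ref{lem:E12.10}, I would then set $\widehat{E}_i^{l-}(\xi):=\widehat{E}_i^l(\xi_1)\widehat{E}_{i_{l-1}}^{l-1}(\xi_2)\cdots\widehat{E}_{i_1}^1(\xi_l)$, where the intermediate vertices are determined uniquely by right-resolvingness of ${\frak L}^-$; orthogonality and commutativity make each such product a projection. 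The key bookkeeping identity $\widehat{E}_i^l(\beta)=\sum_{\xi\in F_\beta(v_i^l)}\widehat{E}_i^{l-}(\xi)$ would be proved by induction on $l$: factoring out the leading projection $\widehat{E}_i^l(\beta)$ and letting the tail range over $F(v_{i_{l-1}}^{l-1})$, the inductive hypothesis collapses the tail sum to $\sum_\gamma \widehat{E}_{i_{l-1}}^{l-1}(\gamma)$, while relation \eqref{eq:RMP3} together with orthogonality gives $\widehat{E}_i^l(\beta)\le\sum_\gamma\widehat{E}_{i_{l-1}}^{l-1}(\gamma)$, so the product reduces to $\widehat{E}_i^l(\beta)$.

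With the fine projections in hand, I would verify that $\widehat{S}_\alpha$ and $\widehat{E}_i^{l-}(\xi)$ satisfy the relations \eqref{eq:L2}--\eqref{eq:L6} of Theorem \ref{thm:main6}. Relations \eqref{eq:L2} and \eqref{eq:L3} follow from \eqref{eq:RMP1}, \eqref{eq:RMP2} and the consistency identity; \eqref{eq:L4} follows by substituting $\widehat{E}_j^{l+1-}(\beta\xi)=\widehat{E}_j^{l+1}(\beta)\widehat{E}_i^{l-}(\xi)$ (valid when $A_{l,l+1}^-(i,\beta,j)=1$) and summing via \eqref{eq:RMP3}, orthogonality making only the $\beta=\xi_1$ term survive. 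The delicate step, and the main obstacle, is \eqref{eq:L6}: here I would use that $X\mapsto\widehat{S}_\alpha^* X\widehat{S}_\alpha$ is multiplicative on the commutative algebra generated by the $\widehat{E}$'s (a consequence of \eqref{eq:RMP2} and $\widehat{S}_\alpha\widehat{S}_\alpha^*\widehat{S}_\alpha=\widehat{S}_\alpha$), apply \eqref{eq:RMP4} factor by factor, and reassemble the resulting product into $\sum_{\beta}\sum_j A_{l,l+1}^+(i,\alpha,j)\widehat{E}_j^{l+1-}(\xi\beta)$. This reassembly is precisely where the local property of the $\lambda$-graph bisystem, i.e. the compatibility between $A^-$ and $A^+$, enters, and it is the computational heart of the argument.

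Finally, since $\LGBS$ satisfies $\sigma_{{\frak L}^-}$-condition (I), Theorem \ref{thm:main6} applies to both families $\{S_\alpha, E_i^{l-}(\xi)\}$ and $\{\widehat{S}_\alpha, \widehat{E}_i^{l-}(\xi)\}$ and produces an isomorphism sending $S_\alpha\mapsto\widehat{S}_\alpha$ and $E_i^{l-}(\xi)\mapsto\widehat{E}_i^{l-}(\xi)$. The consistency identity, valid on both sides, shows this isomorphism carries $E_i^l(\beta)$ to $\widehat{E}_i^l(\beta)$; moreover each generating set is expressible through the other, so the $C^*$-algebras generated by the coarse generators coincide with those generated by the fine ones. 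This establishes the asserted universality and uniqueness, completing the proof.
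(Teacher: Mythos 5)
Your proposal is correct and follows essentially the same route as the paper: the paper's proof likewise takes an arbitrary family satisfying \eqref{eq:RMP1}--\eqref{eq:RMP4}, forms the fine projections $\widetilde{E}_i^{l-}(\xi)=E_i^l(\xi_1)E_{i_{l-1}}^{l-1}(\xi_2)\cdots E_{i_1}^1(\xi_l)$ as in Lemma \ref{lem:E12.10}, verifies \eqref{eq:L4} via \eqref{eq:RMP3} and \eqref{eq:L6} by applying \eqref{eq:RMP4} factor by factor through the multiplicativity of $X\mapsto S_\alpha^*XS_\alpha$, and then invokes the universality and uniqueness of Theorem \ref{thm:main6}. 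The only difference is cosmetic: you make explicit (with an induction) the identity $\widehat{E}_i^l(\beta)=\sum_{\xi\in F_\beta(v_i^l)}\widehat{E}_i^{l-}(\xi)$ and the pairwise orthogonality at each level, which the paper uses implicitly in the final collapse $E_j^{l+1}(\xi_1)E_{j_l}^l(\xi_2)\cdots E_{j_2}^2(\xi_l)=\sum_{\beta}\widetilde{E}_j^{l+1-}(\xi\beta)$.
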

\begin{proof}
Let $S_\alpha, \alpha \in \Sigma^+$ be partial isometries
and $E_i^l(\beta),  \beta \in \Sigma^-_1(v_i^l)$
be mutually commuting projections satisfying
 the relations 
\eqref{eq:RMP1}, \eqref{eq:RMP2}, \eqref{eq:RMP3} and \eqref{eq:RMP4}.
For $\xi =(\xi_1,\dots,\xi_l) \in F(v_i^l)$, 
let $e_n^- \in E_{n,n-1}^-, n=1,2,\dots,l$
and
$v_{i_n}^n \in V_n, n=1,2,\dots, l-1$ be as in Lemma \ref{lem:E12.10},
so that $v_{i_n}^n  =s(e_n^-)$ for $n=1,2,\dots,l-1.$
Define
\begin{equation}
\widetilde{E}_i^{l-}(\xi) :=E_i^{l}(\xi_1)E_{i_{l-1}}^{l-1}(\xi_2) \cdots E_{i_1}^{1}(\xi_l). \label{eq:projEil}
\end{equation}
Since
$E_i^{l}(\xi_1), \, E_{i_{l-1}}^{l-1}(\xi_2),  \cdots,  E_{i_1}^{1}(\xi_l)
$
 mutually commute, one sees that 
$\widetilde{E}_i^{l-}(\xi)$ is a projection.
We will henceforth show that the projections
$\widetilde{E}_i^{l-}(\xi)$ satisfy 
the equalities \eqref{eq:L4} and \eqref{eq:L6}.
For $\beta \xi \in F(v_j^{l+1})$ with
$A^-_{l,l+1}(i,\beta,j)=1$, we know 
\begin{equation*}
\widetilde{E}_{j}^{l+1-}(\beta \xi) = E_{j}^{l+1}(\beta) \widetilde{E}_{i}^{l-}(\xi). 
\end{equation*}
It then follows that 
\begin{align*}
\sum_{\beta\in \Sigma^-}  \sum_{j=1}^{m(l+1)}
   A_{l,l+1}^-(i,\beta,j) \widetilde{E}_j^{l+1-}(\beta\xi) 
= &\sum_{j=1}^{m(l+1)} \sum_{\gamma\in \Sigma_1^-(v_j^{l+1})}  
A_{l,l+1}^-(i,\gamma,j) E_{j}^{l+1}(\gamma) \widetilde{E}_{i}^{l-}(\xi) \\
= & \sum_{\beta\in \Sigma_1^-(v_i^l)}  
    E_{i}^{l}(\beta)\cdot  \widetilde{E}_{i}^{l-}(\xi)  
= \widetilde{E}_i^{l-}(\xi),
\end{align*}
because of the equality \eqref{eq:RMP3}.
Hence we obtain the equality \eqref{eq:L4}.
By using the preceding lemma, we have
\begin{align*}
& S_\alpha^* \widetilde{E}_i^{l-}(\xi) S_\alpha \\
=
& S_\alpha^* E_i^{l}(\xi_1)E_{i_{l-1}}^{l-1}(\xi_2) \cdots E_{i_1}^{1}(\xi_l) S_\alpha \\
=
& S_\alpha^* E_i^{l}(\xi_1) S_\alpha\cdot 
  S_\alpha^* E_{i_{l-1}}^{l-1}(\xi_2) S_\alpha \cdots  
  S_\alpha^* E_{i_1}^{1}(\xi_l) S_\alpha \\
=
& \left(
\sum_{j=1}^{m(l+1)} A_{l,l+1}^+(i,\alpha,j)E_j^{l+1}(\xi_1)
\right)
\cdot  
\left(
\sum_{j_l=1}^{m(l)} A_{l-1,l}^+(i_{l-1},\alpha,j_l)E_{j_l}^{l}(\xi_2)
\right)
\cdots \\
& \qquad\qquad\qquad \qquad\qquad\qquad \quad \cdots \left(
\sum_{j_2=1}^{m(2)} A_{1,2}^+(i_1,\alpha,j_2)E_{j_2}^{2}(\xi_l)
\right).
\end{align*}
Let $e^-_l \in E^-_{l+1,l}$ be the unique edge such that 
$\xi_1 = \lambda^-(e^-_l ), s(e^-_l ) = v_j^{l+1}.$
By \eqref{eq:RMP1} and \eqref{eq:RMP2}, we know that 
$E_j^{l+1}(\xi_1)\cdot E_{j_l}^l(\xi_2) \ne 0$ if and only if 
$v_{j_l}^l = t(e^-_l )$, and in this case
$A^+_{l-1,l}(i_{l-1},\alpha,j_l) =1.$
Hence we have 
\begin{align*}
& \left(
\sum_{j=1}^{m(l+1)} A_{l,l+1}^+(i,\alpha,j)E_j^{l+1}(\xi_1)
\right)
\cdot  
\left(
\sum_{j_l=1}^{m(l)} A_{l-1,l}^+(i_{l-1},\alpha,j_l)E_{j_l}^{l}(\xi_2)
\right) \\
= & \sum_{j=1}^{m(l+1)} A_{l,l+1}^+(i,\alpha,j)E_j^{l+1}(\xi_1)\cdot E_{j_l}^l(\xi_2).
 \end{align*}
We inductively know that 
\begin{align*}
& \left(
\sum_{j=1}^{m(l+1)} A_{l,l+1}^+(i,\alpha,j)E_j^{l+1}(\xi_1)
\right)
\cdot  \left(
\sum_{j_l=1}^{m(l)} A_{l-1,l}^+(i_{l-1},\alpha,j_l)E_{j_l}^{l}(\xi_2)
\right)
\cdots \\
& \qquad\qquad\qquad \qquad\qquad\qquad \quad\cdots \left(
\sum_{j_2=1}^{m(2)} A_{1,2}^+(i_1,\alpha,j_2)E_{j_2}^{2}(\xi_l)
\right) \\
&
=
\sum_{j=1}^{m(l+1)} A_{l,l+1}^+(i,\alpha,j)E_j^{l+1}(\xi_1)\cdot E_{j_l}^l(\xi_2)
 \cdots  E_{j_2}^2(\xi_l).
\end{align*}
As 
$
E_j^{l+1}(\xi_1)\cdot E_{j_l}^l(\xi_2)
 \cdots  E_{j_2}^2(\xi_l)= \sum_{\beta\in \Sigma^-}\widetilde{E}_j^{l+1-}(\xi \beta),
$
by \eqref{eq:RMP1},
we conclude that
$$
S_\alpha^* \widetilde{E}_i^{l-}(\xi) S_\alpha 
=
\sum_{j=1}^{m(l+1)} A_{l,l+1}^+(i,\alpha,j)\widetilde{E}_j^{l+1-}(\xi \beta).
$$
Since it is direct to see that the equality
$$
S_\alpha S_\alpha^*  \widetilde{E}_i^{l}(\xi) = \widetilde{E}_i^{l}(\xi) S_\alpha S_\alpha^*
$$
holds,
the family 
$S_\alpha, \alpha\in \Sigma^+,
\widetilde{E}^{l-}_i(\xi), \xi \in F(v_i^l)
$
of operators satisfy the relations $\LGBS$ of Theorem \ref{thm:main6}.
By the universal property and the uniqueness of the relations $\LGBS$ in Theorem \ref{thm:main6},
the family 
$S_\alpha, \alpha\in \Sigma^+,
E^{l-}_i(\beta), \beta \in \Sigma_1^-(v_i^l)
$
of operators satisfying
 the relations 
\eqref{eq:RMP1}, \eqref{eq:RMP2}, \eqref{eq:RMP3} and \eqref{eq:RMP4}
completely determine the algebraic structure of the $C^*$-algebra
$\OALMP$.
\end{proof}
\begin{remark}
If a $\lambda$-graph bisystem
$\LGBS$ comes from a $\lambda$-graph system ${\frak L}$
as in Example \ref{ex:3.2}(i), then the set 
$\Sigma_1^-(v_i^l)$ is a singleton $\{\iota \}$
for every vertex $v_i^l \in V.$
Hence the operator relations $\LGBS$ of Theorem \ref{thm:themain6}
coincides with the operator relations of Theorem \ref{thm:lambdagraphC*}.
\end{remark}

Similarly we have 
\begin{theorem}\label{thm:themain7'}
Suppose that a $\lambda$-graph bisystem $\LGBS$ satisfies 
$\sigma_{{\frak L}^+}$-condition (I).
The $C^*$-algebra $\OALPM$
is realized as the universal unital unique  $C^*$-algebra
generated by partial isometries
$T_{\beta}$ indexed by symbols $\beta \in \Sigma^-$ 
and mutually commuting projections
$F_i^{l}(\alpha)$ indexed by vertices $v_i^l \in  V_l$ and 
symbols $\alpha \in \Sigma_1^+(v_i^l)$ 
 subject to the  following operator relations:
\begin{align}
\sum_{\beta \in \Sigma^-} T_{\beta}T_{\beta}^*   
& =  \sum_{i=1}^{m(l)} \sum_{\alpha \in \Sigma^+_1(v_i^l)}
        F_i^{l}(\alpha) =  1, \label{eq:RPM1}\\ 
 T_\beta T_\beta^* & F_i^{l}(\alpha)   =   F_i^{l}(\alpha) T_\beta T_\beta^*, 
\label{eq:RPM2}\\
\sum_{\alpha \in \Sigma^+_1(v_i^l)} F_i^{l} (\alpha)   
& =   \sum_{j=1}^{m(l+1)} \sum_{\delta\in \Sigma^+_1(v_j^{l+1})}
A_{l,l+1}^+(i,\delta,j)F_j^{l+1}(\delta),  \label{eq:RPM3}\\
            T_{\beta}^*F_i^{l}(\alpha) T_{\beta} 
&  =  \sum_{j=1}^{m(l+1)} A_{l,l+1}^-(i,\beta,j)F_j^{l+1}(\alpha), \label{eq:RPM4}
\end{align}
where $\Sigma^+_1(v_i^l) = \{
\lambda^+(e^+) \in \Sigma^+ \mid e^+ \in E_{l-1,l}^+, t(e^+) = v_i^l
\}.
$
\end{theorem}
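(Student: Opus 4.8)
The plan is to run the argument of Theorem~\ref{thm:themain6} in the form that is symmetric under exchanging the roles of ${\frak L}^-$ and ${\frak L}^+$: one replaces the follower sets $F(v_i^l)$ by the predecessor sets $P(v_i^l)$, the symbols $\beta \in \Sigma^-$ by $\alpha \in \Sigma^+$, the partial isometries $S_\alpha$ by $T_\beta$, the projections $E_i^{l-}(\xi)$ by the $E_i^{l+}(\mu)$ of Proposition~\ref{prop:main7}, and interchanges the matrices $A^-_{l,l+1}$ and $A^+_{l,l+1}$. The starting point is Theorem~\ref{thm:main7'}, which already asserts that under $\sigma_{{\frak L}^+}$-condition (I) the algebra $\OALPM$ is the universal unital unique $C^*$-algebra on the generators $T_\beta$ and $E_i^{l+}(\mu)$ subject to the relations \eqref{eq:P2}--\eqref{eq:P6}. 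It therefore suffices to show that the coarser relations \eqref{eq:RPM1}--\eqref{eq:RPM4} on the generators $F_i^l(\alpha)$ are equivalent to \eqref{eq:P2}--\eqref{eq:P6}.

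First I would collapse the fine projections to the coarse ones. For $\alpha \in \Sigma_1^+(v_i^l)$ set
$$
P_\alpha(v_i^l) = \{ (\mu_1,\dots,\mu_l) \in P(v_i^l) \mid \mu_l = \alpha \},
\qquad
F_i^l(\alpha) := \sum_{\mu \in P_\alpha(v_i^l)} E_i^{l+}(\mu),
$$
with $F_i^l(\alpha) = 0$ when $P_\alpha(v_i^l) = \emptyset$, so that $\Sigma_1^+(v_i^l) = \{\alpha \mid P_\alpha(v_i^l) \ne \emptyset\}$. Using the partition $P(v_i^l) = \sqcup_{\alpha} P_\alpha(v_i^l)$ together with \eqref{eq:P2}--\eqref{eq:P6}, a computation identical to that of Lemma~\ref{lem:relations}, but with $A^-$ and $A^+$ interchanged and the appended symbol $\delta$ placed at the back of $\mu$ in \eqref{eq:P4} and the prepended symbol placed at the front in \eqref{eq:P6}, shows that the $F_i^l(\alpha)$ are mutually commuting projections satisfying exactly \eqref{eq:RPM1}--\eqref{eq:RPM4}. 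This proves one implication and shows that the canonical generators of $\OALPM$ obey the asserted relations.

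For the converse I would first record the predecessor analogue of Lemma~\ref{lem:E12.10}: for $\mu = (\mu_1,\dots,\mu_l) \in P(v_i^l)$ there is a unique path $e_n^+ \in E_{n-1,n}^+$, $n=1,\dots,l$, with $\mu_n = \lambda^+(e_n^+)$, $t(e_l^+) = v_i^l$ and $t(e_n^+) = s(e_{n+1}^+)$; setting $v_{i_n}^n = t(e_n^+)$ one has $E_i^{l+}(\mu) = F_i^l(\mu_l)\, F_{i_{l-1}}^{l-1}(\mu_{l-1}) \cdots F_{i_1}^1(\mu_1)$. Then, given any family of partial isometries $T_\beta$ and mutually commuting projections $F_i^l(\alpha)$ satisfying \eqref{eq:RPM1}--\eqref{eq:RPM4}, I would \emph{define} $\widetilde{E}_i^{l+}(\mu)$ by the right-hand side of this factorization and verify that the pair $(T_\beta, \widetilde{E}_i^{l+}(\mu))$ satisfies \eqref{eq:P2}--\eqref{eq:P6}. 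The relations \eqref{eq:P2}, \eqref{eq:P3} and \eqref{eq:P4} follow as in the proof of Theorem~\ref{thm:themain6} from \eqref{eq:RPM1}--\eqref{eq:RPM3} and the commutativity of the $F_i^l(\alpha)$, using that $Y$ commuting with $T_\beta T_\beta^*$ gives $T_\beta^* XY T_\beta = (T_\beta^* X T_\beta)(T_\beta^* Y T_\beta)$. Invoking the universal property and the uniqueness asserted in Theorem~\ref{thm:main7'} then yields the isomorphism $T_\beta \mapsto T_\beta$, $\widetilde{E}_i^{l+}(\mu) \mapsto E_i^{l+}(\mu)$, and hence the theorem.

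The main obstacle, exactly as in Theorem~\ref{thm:themain6}, is the verification of \eqref{eq:P6}, namely that
$$
T_\beta^*\, \widetilde{E}_i^{l+}(\mu)\, T_\beta = \sum_{\alpha \in \Sigma^+}\sum_{j=1}^{m(l+1)} A_{l,l+1}^-(i,\beta,j)\, \widetilde{E}_j^{l+1+}(\alpha\mu).
$$
Expanding $\widetilde{E}_i^{l+}(\mu)$ as a product and pushing $T_\beta^*(\cdot)T_\beta$ through each factor via \eqref{eq:RPM4} produces a product of sums indexed by the intermediate vertices; the delicate point is that, by \eqref{eq:RPM1} and \eqref{eq:RPM2}, most cross terms vanish and the surviving ones are forced by the local property of the $\lambda$-graph bisystem (through the compatibility of $A^-$ and $A^+$) to correspond precisely to those predecessor paths into $v_j^{l+1}$ whose prepended symbol lies in $\Sigma^+$. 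This telescoping induction over the $l$ factors carries all the combinatorial content and mirrors the corresponding step in the proof of Theorem~\ref{thm:themain6} under the $-\leftrightarrow+$ duality.
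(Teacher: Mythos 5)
Your proposal is correct and follows essentially the same route as the paper: the paper proves Theorem \ref{thm:themain7'} only by the remark ``Similarly we have,'' i.e.\ by transposing the proof of Theorem \ref{thm:themain6} under the $-\leftrightarrow+$ symmetry, which is precisely what you carry out (coarse projections $F_i^l(\alpha)$ summed over $P_\alpha(v_i^l)$, the predecessor analogue of Lemma \ref{lem:E12.10}, verification of the fine relations for the products $\widetilde{E}_i^{l+}(\mu)$, and the appeal to the universal property in Theorem \ref{thm:main7'}). Your identification of $\Sigma_1^+(v_i^l)$ with the \emph{last} symbol $\mu_l$ of a predecessor word, dual to the first symbol $\xi_1$ of a follower word, is the correct transposition.
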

The  above operator relations are  called $({\frak L}^+,{\frak L}^-).$

For a $\lambda$-graph bisystem $\LGBS$, 
denote by 
${{\frak L}^{-t}}$ (resp. ${{\frak L}^{+t}}$) the labeled Bratteli diagram obtained by reversing the directions of all edges in ${{\frak L}^-}$ (resp. ${{\frak L}^+}$). 
Then the pair
$({{\frak L}^{+t}}, {{\frak L}^{-t}})$ becomes a $\lambda$-graph bisystem.
Since the $C^*$-algebras 
$\OALMP$ and $\OALPM$
are both universal $C^*$-algebras subject to the operator relations 
$\LGBS$ and $({\frak L}^+,{\frak L}^-),$
respectively,
we have canonical isomorphisms of $C^*$-algebras:
\begin{equation*}
\OALMP \cong {{\mathcal{O}}_{{{\frak L}^{-t}}}^-}, \qquad
\OALPM \cong {{\mathcal{O}}_{{{\frak L}^{+t}}}^+}. 
\end{equation*}

\section{K-groups for $\OALMP$}

In this section we will describe $K$-theory formulas
for  our $C^*$-algebras 
$\OALMP$ as well as  $\OALPM$.
We will then prove that the K-groups are invariant under
strong shift equivalence of the associated symbolic matrix bisystems.

By Theorem \ref{thm:themain6} and  Theorem \ref{thm:themain7'} 
(or Theorem \ref{thm:main6} and Theorem \ref{thm:main7'}),
we know that the $C^*$-algebras $\OALMP$ and  $\OALPM$
are nothing but the $C^*$-symbolic crossed products
$\A_{{\frak L}^-}\rtimes_{\rho^-}\Lambda_{{\frak L}^+}$
and 
$\A_{{\frak L}^+}\rtimes_{\rho^+}\Lambda_{{\frak L}^-}$
defined in \cite{MaCrelle},
respectively.
K-theory formulas for the $C^*$-algebra
$\A\rtimes_\rho\Lambda$  
 constructed from 
a $C^*$-dynamical system $(\A,\rho,\Sigma)$
in general have been presented in 
\cite{MaCrelle}.
We may apply the formulas to our $C^*$-algebras
$\OALMP$ and $\OALPM$.
We will focus on the former algebra  $\OALMP$,
the latter one is symmetric.

The endomorphisms
$\rho_\alpha^+:  
\A_{{\frak L}^-}\longrightarrow \A_{{\frak L}^-}$ for $\alpha \in \Sigma^+$
defined in \eqref{eq:defofrhoalphaA}
yield endomorphisms 
$ {\rho_{\alpha}^+}_{*}:K_*(\A_{{\frak L}^-})\rightarrow K_*(\A_{{\frak L}^-})$
for $\alpha \in \Sigma^+$
on the K-theory groups of $\A_{{\frak L}^-}$.
Define an endomorphism
$$
\rho_{*}^+:K_*(\A_{{\frak L}^-})\longrightarrow 
K_*(\A_{{\frak L}^-}), \qquad * = 0,1
$$
by setting
$\rho_{*}^+(g) = \sum_{\alpha \in \Sigma^+} {\rho_{\alpha}^+}_{*}(g),
g \in K_*(\A_{{\frak L}^-}).
$
\begin{lemma}\label{lem:Ktheory1}
\begin{align*}
K_0(\OALMP)  \, 
& \cong \, K_0(\A_{{\frak L}^-}) / (\id - {\rho}_{*}^+) K_0(\A_{{\frak L}^-}), \\
K_1(\OALMP) \,  
& \cong \, \Ker(\id - {\rho}_{*}^+) \text{ in } K_0(\A_{{\frak L}^-}). \\
\end{align*}
\end{lemma}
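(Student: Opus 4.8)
The plan is to identify $\OALMP$ with a $C^*$-symbolic crossed product and then to feed it into the general K-theory machinery of \cite{MaCrelle}. By Theorem~\ref{thm:themain6} together with the identification established at the beginning of this section, the $C^*$-algebra $\OALMP$ is isomorphic to the $C^*$-symbolic crossed product $\ALM \rtimes_{\rho^+} \Lambda_{{\frak L}^+}$ of the $C^*$-symbolic dynamical system $(\ALM, \rho^+, \Sigma^+)$, whose endomorphisms $\rho_\alpha^+$ are the ones defined in \eqref{eq:defofrhoalphaA}. This isomorphism comes from the universal property: both algebras are generated by a family of partial isometries and projections subject to the covariance relations \eqref{eq:csds}, so Theorem~\ref{thm:themain6} provides the identification.

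First I would invoke the Pimsner-type six-term exact sequence attached, in \cite{MaCrelle}, to the Hilbert $C^*$-bimodule defining the crossed product. Setting $\rho_*^+ = \sum_{\alpha \in \Sigma^+} {\rho_\alpha^+}_*$ for the induced endomorphism on $K_*(\ALM)$, this sequence reads
\begin{equation*}
\begin{CD}
K_0(\ALM) @>{\id - \rho_*^+}>> K_0(\ALM) @>>> K_0(\OALMP) \\
@AAA @. @VVV \\
K_1(\OALMP) @<<< K_1(\ALM) @<{\id - \rho_*^+}<< K_1(\ALM).
\end{CD}
\end{equation*}

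Next I would use the structure of $\ALM$ to collapse this sequence. By Lemma~\ref{lem:abelian}(ii), $\ALM \cong C(\Omega_{{\frak L}^-})$, and its proof realizes the algebra as an inductive limit $\varinjlim_l \mathcal{A}_{{\frak L}^-,l}$ of finite-dimensional commutative $C^*$-algebras; hence $\ALM$ is a commutative AF algebra, so that $K_1(\ALM) = 0$ and $K_0(\ALM) \cong C(\Omega_{{\frak L}^-}, \Z)$. Substituting $K_1(\ALM) = 0$ into the diagram splits it into the two exact pieces
\begin{gather*}
K_0(\ALM) \xrightarrow{\id - \rho_*^+} K_0(\ALM) \longrightarrow K_0(\OALMP) \longrightarrow 0, \\
0 \longrightarrow K_1(\OALMP) \longrightarrow K_0(\ALM) \xrightarrow{\id - \rho_*^+} K_0(\ALM),
\end{gather*}
and exactness immediately yields $K_0(\OALMP) \cong K_0(\ALM)/(\id - \rho_*^+)K_0(\ALM)$ together with $K_1(\OALMP) \cong \Ker(\id - \rho_*^+)$ inside $K_0(\ALM)$.

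The main obstacle, and the only step demanding genuine care, is the first: verifying that the general framework of \cite{MaCrelle} applies verbatim and that the horizontal maps in the six-term sequence are exactly $\id - \rho_*^+$ with $\rho_*^+ = \sum_{\alpha} {\rho_\alpha^+}_*$. This requires matching the Hilbert bimodule of the crossed product $\ALM \rtimes_{\rho^+} \Lambda_{{\frak L}^+}$ with the bimodule generated by the $S_\alpha$ under \eqref{eq:csds}, so that the $K$-theoretic class of the bimodule acts as the sum of the maps ${\rho_\alpha^+}_*$. Once this bookkeeping is settled, the remainder is formal homological algebra.
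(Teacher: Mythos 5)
Your proposal follows exactly the route of the paper's own proof: identify $\OALMP$ with the $C^*$-symbolic crossed product $\ALM\rtimes_{\rho^+}\Lambda_{{\frak L}^+}$, invoke the six-term exact sequence from \cite{MaCrelle} with horizontal maps $\id-\rho_*^+$, and use that $\ALM$ is AF so $K_1(\ALM)=0$ to split the sequence. The argument is correct and essentially identical to the paper's.
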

\begin{proof}
Since our $C^*$-algebra $\OALMP$ 
is isomorphic to 
the $C^*$-symbolic crossed product
$\A_{{\frak L}^-}\rtimes_{\rho^+}\Lambda_{{\frak L}^+}$,
 one has the six term
exact sequence of K-theory (\cite{MaCrelle},  cf. \cite{Pim}, \cite{KPW}):
\begin{equation*}
\begin{CD}
 K_0(\A_{{\frak L}^-}) @>\id - {\rho}_{*}^+>> 
 K_0(\A_{{\frak L}^-}) @>\iota_{*}>> 
 K_0(\OALMP) \\
@AAA @. @VVV \\
 K_1(\OALMP)
@<<\iota_{*}<
 K_1(\A_{{\frak L}^-}) @<<\id - {\rho}_{*}^+< 
 K_1(\A_{{\frak L}^-}). \\ 
\end{CD}
\end{equation*}
As $\A_{{\frak L}^-}$ is an AF-algebra, 
one sees that $K_1(\A_{{\frak L}^-}) = 0$, 
so that we have the desired formulas.
\end{proof}

The $K_0$-group
$K_0(C(\Omega_{{\frak L}^-}))$
of the commutative $C^*$-algebra
$C(\Omega_{{\frak L}^-})$
is canonically isomorphic to
the  abelian group
$C(\Omega_{{\frak L}^-},\Z)$ 
of $\Z$-valued continuous functions on $\Omega_{{\frak L}^-}.$
The correspondence
$\varphi_*([E_i^{l-}(\xi)]) = \chi_{U_{\Omega_{{\frak L}^-}(v_i^l;\xi)}}
$ induced by
\eqref{eq:ALMC} yields 
a natural  isomorphism 
$$
\varphi_*: K_0(\A_{{\frak L}^-}) \longrightarrow  C(\Omega_{{\frak L}^-},\Z)
$$ 
between
$K_0(\A_{{\frak L}^-})$
and the  abelian group
$C(\Omega_{{\frak L}^-},\Z).$
For $\omega \in \Omega_{{\frak L}^-},$
put
$$
r(\omega)
=\{ \omega' \in \Omega_{{\frak L}^-} \mid (\omega, \alpha^+,\omega') \in E_{{\frak L}^-}^+ \text{ for some } \alpha^+ \in \Sigma^+\}.
$$
We then define an endomorphism on $C(\Omega_{{\frak L}^-},\Z)$
by setting
$$
\lambda_{{\frak L}^-*}^+(f)(\omega) 
= \sum_{\omega' \in r(\omega)}
f(\omega')
\quad
\text{ for }
f \in C(\Omega_{{\frak L}^-},\Z), \, 
\omega \in \Omega_{{\frak L}^-}.
$$
We thus have the following K-theory formulas for  $\OALMP$.
\begin{theorem}\label{thm:Ktheory}
\begin{align*}
   K_0(\OALMP) 
\, \cong & \, 
C(\Omega_{{\frak L}^-}, \Z) / 
(\id -\lambda_{{\frak L}^-*}^+) C(\Omega_{{\frak L}^-}, \Z), \\
  K_1(\OALMP) 
\, \cong & \,
\Ker(\id -\lambda_{{\frak L}^-*}^+) \text{ in } C(\Omega_{{\frak L}^-}, \Z).
\end{align*}
\end{theorem}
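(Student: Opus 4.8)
The plan is to transport the abstract K-theory formulas of Lemma \ref{lem:Ktheory1} across the explicit isomorphism $\varphi_* : K_0(\ALM) \to C(\Omega_{{\frak L}^-}, \Z)$ induced by \eqref{eq:ALMC}. Lemma \ref{lem:Ktheory1} already expresses $K_0(\OALMP)$ and $K_1(\OALMP)$ as the cokernel and the kernel of $\id - \rho_*^+$ acting on $K_0(\ALM)$, the collapse of the six-term sequence into this form coming from the vanishing of $K_1(\ALM)$. Since by Lemma \ref{lem:abelian}(ii) the algebra $\ALM \cong C(\Omega_{{\frak L}^-})$ is commutative with totally disconnected spectrum, its $K_0$ is canonically the group $C(\Omega_{{\frak L}^-},\Z)$ via $\varphi_*$. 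Thus everything reduces to identifying the endomorphism that $\rho_*^+$ induces on $C(\Omega_{{\frak L}^-},\Z)$ through $\varphi_*$.

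First I would establish the central intertwining identity $\varphi_* \circ \rho_*^+ = \lambda_{{\frak L}^-*}^+ \circ \varphi_*$ on $K_0(\ALM)$. As the classes $[E_i^{l-}(\xi)]$ for $v_i^l \in V_l$, $\xi \in F(v_i^l)$ generate $K_0(\ALM)$, it suffices to verify the identity on these generators. Applying $\rho_*^+ = \sum_{\alpha \in \Sigma^+}(\rho_\alpha^+)_*$ together with the defining formula \eqref{eq:defofrhoalphaA}, and then $\varphi_*$, the left-hand side becomes $\sum_{\alpha \in \Sigma^+}\sum_{\beta \in \Sigma^-}\sum_{j=1}^{m(l+1)} A_{l,l+1}^+(i,\alpha,j)\, \chi_{U_{\Omega_{{\frak L}^-}}(v_j^{l+1};\xi\beta)}$. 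The right-hand side is $\lambda_{{\frak L}^-*}^+\big(\chi_{U_{\Omega_{{\frak L}^-}}(v_i^l;\xi)}\big)$, which by definition evaluates at $\omega$ to the number of forward neighbours $\omega' \in r(\omega)$ lying in the cylinder $U_{\Omega_{{\frak L}^-}}(v_i^l;\xi)$.

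The main obstacle is the pointwise matching of these two expressions, and this is where the local property of the $\lambda$-graph bisystem enters. The hard part will be to show that enumerating the pairs $(\alpha,\omega') \in \Sigma^+ \times \Omega_{{\frak L}^-}$ with $(\omega,\alpha,\omega') \in E_{{\frak L}^-}^+$ and $\omega' \in U_{\Omega_{{\frak L}^-}}(v_i^l;\xi)$ yields exactly the $A^+$-weighted count over $(\alpha,\beta,j)$: descending one step through an $\alpha$-labelled $\lambda^+$-edge from $v_i^l$ to $v_j^{l+1}$ forces the new bottom label of $\omega'$ to be a symbol $\beta \in \Sigma^-$ extending $\xi$ to $\xi\beta \in F(v_j^{l+1})$, and the bijective correspondence $E_+^-(u,v) \leftrightarrow E_-^+(u,v)$ of Definition \ref{def:lambdabisystem}(v) guarantees that this assignment is a bijection between the two index sets. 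I would carry this out by reading the two-dimensional diagram defining $E_{{\frak L}^-}^+$ level by level, exactly in the spirit of the proof of Lemma \ref{lem:keysaesa}, to confirm the equality of characteristic functions. Once it holds on generators it extends by linearity and continuity to all of $C(\Omega_{{\frak L}^-}, \Z)$.

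With the intertwining in hand, $\varphi_*$ descends to isomorphisms $K_0(\ALM)/(\id - \rho_*^+)K_0(\ALM) \cong C(\Omega_{{\frak L}^-},\Z)/(\id - \lambda_{{\frak L}^-*}^+)C(\Omega_{{\frak L}^-},\Z)$ and $\Ker(\id - \rho_*^+) \cong \Ker(\id - \lambda_{{\frak L}^-*}^+)$, and composing these with the isomorphisms of Lemma \ref{lem:Ktheory1} produces the two displayed formulas. The symmetric statement for $\OALPM$ then follows by interchanging the roles of ${\frak L}^-$ and ${\frak L}^+$.
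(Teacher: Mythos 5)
Your proposal is correct and follows essentially the same route as the paper: Theorem \ref{thm:Ktheory} is obtained there precisely by combining Lemma \ref{lem:Ktheory1} with the commutativity of the square intertwining $\rho_*^+$ and $\lambda_{{\frak L}^-*}^+$ through $\varphi_*$, which the paper states as ``easy to see'' and you verify explicitly on the generators $[E_i^{l-}(\xi)]$ using the local property. Your elaboration of that verification is a faithful filling-in of the step the paper leaves to the reader.
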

\begin{proof}
It is easy to see that the diagram
\begin{equation*}
\begin{CD}
 K_0(\A_{{\frak L}^-}) @>\rho_*^+>> 
 K_0(\A_{{\frak L}^-})  \\
@V{\varphi_*}VV  @V{\varphi_*}VV \\
 C(\Omega_{{\frak L}^-},\Z)
@>\lambda_{{\frak L}^-*}^+>> 
 C(\Omega_{{\frak L}^-},\Z) \\ 
\end{CD}
\end{equation*}
commutes.
Hence by Lemma \ref{lem:Ktheory1},
we get the desired K-theory formulas.
\end{proof}
The above K-theory formulas are generalizations of those of the 
$C^*$-algebras ${\mathcal{O}}_{\frak L}$ associated with $\lambda$-graph system
${\frak L}$ in \cite[Theorem 5.5]{MaDocMath2002} (cf. \cite{C2})
and the crossed products $C(\Lambda_A)\rtimes_{\sigma_A^*}\Z$
 of the commutative $C^*$-algebra $C(\Lambda_A)$
on the two-sided topological Markov shifts $\Lambda_A$
by the automorphisms $\sigma_A^*$ induced by the homeomorphism of the shift 
$\sigma_A^*$ (cf. \cite{Poon}). 

\medskip

We will next prove that the groups 
$K_i(\OALMP), i=0,1$ are 
invariant under properly strong shift equivalence of the associated symbolic matrix bisystems of the $\lambda$-graph bisystems $\LGBS$ satisfying FPCC.
To prove it we will actually show the following theorem, that was improved by the referee's kind suggestion.
Let $\mathcal{K}$ denote the $C^*$-algebra of compact operators
on a separable infinite dimensional Hilbert space,
and  $\mathcal{C}$ denote its diagonal $C^*$-subalgebra. 
\begin{theorem}\label{thm:MoritaKtheory}
Let $(\M^-,\M^+)$ and $(\SN^-,\SN^+)$
be symbolic matrix bisystems.
Let 
$({\frak L}_\M^-,{\frak L}_\M^+)$ and 
$({\frak L}_\SN^-,{\frak L}_\SN^+)$
be the associated $\lambda$-graph bisystems both of which satisfy FPCC.
Suppose that 
$(\M^-, \M^+)$ and 
$(\SN^-, \SN^+)$
are properly strong shift equivalent.
Then 
there exists an isomorphism
$\varPhi:
{{\mathcal{O}}_{{\frak L}_\M^-}^+} \otimes \mathcal{K}
\longrightarrow 
{{\mathcal{O}}_{{\frak L}_\SN^-}^+}\otimes \mathcal{K}$
of $C^*$-algebras such that 
$\varPhi({{\mathcal{D}}_{{\frak L}_\M^-}^+} \otimes \mathcal{C})
= 
{{\mathcal{D}}_{{\frak L}_\SN^-}^+}\otimes \mathcal{C}$.
In particular, the $C^*$-algebras
${{\mathcal{O}}_{{\frak L}_\M^-}^+}$ and 
${{\mathcal{O}}_{{\frak L}_\SN^-}^+}$
are Morita equivalent, so that their K-groups
$K_i({{\mathcal{O}}_{{\frak L}_\M^-}^+})$ and 
$K_i({{\mathcal{O}}_{{\frak L}_\SN^-}^+})$
are isomorphic for $i=0,1.$
\end{theorem}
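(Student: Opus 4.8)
The plan is to reduce the whole statement to a single bipartite $\lambda$-graph bisystem and then to run a groupoid-reduction argument on clopen full subsets of the unit space. First I would note that it suffices to treat a \emph{$1$-step} properly strong shift equivalence: the relation ``there is an isomorphism $A\otimes\mathcal{K}\to B\otimes\mathcal{K}$ carrying the distinguished diagonal $\otimes\,\mathcal{C}$ onto the distinguished diagonal $\otimes\,\mathcal{C}$'' is transitive (compose two such isomorphisms after tensoring with $\mathcal{K}$ and using $\mathcal{K}\otimes\mathcal{K}\cong\mathcal{K}$, $\mathcal{C}\otimes\mathcal{C}\cong\mathcal{C}$), so a finite chain of $1$-step equivalences yields the conclusion once each link is handled. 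Given a $1$-step properly strong shift equivalence between $(\M^-,\M^+)$ and $(\SN^-,\SN^+)$ with data $\P_l,\Q_l,\X_l,\Y_l$ and specifications $\varphi,\phi$ as in Definition \ref{def:PSSE}, I would invert the recipe of Lemma \ref{lem:DM4.3} and assemble a bipartite symbolic matrix bisystem $(\widetilde{\M}^-,\widetilde{\M}^+)$ over a common alphabet $\Sigma=C\sqcup D$ whose blocks are exactly these matrices; the compatibility relations \eqref{eq:PSSE3} and \eqref{eq:PSSE4} are precisely what is needed for the local property (v) to hold. By Proposition \ref{lem:DM4.4} this corresponds to a bipartite $\lambda$-graph bisystem $\LGBS$, and its $CD$- and $DC$-halves are isomorphic to $({\frak L}_\M^-,{\frak L}_\M^+)$ and $({\frak L}_\SN^-,{\frak L}_\SN^+)$ respectively.

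For this bipartite $\LGBS$, since $\Sigma^+=C\sqcup D$ and the labels along any path of ${\frak L}^+$ alternate between $C$ and $D$, the unit space of $\G_{{\frak L}^-}^+$ splits into clopen pieces
\begin{equation*}
X_{{\frak L}^-}^+ = X_{{\frak L}^-}^+(C)\sqcup X_{{\frak L}^-}^+(D), \qquad
X_{{\frak L}^-}^+(Z) = \{\, x \mid \lambda_1(x)\in Z\,\},\quad Z=C,D.
\end{equation*}
Each piece is \emph{full}: for any $x$ the point $\sigma_{{\frak L}^-}(x)$ has leading label in the opposite class, so every $\G_{{\frak L}^-}^+$-orbit meets both. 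The first return time to either piece under $\sigma_{{\frak L}^-}$ is constantly $2$, so the induced system on $X_{{\frak L}^-}^+(Z)$ is $(X_{{\frak L}^-}^+(Z),\sigma_{{\frak L}^-}^2)$, and the reduction $\G_{{\frak L}^-}^+|_{X_{{\frak L}^-}^+(Z)}$ (whose $\Z$-coordinate is forced to be even) is its Deaconu--Renault groupoid. The core identification to carry out is that $(X_{{\frak L}^-}^+(C),\sigma_{{\frak L}^-}^2)$ is conjugate to the shift dynamical system $(X_{{\frak L}_\M^-}^+,\sigma_{{\frak L}_\M^-})$ of the $CD$-half, by pairing consecutive $C,D$ labels and matching vertex data via the local property of the $\lambda$-graph bisystem, and that this conjugacy intertwines the groupoid structures and sends $C(X_{{\frak L}^-}^+(C))$ to the diagonal; similarly for the $D$-piece. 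Thus $\G_{{\frak L}^-}^+|_{X_{{\frak L}^-}^+(C)}\cong\G_{{\frak L}_\M^-}^+$ and $\G_{{\frak L}^-}^+|_{X_{{\frak L}^-}^+(D)}\cong\G_{{\frak L}_\SN^-}^+$ as étale groupoids preserving unit spaces.

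With these identifications the conclusion is nearly formal. By the description \eqref{eq:DLMP}, the diagonal $\DLMP$ is generated by characteristic functions of a basis of clopen sets and hence equals $C(X_{{\frak L}^-}^+)$; in particular each $p_Z=\chi_{X_{{\frak L}^-}^+(Z)}$ is a projection in $\DLMP$, and for an étale groupoid the clopen reduction gives $p_Z\,\OALMP\,p_Z = C^*(\G_{{\frak L}^-}^+|_{X_{{\frak L}^-}^+(Z)})$, a full corner because $X_{{\frak L}^-}^+(Z)$ is full. Reduction of an étale groupoid to a full clopen subset of its unit space yields a diagonal-preserving stable isomorphism of the groupoid $C^*$-algebras (via Brown's stabilization theorem applied to the full projection $p_Z$ lying in the Cartan diagonal $\DLMP$, together with the groupoid Cartan structure), so
\begin{equation*}
p_Z\,\OALMP\,p_Z \otimes \mathcal{K} \;\cong\; \OALMP \otimes \mathcal{K},
\qquad (p_Z\,\DLMP\,p_Z)\otimes\mathcal{C}\;\longmapsto\;\DLMP\otimes\mathcal{C}.
\end{equation*}
Transporting both corners through $\OALMP\otimes\mathcal{K}$ and along the groupoid isomorphisms of the previous paragraph produces the desired $\varPhi\colon {\mathcal{O}}_{{\frak L}_\M^-}^+\otimes\mathcal{K}\to{\mathcal{O}}_{{\frak L}_\SN^-}^+\otimes\mathcal{K}$ with $\varPhi({\mathcal{D}}_{{\frak L}_\M^-}^+\otimes\mathcal{C})={\mathcal{D}}_{{\frak L}_\SN^-}^+\otimes\mathcal{C}$, whence Morita equivalence and the isomorphism of K-groups follow. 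I expect the genuine obstacle to be the groupoid identification in the second paragraph: one must unwind the definitions of $\Omega_{{\frak L}^-}$, $X_{{\frak L}^-}^+$ and the continuous graph $E_{{\frak L}^-}^+$ for the bipartite system and verify rigorously that the $\sigma_{{\frak L}^-}^2$-return system on each half is conjugate, equivariantly and diagonal-preservingly, to the shift system of the corresponding half $\lambda$-graph bisystem.
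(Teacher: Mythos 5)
Your proposal is correct and follows essentially the same route as the paper: reduce to the $1$-step case, assemble the bipartite symbolic matrix bisystem whose $CD$- and $DC$-halves are the two given systems, realize ${\mathcal{O}}_{{\frak L}_\M^-}^+$ and ${\mathcal{O}}_{{\frak L}_\SN^-}^+$ as complementary full corners cut by the diagonal projections $P_C+P_D=1$ (your $p_C,p_D=\chi_{X_{{\frak L}^-}^+(C)},\chi_{X_{{\frak L}^-}^+(D)}$), and conclude by a diagonal-preserving stabilization theorem. The only cosmetic difference is that you phrase the corner identification at the level of groupoid reductions to full clopen subsets of the unit space and invoke Brown's theorem directly, where the paper cites its earlier corner identification for bipartite $C^*$-symbolic dynamical systems and the relative Morita equivalence machinery of \cite{MaTransAMS2018}.
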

\begin{proof}
We may assume that 
$({\frak L}_\M^-,{\frak L}_\M^+)$ and 
$({\frak L}_\SN^-,{\frak L}_\SN^+)$
are properly strong shift equivalent in $1$-step.
As in the discussion in Section 6,
there exist an alphabet 
$\widehat{\Sigma},$ disjoint subsets
$C, D \subset \widehat{\Sigma}$
and  a bipartite symbolic matrix bisystem
 $(\widehat{\M}^-,\widehat{\M}^+) 
$
over 
$\widehat{\Sigma}$
such that 
\begin{equation*}
(\widehat{\M}^{CD-}, \widehat{\M}^{CD+}) = (\M^-,\M^+), \qquad
(\widehat{\M}^{DC-}, \widehat{\M}^{DC+}) = (\SN^-,\SN^+).
\end{equation*} 
Let $(\widehat{\frak L}^-,\widehat{\frak L}^+)$
be the associated $\lambda$-graph bisystems to
$(\widehat{\M}^-,\widehat{\M}^+) .$
We also denote by
$(\widehat{\frak L}^{CD-},\widehat{\frak L}^{CD+})$
(resp.
$(\widehat{\frak L}^{DC-},\widehat{\frak L}^{DC+})$
)
 the associated $\lambda$-graph bisystems to
$(\widehat{\M}^{CD-},\widehat{\M}^{CD+}) $
(resp. 
$(\widehat{\M}^{DC-},\widehat{\M}^{DC+}) ). $
By a completely similar argument to \cite[Theorem 6.1]{MaCrelle},
we know that there exist full projections
$P_C, P_D$ in the $C^*$-algebra
${\mathcal{O}}_{\widehat{\frak L}^-}^+$  
such that both
$P_C, P_D$ belong to ${\mathcal{D}}_{\widehat{\frak L}^-}^+$
satisfying 
 $P_C + P_D =1$ and
\begin{gather*}
P_C {\mathcal{O}}_{\widehat{\frak L}^-}^+ P_C\cong 
{\mathcal{O}}_{\widehat{\frak L}^{CD-}}^+, \qquad
P_C {\mathcal{D}}_{\widehat{\frak L}^-}^+ P_C\cong 
{\mathcal{D}}_{\widehat{\frak L}^{CD-}}^+, \\
P_D {\mathcal{O}}_{\widehat{\frak L}^-}^+ P_D\cong 
{\mathcal{O}}_{\widehat{\frak L}^{DC-}}^+, \qquad
P_D {\mathcal{D}}_{\widehat{\frak L}^-}^+ P_D\cong 
{\mathcal{D}}_{\widehat{\frak L}^{DC-}}^+.
\end{gather*}
Hence  the pair  
$({\mathcal{O}}_{\widehat{\frak L}^{CD-}}^+, 
{\mathcal{D}}_{\widehat{\frak L}^{CD-}}^+)
$
and 
$({\mathcal{O}}_{\widehat{\frak L}^{DC-}}^+, 
{\mathcal{D}}_{\widehat{\frak L}^{DC-}}^+)
$
is relative Morita equivalent in the sense of 
\cite{MaTransAMS2018}.
By using \cite[Theorem 4.7]{MaTransAMS2018},
there exists an isomorphism
$\Phi:{\mathcal{O}}_{\widehat{\frak L}^{CD-}}^+\otimes\mathcal{K} 
\longrightarrow
{\mathcal{O}}_{\widehat{\frak L}^{DC-}}^+\otimes\mathcal{K}
$
such that 
$\Phi({\mathcal{D}}_{\widehat{\frak L}^{CD-}}^+\otimes\mathcal{C})
=
{\mathcal{D}}_{\widehat{\frak L}^{DC-}}^+\otimes\mathcal{C}.
$
Since
$({\mathcal{O}}_{\widehat{\frak L}^{CD-}}^+, {\mathcal{D}}_{\widehat{\frak L}^{CD-}}^+)
 = 
({\mathcal{O}}_{\widehat{\frak L}_\M^{-}}^+, {\mathcal{D}}_{\widehat{\frak L}_\M^{-}}^+)$
and
$({\mathcal{O}}_{\widehat{\frak L}^{DC-}}^+, {\mathcal{D}}_{\widehat{\frak L}^{DC-}}^+)
 = 
({\mathcal{O}}_{\widehat{\frak L}_\SN^{-}}^+,{\mathcal{D}}_{\widehat{\frak L}_\SN^{-}}^+),$
we conclude that
there exists an isomorphism
$\varPhi:
{{\mathcal{O}}_{{\frak L}_\M^-}^+} \otimes \mathcal{K}
\longrightarrow 
{{\mathcal{O}}_{{\frak L}_\SN^-}^+}\otimes \mathcal{K}$
 such that 
$\varPhi({{\mathcal{D}}_{{\frak L}_\M^-}^+} \otimes \mathcal{C})
= 
{{\mathcal{D}}_{{\frak L}_\SN^-}^+}\otimes \mathcal{C}$.
\end{proof}
\begin{corollary}\label{cor:13.4}
The K-groups $K_i({\mathcal{O}}_{{\frak L}_\Lambda^-}^+), i=0,1$
of the $C^*$-algebra
${\mathcal{O}}_{{\frak L}_\Lambda^-}^+$
of the canonical $\lambda$-graph bisystem
$({\frak L}_\Lambda^-, {\frak L}_\Lambda^+)$
of a subshift $\Lambda$ is invariant under topological conjugacy of subshifts. 
\end{corollary}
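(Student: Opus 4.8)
The plan is to deduce the corollary as a direct chaining of the topological-conjugacy characterization of properly strong shift equivalence, Theorem \ref{thm:DM4.2}, with the Morita-invariance of the K-groups, Theorem \ref{thm:MoritaKtheory}. Concretely, let $\Lambda$ and $\Lambda'$ be two topologically conjugate subshifts, and let $({\frak L}_\Lambda^-, {\frak L}_\Lambda^+)$ and $({\frak L}_{\Lambda'}^-, {\frak L}_{\Lambda'}^+)$ be their canonical $\lambda$-graph bisystems, with canonical symbolic matrix bisystems $(\M_\Lambda^-, \M_\Lambda^+)$ and $(\M_{\Lambda'}^-, \M_{\Lambda'}^+)$, respectively. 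First I would record, via Proposition \ref{prop:5.4}, that both canonical $\lambda$-graph bisystems satisfy FPCC; this is precisely the hypothesis required to apply Theorem \ref{thm:MoritaKtheory}, and it is the point at which the canonical construction (as opposed to an arbitrary $\lambda$-graph bisystem presenting $\Lambda$) is genuinely used.

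Next, since $\Lambda$ and $\Lambda'$ are topologically conjugate, the only-if direction of Theorem \ref{thm:DM4.2} yields that their canonical symbolic matrix bisystems $(\M_\Lambda^-, \M_\Lambda^+)$ and $(\M_{\Lambda'}^-, \M_{\Lambda'}^+)$ are properly strong shift equivalent. Applying Theorem \ref{thm:MoritaKtheory} to this properly strong shift equivalence then produces an isomorphism
\begin{equation*}
\varPhi:
{\mathcal{O}}_{{\frak L}_\Lambda^-}^+ \otimes \mathcal{K}
\longrightarrow
{\mathcal{O}}_{{\frak L}_{\Lambda'}^-}^+ \otimes \mathcal{K}
\end{equation*}
carrying ${\mathcal{D}}_{{\frak L}_\Lambda^-}^+ \otimes \mathcal{C}$ onto ${\mathcal{D}}_{{\frak L}_{\Lambda'}^-}^+ \otimes \mathcal{C}$, whence ${\mathcal{O}}_{{\frak L}_\Lambda^-}^+$ and ${\mathcal{O}}_{{\frak L}_{\Lambda'}^-}^+$ are Morita equivalent and $K_i({\mathcal{O}}_{{\frak L}_\Lambda^-}^+) \cong K_i({\mathcal{O}}_{{\frak L}_{\Lambda'}^-}^+)$ for $i = 0,1$. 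This is exactly the asserted invariance.

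There is essentially no substantial obstacle, since the two heavy ingredients are already in place; the only step demanding a line of care is the bookkeeping identification that the canonical symbolic matrix bisystem occurring in Theorem \ref{thm:DM4.2} is the matrix presentation of the canonical $\lambda$-graph bisystem feeding Theorem \ref{thm:MoritaKtheory}, together with the verification that FPCC holds for both (which is Proposition \ref{prop:5.4}). Once these compatibilities are noted, the conclusion is a purely formal composition of the two theorems, and no further analysis of the groupoids or their $C^*$-algebras is needed.
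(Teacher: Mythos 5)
Your proposal is correct and is essentially the argument the paper intends: the corollary is stated as an immediate consequence of Theorem \ref{thm:MoritaKtheory} combined with Theorem \ref{thm:DM4.2}, with Proposition \ref{prop:5.4} supplying the FPCC hypothesis for the canonical $\lambda$-graph bisystems. The paper gives no separate proof, and your chaining of the three ingredients is exactly the intended deduction.
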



\section{A duality: $\lambda$-graph systems as $\lambda$-graph bisystems}

Let ${\frak L} =(V,E,\lambda,\iota)$ be a $\lambda$-graph system over $\Sigma.$
We will construct a $\lambda$-graph bisystem $\LGBS$ from ${\frak L}$ 
as in Example \ref{ex:3.2} (i).
Let us recognize the map $\iota: V\longrightarrow V$
as a new symbol written $\iota$, and
define a new alphabet
$\Sigma^- =  \{\iota\}.$
The original alphabet $\Sigma$ is written $\Sigma^+.$ 
Let 
$
E^+_{l,l+1}:= E_{l,l+1} 
$ for $l \in \Zp$ 
and
$ 
\lambda^+= \lambda:E^+ \longrightarrow \Sigma^+.
$  
We then have a labeled Bratteli diagram
${\frak L}^+ = (V, E^+, \lambda^+)$ over alphabet $\Sigma^+,$
that is 
the original labeled Bratteli diagram $\frak L$ without the map 
$\iota: V \longrightarrow V.$ 
The other 
labeled Bratteli diagram
${\frak L}^- = (V, E^-, \lambda^-)$ over alphabet $\Sigma^-$
is defined in the following way.
Define an edge $e^-\in E^-_{l+1,l}$
if
$\iota(v_j^{l+1}) = v_i^l$ 
so that 
$s(e^-) = v_j^{l+1}, t(e^-) = v_i^l$
and 
$\lambda^-(e^-) =\iota \in \Sigma^-.$ 
Then we have a labeled Bratteli diagram
${\frak L}^- = (V, E^-, \lambda^-)$ over alphabet $\Sigma^-.$
Then the local property of the $\lambda$-graph system  ${\frak L}$ makes the pair 
$\LGBS$ a $\lambda$-graph bisystem.
This $\lambda$-graph bisystem does not satisfy FPCC.
Let $(I_{l,l+1}, A_{l,l+1})_{l \in \Zp}$
be the transition matrix system for the $\lambda$-graph system
${\frak L}$ defined in \eqref{eq:tmsI} and \eqref{eq:tmsA}. 
The transition matrix bisystem $(A^-,A^+)$ for 
the $\lambda$-graph bisystem $\LGBS$ defined in
Section 8 satisfies 
\begin{equation}
A_{l,l+1}^-(i,\iota,j)  = I_{l,l+1}(i,j), 
\qquad
A_{l,l+1}^+(i,\alpha,j)  = A_{l,l+1}(i,\alpha,j)  \label{eq:15.lambda}
\end{equation}
for  $ \alpha \in \Sigma^+$ and $i=1,2,\dots,m(l), \, j=1,2,\dots,m(l+1).$

Let $S_\alpha, \alpha \in \Sigma^+$
and
$E_i^l(\beta), \beta \in \Sigma^-_1(v_i^l)$
be the canonical generating family of the $C^*$-algebra
$\OALMP$ satisfying the relations $\LGBS$ in Theorem \ref{thm:themain6}.
Since $\Sigma_1^-(v_i^l) = \{\iota\}$ for all vertices $v_i^l \in V_l$,
the projection 
$E_i^l(\beta), \beta \in \Sigma^-_1(v_i^l)$  
may be written $E_i^l$ without the symbol $\beta.$
The equalities \eqref{eq:15.lambda} tell us that the relations 
$\LGBS$ in Theorem \ref{thm:themain6} is exactly the same as the  relations
$({\frak L})$ in Theorem \ref{thm:lambdagraphC*}.
Hence the $C^*$-algebra
$\OALMP$ coincides with the $C^*$-algebra ${\mathcal{O}}_{\frak L}$
of the $\lambda$-graph system ${\frak L}$ by their universal properties.


Let us consider the other $C^*$-algebra $\OALPM.$
Then  by Theorem \ref{thm:main7'} together with \eqref{eq:15.lambda}, 
the $C^*$-algebra is generated by one coisometry
$T_\iota$ and a family of mutually commuting projections
$E_i^{l+}(\mu) , \mu \in P(v_i^l), i=1,2,\dots,m(l),\, l\in \Zp$  
satisfying the following relations:
\begin{align}
T_{\iota}T_{\iota}^*   
& =   \sum_{i=1}^{m(l)} \sum_{\mu \in P(v_i^l)} E_i^{l+}(\mu) =  1, \label{eq:Plambda2}\\ 
 E_i^{l+} (\mu)   & = \sum_{\alpha\in \Sigma}  \sum_{j=1}^{m(l+1)}
A_{l,l+1}(i,\alpha,j)E_j^{l+1+}(\mu\alpha),  \label{eq:Plambda4}\\
T_{\iota}^*E_i^{l+}(\mu) T_{\iota} &  =  
\sum_{\alpha\in \Sigma}
\sum_{j=1}^{m(l+1)} I_{l,l+1}(i,j)E_j^{l+1+}(\alpha\mu). \label{eq:Plambda6}
\end{align}
By \eqref{eq:Plambda6} with \eqref{eq:Plambda2}, we have
\begin{equation*}
T_\iota^* T_\iota = 
\sum_{i=1}^{m(l)}\sum_{\mu \in P(v_i^l)}T_{\iota}^*E_i^{l+}(\mu) T_{\iota}
 =  
\sum_{j=1}^{m(l+1)}
\sum_{i=1}^{m(l)}
I_{l,l+1}(i,j)
\sum_{\alpha\in \Sigma}
\sum_{\mu \in P(v_i^l)}
 E_j^{l+1+}(\alpha\mu) 
\end{equation*}
Under the condition 
$I_{l,l+1}(i,j) =1$, the local property of $\lambda$-graph bisystem
ensures us 
$$
\{\alpha \mu \in B_{l+1}(\Lambda_{{\frak L}^+}) 
\mid \mu \in P(v_i^l), \alpha \in \Sigma \}
= P(v_j^{l+1}),
$$
so that the equality
$$
I_{l,l+1}(i,j)
\sum_{\alpha\in \Sigma}
\sum_{\mu \in P(v_i^l)}
 E_j^{l+1+}(\alpha\mu) 
=
I_{l,l+1}(i,j)
\sum_{\nu \in P(v_j^{l+1})}
E_j^{l+1+}(\nu) 
$$
holds.
As for each $j=1, 2, \dots,m(l+1),$
there uniquely exists $i=1, 2, \dots, m(l)$ such that 
$I_{l,l+1}(i,j) =1,$ 
so that $\sum_{i=1}^{m(l)} I_{l,l+1}(i,j) =1.$
Hence we have
\begin{equation*}
\sum_{j=1}^{m(l+1)}
\sum_{i=1}^{m(l)} I_{l,l+1}(i,j)
\sum_{\alpha\in \Sigma}
\sum_{\mu \in P(v_i^l)}
E_j^{l+1+}(\alpha\mu)
=
\sum_{j=1}^{m(l+1)}
\sum_{\nu \in P(v_j^{l+1})}
E_j^{l+1+}(\nu) =1,
\end{equation*}
so that $T_\iota$ is a unitary.

Recall that 
the $C^*$-subalgebra
$\DLPM$
of $\OALPM$
is generated by the projections of the form
\begin{equation*}
T_\iota^n E_i^{l+}(\mu) T_\iota^{*n}, \qquad 
\mu \in P(v_i^l), \, n=0, 1,2,\dots, \, i=1,2,\dots, m(l), \, l\in \Zp.
\end{equation*}
It is canonically isomorphic to the commutative $C^*$-algebra
$C(X_{{\frak L}^+}^-)$ of continuous functions on $X_{{\frak L}^+}^-$
through the correspondence
\begin{equation*}
\varphi_{{\frak L}^+}: 
T_\iota^n E_i^{l+}(\mu) T_\iota^{*n}
\in \DLPM\longrightarrow 
\chi_{U_{X_{{\frak L}^+}^-}(\iota^n, v_i^l;\mu)}
\in C(X_{{\frak L}^+}^-)
\end{equation*}
as in Lemma \ref{lem:smeilsm} for 
$X_{{\frak L}^+}^-$ 
(Note that Lemma \ref{lem:smeilsm} treats 
$X_{{\frak L}^-}^+$).
Hence we know the following lemma.
\begin{lemma}
The isomorphism
$\varphi_{{\frak L}^+}: \DLPM\longrightarrow C(X_{{\frak L}^+}^-)$
satisfies
$\varphi_{{\frak L}^+}\circ \Ad(T_\iota^*) 
= \sigma_{{\frak L}^+}^* \circ\varphi_{{\frak L}^+}
$
where $\sigma_{{\frak L}^+}^*: C(X_{{\frak L}^+}^-) \longrightarrow C(X_{{\frak L}^+}^-)$
is defined by
$\sigma_{{\frak L}^+}^*(f) = f \circ \sigma_{{\frak L}^+}$ 
for $f \in C(X_{{\frak L}^+}^-).$
\end{lemma}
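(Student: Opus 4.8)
The plan is to verify the asserted intertwining on a generating family of $\DLPM$ and then extend by continuity. Since $T_\iota$ is a unitary (as established just above) and $\sigma_{{\frak L}^+}$ is a homeomorphism, both $\Ad(T_\iota^*)$ and $\sigma_{{\frak L}^+}^*$ are $*$-automorphisms, of $\DLPM$ and of $C(X_{{\frak L}^+}^-)$ respectively, and $\varphi_{{\frak L}^+}$ is a $*$-isomorphism between these two algebras. Hence it suffices to check $\varphi_{{\frak L}^+}(\Ad(T_\iota^*)(a)) = \sigma_{{\frak L}^+}^*(\varphi_{{\frak L}^+}(a))$ for $a$ ranging over the projections $T_\iota^n E_i^{l+}(\mu) T_\iota^{*n}$ with $\mu \in P(v_i^l)$ and $n \in \Zp$, since these generate $\DLPM$ and all three maps $\varphi_{{\frak L}^+}$, $\Ad(T_\iota^*)$, $\sigma_{{\frak L}^+}^*$ are continuous $*$-homomorphisms.

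For the left-hand side I would use only the unitarity of $T_\iota$. Conjugating a generator $T_\iota^n E_i^{l+}(\mu) T_\iota^{*n}$ by $T_\iota^*$ merely changes the exponent of $T_\iota$ by one, producing another generator of the same shape whose depth in the tower of cylinder sets differs by one from that of $a$. Applying $\varphi_{{\frak L}^+}$ then gives again the characteristic function of a basic clopen cylinder of $X_{{\frak L}^+}^-$, now of shifted depth. The pleasant feature is that, because $T_\iota$ is honestly invertible, no appeal to the structural relations \eqref{eq:Plambda4}--\eqref{eq:Plambda6} is required and there is no exceptional base case at $n=0$ to treat separately.

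For the right-hand side the work is entirely geometric, and this is where the real content sits. First I would write out $\sigma_{{\frak L}^+}^{-1}$ of the basic clopen set $U_{X_{{\frak L}^+}^-}(\iota^n, v_i^l;\mu)$ directly from the definition of the shift $\sigma_{{\frak L}^+}$ on $X_{{\frak L}^+}^-$ together with the symmetric analogue of the cylinder definition \eqref{eq:muvil}. Because $\sigma_{{\frak L}^+}$ deletes the initial window and reindexes, the defining requirement that the relevant level of the sequence lie in $U_{\Omega_{{\frak L}^+}}(v_i^l;\mu)$ is transported to the neighbouring level; and since $\Sigma^- = \{\iota\}$ carries no branching, this identifies $\sigma_{{\frak L}^+}^{-1}(U_{X_{{\frak L}^+}^-}(\iota^n,v_i^l;\mu))$ with the basic cylinder of shifted depth. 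Pulling back the characteristic function through $\sigma_{{\frak L}^+}^*$ then yields precisely the function produced on the left-hand side, and comparing the two completes the check on generators.

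The hard part will be purely the bookkeeping of indices: one must pin down the direction in which $\sigma_{{\frak L}^+}$ moves the cylinder depth and match it against the direction in which conjugation by $T_\iota^*$ moves the $T_\iota$-exponent, so that the two indices genuinely coincide rather than drift apart by two. Once that alignment is fixed, the remaining points, namely that $\Ad(T_\iota^*)$ preserves $\DLPM$ (immediate from the generator computation) and that the equality on the dense generating set propagates to all of $\DLPM$, are routine.
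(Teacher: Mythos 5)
Your overall strategy---verify the intertwining on the generating projections $T_\iota^n E_i^{l+}(\mu) T_\iota^{*n}$ and on the corresponding basic cylinders of $X_{{\frak L}^+}^-$, then extend to all of $\DLPM$ by continuity---is the same as the paper's. But there is a genuine gap: you assert that ``no appeal to the structural relations \eqref{eq:Plambda4}--\eqref{eq:Plambda6} is required and there is no exceptional base case at $n=0$,'' and both halves of that sentence are false; the case $n=0$ is exactly where the content of the lemma sits. Algebraically, $\Ad(T_\iota^*)$ applied to $T_\iota^n E_i^{l+}(\mu) T_\iota^{*n}$ returns a generator of the listed form only when $n\ge 1$; at $n=0$ you get $T_\iota^* E_i^{l+}(\mu) T_\iota$, whose exponent would have to be $-1$, and the generating family of $\DLPM$ in \eqref{eq:DLMP}-style only allows $n\in\Zp$. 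Until you rewrite this element as a combination of honest generators you can neither conclude that $\Ad(T_\iota^*)$ preserves $\DLPM$ nor evaluate $\varphi_{{\frak L}^+}$ on it---and that rewriting is precisely relation \eqref{eq:Plambda6}, namely $T_\iota^* E_i^{l+}(\mu)T_\iota = \sum_{\alpha\in\Sigma}\sum_{j=1}^{m(l+1)} I_{l,l+1}(i,j)\,E_j^{l+1+}(\alpha\mu)$. The paper's proof concludes with the words ``because of the relation \eqref{eq:Plambda6}'' for exactly this reason.

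The geometric half of your argument fails at the same place. The fact that $\Sigma^-=\{\iota\}$ carries no branching makes $\sigma_{{\frak L}^+}$ a homeomorphism, but it does not make the preimage of a depth-zero cylinder a single cylinder: the paper computes
$\sigma_{{\frak L}^+}^*\bigl(\chi_{U_{X_{{\frak L}^+}^-}(v_i^l;\mu)}\bigr)
 = \sum_{\alpha\in\Sigma}\sum_{j=1}^{m(l+1)}\chi_{U_{X_{{\frak L}^+}^-}(\iota,\,v_j^{l+1};\alpha\mu)}$,
because pulling the cylinder condition back through the shift forces a choice of a new symbol $\alpha\in\Sigma^+$ and a new vertex $v_j^{l+1}$, so the preimage is a finite disjoint union of basic cylinders rather than one cylinder of shifted depth. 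Matching this disjoint union against the sum supplied by \eqref{eq:Plambda6} is the actual proof of the lemma; the case $n\ge1$, which is all your computation covers and where both sides indeed just shift the depth by one, is the routine part. To repair the argument you must reinstate the $n=0$ case and invoke \eqref{eq:Plambda6} (together with the description of $\sigma_{{\frak L}^+}^{-1}$ of a depth-zero cylinder) explicitly.
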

\begin{proof}
The $C^*$-subalgebra 
$\ALP$ of $\OALPM$ generated by mutually commuting 
projections $E_i^{l+}(\mu), \mu \in P(v_i^l)$
 is isomorphic to the commutative $C^*$-algebra
$C(\Omega_{{\frak L}^+})$ of continuous functions on the compact Hausdorff space
$\Omega_{{\frak L}^+}$
defined in Section 7.
Now the $\lambda$-graph bisystem 
$({\frak L}^-, {\frak L}^+)$
comes from a $\lambda$-graph system ${\frak L}.$
Hence the compact Hausdorff space
$X_{{\frak L}^+}^-$
is given by in this case
\begin{align*}
X_{{\frak L}^+}^- =
& \{ (\iota, \omega^i)_{i=1}^\infty \in 
\prod_{i=1}^\infty (\Sigma^- \times \Omega_{{\frak L}^+}) \mid
\omega^i = (\alpha_{-i+l}, u_l^i)_{l=1}^\infty \in \Omega_{{\frak L}^+}, i=1,2,\dots, \\
& \qquad (\omega^i, \iota, \omega^{i+1}) \in E_{{\frak L}^+} ^-, i=1,2,\dots,
(\omega^0, \iota, \omega^1) \in E_{{\frak L}^+} ^-\text{ for some } 
\omega^0 \in \Omega_{{\frak L}^+} \}.
\end{align*}
Let $\sigma_{{\frak L}^+}: X_{{\frak L}^+}^-\longrightarrow X_{{\frak L}^+}^-$
be the shift map defined by
$ \sigma_{{\frak L}^+}((\iota, \omega^i)_{i=1}^\infty) = (\iota, \omega^{i+1})_{i=1}^\infty.$
As
$\omega^i = (\alpha_{-i+l}, u_l^i)_{l=1}^\infty
\in \Omega_{{\frak L}^+}, i=0,1, 2,\dots
$ 
and   
$\iota(u^{i+1}_{l+1}) =u^{i}_{l}, l=0,1,\dots, \, i=1,2,\dots,$
we know that 
$ (\iota, \omega^{i+1})_{i=1}^\infty$ uniquely determines 
$(\iota, \omega^i)_{i=1}^\infty$ as in the diagram below,
so that 
the shift map
 $\sigma_{{\frak L}^+}: X_{{\frak L}^+}^-\longrightarrow X_{{\frak L}^+}^-$
is actually a homeomorphism on $X_{{\frak L}^+}^-$.
\begin{equation*}
\begin{CD}
@. @. @. u_0^0@>{\alpha_1}>> u_1^0 @>{\alpha_2}>> u_2^0 @>{\alpha_3}>> \cdots
 \omega^0 \\
@. @. @. @AA{\iota}A @AA{\iota}A @AA{\iota}A @.\\
@. @. u_0^1 @>{\alpha_0}>> u^1_1 @>{\alpha_1}>> u^1_2 @>{\alpha_2}>> u^1_3
@>{\alpha_3}>> \cdots \omega^1\ \\
@. @. @AA{\iota}A  @AA{\iota}A  @AA{\iota}A @AA{\iota}A @.\\
@.u_0^2 @>{\alpha_{-1}}>> u_1^2 @>{\alpha_0}>> u^2_2 @>{\alpha_1}>> u^2_3 
@>{\alpha_2}>> u^2_4
@>{\alpha_3}>> \cdots \omega^2\ \\
 @. @AA{\iota}A  @AA{\iota}A  @AA{\iota}A  @AA{\iota}A @AA{\iota}A @.\\
u_0^3 @>{\alpha_{-2}}>>u_1^3 @>{\alpha_{-1}}>> u_2^3 @>{\alpha_0}>> u^3_3 @>{\alpha_1}>> u^3_4 
@>{\alpha_2}>> u^3_5
@>{\alpha_3}>> \cdots \omega^3\ \\
 @AA{\iota}A @AA{\iota}A  @AA{\iota}A  @AA{\iota}A  @AA{\iota}A @AA{\iota}A @.\\
\end{CD}
\end{equation*}
As 
$$
\sigma_{{\frak L}^+}^*(\chi_{U_{X_{{\frak L}^+}^-}(\iota^n,v_i^l;\mu)})
=\chi_{U_{X_{{\frak L}^+}^-}(\iota^{n-1},v_i^l;\mu)}
\quad\text{ for }
n \ge 1
$$
and
$$
\sigma_{{\frak L}^+}^*(\chi_{U_{X_{{\frak L}^+}^-}(v_i^l;\mu)})
=\sum_{\alpha\in \Sigma}\sum_{j=1}^{m(l+1)}
 \chi_{U_{X_{{\frak L}^+}^-}(\iota,v_j^{l+1};\alpha\mu)},
$$
we conclude that 
$\varphi_{{\frak L}^+}\circ \Ad(T_\iota^*) 
= \sigma_{{\frak L}^+}^* \circ\varphi_{{\frak L}^+}
$ because of the relation \eqref{eq:Plambda6}.
\end{proof}
By their universal properties of  both  the algebra 
$\OALPM$ and the crossed product
$ C(X_{{\frak L}^+}^-)\rtimes_{\sigma_{{\frak L}^+}^*}\Z$
by the automorphism $\sigma_{{\frak L}^+}^*$
of $C(X_{{\frak L}^+}^-)$,
we know that 
$\OALPM$ is isomorphic to 
$ C(X_{{\frak L}^+}^-)\rtimes_{\sigma_{{\frak L}^+}^*}\Z,$
Thus we have 
the following proposition.
\begin{proposition}\label{prop:lambdaalgebras}
Let $\frak L$ be a left-resolving $\lambda$-graph system over $\Sigma.$
Let $\LGBS$ be the associated $\lambda$-graph bisystem.
Then we have 
\begin{enumerate}
\renewcommand{\theenumi}{\roman{enumi}}
\renewcommand{\labelenumi}{\textup{(\theenumi)}}
\item
The $C^*$-algebra $\OALMP$ is canonically isomorphic to the 
the $C^*$-algebra ${\mathcal{O}}_{\frak L}$ 
of the original $\lambda$-graph system  ${\frak L}.$ 
\item
The $C^*$-algebra $\OALPM$ is canonically isomorphic to  the crossed product
$ C(X_{{\frak L}^+}^-)\rtimes_{\sigma_{{\frak L}^+}^*}\Z.$ 
\end{enumerate}
\end{proposition}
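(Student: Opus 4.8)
The proof of Proposition \ref{prop:lambdaalgebras} splits into the two assertions, which I would handle separately, both resting on the universal presentation theorems established earlier.

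For part (i), the plan is to compare operator relations directly. The $\lambda$-graph system $\frak L$ is assumed left-resolving, and one checks that the associated $\lambda$-graph bisystem $\LGBS$ satisfies $\sigma_{{\frak L}^-}$-condition (I); this follows because $\frak L$ satisfying condition (I) in the sense of \cite{MaDocMath2002} is exactly what is needed, so Theorem \ref{thm:themain6} applies and realizes $\OALMP$ as the universal unital $C^*$-algebra on generators $S_\alpha\ (\alpha \in \Sigma^+)$ and $E_i^l(\beta)\ (\beta \in \Sigma_1^-(v_i^l))$ subject to $\LGBS$. The key observation is that $\Sigma^- = \{\iota\}$, so $\Sigma_1^-(v_i^l) = \{\iota\}$ is a singleton for every vertex, and the projection $E_i^l(\beta)$ may be written simply $E_i^l$. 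Substituting the identities \eqref{eq:15.lambda}, namely $A_{l,l+1}^-(i,\iota,j) = I_{l,l+1}(i,j)$ and $A_{l,l+1}^+(i,\alpha,j) = A_{l,l+1}(i,\alpha,j)$, into the four relations \eqref{eq:RMP1}--\eqref{eq:RMP4} transforms them verbatim into the relations $(\frak L)$ of Theorem \ref{thm:lambdagraphC*}. Since both $\OALMP$ and $\mathcal{O}_{\frak L}$ are the universal unique $C^*$-algebras on the same generators subject to the same relations, the correspondence $S_\alpha \mapsto S_\alpha$, $E_i^l \mapsto E_i^l$ extends to a canonical isomorphism.

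For part (ii), the plan is to identify $\OALPM$ as a crossed product. Applying Theorem \ref{thm:main7'} together with \eqref{eq:15.lambda} shows $\OALPM$ is generated by a single partial isometry $T_\iota$ and projections $E_i^{l+}(\mu)$ subject to \eqref{eq:Plambda2}--\eqref{eq:Plambda6}. The first step is to prove $T_\iota$ is a \emph{unitary}: relation \eqref{eq:Plambda2} already gives $T_\iota T_\iota^* = 1$, and computing $T_\iota^* T_\iota$ via \eqref{eq:Plambda6}, one uses that the local property of the $\lambda$-graph bisystem forces $\{\alpha\mu : \mu \in P(v_i^l),\ \alpha\in\Sigma\} = P(v_j^{l+1})$ whenever $I_{l,l+1}(i,j)=1$, and that for each $j$ there is exactly one $i$ with $I_{l,l+1}(i,j)=1$, so the double sum collapses to $\sum_{j}\sum_{\nu\in P(v_j^{l+1})} E_j^{l+1+}(\nu) = 1$. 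Next I would invoke the Lemma identifying $\varphi_{{\frak L}^+}\colon \DLPM \xrightarrow{\cong} C(X_{{\frak L}^+}^-)$ and intertwining $\Ad(T_\iota^*)$ with the automorphism $\sigma_{{\frak L}^+}^*$. Then, since $\OALPM$ is generated by the unitary $T_\iota$ together with the commutative subalgebra $\DLPM \cong C(X_{{\frak L}^+}^-)$ on which $T_\iota$ acts by $\sigma_{{\frak L}^+}^*$, comparing universal properties of $\OALPM$ and of the crossed product $C(X_{{\frak L}^+}^-)\rtimes_{\sigma_{{\frak L}^+}^*}\Z$ yields the desired isomorphism.

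The main obstacle I anticipate is part (ii), specifically verifying cleanly that the universal property of $\OALPM$ matches that of the crossed product. One must argue that $\DLPM$ is genuinely all of the fixed-point/coefficient algebra and that no further relations among $T_\iota$ and $C(X_{{\frak L}^+}^-)$ survive beyond the covariance $T_\iota f T_\iota^* = \sigma_{{\frak L}^+*}(f)$; this requires care because $\OALPM$ is presented by generators and relations rather than directly as a crossed product, so one needs the intertwining Lemma together with the fact that $T_\iota$ is unitary to conclude that every element is a norm-limit of finite sums $\sum_n T_\iota^n f_n$ with $f_n \in C(X_{{\frak L}^+}^-)$. Establishing the unitarity of $T_\iota$ and the homeomorphism property of $\sigma_{{\frak L}^+}$ (rather than mere local homeomorphism, as in the general case) is the crucial feature that distinguishes this $\iota$-induced setting and makes the crossed product description possible.
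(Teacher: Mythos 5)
Your proposal follows essentially the same route as the paper: for (i) it observes that $\Sigma_1^-(v_i^l)=\{\iota\}$ collapses the relations $\LGBS$ of Theorem \ref{thm:themain6} onto the relations $({\frak L})$ of Theorem \ref{thm:lambdagraphC*} via \eqref{eq:15.lambda} and concludes by universality, and for (ii) it proves unitarity of $T_\iota$ by the same collapse of the double sum in \eqref{eq:Plambda6}, invokes the intertwining of $\Ad(T_\iota^*)$ with $\sigma_{{\frak L}^+}^*$ on $\DLPM\cong C(X_{{\frak L}^+}^-)$, and compares universal properties with the crossed product. The obstacles you flag (matching the crossed-product universal property, and $\sigma_{{\frak L}^+}$ being a genuine homeomorphism) are exactly the points the paper's argument addresses, so the proposal is correct and essentially identical in approach.
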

Let $A$ be an $N\times N$ irreducible non-permutation matrix over $\{0,1\},$
  and $\Lambda_A$ denotes the shift space of the two-sided topological Markov shift  $(\Lambda_A,\sigma_A)$
defined by the matrix $A$ as in \eqref{eq:LambdaA}. 
Let $I_N$ be the $N\times N$ identity matrix.
Then the pair $(I_N, A)$ naturally yields a symbolic matrix system
and hence a symbolic matrix bisystem
whose $\lambda$-graph bisystem
is denoted by
$({\frak L}_A^-,{\frak L}_A^+).$
Then we have 
\begin{corollary}\label{cor:duality}
The $C^*$-algebra ${\mathcal{O}}^+_{{{\frak L}_A^-}}$
is isomorphic to the Cuntz--Krieger algebra $\OA$, whereas 
the other $C^*$-algebra
${\mathcal{O}}^-_{{{\frak L}_A^+}}$
is isomorphic to the $C^*$-algebra of  the crossed product 
$C(\Lambda_A) \rtimes_{\sigma_A^*}\Z$
of the commutative $C^*$-algebra  
$C(\Lambda_A) $ of complex valued continuous functions 
on the two-sided shift space $\Lambda_A$ by the automorphism induced by the homeomorphism $\sigma_A$
of the shift on $\Lambda_A.$
\end{corollary}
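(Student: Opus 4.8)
The plan is to obtain the corollary as the specialization of Proposition \ref{prop:lambdaalgebras} to the $\lambda$-graph system ${\frak L}_A$ determined by the pair $(I_N,A)$, for which the two abstract objects appearing in that proposition can be matched with concrete dynamical data of the Markov shift $(\Lambda_A,\sigma_A)$ of \eqref{eq:LambdaA}. The $\lambda$-graph system ${\frak L}_A$ has constant vertex set $V_l=\{v_1,\dots,v_N\}$ for every $l$, constant edge sets presenting the vertex shift of $A$ with the left-resolving labelling (an edge $v_i\to v_j$ carrying as label its source vertex, present precisely when $A(i,j)=1$), and $\iota=\id$; it is a left-resolving, irreducible $\lambda$-graph system presenting $\Lambda_A$. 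Both halves of the corollary then reduce to identifying the output of Proposition \ref{prop:lambdaalgebras} applied to the associated $\lambda$-graph bisystem $({\frak L}_A^-,{\frak L}_A^+)$.

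For the first isomorphism I would invoke Proposition \ref{prop:lambdaalgebras}(i), giving ${\mathcal{O}}^+_{{\frak L}_A^-}\cong{\mathcal{O}}_{{\frak L}_A}$, and then argue ${\mathcal{O}}_{{\frak L}_A}\cong\OA$. Because $\iota=\id$ we have $I_{l,l+1}=I_N$, so the third relation of Theorem \ref{thm:lambdagraphC*} forces $E_i^l=E_i^{l+1}$; writing $E_i$ for the common projection, the four relations $({\frak L})$ collapse to $\sum_\alpha S_\alpha S_\alpha^*=\sum_i E_i=1$, $S_\alpha S_\alpha^* E_i=E_i S_\alpha S_\alpha^*$ and $S_\alpha^* E_i S_\alpha=\sum_j A_{l,l+1}(i,\alpha,j)E_j$. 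A short computation (one checks $E_i S_\alpha=0$ for $\alpha\ne i$, whence $E_i=S_iS_i^*$ and $S_\alpha^*S_\alpha=\sum_j A(\alpha,j)S_jS_j^*$) shows these are exactly the Cuntz--Krieger relations of $A$; equivalently one may cite the graph-algebra description of ${\mathcal{O}}_{{\frak L}_A}$ from Section 2 (\cite[Proposition 7.1]{MaDocMath2002}). By the universality and uniqueness in Theorem \ref{thm:lambdagraphC*}, this yields ${\mathcal{O}}^+_{{\frak L}_A^-}\cong\OA$.

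For the second isomorphism I would start from Proposition \ref{prop:lambdaalgebras}(ii), which gives ${\mathcal{O}}^-_{{\frak L}_A^+}\cong C(X^-_{{\frak L}_A^+})\rtimes_{\sigma_{{\frak L}_A^+}^*}\Z$, and then construct a homeomorphism $h\colon X^-_{{\frak L}_A^+}\to\Lambda_A$ with $h\circ\sigma_{{\frak L}_A^+}=\sigma_A\circ h$. For the graph $\lambda$-graph system the space $\Omega_{{\frak L}_A^+}$ is identified with the right one-sided shift space of $\Lambda_A$, and (using \eqref{eq:15.lambda}, with $\Sigma^-=\{\iota\}$) a point of $X^-_{{\frak L}_A^+}$ is an $\iota$-compatible tower $(\iota,\omega^i)_{i=1}^\infty$ of such sequences. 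Reading off the symbols $\alpha_n$, $n\in\Z$, labelling the horizontal edges of the two-dimensional array displayed in the proof of the preceding lemma produces a bi-infinite admissible word, and I would define $h\big((\iota,\omega^i)_i\big)=(\alpha_n)_{n\in\Z}\in\Lambda_A$. Then $h$ induces an isomorphism $C(X^-_{{\frak L}_A^+})\cong C(\Lambda_A)$ carrying $\sigma_{{\frak L}_A^+}^*$ to $\sigma_A^*$, so by functoriality of the crossed product $C(X^-_{{\frak L}_A^+})\rtimes_{\sigma_{{\frak L}_A^+}^*}\Z\cong C(\Lambda_A)\rtimes_{\sigma_A^*}\Z$, completing the proof.

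The main obstacle is the verification in the third paragraph that $h$ is a well-defined homeomorphism intertwining the shifts. The content is the bookkeeping showing that the left-infinite tails of the rows $\omega^i$ together with the vertical $\iota=\id$ edges of the tower assemble into a single bi-infinite admissible word, that distinct towers give distinct words, and that $h^{-1}$ is continuous (a compactness argument, since $X^-_{{\frak L}_A^+}$ and $\Lambda_A$ are compact Hausdorff). Once the identifications of the first two paragraphs are in place, the equivariance $h\circ\sigma_{{\frak L}_A^+}=\sigma_A\circ h$ and the resulting isomorphism of crossed products are routine.
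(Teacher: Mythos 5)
Your proposal is correct and follows the route the paper intends: Corollary \ref{cor:duality} is obtained by specializing Proposition \ref{prop:lambdaalgebras} to the graph $\lambda$-graph system ${\frak L}_A$ (with $\iota=\id$, so $I_{l,l+1}=I_N$), collapsing the relations $({\frak L})$ of Theorem \ref{thm:lambdagraphC*} to the Cuntz--Krieger relations for the first half, and identifying the tower space $X^-_{{\frak L}_A^+}$ equivariantly with the two-sided shift space $\Lambda_A$ for the second half. The paper leaves these two identifications implicit (citing \cite[Proposition 7.1]{MaDocMath2002} in Section 2 for the first), and your write-up supplies exactly the bookkeeping that is intended.
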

Let $\LGBS$ be a $\lambda$-graph bisystem.
As seen in the construction of the $C^*$-algebra
 $\OALMP$, the $C^*$-subalgebra 
$\ALM$ generated by the projections $E_i^{l-}(\xi), \xi \in F(v_i^l)$
is isomorphic to the commutative $C^*$-algebra 
$C(\Omega_{{\frak L}^-})$  whose character space  
$\Omega_{{\frak L}^-}$ consists of infinite labeled paths of the labeled 
Bratteli diagram  ${\frak L}^-$.
Since the matrix
$A_{l,l+1}^+(i,\alpha,j)$ for $v_i^l \in V_l, v_j^{l+1}\in V_{l+1}, \alpha \in \Sigma^+$
in the operator relation \eqref{eq:L6}
is the structure matrix of the other 
Bratteli diagram  ${\frak L}^+$,
the operator relation \eqref{eq:L6}
of  $\OALMP$ tells us that the Bratteli diagram
${\frak L}^+$ ``acts'' on $\Omega_{{\frak L}^-}$.
We actually see that $\rho_\alpha$ for $\alpha\in \Sigma^+$ defined in 
\eqref{eq:defofrhoalphaA}
gives rise to an endomorphism on 
$C(\Omega_{{\frak L}^-})$.
This means that the $C^*$-algebra 
$\OALMP$ may be regarded as the one constructed from an action
of ${\frak L}^+$ onto ${\frak L}^-$.
From this view point, the other 
$C^*$-algebra 
$\OALPM$ may be regarded as the one constructed from an action
of ${\frak L}^-$ onto ${\frak L}^+$.
This observation says that the two $C^*$-algebras 
$\OALMP$ and $\OALPM$
are obtained from the labeled Bratteli diagrams 
 ${\frak L}^-$ and ${\frak L}^+$
by exchanging its roles of the action and the space, respectively.
In this sense, one may consider that the 
two $C^*$-algebras 
$\OALMP$ and $\OALPM$ have a ``duality'' to each other. 
Therefore Corollary \ref{cor:duality} shows us that the pair of the Cuntz--Krieger algebra 
$\OA$ and the crossed product $C^*$-algebra $C(\Lambda_A) \rtimes_{\sigma_A^*}\Z$
is regarded as a ``duality'' pair.

More precisely, our definition of ``duality'' in this setting is the following.
\begin{definition}
Let $\mathcal{G}_1, \mathcal{G}_2$ be amenable and \'etale groupoids such that 
their unit spaces $\mathcal{G}_1^{(0)}, \mathcal{G}_2^{(0)}$
are totally disconnected compact Hausdorff spaces.
The pair 
$(C^*(\mathcal{G}_1), C(\mathcal{G}_1^{(0)}))$
and 
$(C^*(\mathcal{G}_2), C(\mathcal{G}_2^{(0)}))$
 of the $C^*$-algebras of the \'etale groupoids $\mathcal{G}_1, \mathcal{G}_2$
and their commutative $C^*$-subalgebras of its diagonals 
 $C(\mathcal{G}_1^{(0)})$ and 
$C(\mathcal{G}_2^{(0)})$
is said to be {\it a duality pair}\/
if there exists a $\lambda$-graph bisystem 
$\LGBS$ such that there exist isomorphisms
$\Phi_1: C^*(\mathcal{G}_1) \longrightarrow \OALMP$
and
$\Phi_2: C^*(\mathcal{G}_2) \longrightarrow \OALPM$
such that 
$\Phi_1( C(\mathcal{G}_1^{(0)}) ) = C(X_{{\frak L}^-}^+)$
and
$\Phi_2( C(\mathcal{G}_2^{(0)}) ) = C(X_{{\frak L}^+}^-)$,
that is
\begin{equation}
(C^*(\mathcal{G}_1), C(\mathcal{G}_1^{(0)}) )= (\OALMP, \DLMP),  \qquad
(C^*(\mathcal{G}_2), C(\mathcal{G}_2^{(0)}) )= (\OALPM, \DLPM). \label{eq:groupoidduality}
\end{equation}
In other words, the condition \eqref{eq:groupoidduality} is equivalent to
the condition that the groupoids  
 $\mathcal{G}_1, \mathcal{G}_2$ are isomorphic to 
 $\mathcal{G}_{{\frak L}^-}^+, \mathcal{G}_{{\frak L}^+}^-$
as \'etale groupoids, respectively
when the groupoids 
$\mathcal{G}_1, \mathcal{G}_2, \mathcal{G}_{{\frak L}^-}^+, \mathcal{G}_{{\frak L}^+}^-$
are all essentially principal (Renault \cite[Proposition 4.11]{Renault2}).
\end{definition}
Let $A$ be an $N\times N$ irreducible non-permutation matrix over $\{0,1\}.$
Let $\DA$ 
be the commutative $C^*$-subalgebra of diagonal elements of the canonical AF-algebra 
inside the Cuntz--Krieger algebra $\OA$.
The subalgebra $\DA$ 
is isomorphic to the commutative $C^*$-algebra $C(\Lambda_A^+)$
of the right one-sided shift space $\Lambda_A^+$ of $\Lambda_A$.
As a result,  we have the following corollary of Proposition \ref{prop:lambdaalgebras}
by the discussion of this section. 
\begin{corollary}
The pair $(\OA, \DA)$ and $(C(\Lambda_A) \rtimes_{\sigma_A^*}\Z, C(\Lambda_A))$
is a duality pair in the above sense. 
\end{corollary}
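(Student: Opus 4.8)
The plan is to exhibit the single $\lambda$-graph bisystem that realizes both pairs, and then to check that the two diagonal subalgebras correspond correctly under the isomorphisms already established. Concretely, I would take $\LGBS = ({\frak L}_A^-, {\frak L}_A^+)$ to be the $\lambda$-graph bisystem obtained from the $\lambda$-graph system ${\frak L}_A$ of the finite directed graph with transition matrix $A$, as in Example \ref{ex:3.2}(i). By Corollary \ref{cor:duality} we already have isomorphisms $\Phi_1 : \OALMP \longrightarrow \OA$ and $\Phi_2 : \OALPM \longrightarrow C(\Lambda_A)\rtimes_{\sigma_A^*}\Z$, so the $C^*$-algebra level of the duality-pair condition \eqref{eq:groupoidduality} holds with $\mathcal{G}_1 = \G_{{\frak L}_A^-}^+$ and $\mathcal{G}_2 = \G_{{\frak L}_A^+}^-$. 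What remains is to verify that these isomorphisms carry the diagonals as required, namely $\Phi_1(\DLMP) = \DA$ and $\Phi_2(\DLPM) = C(\Lambda_A)$.

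For the second identity, I would invoke the lemma preceding Proposition \ref{prop:lambdaalgebras}, which provides the isomorphism $\varphi_{{\frak L}^+} : \DLPM \longrightarrow C(X_{{\frak L}^+}^-)$ intertwining $\Ad(T_\iota^*)$ with $\sigma_{{\frak L}^+}^*$. Since Proposition \ref{prop:lambdaalgebras}(ii) builds $\Phi_2$ precisely as the crossed-product isomorphism induced by this intertwiner, $\Phi_2$ sends $\DLPM$ onto the canonical copy of $C(X_{{\frak L}_A^+}^-)$ inside the crossed product. Identifying the unit space $X_{{\frak L}_A^+}^-$ with the two-sided shift space $\Lambda_A$, under which $\sigma_{{\frak L}_A^+}$ becomes $\sigma_A$, one concludes $\Phi_2(\DLPM) = C(\Lambda_A)$, the diagonal of the transformation groupoid $\Lambda_A \rtimes \Z$.

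For the first identity, I would trace through the canonical isomorphism of Proposition \ref{prop:lambdaalgebras}(i), which is assembled generator by generator from the universal relations $\LGBS$ and $({\frak L})$ of Theorem \ref{thm:themain6} and Theorem \ref{thm:lambdagraphC*}. Because that isomorphism matches the generating projections $S_\mu E_i^{l-}(\xi) S_\mu^*$ spanning $\DLMP = C(X_{{\frak L}_A^-}^+)$ with the corresponding diagonal projections of ${\mathcal{O}}_{{\frak L}_A} \cong \OA$, it restricts to an isomorphism of the diagonals; identifying $X_{{\frak L}_A^-}^+$ with the right one-sided shift space $\Lambda_A^+$ then yields $\Phi_1(\DLMP) = \DA \cong C(\Lambda_A^+)$.

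The main obstacle I anticipate is the careful verification of the two unit-space identifications $X_{{\frak L}_A^-}^+ \cong \Lambda_A^+$ and $X_{{\frak L}_A^+}^- \cong \Lambda_A$, together with the claim that they are compatible with the diagonal generators and with the shift $\sigma_A$. Once these homeomorphisms are in place, using that in ${\frak L}_A$ every level $V_l$ is a copy of the vertex set of the graph and $\iota$ is the identity, the remaining steps are bookkeeping, and the duality-pair conclusion follows at once from the definition and from essential principality of the groupoids (Renault \cite[Proposition 4.11]{Renault2}), which guarantees that $\mathcal{G}_1, \mathcal{G}_2$ are determined up to isomorphism by the pairs $(\OALMP, \DLMP)$ and $(\OALPM, \DLPM)$.
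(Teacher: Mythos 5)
Your proposal is correct and follows essentially the same route as the paper, which derives the corollary from Proposition \ref{prop:lambdaalgebras}, Corollary \ref{cor:duality}, the lemma identifying $\DLPM$ with $C(X_{{\frak L}^+}^-)$ via the intertwiner $\varphi_{{\frak L}^+}$, and the identifications $X_{{\frak L}_A^-}^+\cong\Lambda_A^+$, $X_{{\frak L}_A^+}^-\cong\Lambda_A$ with $\DA\cong C(\Lambda_A^+)$. The diagonal-tracking steps you flag as the remaining work are exactly the content the paper summarizes as ``the discussion of this section.''
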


\medskip

{\it Acknowledgments:}
The author would like to deeply thank the referee
for many helpful comments and suggestions in the presentation of the paper. 
This work was  supported by JSPS KAKENHI Grant Numbers 15K04896, 19K03537.




\end{document}